%%% Linear independences of maps associated to partitions
%%% by Stefan Jung, University of Saarland, Germany
%%% June 2019
%%% 

\documentclass[12pt]{amsart}
\textwidth=150mm
\textheight=210mm
\hoffset=-10mm
\usepackage[latin1]{inputenc}
\usepackage{amsbsy}
\usepackage{amscd} 
\usepackage{amsfonts}
\usepackage{amsgen} 
\usepackage{amsmath}
\usepackage{amsopn} 
\usepackage{amssymb}
\usepackage{amstext}
\usepackage{amsthm} 
\usepackage{amsxtra}
\usepackage{array}
\usepackage[all]{xy}
\usepackage{epsfig}
\usepackage{extarrows}
\usepackage{dsfont}
\usepackage{float}
\usepackage{hhline}
\usepackage{longtable}
\usepackage{mathtools}
\usepackage{mathrsfs} 
\usepackage{paralist}
\usepackage{rotating}
\usepackage{tikz}

\include{PartitionCommands}

\theoremstyle{plain} 
\newtheorem{thm}{Theorem}[section]
\newtheorem*{thm*}{Theorem}
\newtheorem*{thm*conj*}{Theorem/Conjecture}
\newtheorem{prop}[thm]{Proposition}
\newtheorem{lem}[thm]{Lemma}
\newtheorem*{lem*}{Lemma}
\newtheorem{cor}[thm]{Corollary}
\newtheorem*{cor*}{Corollary}
\theoremstyle{definition}
\newtheorem{defn}[thm]{Definition}

\newtheorem*{defn*}{Definition}
\newtheorem{rem}[thm]{Remark}
\newtheorem{ex}[thm]{Example}
\newtheorem{question}[thm]{Question}

\newtheorem{notation}[thm]{Notation}
\newtheorem*{notation*}{Notation}
\newtheorem{obs}[thm]{Observation}

\numberwithin{equation}{section}

\renewcommand{\theta}{\vartheta}
\renewcommand{\epsilon}{\varepsilon}
\renewcommand{\subset}{\subseteq}

\newcommand{\N}{\mathbb N}

\newcommand{\C}{\mathbb C}

\newcommand{\otop}{\;\raisebox{0.02cm}{\scalebox{0.72}{$\bigcirc\textnormal{\hspace{-10.8pt}}\raisebox{-0.07cm}{$\top$}$}}\;}

%Befehl für Quotientenräume

\DeclareMathOperator{\col}{Col}

\DeclareMathOperator{\Hom}{Hom}
\DeclareMathOperator{\id}{id}

\DeclareMathOperator{\rl}{rl}

\DeclareMathOperator{\Span}{span}

\usepackage{hyperref}

\begin{document}
\title{Linear independences of maps associated to partitions}
\author{Stefan Jung}
\address{Saarland University, Fachbereich Mathematik, Postfach 151150,
66041 Saarbr\"ucken, Germany}
\email{jung@math.uni-sb.de}
\date{\today}
\subjclass[2010]{}
\keywords{noncrossing partitions, easy quantum groups}
\thanks{The author was funded by the ERC Advanced Grant NCDFP, held by Roland Speicher. This article is part of the author's PhD thesis.}

\begin{abstract}
Given a suitable collection of partitions of sets, there exists a connection to easy quantum groups via intertwiner maps.
A sufficient condition for this correspondence to be one-to-one are particular linear independences on the level of those maps.
In the case of non-crossing partitions, a proof of this linear independence can be traced back to a matrix determinant formula, developed by W. Tutte.
We present a revised and adapted version of Tutte's work and the link to the problem above, believing that this self-contained article will assist others in the field of easy quantum groups.
In particular, we fixed some errors in the original work and adapted notations, definitions, statements and proofs.
\end{abstract}

\maketitle

\section{Introduction}
In \cite{woronowiczpseudogroups} S. L. Woronowicz defined compact matrix quantum groups (CMQGs), roughly speaking as unital \(C^*\)-algebras \(A\) generated by the entries \(u_{ij}\) of an \(N\times N\) matrix \(u\) such that there exists a suitable comultiplication on \(A\).
In \cite{woronowicztannakakrein} the same author introduced \emph{Tannaka-Krein duality for compact matrix quantum groups}, which can be sketched as follows:
Given for non-negative integers \(k\) and \(l\) two words \(\omega\in\{1,*\}^k\) and \(\omega'\in\{1,*\}^l\) over the alphabet \(\{1,*\}\), one can consider so-called \emph{intertwiner spaces}
\[\Hom(\omega,\omega'):=\Big\{\,T:(\C^N)^{\otimes k}\rightarrow (\C^N)^{\otimes l}\;|\;T\textnormal{ linear},\; T\big(u^{\otop\omega}\big)=\big(u^{\otop\omega'}\big)T\Big\}.\]
Tannaka-Krein duality says that the correspondence between CMQGs and such collections of operator spaces is one-to-one.
In particular, one can write down the axioms of such a collection of intertwiner spaces in the form of \emph{concrete monoidal \(W^*\)-categories \(R\)}.
\newline
In \cite{banicaspeicherliberation} T. Banica and R. Speicher used this duality to define \emph{easy quantum groups}:
Given a so-called \emph{category of  partitions} \(\mathcal{C}\) and a natural number \(N\in\N\), one can associate to every partition \(p\in\mathcal{C}\) a linear map \(T_p\).
Defining
\[\Hom(k,l):=\Span(\,\{\,T_p\;|\;p\in\mathcal{C}(k,l)\}\,),\]
where \(k\) and \(l\) are non-negative integers, one obtains a concrete monoidal \(W^*\)-category \(R_N(\mathcal{C})\) and is able to apply Tannaka-Krein duality to this situation.
The result is a unique CMQG \(G_N(\mathcal{C})\), called easy quantum group.
See the preliminaries for precise definitions of the notations.
\vspace{11pt}
\newline
Considering the two-step construction of easy quantum groups as just displayed,
\begin{equation}\label{eqn:from_partitions_to_easy_quantum_groups}
\mathcal{C}\overset{\Psi}{\mapsto} R_N(\mathcal{C})\overset{\Phi}{\mapsto} G_N(\mathcal{C}),
\end{equation}
the second functor, \(\Phi\), that associates a CMQG \(G_N(\mathcal{C})\) to a given concrete  monoidal \(W^*\)-category \(R_N(\mathcal{C})\), is injective by Woronowicz's Tannaka-Krein duality.
The whole construction in Equation \ref{eqn:from_partitions_to_easy_quantum_groups} is surjective by definition of easy quantum groups, but the question is, if we also have injectivity for the first mapping \(\Psi\).
The main result of this work concentrates on the category of non-crossing partitions \(\mathcal{NC}\) and is given by Theorem \ref{thm:main_result:linear_independence_of_T_ps}:
Consider a category of non-crossing partitions \(\mathcal{C}\subseteq \mathcal{NC}\).
Then for \(N\ge 4\) and any fixed \(n\in\N_0\) the collection of maps
\(\displaystyle\big(T_p\big)_{p\in\mathcal{C}(0,n)}\)
is linearly independent.
\newline
From this it can easily be deduced that the functor \(\mathcal{C}\overset{\Psi}{\mapsto} R_N(\mathcal{C})\) is injective for the considered cases and so is the whole construction in Equation \ref{eqn:from_partitions_to_easy_quantum_groups}, see Corollary \ref{cor:sufficient_condition_for_different_easy_quantum_groups}.
In order to prove the above, we recapture W. Tutte's paper \cite{tutte}, see below.
\vspace{11pt}
\newline
The organization of this work is as follows:
In Section \ref{sec:reformulating_the_injectivity_of_Psi_as_a_problem_of_linear_independence} we reformulate the question of injectivity of the functor \(\Psi\) as a linear independence problem on the level of maps \(T_p\) associated to partitions \(p\).
The subsequent Section \ref{sec:linear_(in)dependencies_in_the_general_situation} deals with this problem in the general situation (of arbitrary partitions).
Proposition \ref{prop:linear_independence_if_blocks_do_not_exceed_N} and Corollary \ref{cor:different_categories_imply_different_easy_QGs_if_N_exceeds_number_of_points} show that for two different given categories of partitions \(\mathcal{C}_1\) and \(\mathcal{C}_2\) we find at least some \(N\in\N\) such that the respective easy quantum groups \(G_N(\mathcal{C}_1)\) and \(G_N(\mathcal{C}_2)\) differ.
With Proposition \ref{prop:partitions_with_at_most_N_blocks_generate_everything} we also state some kind of converse result:
Considering the category \(\mathcal{P}\) of all partitions, a linear basis for the space \(\Hom(0,n)\) is already given by the collection of those \(T_p\) where \(p\in\mathcal{C}(0,n)\) has at most \(N\) blocks.
\vspace{11pt}
\newline
The main part of this work is Section \ref{sec:linear_independence_in_the_free_case}.
It focusses on the non-crossing case, i.e. free easy quantum groups. It is known that different categories of non-crossing partitions produce different easy quantum groups, at least if \(N\ge 4\), i.e. the associated matrix of generators \(u\) has at least 4 rows and columns.
A common reference from which this can be deduced is W. Tutte's work \cite{tutte} on the matrix of chromatic joints.
It is a purely combinatorial work on the level of partitions on one line.
One of its advantages is, that it does not use any deep results from other theories.
Its proofs are straightforward once the relevant objects have been properly identified.
As the consequence of Tutte's work in the context of easy quantum groups is well-known, it is worth to motivate this part of the article:
\newline
The aim of Section \ref{sec:linear_independence_in_the_free_case} is to present a self-contained proof of the linear independence of maps \(T_p\) in the non-crossing case.
Some of Tutte's arguments turned out to be wrong and in this virtue we adapted the original work.
Other parts are changed to fit with common notations in the context of easy quantum groups or they are extended to justify results where proofs have been omitted in Tutte's article.
In addition, we linked definitions and arguments with the graphical presentations of partitions (see the preliminaries).
In this sense the present work contributes to the theory of free easy quantum groups.
See also Section \ref{subsec:comments_on_tuttes_work}, the purpose of which is to make clear for which ideas and results credit is due to Tutte and his work.
\vspace{11pt}
\newline
Throughout this work we denote for any \(n\in\N\) with \([n]\) the set \(\{1,\ldots,n\}\) and define \([0]=\emptyset\).
Given two Hilbert spaces \(H_1\) and \(H_2\), let \(B(H_1,H_2)\) denote the bounded linear operators from \(H_1\) to \(H_2\).
If both \(H_1\) and \(H_2\) have the same finite vector space  dimension \(N\), we identify \(B(H_1,H_2)\) with the \(N\times N\)-matrices \(M_N(\C)\) via some orthonormal bases of \(H_1\) and \(H_2\).
\section{Preliminaries}
In this section we define compact matrix quantum groups and give an overview on partitions of sets. We also describe how categories of such partitions define easy quantum groups and how Woronowicz's Tannaka-Krein duality makes parts of this construction injective.
Details about compact matrix quantum groups can be found in \cite{woronowiczpseudogroups} and Tannaka-Krein duality has been proved in \cite{woronowicztannakakrein}. More informations on  partitions and their link to easy quantum groups can be found in \cite{banicaspeicherliberation}.
\begin{rem}
In this article we settled for the context of (uni-coloured) partitions and (orthogonal) easy quantum groups as described in \cite{banicaspeicherliberation}.
We remark that P. Tarrago and M. Weber generalized in \cite{tarragoweberclassificationpartitions} and \cite{tarragoweberclassificationunitaryQGs} this notions to two-coloured partitions and unitary easy quantum groups.
However, the questions of linear independence stay the same.
\end{rem}
The following definition is an adaptation of Woronowicz's compact matrix quantum groups in \cite{woronowiczpseudogroups} to the real case.
\begin{defn}
Let \(A\) be a unital \(C^*\)-algebra, \(N\in \N\) and \(u_G=(u_{ij})\in M_N(A)\) be an \(N\!\times\! N\)-matrix over \(A\). Assume that the following holds:
\begin{itemize}
\item[(i)] The entries of \(u\) generate \(A\) as a \(C^*\)-algebra.
\item [(ii)] The matrix \(u\) is invertible and its entries are self-adjoint.
\item[(iii)] There is a unital \(^*\)-homomorphism \(\Delta:A\rightarrow A\otimes A\) (called comultiplication on \(A\)) that fulfils
\[\Delta(u_{ij})=\sum_{k=1}^{N}u_{ik}\otimes u_{kj}\quad\quad \forall 1\le i,j\le N.\]
\end{itemize}
Then we denote \(A\) also by \(C(G)\) and call it the \emph{non-commutative functions over an orthogonal compact matrix quantum group \(G\) of size \(N\)}.
\end{defn}
\begin{defn}\label{defn:partitions}
Let \(k,l\in\N_0\). 
A \emph{partition on \(k\) upper and \(l\) lower points} is a collection of non-empty disjoint subsets of \([k]\dot{\cup}[l]\) such that their union is \([k]\dot{\cup}[l]\).
The subsets of \([k]\dot{\cup}[l]\) describing the partition are called \emph{blocks} and its number is denoted by \(b(p)\).
\end {defn}
In the following we will only speak of partitions when dealing with the objects above.
We illustrate partitions by drawing the elements of  \([k]\) and \([l]\) as actual points into two horizontal lines on top of each other. Points that lie in the same block are connected via a grid of lines. Here are three examples \(p\), \(q\) and \(r\) of such partitions:
\vspace{11pt}
\begin{equation}\label{eqn:examples_of_partitions}
p=
\setlength{\unitlength}{0.5cm}
\begin{picture}(6,2.4)
\put(3.2,0){\circle{0.6}}
\put(2.8,-0.33) {\linethickness{0.3cm}\color{white}\line(1,0){0.8}}
\put(0,-1.5) {$\circ$}
\put(1,-1.5) {$\circ$}
\put(2,-1.5) {$\circ$}
\put(3,-1.5) {$\circ$}
\put(4,-1.5) {$\circ$}
\put(0,1.5) {$\circ$}
\put(1,1.5) {$\circ$}
\put(2,1.5) {$\circ$}
\put(0,-2.3) {$1'$}
\put(1,-2.3) {$2'$}
\put(2,-2.3) {$3'$}
\put(3,-2.3) {$4'$}
\put(4,-2.3) {$5'$}
\put(0,2.3) {$1$}
\put(1,2.3) {$2$}
\put(2,2.3) {$3$}
\put(0.2,-1){\line(0,1){1}}
\put(1.2,-1){\line(0,1){1}}
\put(2.2,-1){\line(0,1){0.7}}
\put(3.2,-1){\line(0,1){1.6}}
\put(4.2,-1){\line(0,1){1}}
\put(0.2,0.4){\line(0,1){1}}
\put(1.2,0.6){\line(0,1){0.8}}
\put(2.2,0.6){\line(0,1){0.8}}
\put(1.2,0.6){\line(1,0){2}}
\put(0.2,0){\line(1,0){2.7}}
\put(3.5,0){\line(1,0){0.7}}
\end{picture}
,\quad\quad
q=
\begin{picture}(5,2.4)
\put(0,-1.5) {$\circ$}
\put(1,-1.5) {$\circ$}
\put(2,-1.5) {$\circ$}
\put(0,1.5) {$\circ$}
\put(1,1.5) {$\circ$}
\put(2,1.5) {$\circ$}
\put(3,1.5) {$\circ$}
\put(0,-2.3) {$1'$}
\put(1,-2.3) {$2'$}
\put(2,-2.3) {$3'$}
\put(0,2.3) {$1$}
\put(1,2.3) {$2$}
\put(2,2.3) {$3$}
\put(3,2.3) {$4$}
\put(0.2,-1){\line(0,1){0.8}}
\put(1.2,-1){\line(0,1){1.2}}
\put(2.2,-1){\line(0,1){1.2}}
\put(0.2,0.2){\line(0,1){1.2}}
\put(1.2,0.2){\line(0,1){1.2}}
\put(2.2,0.6){\line(0,1){0.8}}
\put(3.2,0.2){\line(0,1){1.2}}
\put(0.2,0.2){\line(1,0){1}}
\put(2.2,0.2){\line(1,0){1}}
\end{picture}
,\quad\quad
r=
\begin{picture}(3,2.4)
\put(0,-1.5) {$\circ$}
\put(1,-1.5) {$\circ$}
\put(2,-1.5) {$\circ$}
\put(0,-2.3) {$1'$}
\put(1,-2.3) {$2'$}
\put(2,-2.3) {$3'$}
\put(0.2,-1){\line(0,1){1.4}}
\put(1.2,-1){\line(0,1){1.}}
\put(2.2,-1){\line(0,1){1.4}}
\put(0.2,0.4){\line(1,0){2}}
\end{picture}
\vspace{30pt}
\end{equation}
The numbers \(k\) and/or \(l\) are allowed to be zero, see for instance the partition \(r\).
\newline
A block with only one element is called a \emph{singleton}.
We have  primed the lower points in the pictures above just to distinguish the set \([l]\) from \([k]\) in the disjoint union \([k]\dot{\cup}[l]\).
\newline
Note that in \(p\) the points 1', 2' and 5' are not connected to the points 2, 3 and  4'. The corresponding connecting lines need to cross (if we are only allowed to draw them between the upper and lower row of points).
We will call a partition \emph{crossing} whenever in its illustration at least two lines belonging to different blocks cross each other.
Otherwise we call it \emph{non-crossing}.
We will denote the set of all partitions on \(k\) upper and \(l\) lower points by \(\mathcal{P}(k,l)\) and define \(\mathcal{P}:=\bigcup_{k,l\in\N_0}\mathcal{P}(k,l)\). Analogously we define the notions \(\mathcal{NC}(k,l)\) and \(\mathcal{NC}\) if we restrict to non-crossing partitions.
For example  the partition \(q\) from above fulfils
\[q\in \mathcal{NC}(4,3)\subseteq\mathcal{NC}.\]
Keep in mind that empty rows are allowed, so it holds for the partition \(r\) from  above
\[r\in\mathcal{NC}(0,3)\subseteq\mathcal{NC}.\]
\newline
We already introduce one further notion that will appear repeatedly later on:
If we are not interested in the detailed structure of some non-empty (!) parts of the partition, we sometimes replace them by the symbol \(\square\).
Taking for example the partition \(p\) above we could write
\[
p=
\setlength{\unitlength}{0.5cm}
\begin{picture}(5,1.5)
\put(3.2,0){\circle{0.6}}
\put(2.8,-0.33) {\linethickness{0.3cm}\color{white}\line(1,0){0.8}}
\put(0.88,-1.56) {$\square$}
\put(2,-1.5) {$\circ$}
\put(3,-1.5) {$\circ$}
\put(4,-1.5) {$\circ$}
\put(-0.12,1.4) {$\square$}
\put(1,1.5) {$\circ$}
\put(2,1.5) {$\circ$}
\put(1.2,-1){\line(0,1){1}}
\put(2.2,-1){\line(0,1){0.7}}
\put(3.2,-1){\line(0,1){1.6}}
\put(4.2,-1){\line(0,1){1}}
\put(0.2,0.4){\line(0,1){1}}
\put(1.2,0.6){\line(0,1){0.8}}
\put(2.2,0.6){\line(0,1){0.8}}
\put(1.2,0.6){\line(1,0){2}}
\put(1.2,0){\line(1,0){1.7}}
\put(3.5,0){\line(1,0){0.7}}
\end{picture}.\vspace{16pt}
\]
The square in the upper row represents an arbitrary (non-empty) subpartition that is not connected to any of the other points.
In this case it is actually just a singleton.
The square in the lower row represents a subpartition such that there is at least one point that is connected to the rightmost point in the lower row.
In our case this square represents two points connected to each other.
\newline
If we want to allow the subpartition to be empty,  we use dashed lines to draw the corresponding square and connecting lines.
Although there is no point in doing so for the moment, it would be correct to write\vspace{6pt}
\[
p=
\setlength{\unitlength}{0.5cm}
\begin{picture}(6,1.5)
\put(3.2,0){\circle{0.6}}
\put(2.8,-0.33) {\linethickness{0.3cm}\color{white}\line(1,0){0.8}}
\put(0.88,-1.56) {$\square$}
\put(2,-1.5) {$\circ$}
\put(3,-1.5) {$\circ$}
\put(4,-1.5) {$\circ$}
\put(4.88,-1.56) {$\square$}
\put(4.83,-1.26) {\linethickness{0.1cm}\color{white}\line(1,0){0.7}}
\put(5.22,-1.61) {\linethickness{0.1cm}\color{white}\line(0,1){0.7}}
\put(-0.12,1.4) {$\square$}
\put(1,1.5) {$\circ$}
\put(2,1.5) {$\circ$}
\put(1.2,-1){\line(0,1){1}}
\put(2.2,-1){\line(0,1){0.7}}
\put(3.2,-1){\line(0,1){1.6}}
\put(4.2,-1){\line(0,1){1}}
\put(0.2,0.4){\line(0,1){1}}
\put(1.2,0.6){\line(0,1){0.8}}
\put(2.2,0.6){\line(0,1){0.8}}
\multiput(5.2,-1)(0,0.35){3}{\line(0,1){0.2}}
\put(1.2,0.6){\line(1,0){2}}
\put(1.2,0){\line(1,0){1.7}}
\put(3.5,0){\line(1,0){0.7}}
\multiput(4.1,0)(0.4,0){3}{\line(1,0){0.3}}
\end{picture}
=
\setlength{\unitlength}{0.5cm}
\begin{picture}(5,1.5)
\put(3.2,0){\circle{0.6}}
\put(2.8,-0.33) {\linethickness{0.3cm}\color{white}\line(1,0){0.8}}
\put(0,-1.5) {$\circ$}
\put(1,-1.5) {$\circ$}
\put(2,-1.5) {$\circ$}
\put(3,-1.5) {$\circ$}
\put(3.88,-1.56) {$\square$}
\put(-0.12,1.44) {$\square$}
\put(-0.17,1.74) {\linethickness{0.1cm}\color{white}\line(1,0){0.7}}
\put(0.21,1.39) {\linethickness{0.1cm}\color{white}\line(0,1){0.7}}
\put(1,1.5) {$\circ$}
\put(2,1.5) {$\circ$}
\put(0.2,-1){\line(0,1){1}}
\put(1.2,-1){\line(0,1){1}}
\put(2.2,-1){\line(0,1){0.7}}
\put(3.2,-1){\line(0,1){1.6}}
\put(4.2,-1){\line(0,1){1}}
\put(1.2,0.6){\line(0,1){0.8}}
\put(2.2,0.6){\line(0,1){0.8}}
\multiput(0.2,0.5)(0,0.35){3}{\line(0,1){0.2}}
\put(1.2,0.6){\line(1,0){2}}
\put(0.2,0){\line(1,0){2.7}}
\put(3.5,0){\line(1,0){0.7}}
\end{picture}.\vspace{16pt}
\]
In this case the dashed structure in the lower row is indeed empty and the one on the upper row represents again a singleton.
\vspace{11pt}
\newline
We consider now some uni- and bivariate operations on partitions that will allow us to define so called \emph{categories of partitions}.
We use the partitions from Equations \ref{eqn:examples_of_partitions} to give examples.
\newline
The \emph{tensor product} of two partitions \(p\in\mathcal{P}(k,l)\) and \(q\in\mathcal{P}(k',l')\) is defined by horizontal concatenation, i.e. by placing their pictures side by side and considering this a partition in \(\mathcal{P}(k+k',l+l')\):
\[p\otimes q=
\setlength{\unitlength}{0.5cm}
\begin{picture}(5,1)
\put(3.2,0){\circle{0.6}}
\put(2.8,-0.33) {\linethickness{0.3cm}\color{white}\line(1,0){0.8}}
\put(0,-1.5) {$\circ$}
\put(1,-1.5) {$\circ$}
\put(2,-1.5) {$\circ$}
\put(3,-1.5) {$\circ$}
\put(4,-1.5) {$\circ$}
\put(0,1.5) {$\circ$}
\put(1,1.5) {$\circ$}
\put(2,1.5) {$\circ$}
\put(0.2,-1){\line(0,1){1}}
\put(1.2,-1){\line(0,1){1}}
\put(2.2,-1){\line(0,1){0.7}}
\put(3.2,-1){\line(0,1){1.6}}
\put(4.2,-1){\line(0,1){1}}
\put(0.2,0.4){\line(0,1){1}}
\put(1.2,0.6){\line(0,1){0.8}}
\put(2.2,0.6){\line(0,1){0.8}}
\put(1.2,0.6){\line(1,0){2}}
\put(0.2,0){\line(1,0){2.7}}
\put(3.5,0){\line(1,0){0.7}}
\end{picture}
\begin{picture}(4.2,2.4)
\put(0,-1.5) {$\circ$}
\put(1,-1.5) {$\circ$}
\put(2,-1.5) {$\circ$}
\put(0,1.5) {$\circ$}
\put(1,1.5) {$\circ$}
\put(2,1.5) {$\circ$}
\put(3,1.5) {$\circ$}
\put(0.2,-1){\line(0,1){0.8}}
\put(1.2,-1){\line(0,1){1.2}}
\put(2.2,-1){\line(0,1){1.2}}
\put(0.2,0.2){\line(0,1){1.2}}
\put(1.2,0.2){\line(0,1){1.2}}
\put(2.2,0.6){\line(0,1){0.8}}
\put(3.2,0.2){\line(0,1){1.2}}
\put(0.2,0.2){\line(1,0){1}}
\put(2.2,0.2){\line(1,0){1}}
\end{picture}\]
\newline
The \emph{involution} is an operator on \(\mathcal{P}\) that maps a partition  \(r\in\mathcal{P}(k,l)\) to an element \(r^*\in(\mathcal{P}(l,k)\) given by mirroring \(r\) at some horizontal axis:\vspace{11pt}
\[p^*=
\reflectbox{\scalebox{-1}{
\setlength{\unitlength}{0.5cm}
\begin{picture}(5,1)
\put(3.2,0){\circle{0.6}}
\put(2.8,-0.33) {\linethickness{0.3cm}\color{white}\line(1,0){0.8}}
\put(0,-1.5) {$\circ$}
\put(1,-1.5) {$\circ$}
\put(2,-1.5) {$\circ$}
\put(3,-1.5) {$\circ$}
\put(4,-1.5) {$\circ$}
\put(0,1.5) {$\circ$}
\put(1,1.5) {$\circ$}
\put(2,1.5) {$\circ$}
\put(0.2,-1){\line(0,1){1}}
\put(1.2,-1){\line(0,1){1}}
\put(2.2,-1){\line(0,1){0.7}}
\put(3.2,-1){\line(0,1){1.6}}
\put(4.2,-1){\line(0,1){1}}
\put(0.2,0.4){\line(0,1){1}}
\put(1.2,0.6){\line(0,1){0.8}}
\put(2.2,0.6){\line(0,1){0.8}}
\put(1.2,0.6){\line(1,0){2}}
\put(0.2,0){\line(1,0){2.7}}
\put(3.5,0){\line(1,0){0.7}}
\end{picture}
}}\]
\newline
If \(s\in\mathcal{P}(k,l)\) and \(t\in\mathcal{P}(l,m)\), we can construct the \emph{composition} \(ts=t\circ s\).
It is defined by vertical concatenation:
We place the partition \(t\) below \(s\) and connect each lower points of \(s\) with the corresponding upper points of \(t\).
This way we might obtain blocks in the middle that are connected neither to any of the very upper nor lower points.
We denote these blocks as \emph{remaining loops} and their number as \(\textnormal{rl}(t,s)\).
Finally we erase all remaining loops and middle points:
\[pq=p\circ q=
\begin{picture}(5,2.4)
\put(3.2,0){\circle{0.6}}
\put(2.8,-0.33) {\linethickness{0.3cm}\color{white}\line(1,0){0.8}}
\put(0,-1.5) {$\circ$}
\put(1,-1.5) {$\circ$}
\put(2,-1.5) {$\circ$}
\put(3,-1.5) {$\circ$}
\put(4,-1.5) {$\circ$}
\put(0,1.5) {$\circ$}
\put(1,1.5) {$\circ$}
\put(2,1.5) {$\circ$}
\put(0.2,-1){\line(0,1){1}}
\put(1.2,-1){\line(0,1){1}}
\put(2.2,-1){\line(0,1){0.7}}
\put(3.2,-1){\line(0,1){1.6}}
\put(4.2,-1){\line(0,1){1}}
\put(0.2,0.4){\line(0,1){1}}
\put(1.2,0.6){\line(0,1){0.8}}
\put(2.2,0.6){\line(0,1){0.8}}
\put(1.2,0.6){\line(1,0){2}}
\put(0.2,0){\line(1,0){2.7}}
\put(3.5,0){\line(1,0){0.7}}
\end{picture}
\circ\quad
\begin{picture}(4,2.4)
\put(0,-1.5) {$\circ$}
\put(1,-1.5) {$\circ$}
\put(2,-1.5) {$\circ$}
\put(0,1.5) {$\circ$}
\put(1,1.5) {$\circ$}
\put(2,1.5) {$\circ$}
\put(3,1.5) {$\circ$}
\put(0.2,-1){\line(0,1){0.8}}
\put(1.2,-1){\line(0,1){1.2}}
\put(2.2,-1){\line(0,1){1.2}}
\put(0.2,0.2){\line(0,1){1.2}}
\put(1.2,0.2){\line(0,1){1.2}}
\put(2.2,0.6){\line(0,1){0.8}}
\put(3.2,0.2){\line(0,1){1.2}}
\put(0.2,0.2){\line(1,0){1}}
\put(2.2,0.2){\line(1,0){1}}
\end{picture}
=
\raisebox{1cm}{
\begin{picture}(5,2.4)
\put(0,-1.5) {$\circ$}
\put(1,-1.5) {$\circ$}
\put(2,-1.5) {$\circ$}
\put(0,1.5) {$\circ$}
\put(1,1.5) {$\circ$}
\put(2,1.5) {$\circ$}
\put(3,1.5) {$\circ$}
\put(0.2,-1){\line(0,1){0.8}}
\put(1.2,-1){\line(0,1){1.2}}
\put(2.2,-1){\line(0,1){1.2}}
\put(0.2,0.2){\line(0,1){1.2}}
\put(1.2,0.2){\line(0,1){1.2}}
\put(2.2,0.6){\line(0,1){0.8}}
\put(3.2,0.2){\line(0,1){1.2}}
\put(0.2,0.2){\line(1,0){1}}
\put(2.2,0.2){\line(1,0){1}}
\end{picture}}
\textnormal{\hspace{-2.63cm}}
\raisebox{-1cm}{
\begin{picture}(5,2.4)
\put(3.2,0){\circle{0.6}}
\put(2.8,-0.33) {\linethickness{0.3cm}\color{white}\line(1,0){0.8}}
\put(0,1.5) {$\circ$}
\put(1,1.5) {$\circ$}
\put(2,1.5) {$\circ$}
\put(0,-1.5) {$\circ$}
\put(1,-1.5) {$\circ$}
\put(2,-1.5) {$\circ$}
\put(3,-1.5) {$\circ$}
\put(4,-1.5) {$\circ$}
\put(0.2,2){\line(0,1){0.4}}
\put(1.2,2){\line(0,1){0.4}}
\put(2.2,2){\line(0,1){0.4}}
\put(0.2,-1){\line(0,1){1}}
\put(1.2,-1){\line(0,1){1}}
\put(2.2,-1){\line(0,1){0.7}}
\put(3.2,-1){\line(0,1){1.6}}
\put(4.2,-1){\line(0,1){1}}
\put(0.2,0.4){\line(0,1){1}}
\put(1.2,0.6){\line(0,1){0.8}}
\put(2.2,0.6){\line(0,1){0.8}}
\put(1.2,0.6){\line(1,0){2}}
\put(0.2,0){\line(1,0){2.7}}
\put(3.5,0){\line(1,0){0.7}}
\end{picture}
}
\!\!\!=
\begin{picture}(5,2.4)
\put(3.2,0){\circle{0.6}}
\put(2.8,-0.33) {\linethickness{0.3cm}\color{white}\line(1,0){0.8}}
\put(0,-1.5) {$\circ$}
\put(1,-1.5) {$\circ$}
\put(2,-1.5) {$\circ$}
\put(3,-1.5) {$\circ$}
\put(4,-1.5) {$\circ$}
\put(0,1.8) {$\circ$}
\put(1,1.8) {$\circ$}
\put(2,1.8) {$\circ$}
\put(3,1.8) {$\circ$}
\put(0.2,-1){\line(0,1){1}}
\put(1.2,-1){\line(0,1){1}}
\put(2.2,-1){\line(0,1){0.8}}
\put(3.2,-1){\line(0,1){1.6}}
\put(4.2,-1){\line(0,1){1}}
\put(0.2,0.6){\line(0,1){1}}
\put(1.2,0.6){\line(0,1){1}}
\put(2.2,0.8){\line(0,1){0.8}}
\put(3.2,0.6){\line(0,1){1}}
\put(0.2,0.6){\line(1,0){3}}
\put(0.2,0){\line(1,0){2.7}}
\put(3.5,0){\line(1,0){0.7}}
\end{picture}
\]
\vspace{22pt}

Note that the involution deserves its name as it holds \((s^*)^*=s\) and \((ts)^*=s^*t^*\) whenever \(t\) and \(s\) are composable.
\newline
The operator \vspace{2pt}\(\textnormal{rot}_{\!\!\raisebox{2pt}{\rotatebox{270}{$^\curvearrowright$}}}\) is defined to take the rightmost point in the upper row of a partition and move it to the right end of the lower row without changing the connections to other points.
\[\textnormal{rot}_{\!\!\raisebox{2pt}{\rotatebox{270}{$^\curvearrowright$}}}(q)=
\textnormal{rot}_{\!\!\raisebox{2pt}{\rotatebox{270}{$^\curvearrowright$}}}(\;\,
\setlength{\unitlength}{0.5cm}
\begin{picture}(3.8,2.4)
\put(0,-1.5) {$\circ$}
\put(1,-1.5) {$\circ$}
\put(2,-1.5) {$\circ$}
\put(0,1.5) {$\circ$}
\put(1,1.5) {$\circ$}
\put(2,1.5) {$\circ$}
\put(3,1.5) {$\circ$}
\put(0.2,-1){\line(0,1){0.8}}
\put(1.2,-1){\line(0,1){1.2}}
\put(2.2,-1){\line(0,1){1.2}}
\put(0.2,0.2){\line(0,1){1.2}}
\put(1.2,0.2){\line(0,1){1.2}}
\put(2.2,0.6){\line(0,1){0.8}}
\put(3.2,0.2){\line(0,1){1.2}}
\put(0.2,0.2){\line(1,0){1}}
\put(2.2,0.2){\line(1,0){1}}
\end{picture}
)
=
\setlength{\unitlength}{0.5cm}
\begin{picture}(5,2)
\put(0,-1.5) {$\circ$}
\put(1,-1.5) {$\circ$}
\put(2,-1.5) {$\circ$}
\put(3,-1.5) {$\circ$}
\put(0,1.5) {$\circ$}
\put(1,1.5) {$\circ$}
\put(2,1.5) {$\circ$}
\put(0.2,-1){\line(0,1){0.8}}
\put(1.2,-1){\line(0,1){1.2}}
\put(2.2,-1){\line(0,1){1}}
\put(0.2,0.2){\line(0,1){1.2}}
\put(1.2,0.2){\line(0,1){1.2}}
\put(2.2,0.6){\line(0,1){0.8}}
\put(3.2,-1){\line(0,1){1}}
\put(0.2,0.2){\line(1,0){1}}
\put(2.2,0){\line(1,0){1}}
\end{picture}\]\vspace{6pt}

%If \(k=0\) then nothing changes.
Analogously, we define the operation \vspace{2pt}\(\textnormal{rot}_{\!\!\raisebox{2pt}{\rotatebox{270}{$^\curvearrowleft$}}}\) into the opposite direction and the corresponding operations on the left side of the partition, \vspace{2pt}\(\textnormal{rot}_{\raisebox{-6pt}{\rotatebox{90}{$^\curvearrowleft$}}}\) and \vspace{2pt}\(\textnormal{rot}_{\raisebox{-6pt}{\rotatebox{90}{$^\curvearrowright$}}}\).
A partition \(q'\) is called a \emph{rotated version of \(q\)} if it is obtained from \(q\) by repeated application of these four maps.
Note that a rotation operator is only defined on partitions where there exists a point with whom the rotation can be performed.
On the partition \(r\) from above only the rotations from the lower to the upper row are well-defined.
\begin{rem}\label{rem:identity_partitions_deserve_their_names}
The partition \(\idpartww\) is called  \emph{identity partitions}.
Taking the \(k\)-th tensor powers of this partition, we obtain the neutral element with respect to composition from the left on  \(\mathcal{P}(k,l)\) as well as the neutral element with respect to composition from the right on \(\mathcal{P}(m,k)\).
For example
\[\left(\raisebox{-4pt}{\idpartww}\otimes\raisebox{-4pt}{\idpartww}\otimes \raisebox{-4pt}{\idpartww}\right)
\circ
\setlength{\unitlength}{0.5cm}
\begin{picture}(4,1.5)
\put(0,-1.5) {$\circ$}
\put(1,-1.5) {$\circ$}
\put(2,-1.5) {$\circ$}
\put(0,1.5) {$\circ$}
\put(1,1.5) {$\circ$}
\put(2,1.5) {$\circ$}
\put(3,1.5) {$\circ$}
\put(0.2,-1){\line(0,1){0.8}}
\put(1.2,-1){\line(0,1){1.2}}
\put(2.2,-1){\line(0,1){1.2}}
\put(0.2,0.2){\line(0,1){1.2}}
\put(1.2,0.2){\line(0,1){1.2}}
\put(2.2,0.6){\line(0,1){0.8}}
\put(3.2,0.2){\line(0,1){1.2}}
\put(0.2,0.2){\line(1,0){1}}
\put(2.2,0.2){\line(1,0){1}}
\end{picture}
=\;q\;=
\begin{picture}(4,1.5)
\put(0,-1.5) {$\circ$}
\put(1,-1.5) {$\circ$}
\put(2,-1.5) {$\circ$}
\put(0,1.5) {$\circ$}
\put(1,1.5) {$\circ$}
\put(2,1.5) {$\circ$}
\put(3,1.5) {$\circ$}
\put(0.2,-1){\line(0,1){0.8}}
\put(1.2,-1){\line(0,1){1.2}}
\put(2.2,-1){\line(0,1){1.2}}
\put(0.2,0.2){\line(0,1){1.2}}
\put(1.2,0.2){\line(0,1){1.2}}
\put(2.2,0.6){\line(0,1){0.8}}
\put(3.2,0.2){\line(0,1){1.2}}
\put(0.2,0.2){\line(1,0){1}}
\put(2.2,0.2){\line(1,0){1}}
\end{picture}
\circ
\left(\raisebox{-4pt}{\idpartww}\otimes\raisebox{-4pt}{\idpartww}\otimes \raisebox{-4pt}{\idpartww}\otimes \raisebox{-4pt}{\idpartww}\right).
\vspace{11pt}
\]
\end{rem}
Rotating the  identity partition onto one line we obtain the two so-called \emph{ pair partitions} \(\paarpartww,\baarpartww\).
\vspace{11pt}
\newline

\begin{defn}\label{defn:category_of_partitions}
A \emph{category of partitions \(\mathcal{C}\)} is a subset of \(\mathcal{P}\) such that 
\begin{itemize}
\item[(i)] it contains the identity partition, \(\idpartww\),
\item[(ii)] it contains the two pair partitions \(\paarpartww\), \(\baarpartww\) and
\item[(iii)] it is closed under composition, involution and taking tensor products.
\end{itemize}
A category of partitions is called \emph{non-crossing} if all its elements are non-crossing.
We define for all \(k,l\!\in\!\N_0\) the subsets 
\[\mathcal{C}(k,l):=\mathcal{C}\cap\mathcal{P}(k,l)\]
\end{defn}
The two pair partitions mentioned above guarantee that a category of partitions is rotation-invariant, compare {banicaspeicherliberation}.
\begin{prop}\label{prop:mcpp_is_equivalent_to_closure_under_rotation}
A category of partitions is closed under taking rotated versions of elements.
\end{prop}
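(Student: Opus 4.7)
The plan is to realise each of the four elementary rotation operators explicitly as a combination of tensor products and compositions involving only $p$, the identity partition $\idpartww$, and the two pair partitions $\paarpartww$, $\baarpartww$. Since a category of partitions by Definition \ref{defn:category_of_partitions} contains these three partitions and is closed under $\otimes$ and $\circ$, each such rotation of a given $p\in\mathcal{C}$ will automatically remain in $\mathcal{C}$, and an arbitrary rotated version then lies in $\mathcal{C}$ by iteration.

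Concretely, for $p\in\mathcal{C}(k,l)$, my first step is to verify the identity
\[
\textnormal{rot}_{\!\!\raisebox{2pt}{\rotatebox{270}{$^\curvearrowright$}}}(p)\;=\;\bigl(p\otimes\idpartww\bigr)\circ\bigl(\idpartww^{\otimes(k-1)}\otimes\paarpartww\bigr),
\]
which I would read off directly from the diagram: the cap $\paarpartww$, sitting above the rightmost column of $p\otimes\idpartww$, folds the rightmost upper point of $p$ around through the extra identity strand on the right so that it becomes the new rightmost lower point, while the remaining upper points of $p$ travel through the identities unchanged. Dually, with the cup $\baarpartww$ acting from below in place of the cap acting from above, one obtains
\[
\textnormal{rot}_{\!\!\raisebox{2pt}{\rotatebox{270}{$^\curvearrowleft$}}}(p)\;=\;\bigl(\idpartww^{\otimes(l-1)}\otimes\baarpartww\bigr)\circ\bigl(p\otimes\idpartww\bigr),
\]
and the two left-hand rotations $\textnormal{rot}_{\raisebox{-6pt}{\rotatebox{90}{$^\curvearrowleft$}}}(p)$ and $\textnormal{rot}_{\raisebox{-6pt}{\rotatebox{90}{$^\curvearrowright$}}}(p)$ are given by entirely analogous formulas in which the cap/cup and auxiliary identity strand are placed on the left rather than the right.

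Each of the four resulting expressions is manifestly built from $p$ together with $\idpartww$, $\paarpartww$, $\baarpartww$ using only $\otimes$ and $\circ$, so it belongs to $\mathcal{C}$ whenever $p$ does. Since by definition an arbitrary rotated version of $p$ is produced by iteratively applying these four basic rotation operators, a straightforward induction on the number of steps finishes the argument.

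The only point that requires genuine care---though it is not really an obstacle---is the diagrammatic verification of the four displayed identities: one has to track which upper and lower points become identified through each cap or cup, and check that no spurious remaining loops are introduced. Because the auxiliary pieces are only identities and pair partitions, this reduces to inspecting a couple of small pictures, after which the proposition is immediate from the axioms of a category of partitions.
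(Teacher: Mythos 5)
Your proof is correct and follows essentially the same route as the paper: both realise each elementary rotation of $p$ as a composition of $p\otimes\idpartww$ with a suitably placed pair partition tensored with identity strands, and then invoke closure of $\mathcal{C}$ under $\otimes$ and $\circ$. In fact your displayed identity $(p\otimes\idpartww)\circ(\idpartww^{\otimes(k-1)}\otimes\paarpartww)$ is the dimensionally consistent version of the formula the paper writes down (whose leftmost factor appears to be a typo for $p\otimes\idpartww$), so nothing is missing.
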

\begin{proof}
Consider for example the operator \(\textnormal{rot}_{\!\!\raisebox{2pt}{\rotatebox{270}{$^\curvearrowright$}}}\) and for \(k\in\N\) a partition \(p\in\mathcal{P}(k,l)\).
Then it holds
\[\textnormal{rot}_{\!\!\raisebox{2pt}{\rotatebox{270}{$^\curvearrowright$}}}(p)=\left(\raisebox{-4pt}{$\idpartww^{\otimes k}$}\otimes \raisebox{-4pt}{$\idpartww$}\right)\circ p\circ\left(\raisebox{-4pt}{$\idpartww^{\otimes k-1}$}\otimes \paarpartww\right).\] 
Analogously, one proves the statement for the other three rotations.
\end{proof}
In the following we associate with a given partition \(p\) a linear map \(T_p\) on some finite-dimensional Hilbert space.
In fact, this will result in a family of maps \(\left(T_p(N)\right)_{N\in\N}\) as we can vary the dimension \(N\) of the considered Hilbert space.
\begin{defn}
Consider a partition \(p\in\mathcal{P}(k,l)\) and two multi-indices \(i=(i_1,\ldots,i_k)\in\N^k\) and \(j=(j_1,\ldots,j_l)\in\N^l\). The pair \((i,j)\) defines a labelling of \(p\) by mapping the upper points of \(p\) from left to right to the numbers \(i_1,\ldots,i_k\) and the lower points to \(j_1,\ldots,j_l\).
\newline
We call \((i,j)\) a \emph{valid labelling} for \(p\) if connected points of the partitions are mapped to the same numbers. Otherwise it is called an \emph{invalid labelling}. 
\newline
Analogously, we can only consider \(i\) (or only \(j\)) and call it a valid labelling for the upper (or lower) points of \(p\)  if all upper (or all lower) points that are connected get the same labels.
\end{defn}
Of course \((i,j)\) can only be a valid labelling for \(p\) if  both \(i\) and \(j\) are valid labellings for their respective row of points.
\begin{ex}
Consider again the partition \(p\) from Equation \ref{eqn:examples_of_partitions}:
\vspace{11pt}
\[
p=
\setlength{\unitlength}{0.5cm}
\begin{picture}(6,2.4)
\put(3.2,0){\circle{0.6}}
\put(2.8,-0.33) {\linethickness{0.3cm}\color{white}\line(1,0){0.8}}
\put(0,-1.5) {$\circ$}
\put(1,-1.5) {$\circ$}
\put(2,-1.5) {$\circ$}
\put(3,-1.5) {$\circ$}
\put(4,-1.5) {$\circ$}
\put(0,1.5) {$\circ$}
\put(1,1.5) {$\circ$}
\put(2,1.5) {$\circ$}
\put(0,-2.3) {$1'$}
\put(1,-2.3) {$2'$}
\put(2,-2.3) {$3'$}
\put(3,-2.3) {$4'$}
\put(4,-2.3) {$5'$}
\put(0,2.3) {$1$}
\put(1,2.3) {$2$}
\put(2,2.3) {$3$}
\put(0.2,-1){\line(0,1){1}}
\put(1.2,-1){\line(0,1){1}}
\put(2.2,-1){\line(0,1){0.7}}
\put(3.2,-1){\line(0,1){1.6}}
\put(4.2,-1){\line(0,1){1}}
\put(0.2,0.4){\line(0,1){1}}
\put(1.2,0.6){\line(0,1){0.8}}
\put(2.2,0.6){\line(0,1){0.8}}
\put(1.2,0.6){\line(1,0){2}}
\put(0.2,0){\line(1,0){2.7}}
\put(3.5,0){\line(1,0){0.7}}
\end{picture}\vspace{1cm}
\]
Then the following holds:
\begin{itemize}
\item \(i\!=\!(5,6,6)\) is a valid labelling for the upper row but \(i'\!=\!(5,6,5)\) is not.
\item \(j=(3,3,7,6,3)\) is a valid labelling for the lower row but \(j'=(3,3,7,2,8)\) is not.
\item \((i,j)\) is a valid labelling for \(p\) because both \(i\) and \(j\) are valid for their respective row of \(p\) and, in addition, the points 2 and 3 and \(4'\) are all labelled by `6'.
\end{itemize}
\end{ex}
\begin{defn}\label{defn:delta_p}
Let \(k,l\in\N_0\) and \(p\in\mathcal{P}(k,l)\). Then we define
\[\delta_p:\N^k\times\N^l\rightarrow \{0,1\}\;;\; (i,j)\mapsto
\begin{cases}
1&,(i,j)\textnormal{ is a valid labelling for \(p\)}\\
0&,\textnormal otherwise.
\end{cases}\]
\end{defn}
\begin{defn}\label{defn:T_p}
Let \(N\!\in\!\N\) and \(p\in\mathcal{P}(k,l)\) for some \(k,l\in\N_0\).
Consider the Hilbert space \(\C^N\) with canonical orthonormal basis \((e_i)_{i\in [N]}\). Then we define a linear map \(T_p\) as follows:
\[T_p:\left(\C^N\right)^{\otimes k}\rightarrow \left(\C^N\right)^{\otimes l}\;;\;e_{i_1}\otimes\ldots\otimes e_{i_k}\mapsto\sum_{j\in [N]^l}\delta_p(i,j)\left(e_{j_1}\otimes\ldots\otimes e_{j_l}\right)\]
\end{defn}
The important observation at this point is, that the operations on partitions from the last section translate in a nice way to operations on the maps \(T_p\), compare \cite[Prop. 1.9]{banicaspeicherliberation}.
\begin{lem}\label{lem:properties_of_the_T_p}
Let \(N\in\N\) and \(p,q\) be partitions. Then it holds:
\begin{itemize}
\item[(1)] \(T_{q\otimes p}=T_q\otimes T_p\).
\item[(2)] \(T_{p^*}=\left(T_{p}\right)^*\)
\item[(3)] \(T_{qp}=N^{-\textnormal{rl(q,p)}}\left(T_q\circ T_p\right)\)
\end{itemize}
where for the last statement \(p\) must be composable with \(q\) from the left and \(\textnormal{rl}(q,p)\) denotes the number of remaining loops when writing \(p\) on top of \(q\) and connecting the middle points.
\end{lem}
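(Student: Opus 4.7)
The plan is to verify each of the three identities by computing matrix coefficients in the canonical basis $(e_{i_1}\otimes\cdots\otimes e_{i_k})_{i\in[N]^k}$, using only the combinatorial definition of $\delta_p$ and the description of how the operations on partitions act on their block structures.

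For part (1), I would observe that $q\otimes p$ is obtained by placing $q$ and $p$ side by side, so every block of $q\otimes p$ is entirely a block of $q$ or entirely a block of $p$. Hence for multi-indices $i=(i',i'')$ and $j=(j',j'')$ split according to the positions belonging to $q$ and $p$, we have the factorisation $\delta_{q\otimes p}(i,j)=\delta_q(i',j')\,\delta_p(i'',j'')$, and expanding $T_{q\otimes p}(e_i)$ by the definition gives precisely $(T_q\otimes T_p)(e_i)$.

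For part (2), $p^*$ has the same block set as $p$, only with the roles of upper and lower rows exchanged. Thus $\delta_{p^*}(j,i)=\delta_p(i,j)$ for all $(i,j)$. Comparing the matrix entries $\langle T_{p^*}e_j,e_i\rangle = \delta_{p^*}(j,i)$ with $\langle T_p^*e_j,e_i\rangle = \overline{\langle e_j,T_pe_i\rangle}=\delta_p(i,j)$ yields the identity at once; note the operators have $\{0,1\}$ matrix entries so complex conjugation is immaterial.

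Part (3) is the heart of the lemma and the place where the factor $N^{-\mathrm{rl}(q,p)}$ arises. Writing out the composition, for $p\in\mathcal{P}(k,l)$ and $q\in\mathcal{P}(l,m)$ and a fixed $i\in[N]^k$,
\[
(T_q\circ T_p)(e_{i_1}\otimes\cdots\otimes e_{i_k})
= \sum_{r\in[N]^m}\Bigl(\sum_{j\in[N]^l}\delta_p(i,j)\,\delta_q(j,r)\Bigr)\,e_{r_1}\otimes\cdots\otimes e_{r_m},
\]
so the claim reduces to the combinatorial identity
\[
\sum_{j\in[N]^l}\delta_p(i,j)\,\delta_q(j,r)=N^{\mathrm{rl}(q,p)}\,\delta_{qp}(i,r).
\]
To prove it I would consider the vertically stacked diagram with $p$ on top and $q$ below, and interpret a choice of $j$ as a labelling of the $l$ middle points. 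The joint factor $\delta_p(i,j)\delta_q(j,r)$ equals $1$ precisely when the combined labelling $(i,j,r)$ is constant on every connected component of the stacked diagram. The connected components split into two types: (a) components containing at least one upper or lower point of the composite, which correspond bijectively to the blocks of $qp$; (b) components consisting entirely of middle points, which are exactly the remaining loops counted by $\mathrm{rl}(q,p)$. On components of type (a), the label is forced by $(i,r)$ once $(i,r)$ is consistent with the block structure of $qp$; this consistency is recorded precisely by $\delta_{qp}(i,r)$. On each of the $\mathrm{rl}(q,p)$ components of type (b), the middle label is free, contributing a factor $N$ each. The main obstacle, and the only step requiring some care, is to check that the components of type (a) are indeed in bijection with the blocks of $qp$ and that the induced equivalence relation on the upper and lower endpoints coincides with the block relation obtained after erasing the middle row; this is immediate from the definition of the composition of partitions. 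Combining everything yields the required identity and therefore the identity $T_{qp}=N^{-\mathrm{rl}(q,p)}(T_q\circ T_p)$.
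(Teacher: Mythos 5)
Your proposal is correct: all three identities follow from the matrix-coefficient computations you describe, and the key point in (3) — that the connected components of the stacked diagram split into those meeting outer points (giving the blocks of $qp$ and the constraint $\delta_{qp}(i,r)$) and the purely middle ones (each contributing a free label, hence a factor $N$) — is exactly the right decomposition. The paper itself gives no proof and simply cites \cite[Prop.~1.9]{banicaspeicherliberation}, whose argument is precisely this standard computation, so your route coincides with the intended one.
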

For fixed \(N\in\N\) and any category of partitions \(\mathcal{C}\) the collection \(\big(T_p\big)_{p\in\mathcal{C}}\) gives, via a concrete monoidal \(W^*\)-category \(R_N(\mathcal{C})\) and Tannaka-Krein duality, rise to a CMQG \(G_N(\mathcal{C})\), see for example \cite{tarragoweberclassificationunitaryQGs}.
We state this result in form of the following definition.
\begin{defn}\label{defn:easy_QGs}
Consider \(N\in\N\), let \(u=\big(u_{ij}\big)\) be a  matrix of generators and \(\mathcal{C}\) a category of partitions.
For \(p\in\mathcal{C}\) let \(T_p\) be the linear map as defined in Definition \ref{defn:T_p} for the above \(N\in\N\).
\begin{itemize}
\item[(a)] Given \(k\in\N_0\), we define
\[u^{\otop k}:=\sum_{i_1,\ldots,i_k=1}^{N}\sum_{j_1,\ldots,j_k=1}^{N}E_{i_1j_1}\otimes\ldots\otimes E_{i_kj_k}\otimes u_{i_1j_1}\cdots u_{i_kj_k}.\]
\item[(b)] The (well-defined and unique) real compact matrix quantum group \(G_N(\mathcal{C})=(A,u)\) that fulfils for all \(k,l\in\N_0\)
\[\big\{\,T:(\C^N)^{\otimes k}\rightarrow (\C^N)^{\otimes l}\,|\,T\textnormal{ linear},\;Tu^{\otop k}\;=\;u^{\otop l}T\big\}=\Span\big(\{\,T_p\,|\,p\in\mathcal{C}(k,l)\,\}\big)\]
is called an easy quantum group.
\item[(c)] Two easy quantum groups \(G_{N_1}(\mathcal{C}_1)\) and \(G_{N_2}(\mathcal{C}_1)\) are considered the same (in the sense of equivalence) if \(N_1=N_2\) and if they have the same intertwiner spaces, i.e. for all \(k,l\in\N_0\) it holds
\[\Span\big(\{\,T_p\,|\,p\in\mathcal{C}_1(k,l)\,\}\big)=\Span\big(\{\,T_p\,|\,p\in\mathcal{C}_2(k,l)\,\}\big).\]
\end{itemize}
\end{defn}
Note that the definition of equality/equivalence in part (c) is the usual definition of (weak) equivalence for CMQGs as defined in \cite{woronowiczpseudogroups}.
\section{Reformulating the injectivity of \(\Psi\) as a problem of linear independence}
\label{sec:reformulating_the_injectivity_of_Psi_as_a_problem_of_linear_independence}
As mentioned in the previous section, the question of (in)equality of two easy quantum groups \(G_N(\mathcal{C}_1)\) and \(G_N(\mathcal{C}_2)\) can be formulated on the level of intertwiner maps:
\begin{question}\label{quest:inital_question_chapter_3}
Consider two different categories of partitions \(\mathcal{C}_1\neq \mathcal{C}_2\) and fix some \(N\in\N\).
\newline
Do we have 
\[(\Span\big(\{T_p\,|\,p\in\mathcal{C}_1(k,l)\}\big)\neq\Span\big(\{T_p\,|\,p\in\mathcal{C}_2(k,l)\}\big)\]
for at least  one pair \((k,l)\)?
In that case the associated easy quantum groups \(G_N(\mathcal{C}_1)\) and \(G_N(\mathcal{C}_2)\) differ.
\end{question}
Our first simplification is the restriction to the case \(k=0\), the empty word.
\begin{prop}\label{prop:equivalence_of_rotated_situation}
Let \(\mathcal{C}_1\) and \(\mathcal{C}_2\) be two categories of partitionsand consider \(k,l\in\N_0\).
Then the sets
\[\{T_p\;|\;p\in\mathcal{C}_1(k,l)\}\quad\quad\textnormal{and}\quad\quad
\{T_q\;|\;q\in\mathcal{C}_2(k,l)\}\]
span the same space of operators in \(B(\left(\C^N\right)^{\otimes k},\left(\C^N\right)^{\otimes l})\) if and only if their rotated versions span the same space of operators.
\end{prop}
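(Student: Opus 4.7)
The plan is to show that for each elementary rotation the assignment \(T_p \mapsto T_{\rot(p)}\) extends to an invertible linear operator between the relevant spaces of bounded maps; equality of spans is then preserved in both directions, and iterating over an arbitrary sequence of elementary rotations gives the full statement.

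By Proposition~\ref{prop:mcpp_is_equivalent_to_closure_under_rotation}, each of the four elementary rotations of a partition \(p\) can be expressed as a combination of compositions and tensor products of \(p\) with the identity partition \(\idpartww\) and the pair partitions \(\paarpartww\), \(\baarpartww\). Applying Lemma~\ref{lem:properties_of_the_T_p} term by term turns such a decomposition into an identity of the form
\[T_{\rot(p)} \;=\; c \cdot L \circ (T_p \otimes \id) \circ R,\]
where \(L\) and \(R\) are fixed bounded operators built as tensor products of \(T_{\idpartww}\), \(T_{\paarpartww}\), \(T_{\baarpartww}\) depending only on the type of rotation and not on \(p\), and \(c\) is a nonzero scalar (a power of \(N^{-1}\) coming from remaining loops). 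Consequently, the assignment \(T_p \mapsto T_{\rot(p)}\) extends to a well-defined linear map
\[F_{\rot}\colon B\bigl((\C^N)^{\otimes k}, (\C^N)^{\otimes l}\bigr) \longrightarrow B\bigl((\C^N)^{\otimes k'}, (\C^N)^{\otimes l'}\bigr),\]
where \((k',l')\) are the numbers of upper and lower points of the rotated partitions.

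The key observation is that each elementary rotation has an inverse elementary rotation, so the same construction yields a linear map \(F_{\rot^{-1}}\) in the opposite direction. Since a rotation and its inverse compose to the identity on partitions, one has \(F_{\rot^{-1}}(F_{\rot}(T_p)) = T_p\) for every partition \(p\) to which the rotation applies; by linearity, \(F_{\rot^{-1}} \circ F_{\rot}\) coincides with the identity on \(\Span\{T_p : p \in \mathcal{P}(k, l)\}\). Hence \(F_{\rot}\) restricts to a linear bijection from \(\Span\{T_p : p \in \mathcal{C}_i(k, l)\}\) onto \(\Span\{T_{\rot(p)} : p \in \mathcal{C}_i(k, l)\}\), so the spans for \(\mathcal{C}_1\) and \(\mathcal{C}_2\) agree if and only if the corresponding rotated spans agree. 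Iterating over the finite sequence of elementary rotations linking an arbitrary rotated version back to the original completes the proof.

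The argument is essentially bookkeeping; the only step requiring real care is writing down, for each of the four elementary rotations provided by Proposition~\ref{prop:mcpp_is_equivalent_to_closure_under_rotation}, the correct ordering of compositions and tensor products between the upper/lower and left/right sides so that Lemma~\ref{lem:properties_of_the_T_p} gives an operator identity of the claimed shape. Beyond this no new combinatorial or analytic ingredient is needed.
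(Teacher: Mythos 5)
Your proposal is correct and follows the same route as the paper: the paper's proof likewise observes that each elementary rotation acts linearly on the operators \(T_p\) (implicitly via the composition/tensor decomposition of Proposition~\ref{prop:mcpp_is_equivalent_to_closure_under_rotation} together with Lemma~\ref{lem:properties_of_the_T_p}), so linear relations among the \(T_p\) are preserved, and invertibility of the rotations gives the converse. You merely make explicit the operator identity \(T_{\rot(p)}=c\cdot L\circ(T_p\otimes\id)\circ R\) that the paper leaves implicit when it writes \(\rot(T_p)\).
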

\begin{proof}
Without restriction assume \(k>0\) and consider the rotation \(\textnormal{rot}_{\!\!\raisebox{2pt}{\rotatebox{270}{$^\curvearrowright$}}}\).
Note that by Proposition \ref{prop:mcpp_is_equivalent_to_closure_under_rotation} the categories above are closed under rotations.
\newline
The statement above is just the observation that an equation \(\sum_p\alpha_pT_p=\sum_q\beta_qT_q\) implies 
\[\sum_p \alpha_q\textnormal{rot}_{\!\!\raisebox{2pt}{\rotatebox{270}{$^\curvearrowright$}}}(T_p)
=\sum_q \beta_q\textnormal{rot}_{\!\!\raisebox{2pt}{\rotatebox{270}{$^\curvearrowright$}}}(T_q).\]
Of course the analogous argument works for all other rotations, showing the `only if' part of the claim.
The `if' part is the observation that rotation operations are invertible.
\end{proof}
\begin{cor}\label{cor:restriction_to_lower_points_for_linear_independence}
Let \(\mathcal{C}_1\) and \(\mathcal{C}_2\) be two categories of partitions.
Fix \(N\in\N\) and let \(G_N(\mathcal{C}_1)=(A_1,u_1)\) and \(G_N(\mathcal{C}_2)=(A_2,u_2)\) be the respective easy quantum groups.
Then \(G_N(\mathcal{C}_1)\) and \(G_N(\mathcal{C}_2)\) are equal if and only if for all \(n\in\N_0\) the spaces
\[\Span\big(\{T_p\;|\;p\in\mathcal{C}_1(0,n)\}\big)\quad\quad\textnormal{and}\quad\quad
\Span\big(\{T_q\;|\;q\in\mathcal{C}_2(0,n)\}\big)\]
coincide.
\end{cor}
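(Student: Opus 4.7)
The plan is to reduce the corollary to Definition~\ref{defn:easy_QGs}~(c) by invoking Proposition~\ref{prop:equivalence_of_rotated_situation} as a bridge. By that definition, $G_N(\mathcal{C}_1)=G_N(\mathcal{C}_2)$ is equivalent to
\[
\Span\big(\{T_p\;|\;p\in\mathcal{C}_1(k,l)\}\big)=\Span\big(\{T_q\;|\;q\in\mathcal{C}_2(k,l)\}\big)
\]
for \emph{all} pairs $(k,l)\in\N_0\times\N_0$. So the corollary is really the assertion that this family of equalities, indexed by $(k,l)$, is already determined by its restriction to the sub-family indexed by $(0,n)$, $n\in\N_0$.

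For the ``only if'' direction I would simply specialize Definition~\ref{defn:easy_QGs}~(c) to pairs of the form $(k,l)=(0,n)$; no further work is needed. For the ``if'' direction I would fix $k,l\in\N_0$ arbitrary and set $n:=k+l$. By hypothesis the spans in $B(\C,(\C^N)^{\otimes n})$ arising from $\mathcal{C}_1(0,n)$ and $\mathcal{C}_2(0,n)$ coincide. I then appeal to Proposition~\ref{prop:equivalence_of_rotated_situation}: a partition in $\mathcal{C}_i(0,n)$ can be transformed into one in $\mathcal{C}_i(k,l)$ by $k$ applications of $\textnormal{rot}_{\raisebox{-6pt}{\rotatebox{90}{$^\curvearrowleft$}}}$, and each category is closed under rotation by Proposition~\ref{prop:mcpp_is_equivalent_to_closure_under_rotation}. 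A single application of Proposition~\ref{prop:equivalence_of_rotated_situation} (or an induction on $k$, using one rotation at a time) then yields
\[
\Span\big(\{T_p\;|\;p\in\mathcal{C}_1(k,l)\}\big)=\Span\big(\{T_q\;|\;q\in\mathcal{C}_2(k,l)\}\big).
\]
Since $(k,l)$ was arbitrary, Definition~\ref{defn:easy_QGs}~(c) gives $G_N(\mathcal{C}_1)=G_N(\mathcal{C}_2)$.

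Strictly speaking there is no real ``hard part'' here — the content of the corollary is entirely in Proposition~\ref{prop:equivalence_of_rotated_situation}, which in turn rests on the invertibility of the rotation operations. The only point requiring a touch of care is to observe that the rotation maps restrict to bijections $\mathcal{C}_i(k,l)\to\mathcal{C}_i(k',l')$ whenever $k+l=k'+l'$, so that coincidence of spans genuinely transfers between all such $(k,l)$ and $(0,n)$. This bijectivity is clear from the pictorial description of the rotations together with Proposition~\ref{prop:mcpp_is_equivalent_to_closure_under_rotation}, so the argument reduces to the bookkeeping sketched above.
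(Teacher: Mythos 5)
Your argument is correct and is exactly the route the paper intends: the corollary is stated as an immediate consequence of Proposition \ref{prop:equivalence_of_rotated_situation} (combined with Definition \ref{defn:easy_QGs}(c)), with the ``if'' direction obtained by rotating each $\mathcal{C}_i(k,l)$ down to $\mathcal{C}_i(0,k+l)$ and the ``only if'' direction by specialization. Your remark that the rotations restrict to bijections between the relevant hom-sets is precisely the bookkeeping the paper leaves implicit.
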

This result enables us to formulate a sufficient condition for different categories to produce different easy quantum groups:
\begin{cor}\label{cor:sufficient_condition_for_different_easy_quantum_groups}
Consider the situation of Corollary \ref{cor:restriction_to_lower_points_for_linear_independence} with two different categories \(\mathcal{C}_1\) and \(\mathcal{C}_2\). 
If there is an \(n\in\N_0\) such that \(\mathcal{C}_1(0,n)\neq\mathcal{C}_2(0,n)\)  and the collection 
\[\Big(T_p\Big)_{p\in\mathcal{C}_1(0,n)\cup\mathcal{C}_2(0,n) }\]
is linear independent, then \(G_N(\mathcal{C}_1)\) and \(G_N(\mathcal{C}_2)\) are different.
\end{cor}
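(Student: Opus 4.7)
The plan is to reduce the claim directly to Corollary \ref{cor:restriction_to_lower_points_for_linear_independence} and then exploit the linear independence hypothesis via a short contradiction argument.

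First, I would invoke Corollary \ref{cor:restriction_to_lower_points_for_linear_independence} to reformulate the target: it suffices to exhibit some $m\in\N_0$ such that
\[
\Span\bigl(\{\,T_p \mid p\in\mathcal{C}_1(0,m)\,\}\bigr)\;\neq\;\Span\bigl(\{\,T_q \mid q\in\mathcal{C}_2(0,m)\,\}\bigr).
\]
By hypothesis we already have a witness, namely the given $n$ with $\mathcal{C}_1(0,n)\neq\mathcal{C}_2(0,n)$, so I would set $m=n$ and pick a partition $p_0\in\mathcal{C}_1(0,n)\,\triangle\,\mathcal{C}_2(0,n)$; without loss of generality $p_0\in\mathcal{C}_1(0,n)\setminus\mathcal{C}_2(0,n)$.

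Next, the key step. I claim that $T_{p_0}\notin\Span\bigl(\{\,T_q \mid q\in\mathcal{C}_2(0,n)\,\}\bigr)$. Assume for contradiction that there exist scalars $\alpha_q\in\C$ for $q\in\mathcal{C}_2(0,n)$ (with finite support) such that
\[
T_{p_0}\;=\;\sum_{q\in\mathcal{C}_2(0,n)}\alpha_q\,T_q.
\]
Rearranging gives a nontrivial linear relation
\[
T_{p_0}\;-\;\sum_{q\in\mathcal{C}_2(0,n)}\alpha_q\,T_q\;=\;0
\]
among the family $\bigl(T_r\bigr)_{r\in\mathcal{C}_1(0,n)\cup\mathcal{C}_2(0,n)}$: the term $T_{p_0}$ belongs to this family since $p_0\in\mathcal{C}_1(0,n)$, while each $T_q$ belongs to it since $q\in\mathcal{C}_2(0,n)$. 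Because $p_0\notin\mathcal{C}_2(0,n)$, the partition $p_0$ does not reappear in the sum on the right, so the coefficient of $T_{p_0}$ in this linear combination is $1$. This contradicts the assumed linear independence of $\bigl(T_r\bigr)_{r\in\mathcal{C}_1(0,n)\cup\mathcal{C}_2(0,n)}$.

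Hence $T_{p_0}$ lies in the first span but not the second, so the two spans differ and Corollary \ref{cor:restriction_to_lower_points_for_linear_independence} yields $G_N(\mathcal{C}_1)\neq G_N(\mathcal{C}_2)$. I do not expect any genuine obstacle here: the entire statement is a formal consequence of Corollary \ref{cor:restriction_to_lower_points_for_linear_independence} together with the standard fact that linear independence of a family prevents any one of its members from lying in the span of the others. The only point to be a bit careful about is the symmetric-difference reduction (choosing on which side the distinguishing partition sits), but that is handled by the obvious WLOG.
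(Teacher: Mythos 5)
Your proposal is correct and follows essentially the same route as the paper: the paper's own proof simply asserts that the hypotheses force the spaces $\Hom_1(0,n)$ and $\Hom_2(0,n)$ to differ and then invokes Corollary \ref{cor:restriction_to_lower_points_for_linear_independence} (via Tannaka--Krein), and your argument just spells out the standard linear-algebra step (a member of a linearly independent family cannot lie in the span of the others) that the paper leaves implicit. No gaps.
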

\begin{proof}
The corollary describes a sufficient condition for the operator spaces \(\Hom_1(0,n)\) and \(\Hom_2(0,n)\) to be different.
Using Tannaka-Krein duality, we conclude in this case that the associated easy quantum groups cannot be equal.
\end{proof}
In order to use Corollary \ref{cor:sufficient_condition_for_different_easy_quantum_groups} (finally to answer Question \ref{quest:inital_question_chapter_3}) we aim to solve the following problem:
\begin{question}
Consider a category of partitions \(\mathcal{C}\) and \(N\in\N\).
Under which conditions, on \(\mathcal{C}\) and/or \(N\), is the collection
\[\big(T_p\big)_{p\in\mathcal{C}(0,n)}\]
for a given \(n\in\N_0\) linearly independent?
\end{question}
Given linear independence as described above, we have that different subcategories of \(\mathcal{C}\) produce different easy quantum groups.
\begin{rem}
Apart from the question of one-to-one correspondence between categories of partitions and easy quantum groups, we have other problems and theories depending a lot on the question of linear independence of the maps \(T_p\).
For example the fusion rules of easy quantum groups are well-established in this linear independent situation but not in the general one, see \cite{freslon} and \cite{freslonweber}.
\end{rem}
\section{Linear (in)dependences in the general situation}
\label{sec:linear_(in)dependencies_in_the_general_situation}
In virtue of Corollary \ref{cor:sufficient_condition_for_different_easy_quantum_groups} it is worth to investigate linear (in)dependences of the maps \(T_p\) where \(p\) is from some fixed \(\mathcal{C}(0,n)\).
Recall that in this case \(T_p\) is a map from \(\left(\C^N\right)^{\otimes 0}=\C\) to \(\left(\C^N\right)^{\otimes n}\).
It turns out that the crucial point is the relation between the natural number \(N\in\N\) and the number of blocks in the partitions \(p\in\mathcal{C}(0,n)\).
\newline
In order to state the results and their proofs we need two further notations. They are well known, see for example \cite[p. 35]{mingospeicher} and \cite[Def. 9.14]{nicaspeicher}, respectively.
\begin{defn}
Let \(n\in\N_0\) and \(j=(j_1,\ldots,j_n)\in\N^n\) a multi-index.
Then we denote with \(\ker(j)\in\mathcal{P}(0,n)\) the partition on \(n\) lower points where \(j\) numbers its points from left to right such that points are connected by \(\ker(j)\) if and only if they have the same number.
\end{defn}
\begin{defn}
Let \(p,q\in\mathcal{P}(0,n)\) for some \(n\in\N_0\).
We write \(p\preceq q\) if and only if each valid labelling \((i,j)\) of \(q\) is also a valid labelling of \(p\).
\end{defn}
In other words, \(p\preceq q\) holds if we can obtain \(p\) from \(q\) by refining the blocks in the partition.
This is obviously a partial ordering, the partition that has only singletons as blocks is the minimal element in \(\mathcal{P}(0,n)\) and the one-block partition, where all points are connected to each other, is the maximal element.
The following result can also be found in \cite[Lem. 3.4]{maassen}, for example.
\begin{prop}\label{prop:linear_independence_if_blocks_do_not_exceed_N}
Let \(N\!\in\!\N\) and \(n\in\N_0\).
Consider for every partition \(p\in\mathcal{P}(0,n)\) the linear map \(T_p: \left(\C^N\right)^{\otimes 0}=\C\rightarrow \left(\C^N\right)^{\otimes n}\) as defined in Definition \ref{defn:T_p}.
Denote further
\[\mathcal{P}_N(0,n):=\{p\in \mathcal{P}(0,n)\;|\; p\textnormal{ has at most }N\textnormal{ blocks}\}.\] 
Then the collection of maps
\[\Big(T_p\Big)_{p\in\mathcal{P}_N(0,n)}\]
is linearly independent.
\end{prop}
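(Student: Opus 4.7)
The plan is to exploit the explicit combinatorial description of $T_p(1)$ as a sum over multi-indices and then extract a triangular system in the refinement order on $\mathcal{P}_N(0,n)$, from which the claim follows by elementary induction (equivalently, by M\"obius inversion on the partition lattice).

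First I would rewrite the maps concretely. For $p\in\mathcal{P}(0,n)$ we have $T_p(1)=\sum_{j\in[N]^n}\delta_p(j)\,e_{j_1}\otimes\cdots\otimes e_{j_n}$, and I would observe that $\delta_p(j)=1$ is precisely the condition that $p\preceq\ker(j)$, since a labelling $j$ is valid for $p$ iff any two points lying in the same block of $p$ receive the same value, i.e.\ iff the blocks of $p$ are refined by those of $\ker(j)$. Thus
\[
T_p(1)\;=\;\sum_{\substack{j\in[N]^n\\ p\preceq\ker(j)}}\,e_{j_1}\otimes\cdots\otimes e_{j_n}.
\]

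Next, suppose $\sum_{p\in\mathcal{P}_N(0,n)}\alpha_p T_p=0$. Since the vectors $e_{j_1}\otimes\cdots\otimes e_{j_n}$ for $j\in[N]^n$ form a basis of $(\mathbb{C}^N)^{\otimes n}$, reading off the coefficient of each basis vector gives
\[
\sum_{\substack{p\in\mathcal{P}_N(0,n)\\ p\preceq\ker(j)}}\alpha_p \;=\;0 \qquad\forall\,j\in[N]^n.
\]
As $j$ ranges over $[N]^n$, the partition $\ker(j)$ ranges over exactly $\mathcal{P}_N(0,n)$: any $\pi\in\mathcal{P}_N(0,n)$ has at most $N$ blocks, so one can label them by distinct elements of $[N]$ to produce $j$ with $\ker(j)=\pi$. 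Hence
\[
\sum_{\substack{p\in\mathcal{P}_N(0,n)\\ p\preceq\pi}}\alpha_p \;=\;0 \qquad\forall\,\pi\in\mathcal{P}_N(0,n).
\]

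Finally, I would conclude by induction on the poset $(\mathcal{P}_N(0,n),\preceq)$. For a $\preceq$-minimal $\pi$ in this set, the displayed equation reduces to $\alpha_\pi=0$. Inductively, if $\alpha_p=0$ for every $p\prec\pi$ in $\mathcal{P}_N(0,n)$, then the equation for $\pi$ forces $\alpha_\pi=0$ as well. Equivalently, the incidence matrix $(\mathds{1}_{p\preceq\pi})_{p,\pi\in\mathcal{P}_N(0,n)}$ is upper-unitriangular after any linear extension of $\preceq$, hence invertible, so $\alpha_p=0$ for every $p\in\mathcal{P}_N(0,n)$. This yields the linear independence.

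There is no serious obstacle: the argument is a direct unpacking of Definition~\ref{defn:T_p}, combined with the standard observation that $\delta_p(j)=1\iff p\preceq\ker(j)$. The only point worth emphasizing is that restricting to $\mathcal{P}_N(0,n)$ is essential so that every $\pi$ appearing on the right-hand side is realized as some $\ker(j)$ with $j\in[N]^n$; outside this range the triangular system would be incomplete and $T_p$ would in fact vanish for $p$ with more than $N$ blocks.
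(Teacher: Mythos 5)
Your argument is correct and is essentially the paper's own proof: both expand $T_p(1)=\sum_{p\preceq\ker(j)}e_j$, pair the relation $\sum_p\alpha_pT_p(1)=0$ against the basis vectors $e_j$, observe that $\ker(j)$ realizes every element of $\mathcal{P}_N(0,n)$ as $j$ ranges over $[N]^n$, and then invert the unitriangular system $\sum_{p\preceq\pi}\alpha_p=0$ by induction along the refinement order (the paper phrases this as downward induction on the number of blocks, which is just a linear extension of $\preceq$). One caveat on your closing aside: $T_p$ does \emph{not} vanish when $p$ has more than $N$ blocks --- the constant multi-index $j=(1,\dots,1)$ always satisfies $p\preceq\ker(j)$, so $T_p(1)\neq 0$; what fails for such $p$ is only that $p$ itself is never of the form $\ker(j)$, and indeed Proposition \ref{prop:partitions_with_at_most_N_blocks_generate_everything} shows these nonzero $T_p$ lie in the span of the ones you kept.
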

\begin{proof}
Let \(e_1,\ldots, e_N\) be the standard orthonormal basis of \(\C^N\) that has already been used to define the maps \(T_p\).
Given a multi-index \(j=(j_1,\ldots,j_n)\) we write \(e_j:=e_{j_1}\otimes\ldots\otimes e_{j_n}\in \left(\C^N\right)^{\otimes n}\).
\newline
Consider now a linear combination
\begin{equation}\label{eqn:linear_combination_of_T_p's_that_should_be_zero}
0=\sum_{p\in\mathcal{P}_N(0,n)}\alpha_pT_p.
\end{equation}
We prove \(\alpha_p=0\) by induction on the number of blocks of \(p\). 
The base case is the largest possible number of blocks, say \(M\), so consider an arbitrary partition \(p\) with exactly \(M\) blocks.
The idea is to show that  there is  a direction \(\langle v\rangle\) in \(\left(\C^N\right)^{\otimes n}\) such that all \(T_q(1)\) are orthogonal to \(\langle v\rangle\) except \(T_p(1)\).
\newline
As \(M\) does not exceed \(N\) by assumption, we find a multi-index \(j\in[N]^n\) such that \(\ker(j)\) coincides with \(p\).
We now claim the following: 
\begin{equation}\label{eqn:T_q(1)_is_orthogonal_to_(e_j)}
\langle T_q(1),e_{j}\rangle=
\begin{cases}
1&,q=p\\
0&,q\neq p
\end{cases}.
\end{equation}
To prove this, recall that for \(q\in\mathcal{P}(0,n)\) the map \(T_q\) is uniquely defined by the image \(T_q(1)\) and it holds 
\begin{equation}\label{eqn:image_of_T_q(1)}
T_q(1)=\sum_{\begin{matrix}\scriptstyle i\in[N]^{n} \\\scriptstyle q\preceq \ker(i)\end{matrix}} e_i.
\end{equation}
Therefore, the case \(q=p\) in the claimed Equation \ref{eqn:T_q(1)_is_orthogonal_to_(e_j)} is  clear.
Now assume \(q\neq p\) and consider a multi-index \(i\in[N]^n\) with \(q\preceq\ker(i)\) as in the summation in Equation \ref{eqn:image_of_T_q(1)}.
As \(q\) has at most as many blocks as \(p=\ker(j)\), there are two points in the same block of \(q\) which are in different blocks of \(\ker(j)\).
Together with \(\ker(i)\succeq q\) we deduce that these two points are in the same block of \(\ker(i)\), i.e.  \(i\) and \(j\) must differ at least at one entry.
But then we have \(\langle e_i,e_j\rangle=0\) for all  \(\ker(i)\succeq q\) and this shows the claimed identity \(\langle T_q(1),e_j\rangle=0\) for \(q\neq p\).
\newline
Combining Equation \ref{eqn:T_q(1)_is_orthogonal_to_(e_j)} with the assumption, Equation \ref{eqn:linear_combination_of_T_p's_that_should_be_zero}, we deduce
\[\alpha_p=\langle \sum_{p\in\mathcal{P}_N(0,n)}\alpha_pT_p(1),e_j\rangle=0.\]
Of course this argument holds separately for all \(p\) with exactly \(M\) blocks, finishing the base case. 
\newline
The induction step is just the observation that we can repeat the arguments above for coefficients \(\alpha_q\) not yet proven to be zero.
After at most \(M\) steps we have proved \(\alpha_p=0\) for all \(p\), so linear independence holds as claimed. 
\end{proof}
We now prove the converse result of Proposition \ref{prop:linear_independence_if_blocks_do_not_exceed_N}: Whilst for given \(n\in\N_0\) the partitions \(\mathcal{P}_N(0,n)\) give rise to a linear independent set of maps \(T_p\), we prove now that the remaining maps \(T_q\) do not enlarge the generated space of linear maps.
\begin{prop}\label{prop:partitions_with_at_most_N_blocks_generate_everything}
In the situation of Proposition \ref{prop:linear_independence_if_blocks_do_not_exceed_N} we have
\[\Span\big(\{T_p\;|\;p\in\mathcal{P}_N(0,n)\}\big)=\Span\big(\{T_p\;|\;p\in\mathcal{P}(0,n)\}\big).\]
Hence, by Proposition \ref{prop:linear_independence_if_blocks_do_not_exceed_N}, the collection \(\big(T_p\big)_{p\in P_N(0,n)}\) is a basis for \(\Hom(0,n)\).
\end{prop}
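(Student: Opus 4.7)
The plan is to prove the non-trivial inclusion $\Span\{T_p \mid p \in \mathcal{P}(0,n)\} \subseteq \Span\{T_p \mid p \in \mathcal{P}_N(0,n)\}$ by showing that every $T_p$ with more than $N$ blocks can be expressed as a linear combination of maps $T_\sigma$ with strictly fewer blocks, and then performing downward induction on the block count $b(p)$. The base case $b(p) \leq N$ is tautological because then $p \in \mathcal{P}_N(0,n)$.

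The main technical device will be the following kernel decomposition. For each partition $\pi \in \mathcal{P}(0,n)$, introduce the vector
\[
v_\pi := \sum_{\substack{i \in [N]^n \\ \ker(i) = \pi}} e_{i_1} \otimes \cdots \otimes e_{i_n} \in (\mathbb{C}^N)^{\otimes n}.
\]
Two observations about $v_\pi$ drive the argument. First, $v_\pi = 0$ precisely when $b(\pi) > N$, since realizing $\ker(i) = \pi$ requires $i$ to use exactly $b(\pi)$ distinct labels from $[N]$. Second, grouping the sum in Equation \ref{eqn:image_of_T_q(1)} according to the value of $\ker(i)$ gives the tidy identity
\[
T_p(1) = \sum_{\pi \succeq p} v_\pi.
\]

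Möbius inversion on the partition lattice $(\mathcal{P}(0,n), \preceq)$ then transforms this into
\[
v_p = \sum_{\pi \succeq p} \mu(p,\pi)\, T_\pi(1).
\]
For any $p$ with $b(p) > N$ the left-hand side vanishes, and isolating the term $\pi = p$ (where $\mu(p,p) = 1$) yields
\[
T_p(1) = -\sum_{\pi \succ p} \mu(p,\pi)\, T_\pi(1).
\]
Every $\pi$ occurring on the right is strictly coarser than $p$, so $b(\pi) < b(p)$, and the downward induction closes. The only point requiring care will be the clean invocation of Möbius inversion on the partition lattice; this is a standard fact and a brief reference should suffice. Combining this inclusion with Proposition \ref{prop:linear_independence_if_blocks_do_not_exceed_N} immediately yields that $(T_p)_{p \in \mathcal{P}_N(0,n)}$ is a basis of $\Hom(0,n)$.
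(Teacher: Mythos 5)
Your proof is correct, but it takes a genuinely different route from the paper's. The paper proves the inclusion $\Span\{T_p\mid p\in\mathcal{P}(0,n)\}\subseteq\Span\{T_p\mid p\in\mathcal{P}_N(0,n)\}$ by a hands-on recursive construction: starting from $L_N=\sum_{p\succeq q,\,b(p)=N}T_p$ it builds corrections $L_{N-1},\ldots,L_1$ whose coefficients are determined by matching inner products $\langle L_M(1),e_j\rangle$ against $\langle T_q(1),e_j\rangle$ for multi-indices $j$ with progressively fewer blocks in $\ker(j)$, until $L_1=T_q$. You instead isolate the vectors $v_\pi=\sum_{\ker(i)=\pi}e_i$, observe that $v_\pi=0$ exactly when $b(\pi)>N$, rewrite Equation \ref{eqn:image_of_T_q(1)} as $T_p(1)=\sum_{\pi\succeq p}v_\pi$, and apply M\"obius inversion on the (finite) partition lattice to get $T_p(1)=-\sum_{\pi\succ p}\mu(p,\pi)T_\pi(1)$ whenever $b(p)>N$; since every $\pi\succ p$ satisfies $b(\pi)<b(p)$, downward induction on the block count closes the argument. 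All the individual steps check out: the convention $\pi\succeq p$ in this paper does mean $\pi$ is coarser, so strict coarsening strictly decreases the block count, and the dual form of M\"obius inversion (summing over elements above a given one, with $\mu(p,p)=1$) is exactly what you need. What your approach buys is brevity and explicit coefficients (values of the M\"obius function of the partition lattice, which could be written down in closed form); what it costs is the reliance on M\"obius inversion as an external standard fact, whereas the paper's construction is self-contained and elementary, if more laborious. Both arguments use in the same essential way that $\mathcal{P}(0,n)$ contains, together with any $p$, all coarsenings $\pi\succeq p$ --- the point flagged in the remark following the proposition --- so neither generalizes to arbitrary categories of partitions.
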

\begin{proof}
We use the same notations as in the proof of Propostion \ref{prop:linear_independence_if_blocks_do_not_exceed_N}.
Recall that \(b(p)\) denotes the number of blocks of a partition \(p\).
\newline
For a partition \(q\) with more than \(N\) blocks we have to prove 
\[T_q\in\Span\big(\{T_p\;|\;p\in\mathcal{P}_N(0,n)\}\big).\]
To do so, we recursively construct linear combinations \(L_N,L_{N-1},\ldots,L_1\) of \(T_p\)'s fulfilling the following:
\[b\big(\ker(j)\big)\ge M\quad\Rightarrow\quad\langle L_M(1),e_j\rangle=\langle T_q(1),e_j\rangle\quad,\]
for \(1\le M\le N\).
Roughly speaking, with decreasing \(M\), \(L_M(1)\) will coincide with \(T_q(1)\) in more and more directions until finally \(L_1(1)\) coincides with \(T_q(1)\), so \(L_1=T_q\).
\newline
\textbf{Base case: Construction of \(L_N\):}
Consider the linear combination
\begin{equation}\label{eqn:L_N}
L_N:=\sum_{\begin{matrix}\scriptstyle q\preceq p\\ \scriptstyle b(p)= N\end{matrix}}T_p
\end{equation}
and we have to show that
\[
\langle L_N(1),e_j\rangle=\langle T_q(1),e_j\rangle
\]
whenever \(\ker(j)\) has \(N\) blocks.
Using the definitions of \(L_N\) and \(T_q\), this Equation reads
\begin{equation}\label{eqn:compare_L_N_and_T_q}
\langle \sum_{\begin{matrix}\scriptstyle q\preceq p\\ \scriptstyle b(p)= N\end{matrix}}\sum_{\begin{matrix}\scriptstyle i\in[N]^{n} \\\scriptstyle p\preceq \ker(i)\end{matrix}} e_i,e_j\rangle
=\langle \sum_{\begin{matrix}\scriptstyle i\in[N]^{n} \\\scriptstyle q\preceq \ker(i)\end{matrix}} e_i,e_j\rangle.
\end{equation}
We have to prove that this is true whenever \(b\big(\ker(j)\big)=N\).
\newline
\textbf{Case 1: \(q\not\preceq \ker(j)\): }
On both the left and the right side of Equation \ref{eqn:compare_L_N_and_T_q} it holds \(q\preceq \ker(i)\).
Together with \(q\not\preceq \ker(j)\) and transitivity of \(\preceq\) this implies \(\ker(i)\not\preceq \ker(j)\), so \(i\neq j\) in all cases and both sides vanish.
\newline
\textbf{Case 2: \(q\preceq \ker(j)\): }
As \(\ker(j)\) has \(N\) blocks, \(\ker(j)\) is equal to exactly one partition \(\hat{p}\in P_N(0,n)\) and for all other partitions \(p\in P_N(0,n)\) it holds \(p\not\preceq \ker(j)\).
As above, we deduce for \(p\not\preceq \ker(j)\) and \(p\preceq\ker(i)\) that \(i\neq j\).
Consequently, Equation \ref{eqn:compare_L_N_and_T_q} reads
\[\langle \sum_{\begin{matrix}\scriptstyle i\in[N]^{n} \\\scriptstyle \hat{p}\preceq \ker(i)\end{matrix}} e_i,e_j\rangle
=\langle \sum_{\begin{matrix}\scriptstyle i\in[N]^{n} \\\scriptstyle q\preceq \ker(i)\end{matrix}} e_i,e_j\rangle.\]
As \(\hat{p}:=\ker(j)\) and \(q\preceq \ker(j)\), we have that \(e_j\) appears on both sides as one of the summands \(e_i\).
So we end up with a true statement, finishing the base case.
\newline
\textbf{Induction step: Constructing \(L_{M}\) from \(L_{M+1}\):}
\newline
Assume that there is for some \(1\le M< N\) a linear combination
\[L_{M+1}=\sum_{\begin{matrix}\scriptstyle q\preceq p\\ \scriptstyle M+1 \le b(p)\le N\end{matrix}}\alpha_pT_p\]
such that
\begin{equation}\label{eqn:compare_L_M+1_and_T_q}
b\big(\ker(j)\big)\ge M+1\quad\Rightarrow\quad\langle L_{M+1}(1),e_j\rangle=\langle T_q(1),e_j\rangle.
\end{equation}
We want to construct some \(L_M\) such that the analogue of Equation \ref{eqn:compare_L_M+1_and_T_q} holds for all \(j\) with \(b\big(\ker(j)\big)\ge M\).
\newline
For a given \(p\succeq q\) with \(b(p)=M\) consider any \(\hat{k}\in[N]^n\) with \(\ker(\hat{k})=p\) and define
\[\alpha_p:=1-\langle L_{M+1}(1),e_{\hat{k}}\rangle.\]
Note that this is independent of the chosen \(\hat{k}\).
\newline
Now enlarge the sum \(L_{M+1}\) in the following way:
\begin{equation}\label{eqn:defn_of_L_M}
L_M:=L_{M+1}+\sum_{\begin{matrix}\scriptstyle q\preceq p\\ \scriptstyle b(p)=M\end{matrix}}\alpha_pT_p.
\end{equation}
We have to show that \(L_M\) fulfils
\begin{equation}\label{eqn:compare_L_M_and_T_q}
b\big(\ker(j)\big)\ge M\quad\Rightarrow\quad\langle L_{M}(1),e_j\rangle=\langle T_q(1),e_j\rangle.
\end{equation}
\textbf{Case 1: \(b\big(\ker(j))\ge M+1\):} Considering in Equation \ref{eqn:defn_of_L_M} the added summands \(\alpha_pT_p\) (with \(b(p)=M\)), we see 
\[\langle T_p(1),e_j\rangle=0\]
because there are at least \(M+1\) different entries in the multi-index \(j\).
We conclude that \(L_M\) fulfils the properties assumed on \(L_{M+1}\):
\[b(\ker(j)\big)\ge M+1\quad\Rightarrow\quad\langle L_{M}(1),e_j\rangle=\langle T_q(1),e_j\rangle\]
\newline
\textbf{Case 2: \(b\big(\ker(j))= M\):} 
If \(\ker(j)\not\succeq q\), then both \(\langle L_M(1),e_j\rangle\) and \(\langle T_q(1),e_j\rangle\) are zero.
The arguments are the same as in the base case.
\newline
If \(\ker(j)\succeq q\), then \(\ker(j)\) coincides with one partition \(\hat{p}\in\{p\;|\;p\succeq q\,,\,b(p)=M\}\) and as in the base case one proves for \(p\) with \(b(p)=M\):
\[\langle T_p(1),e_j\rangle=
\begin{cases}
1&, p=\hat{p}\\
0&, p\neq \hat{p}
\end{cases}\]
Hence, by definition of \(\alpha_p\), it holds
\[\langle L_M(1),e_j\rangle=\langle L_{M+1}(1),e_j\rangle+\alpha_{\hat{p}}=1,\]
which is equal to \(\langle T_q(1),e_j\rangle=\langle e_j,e_j\rangle\), proving Implication \ref{eqn:compare_L_M_and_T_q} also in the case \(b\big(\ker(j)\big)= M\).
\end{proof}
\begin{rem}
While Proposition \ref{prop:linear_independence_if_blocks_do_not_exceed_N} obviously holds if we replace \(\mathcal{P}\) by any smaller category of partitions, the proof of Proposition \ref{prop:partitions_with_at_most_N_blocks_generate_everything} used the fact that, with every partition \(q\), the set \(\mathcal{P}(0,n)\) also contains all partitions \(p\succeq q\), i.e. any partition obtained from \(q\) by fusing different blocks.
\end{rem}
Using Proposition \ref{prop:linear_independence_if_blocks_do_not_exceed_N}, we can state the following result.
\begin{cor}\label{cor:different_categories_imply_different_easy_QGs_if_N_exceeds_number_of_points}
Consider two different categories of partitions \(\mathcal{C}_1\) and \(\mathcal{C}_2\).
Let \(M\in\N\) be the smallest integer such that \(\mathcal{C}_1(0,M)\neq \mathcal{C}_2(0,M)\).
Then \(G_N(\mathcal{C}_1) \neq G_N(\mathcal{C}_2)\) holds for all \(N\ge M\).
\newline
In particular, the sequences 
\[\big(G_N(\mathcal{C}_1)\big)_{N\in\N}\quad\quad\textnormal{and}\quad\quad\big(G_N(\mathcal{C}_1)\big)_{N\in\N}\]
differ.
\end{cor}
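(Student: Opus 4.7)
The plan is to reduce the problem, via rotation invariance, to an inequality on lower-row partitions, and then invoke the general linear independence result (Proposition~\ref{prop:linear_independence_if_blocks_do_not_exceed_N}) combined with the sufficient condition from Corollary~\ref{cor:sufficient_condition_for_different_easy_quantum_groups}.

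First I would check that the integer \(M\) in the statement is well defined. The categories \(\mathcal{C}_1\) and \(\mathcal{C}_2\) differ on some pair \((k,l)\) by assumption, and both are stable under rotation by Proposition~\ref{prop:mcpp_is_equivalent_to_closure_under_rotation}. Applying the four rotations repeatedly, any partition on \(k\) upper and \(l\) lower points can be sent to a partition on \(0\) upper and \(k+l\) lower points; since the rotations are bijections on categories, a partition lies in \(\mathcal{C}_i(k,l)\) if and only if its rotated image lies in \(\mathcal{C}_i(0,k+l)\). Consequently \(\mathcal{C}_1(0,k+l)\neq \mathcal{C}_2(0,k+l)\), so the set of \(n\) with \(\mathcal{C}_1(0,n)\neq \mathcal{C}_2(0,n)\) is nonempty and \(M\) exists.

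Next I would fix any \(N\ge M\) and invoke the linear independence of the \(T_p\). Every partition \(p\in\mathcal{P}(0,M)\) has at most \(M\) points and therefore at most \(M\le N\) blocks, so
\[
\mathcal{C}_1(0,M)\cup\mathcal{C}_2(0,M)\;\subseteq\;\mathcal{P}(0,M)\;=\;\mathcal{P}_N(0,M),
\]
and Proposition~\ref{prop:linear_independence_if_blocks_do_not_exceed_N} guarantees that the full family \(\bigl(T_p\bigr)_{p\in\mathcal{P}_N(0,M)}\) is linearly independent. In particular so is its restriction to \(\mathcal{C}_1(0,M)\cup\mathcal{C}_2(0,M)\). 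Together with the hypothesis \(\mathcal{C}_1(0,M)\neq\mathcal{C}_2(0,M)\), this is precisely the sufficient condition of Corollary~\ref{cor:sufficient_condition_for_different_easy_quantum_groups}, and it yields \(G_N(\mathcal{C}_1)\neq G_N(\mathcal{C}_2)\) for every \(N\ge M\). The ``in particular'' claim about the sequences is then immediate: they disagree at every sufficiently large index.

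There is no real obstacle here; the only point requiring some care is the well-definedness of \(M\), which must use that categories of partitions are automatically closed under rotation and that rotation is invertible. Once that is observed, the rest is a direct application of the two earlier results and the count ``\(\#\text{blocks}\le\#\text{points}\)''.
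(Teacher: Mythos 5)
Your argument is correct and is exactly the route the paper intends: all partitions on \(M\) points have at most \(M\le N\) blocks, so Proposition~\ref{prop:linear_independence_if_blocks_do_not_exceed_N} gives linear independence of \(\bigl(T_p\bigr)_{p\in\mathcal{C}_1(0,M)\cup\mathcal{C}_2(0,M)}\), and Corollary~\ref{cor:sufficient_condition_for_different_easy_quantum_groups} then yields \(G_N(\mathcal{C}_1)\neq G_N(\mathcal{C}_2)\). Your preliminary check that \(M\) is well defined, via rotation invariance of categories, is a detail the paper leaves implicit but is correctly handled.
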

Note that \(M\) might not be the smallest value for \(N\) such that \(G_N(\mathcal{C}_1) \neq G_N(\mathcal{C}_2)\).
If we find  an \(n\in\N_0\) such that \(\mathcal{C}_1(0,n)\neq \mathcal{C}_2(0,n)\) and both \(\mathcal{C}_1(0,n)\) and \(\mathcal{C}_2(0,n)\) only contain partitions with at most \(M'\) blocks then we even have the result above for all \(N\ge M'\).
In particular, we can choose \(M'\) to be the maximal number of blocks of a partition \(p\) inside \(\mathcal{C}_1(0,M)\cup\mathcal{C}_2(0,M)\) (and this is at most \(M\)).
\section{Linear independence in the free case}
\label{sec:linear_independence_in_the_free_case}
For general categories of partitions we did not find in Section \ref{sec:linear_(in)dependencies_in_the_general_situation} a universal \(N\in\N\) such that different categories produce different easy quantum groups.
In the so-called \emph{free case}, where by definition only non-crossing partitions are considered, the situation is much more comfortable, at least if we assume \(N\ge 4\):
For each \(n\in\N_0\), the collection of maps \(\big(T_p\big)_{p\in\mathcal{NC}(0,n)}\) is linearly independent.
Hence, by Corollary \ref{cor:sufficient_condition_for_different_easy_quantum_groups}, every free easy quantum group with fundamental corepresentation matrix of size \(N\ge 4\) corresponds to a unique category of non-crossing partitions.
Conversely, for \(N\!\ge\!4\), two different non-crossing categories give rise to different free easy quantum groups.
\vspace{11pt}\newline
The structure of this section is as follows:
We first boil down the problem of linear independence to the question for a determinant of a special Gram matrix \(A(n,0)\).
The main part of this section is an adapted version of W. Tutte's work \cite{tutte}, where a formula for such a determinant is developed.
Afterwards, see Section \ref{subsec:conclusion:linear_independence_of_the_T_ps}, some easy observations show that the mentioned determinant is non-zero.
\newline
We recapitulate Tutte's work \cite{tutte} up to a recursion formula for the determinant \(\det\big(A(n,0)\big)\).
We do not change the principal ideas presented there, but we fixed errors in some of Tutte's arguments.
In further consequence, more definitions and partial results have been changed.
In addition, the graphical notations for partitions as introduced in the preliminaries are integrated in definitions and proofs as well as further explanations and more detailed arguments to justify partial results.
In the end, this section presents a self-contained proof of the linear independence described above, starting with the initial problem and guiding the reader without gaps through the relevant steps of the proof.
\subsection{Boiling down the problem to the invertiblity of a matrix}
\label{subsec:boiling_down_the_problem}
The question of injectivity of the construction \(\mathcal{C}\mapsto G_N(\mathcal{C})\) has been tracked down (in the sense of a sufficient condition) to the question of linear independence of the collections 
\begin{equation}\label{eqn:this_two-coloured_collection_should_be_l.i.}
\big(T_p\big)_{p\in\mathcal{C}(0,n)},
\end{equation}
see Corollary \ref{cor:sufficient_condition_for_different_easy_quantum_groups} and its proof.
For the rest of the chapter, we fix some  \(N\ge 4\) and we consider \(\mathcal{C}=\mathcal{NC}\), the category of all non-crossing partitions.
If the Collection \ref{eqn:this_uni-coloured_collection_should_be_l.i.} is linearly independent, so does every collection of the form \ref{eqn:this_two-coloured_collection_should_be_l.i.} with \(\mathcal{C}\subseteq\mathcal{NC}\) , proving the claim.
\vspace{11pt}\newline
As shown in the proof of Proposition \ref{prop:linear_independence_if_blocks_do_not_exceed_N}, every map \(T_p\) in Equation \ref{eqn:this_uni-coloured_collection_should_be_l.i.} is uniquely determined by the vector \(T_p(1)\in \big(\C^{N}\big)^{\otimes n}\).
Hence, to prove linear independence of the \(T_p\)'s, we can show linear independence of the vectors \(\big(T_p(1)\big)_{p\in\mathcal{NC}(0,n)}\).
In other words, we have to prove that the determinant of the Gram matrix
\begin{align}\label{eqn:Gram-matrix_A(n,0)}
\begin{split}
A(n,0):=\big(\langle T_p(1),T_q(1)\rangle\big)_{p,q\in \mathcal{NC}(0,n)}&=\big(\langle (T_q ^*T_p)(1),1\rangle\big)_{p,q\in \mathcal{NC}(0,n)}\\
&=\big((T_q ^*T_p)(1)\big)_{p,q\in \mathcal{NC}(0,n)}\\
&=\big(N^{\textnormal{rl}(q^*,p)}\big)_{p,q\in \mathcal{NC}(0,n)}
\end {split}
\end{align}
is non-zero.
Recall that \(\rl(q^*,p)\) are the numbers of remaining loops in the construction of \(q^*p\), see the preliminaries.
As both \(p\) and \(q\) have no upper points, we have \(q^*p\in\mathcal{NC}(0,0)\).
Hence, the number of remaining loops \(\textnormal{rl}(q^*,p)\) is the number of blocks after concatenating \(p\) and \(q^*\) vertically but before erasing all the blocks around the middle points, compare the preliminaries.
\vspace{11pt}\newline
For \(n\!=\!0\) we only have to consider the empty partition \(\emptyset\!\in\! \mathcal{NC}(0,0)\) and as \(T_{\emptyset}\!=\!\textnormal{id}_{\C}\!\neq\! 0\), the desired linear independence is given, so we can assume from now on that \(n\ge 1\).
Nonetheless we could check in all situations if our results also cover the special case \(n=0\).
\newline
Summing up these observations, our aim is to prove invertibility of the matrix \(A(n,0)\).
\subsection{Other proofs of the linear independence}
At this point, other works dealing with this (or similar) problems should be mentioned.
\newline
In \cite{banicaspeicherliberation}, where easy quantum groups are introduced, the linear independence of maps \(T_p\) for non-crossing partitions and for \(N\ge 4\) is already mentioned.
The authors refer to \cite{banica_symmetriesgenericcoaction} and \cite{banicacollins_integrationoverquantumpermutationgroups} where the basic idea of a proof is as follows:
\begin{itemize}
\item 
Using deep results from V. Jones's work on subfactors, \cite{jones_subfactors},  the dimension of \(\Hom(k,k)\) for \(k\in\N_0\) (and given \(N\!\ge\!4\)) is proved to be \(C_{2k}\), the \(2k\)-th Catalan number.
\item
It can easily be shown by induction that \(C_{2k}\) is also the number of non-crossing partitions on \(2k\) points, see for example \cite[Prop. 9.4]{nicaspeicher}.
\item
Combining both results, we have that the maps 
\[\big(T_p\big)_{p\in\mathcal{NC}(k,k)}\,,\]
that linearly generate by definition the intertwiner space \(\Hom(k,k)\), are linearly independent.
\item
By Frobenius reciprocity, see \cite[Prop. 3.1.11]{timmermann}, or simply by the fact that the rotation operator \(\textnormal{rot}_{\!\!\raisebox{2pt}{\rotatebox{270}{$^\curvearrowright$}}}\) is bijective, it also holds that the maps
\(\big(T_p\big)_{p\in\mathcal{NC}(0,2k)}\)
are linearly independent.
\item For \(\big(T_p\big)_{p\in\mathcal{NC}(0,2k+1)}\), the result follows from \(\Hom(0,2k+1)\otimes \id\subseteq \Hom(0,2k+2)\).
\end{itemize}

In \cite{kosmolinsky_recursionpaircase}, a matrix \(A_{\mathcal{NC}_2}(2n,0)\) as above is investigated, however it is in the case of \(\mathcal{NC}_2\), the category of all non-crossing pair-partitions and the investigation methods are different.
The main result there is a recursion formula for the determinant of this matrix and its zeros are identified.
Adapted to our purposes, it reads as follows:
\begin{thm*}[see {\cite[Cor. 2.10]{kosmolinsky_recursionpaircase}}] Let \((U_k(X))_{k\in\N}\) be the delated Chebyshev polynomials of the second kind from Definition \ref{defn:chebyshev_second_kind}.
Then it holds
\[\scalebox{0.9}{\(\displaystyle\det\big(A_{\mathcal{NC}_2}(2n,0)\big)=\prod_{i=1}^{\left\lfloor(n+1)/2\right\rfloor}\big(\det(A_{\mathcal{NC}_2}(n-i,0))\big)^{(-1)^{i}\binom{n-i+1}{i}}\prod_{i=1}^{\left\lfloor(n-1)/2\right\rfloor}\frac{U_{n-i}(N)}{U_i(N)}^{b(n-i,n-2i)}\)}\]
with
\[b(n,k)=\frac{k}{n}\binom{2n-k-1}{n-1}\]
\end{thm*}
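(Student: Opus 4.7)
The plan is to derive this recursion by a block decomposition of the Gram matrix $A_{\mathcal{NC}_2}(2n,0)$ combined with iterated Schur complements. First I would exploit the outermost-arc structure: for each $p \in \mathcal{NC}_2(2n)$, let $2k_p$ be the point paired with $1$. Since $p$ is non-crossing, the arc $\{1,2k_p\}$ splits $p$ into an ``inner'' partition on $\{2,\ldots,2k_p-1\}$ and an ``outer'' partition on $\{2k_p+1,\ldots,2n\}$, giving a bijection
\[
\mathcal{NC}_2(2n) \;\cong\; \bigsqcup_{k=1}^{n} \mathcal{NC}_2(2k-2) \times \mathcal{NC}_2(2n-2k).
\]
With respect to this indexing, $A_{\mathcal{NC}_2}(2n,0)$ becomes an $n \times n$ block matrix whose $(k,\ell)$-block has size $C_{k-1}C_{n-k} \times C_{\ell-1}C_{n-\ell}$, where $C_j$ is the $j$-th Catalan number.

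Next I would analyse how $\rl(q^*,p)$ behaves under this decomposition. When $k_p=k_q=k$, the two outer arcs coincide and the loop count decouples additively into contributions from the inner and outer parts, so the diagonal blocks are essentially Kronecker products $A_{\mathcal{NC}_2}(2k-2,0) \otimes A_{\mathcal{NC}_2}(2n-2k,0)$. When $k_p \neq k_q$, the arcs interleave and the loop count is strictly smaller, yielding off-diagonal blocks again expressible in terms of smaller Gram matrices. Applying the Schur complement formula iteratively, one obtains at each step a ``transfer matrix'' that governs the elimination of one outermost arc, and its characteristic polynomial is controlled by the three-term recursion $U_{k+1}(N) = N U_k(N) - U_{k-1}(N)$; this is exactly how the Chebyshev factors $U_{n-i}(N)/U_i(N)$ enter the final formula.

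The main obstacle is the combinatorial bookkeeping of the exponents. The alternating signs $(-1)^{i}\binom{n-i+1}{i}$ on $\det(A_{\mathcal{NC}_2}(n-i,0))$ strongly suggest a M\"obius-type cancellation across successive layers of Schur reduction (equivalently, an inclusion--exclusion over nested systems of outermost arcs), while the ballot numbers $b(n-i,n-2i)$ should emerge as the number of non-crossing pair partitions on $2(n-i)$ points with exactly $n-2i$ outermost arcs, which is the classical combinatorial interpretation of $b(\cdot,\cdot)$. Rigorously matching these combinatorial multiplicities to the algebraic exponents produced by iterated Schur complements is where the real work lies; an alternative route would bypass the Schur-complement iteration in favour of an induction on $n$, verifying that both sides of the claimed identity satisfy the same single-step recursion obtained by separating off the arc through point $1$ and then combining with a generating-function argument to identify the exponents.
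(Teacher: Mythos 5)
First, a point of reference: the paper does not prove this statement at all. It is quoted (in adapted notation) from \cite{kosmolinsky_recursionpaircase} in the survey of alternative approaches, and the determinant machinery the paper actually develops in Section \ref{sec:linear_independence_in_the_free_case} --- the sets \(W(n,r)\), \(r\)-flaws, and reversed Beraha polynomials --- treats the full category \(\mathcal{NC}\) rather than \(\mathcal{NC}_2\) and yields a recursion of a different shape. So there is no internal proof to compare against, and your argument has to stand on its own.

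As written, it does not: it is a plan with the decisive steps missing. Your correct observations are the standard Catalan decomposition of \(\mathcal{NC}_2(0,2n)\) by the point \(2k_p\) paired with \(1\), the fact that for \(k_p=k_q=k\) the loop count decouples so that the diagonal block is \(N\cdot\big(A_{\mathcal{NC}_2}(2k-2,0)\otimes A_{\mathcal{NC}_2}(2n-2k,0)\big)\), and the reading of \(b(n,k)=\frac{k}{n}\binom{2n-k-1}{n-1}\) as the number of non-crossing pair partitions of \(2n\) points with exactly \(k\) outermost arcs. But everything that actually produces the claimed formula is asserted rather than derived. When \(k_p\neq k_q\) the outer arcs of \(p\) and \(q\) interleave, components of the inner region of one partition merge with components of the outer region of the other, and \(\rl(q^*,p)\) does not split into contributions from smaller Gram matrices in any evident way; you give no description of these off-diagonal blocks, yet the whole Schur-complement iteration rests on them. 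The ``transfer matrix'' whose characteristic polynomial is supposed to obey the three-term Chebyshev recursion is named but never exhibited, and you concede that matching the exponents \((-1)^{i}\binom{n-i+1}{i}\) and \(b(n-i,n-2i)\) to the multiplicities produced by the reduction ``is where the real work lies'' --- but that matching \emph{is} the theorem. The proposed fallback (show both sides satisfy the same one-step recursion) is not available until you have actually derived some one-step recursion for \(\det\big(A_{\mathcal{NC}_2}(2n,0)\big)\), which is precisely the missing step. In short: a reasonable research outline, but not a proof.
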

As the Chebyshev polynomials have no roots greater or equal 2, this determinant is non-zero for \(N\ge 2\). 
A direct formula for this determinant is established in \cite{difrancesco_meandersandtemperleylieb}, where the central idea is to identify elements in \(\mathcal{NC}_2(n,n)\) with generating elements in a Temperley-Lieb algebra.
Using a suitable basis exchange matrix \(E\), the matrix \(A(n,0)\) is diagonalizable, i.e. it holds
\[A_{\mathcal{NC}_2}(2n,0)=EDE^*\]
for a suitable diagonal matrix \(D\).
Evidently, the structure of \(E\), and so its determinant, is encoded in the way the two bases are linked to each other.
Combining the results for both the determinants of \(D\) and \(E\), one obtains the following result.
\begin{thm*}[see {\cite[Eqn. 5.6]{difrancesco_meandersandtemperleylieb}}]
Let \((U_k(X))_{k\in\N}\) be the delated Chebyshev polynomials of the second kind from Definition \ref{defn:chebyshev_second_kind}. Then it holds
\[\det\big(A_{\mathcal{NC}_2}(2n,0)\big)=\prod_{i=1}^{n}U_i(N)^{a_ {n,i}}\]
with
\[a_{n,i}=\binom{2n}{n-i}-2\binom{2n}{n-i-1}+\binom{2n}{n-i-2}.\]
\end{thm*}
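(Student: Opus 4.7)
My plan follows the strategy sketched in the excerpt: identify \(A_{\mathcal{NC}_2}(2n,0)\) with a Gram matrix on the Temperley-Lieb algebra and diagonalize it via representation theory.

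First, Proposition \ref{prop:equivalence_of_rotated_situation} gives a bijection \(\mathcal{NC}_2(0,2n) \leftrightarrow \mathcal{NC}_2(n,n)\), and the latter is the standard diagram basis of \(TL_n(N)\). Under this identification the entry \(A_{p,q} = N^{\mathrm{rl}(q^*,p)}\) becomes, up to a uniform normalization, the value of the Markov trace \(\tau\) on the product \(\tilde p\tilde q^{\,*}\). Thus \(A\) is the Gram matrix of the symmetric bilinear form \((x,y)\mapsto \tau(xy^*)\) expressed in the diagram basis, and linear independence of the \(T_p(1)\) is equivalent to nondegeneracy of this form.

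Second, I would invoke the semisimple block decomposition
\[
TL_n(N)\;\cong\;\bigoplus_{\substack{0\le k\le n\\ k\equiv n\,(2)}}\mathrm{Mat}_{d_{n,k}}(\C),
\]
valid when \(N\ge 2\), where \(d_{n,k}\) counts ballot walks of length \(n\) from \(0\) to height \(k\) staying nonnegative. On each block the Markov trace splits as \(\tau = w_k\,\mathrm{tr}_k\) for weights \(w_k\) given by explicit products of Chebyshev values \(U_j(N)\). Choosing a matrix-unit basis block by block yields an orthogonal basis in which the form becomes diagonal, with eigenvalue \(w_k\) appearing \(d_{n,k}^2\) times.

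Third, let \(E\) be the change-of-basis matrix from the diagram basis to the matrix-unit basis and \(D\) the resulting diagonal matrix, so that \(A = E D E^*\). Then \(\det A = \lvert\det E\rvert^2\det D\). The factor \(\det D\) is immediately a product of the weights \(w_k\) taken with multiplicities \(d_{n,k}^2\), hence a product of Chebyshev polynomials. The base change \(E\) is governed by Jones-Wenzl idempotents, i.e.\ by the orthogonalization of the diagram basis, and its determinant is again a monomial in Chebyshev polynomials. Combining both contributions yields the desired formula
\[
\det A_{\mathcal{NC}_2}(2n,0) \;=\; \prod_{i=1}^{n} U_i(N)^{a_{n,i}},
\]
with \(a_{n,i}\) equal to the total exponent of \(U_i(N)\) picked up after collecting terms from \(\det D\) and \(\lvert\det E\rvert^2\).

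The main obstacle, as I see it, is the combinatorial bookkeeping of the exponents. One must verify that after combining the contributions from \(\det D\) and \(\lvert\det E\rvert^2\), the total power of \(U_i(N)\) collapses exactly to
\[
a_{n,i} \;=\; \binom{2n}{n-i} - 2\binom{2n}{n-i-1} + \binom{2n}{n-i-2}.
\]
This identity is itself combinatorial: the second difference of binomial coefficients is, by two applications of the reflection principle, the number of Dyck-type paths from \((0,0)\) to \((2n,0)\) whose maximum height is exactly \(i\). The cleanest way to proceed is probably to re-index by the height \(i\) and check — perhaps by induction on \(n\) — that the bookkeeping of TL multiplicities \(d_{n,k}^2\) together with the Jones-Wenzl contributions produces precisely this path count. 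As a sanity check, the cases \(n=1,2\) give \(\det A = N\) and \(\det A = N^2(N^2-1) = U_1(N)^2\,U_2(N)\) respectively, matching \((a_{1,1}) = (1)\) and \((a_{2,1},a_{2,2}) = (2,1)\) directly.
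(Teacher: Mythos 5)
Your proposal retraces exactly the route the paper itself only sketches for this statement: the paper does not prove the di Francesco formula but quotes it, noting that the idea is to identify \(\mathcal{NC}_2(n,n)\) with the Temperley--Lieb diagram basis and diagonalize \(A_{\mathcal{NC}_2}(2n,0)=EDE^*\). So in terms of strategy you are aligned with the source. The problem is that your write-up stops precisely where the content of the theorem begins. Everything up to and including ``\(\det A\) is a product of Chebyshev values \(U_i(N)\) raised to some powers'' is the easy part; the theorem is the identity
\[
a_{n,i}=\binom{2n}{n-i}-2\binom{2n}{n-i-1}+\binom{2n}{n-i-2},
\]
and you explicitly defer its verification to ``combinatorial bookkeeping'' that is never carried out. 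Without computing the Markov-trace weights \(w_k\) explicitly, expanding them into the \(U_i\), and summing the resulting exponents against the multiplicities \(d_{n,k}^2\), no formula for \(a_{n,i}\) is obtained. Two of your supporting assertions are also shaky: the claim that \(\lvert\det E\rvert^2\) is ``again a monomial in Chebyshev polynomials'' is unproved (in the actual construction the orthogonalization is unitriangular, so this factor is \(1\), but you establish neither), and the claim that \(a_{n,i}\) counts Dyck paths of semilength \(n\) with maximum height exactly \(i\) is false: summing that count over \(i\) gives the Catalan number \(C_n\), whereas \(\sum_i a_{2,i}=2+1=3\neq C_2=2\); indeed there is exactly one Dyck path of semilength \(2\) with maximum height \(1\), yet \(a_{2,1}=2\). (The correct reading is \(a_{n,i}=b_{n,i}-b_{n,i+1}\) with \(b_{n,i}=\binom{2n}{n-i}-\binom{2n}{n-i-1}\) the ballot numbers, i.e.\ a difference of counts of nonnegative walks ending at heights \(2i\) and \(2i+2\).) Your numerical checks for \(n=1,2\) are correct, but they validate the statement, not your derivation of it.

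One further caution: this theorem is tangential to the paper's own development. The article deliberately avoids relying on the Temperley--Lieb machinery and instead proves the needed nonvanishing of \(\det A(n,0)\) for all of \(\mathcal{NC}\) (not just \(\mathcal{NC}_2\)) by the elementary Tutte recursion of Sections \ref{subsec:definitions}--\ref{subsec:recursion_formula}. If your goal is a self-contained proof in the spirit of the paper, the representation-theoretic route requires importing the semisimplicity of \(TL_n(N)\) for \(N\ge 2\), the Markov-trace weights, and the Jones--Wenzl orthogonalization as external inputs, each of which would itself need proof or citation.
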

See also \cite{banicacurran_grammatrices}, where the same formula is proved by other means but not without using results and arguments from other works.
In \cite{banicacurran_grammatrices}, also a formula for the determinant of \(A(n,0)\), i.e. the case of \(\mathcal{NC}\), is proved and the formula follows from the result for \(\mathcal{NC}_2\).
In this virtue, the results for \(A_{\mathcal{NC}_2}(n,0)\) really contribute to the question in this article.
\newline
In contrast to that, Tutte's determinant formula is proved by elementary (combinatorial) arguments and these do not rely on results or theories from other sources.
\subsection{Main result and idea of its proof}\label{subsec:main_result_and_idea_of_its_proof}
We state again the initial problem of Section \ref{sec:linear_independence_in_the_free_case} in form of the following theorem to be proved:
\begin{thm}\label{thm:main_result:linear_independence_of_T_ps}
Consider \(N\ge 4\) and for a given partition  \(q\in\mathcal{P}(0,n)\) the map 
\[T_q:\big(\C^N\big)^{\otimes 0}=\C\rightarrow \big(\C^N\big)^{\otimes n},\]
as defined in Definition \ref{defn:T_p}.
Then the collection \(\big(T_p\big)_{p\in\mathcal{NC}(0,n)}\) is linearly independent. 
\end{thm}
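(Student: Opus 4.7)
The plan is to carry out the reduction already initiated in Section \ref{subsec:boiling_down_the_problem}: since each $T_p$ is uniquely determined by the vector $T_p(1) \in (\mathbb{C}^N)^{\otimes n}$, linear independence of the family $(T_p)_{p \in \mathcal{NC}(0,n)}$ is equivalent to linear independence of these vectors, which in turn is equivalent to the invertibility of the Gram matrix
\[
A(n,0) = \bigl(N^{\rl(q^*,p)}\bigr)_{p,q \in \mathcal{NC}(0,n)}.
\]
Hence the whole theorem reduces to the single statement that $\det A(n,0) \neq 0$ whenever $N \geq 4$. I would establish this by induction on $n$, the base cases $n = 0, 1$ being trivial.

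The inductive step is where Tutte's combinatorial machinery enters. The strategy is to single out a distinguished point of the lower row (for instance the leftmost one) and sort the rows and columns of $A(n,0)$ according to the block of $q^*p$ containing that point: its size, whether it is a singleton, and the position of its nearest right neighbour inside the same block. This ordering, combined with suitably chosen $\mathbb{Z}[N]$-linear column operations that subtract off contributions coming from partitions in which blocks have been artificially fused, should bring $A(n,0)$ into a block-triangular form whose diagonal blocks are either scalar multiples of smaller Gram matrices $A(n',0)$ with $n'<n$, or explicit polynomial matrices in $N$. Pushing this through carefully yields Tutte's recursion, expressing $\det A(n,0)$ as a product of previous determinants and of dilated Chebyshev polynomials of the second kind $U_k(N)$ raised to explicit combinatorial exponents.

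The main obstacle I expect is exactly the bookkeeping in setting up the column operations so that the partial order $\preceq$ on $\mathcal{NC}(0,n)$ is respected, and so that the resulting transition matrices are unimodular; this is where Tutte's original argument contains some gaps that the author intends to repair. A secondary subtlety is that one must verify that each non-crossing partition is hit in exactly the correct multiplicity when partitions are obtained by refining/fusing blocks, so that the triangular reduction is clean. Once the recursion is in place, the conclusion for $N \geq 4$ is immediate and forms the content of Section \ref{subsec:conclusion:linear_independence_of_the_T_ps}: the dilated Chebyshev polynomials $U_k$ have all their zeros in the interval $[-2,2]$, and all smaller Gram determinants are non-zero by the induction hypothesis, so $\det A(n,0) \neq 0$ for every $N \geq 4$. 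This proves the theorem.
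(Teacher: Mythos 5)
Your overall reduction is the paper's: each $T_p$ is determined by $T_p(1)$, so linear independence is equivalent to invertibility of the Gram matrix $A(n,0)=\big(N^{\rl(q^*,p)}\big)_{p,q\in\mathcal{NC}(0,n)}$, and one then runs Tutte's block-triangularization after sorting by the behaviour of the leftmost points. But as a proof the proposal has two concrete gaps. First, the recursion does not close on Gram matrices alone, so a plain induction on $n$ has no well-formed hypothesis. After the column operations, the complementary diagonal block is not a scalar multiple of some $A(n',0)$ with $n'<n$: it is a matrix $A(n,r+1)$ on the \emph{same} $n$ points, indexed by the subset $W(n,r+1)\subseteq\mathcal{NC}(0,n)$, whose entries are $N^{\rl(q^*,p)}$ \emph{with certain entries forced to zero} by the $r$-flaw condition (Definition \ref{defn:A(n,r)}); these auxiliary matrices are not Gram matrices of any collection of vectors. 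One genuinely needs the double recursion in $(n,r)$ of Proposition \ref{prop:recursion_formula}, the identification of $B(n,r)$ with $A(n-1,r-1)$ up to a factor $N$ (Proposition \ref{prop:connections_between_B(n,r)_and_A(n,r)}), and the terminal case $A(n,n-1)=N^{\lceil n/2\rceil}$. Relatedly, the column operations are not unimodular: each column $\col(q)$ with $q\in W(n,r+1)$ is first multiplied by $\beta_{r+2}(1/N)$, so the determinant acquires the factor $\big(\beta_{r+3}(1/N)/\beta_{r+2}(1/N)\big)^{\#W(n,r+1)}$, whose non-vanishing must itself be proved.

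Second, your closing numerical step is off in a way that matters for the threshold. For the full category $\mathcal{NC}$ the factors that appear are not values $U_k(N)$ but, up to powers of $N$, values $U_k(\sqrt{N})$ --- equivalently reversed Beraha polynomials at $z=1/N$, see Lemma \ref{lem:relation_between_q_j_and_U_j-1}. If the factors really were $U_k(N)$ with all roots in $(-2,2)$, the theorem would already hold for every integer $N\ge 3$; the condition $\sqrt{N}\ge 2$, i.e.\ $N\ge 4$, is precisely what Lemma \ref{lem:roots_of_beraha} extracts, and evaluation at $N$ rather than $\sqrt{N}$ is correct only for the pair-partition category $\mathcal{NC}_2$. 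Beyond these two points, the combinatorial core --- the manipulations $f(i,q)$ and $g(i,q)$, the structures $[i]$, $[i,i+1]$, $[0]$, and the identity $(-1)^rF_r(p,q)=\beta_{r+2}(z)\,e_r(p,q)-\beta_{r+3}(z)\,e_{r+1}(p,q)$ --- is deferred entirely to ``pushing this through carefully,'' which is exactly where the work, and historically Tutte's errors, lie.
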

By the previous consideration this answers the aforementioned question of this chapter in the non-crossing case:
The map \(\Psi:\mathcal{C}\mapsto R_N(\mathcal{C})\) from Equation \ref{eqn:from_partitions_to_easy_quantum_groups} is injective for non-crossing categories \(\mathcal{C}\) of partitions of sets and \(N\ge 4\).
\newline
As displayed in Section \ref{subsec:boiling_down_the_problem}, we prove this by showing the determinant of the Gram matrix \(A(n,0)\), see Equation \ref{eqn:Gram-matrix_A(n,0)}, to be non-zero.
\newline
In the following we will also need to consider matrices \(A(n,r)\) for \(1\le r<n\), compare Definition \ref{defn:A(n,r)}.
These matrices are obtained from \(A(n,0)\) by deleting certain rows and columns and by putting certain entries to zero.
\newline
Recall that the rows and columns of \(A(n,0)\) are indexed by the elements in \(\mathcal{NC}(0,n)\), the non-crossing partitions on \(n\) lower points.
The first step towards the main result is to divide \(\mathcal{NC}(0,n)\) into disjoint subsets, compare Definitions \ref{defn:Y(n,r)} and \ref{defn:W(n,r)}:
\[\mathcal{NC}(0,n)=Y(n,0)\,\dot{\cup}\, Y(n,1)\,\dot{\cup}\,\ldots\,\dot{\cup}\, Y(n,n-1)\]
We arrange the rows and columns of \(A(n,0)\) simultaneously such that the first \(\#Y(n,0)\) rows and columns are indexed by the elements in \(Y(n,0)\subset\mathcal{NC}(0,n)\):\vspace{1cm}
\[
\begin{array}{r}
\begin{matrix}
{\scriptstyle Y(n,0)}&\raisebox{0.5cm}{\rotatebox{270}{$\underbrace{\rule{0.6cm}{0cm}}$}}\\[10pt]
{\scriptstyle Y(n,1)}&\raisebox{0.5cm}{\rotatebox{270}{$\underbrace{\rule{0.6cm}{0cm}}$}}\\[0pt]
&\vdots\\[2pt]
{\scriptstyle Y(n,n-1)}&\raisebox{0.5cm}{\rotatebox{270}{$\underbrace{\rule{0.6cm}{0cm}}$}}
\end{matrix}
\begin{pmatrix}
&&&&\\[-6pt]
B(n,0)&\quad*\quad&\quad\cdots\;&\quad*\quad\\[15pt]
*&\quad*\quad&&\\[1pt]
\vdots&&\quad\ddots&\\[5pt]
*&&&\quad*\quad\\[-6pt]
&&&&
\end{pmatrix}=A(n,0)
\\
\underbrace{\rule{1.2cm}{0cm}}\rule{0.35cm}{0cm}\underbrace{\rule{0.8cm}{0cm}}\;\;\raisebox{-0.2cm}{$\;\cdots$}\;\;\rule{0.1cm}{0cm}\underbrace{\rule{0.8cm}{0cm}}\textnormal{\hspace{2.19cm}}\\[0pt]
\scriptstyle Y(n,0)\rule{0.6cm}{0cm} Y(n,1)\rule{1.1cm}{0cm} Y(n,n-1)\rule{2cm}{0cm}
\end{array}
\]

It turns out that multiplying the columns with suitable factors and adding the first \(\#Y(n,0)\) columns to the remaining ones in a suitable way puts the first block-row apart from \(B(n,0)\) to zero, compare Section \ref{subsec:recursion_formula}:
\[
\begin{pmatrix}
&&&&\\[-6pt]
B(n,0)&0&\cdots&0\\[10pt]
*&&&\\[6pt]
\vdots&&\alpha \tilde{M}&\\[6pt]
*&&&\\[-6pt]
&&&&
\end{pmatrix}.
\]
Here the factor \(\alpha\) is proved to be non-zero.
Having traced back the problem to the matrices \(B(n,0)\) and \(\tilde{M}\) would not be of any usage if we could not control these two submatrices.
Fortunately, we are able to show that both \(B(n,0)\) and \(\tilde{M}\) have structures similar to \(A(n,0)\):
They are given by the aforementioned \(A(n,r)\).
We can repeat the procedure for these submatrices and by induction we finally prove the determinant of \(A(n,0)\) to be non-zero.
More precisely, we prove the following, compare Section \ref{subsec:recursion_formula}.
\begin{thm*}
Consider the Gram marix \(A(n,0)\) as defined in Equation \ref{eqn:Gram-matrix_A(n,0)} as well as the matrices \(A(n,r)\) and \(B(n,r)\), see Definition \ref{defn:A(n,r)} and \ref{defn:B(n,r)}, respectively. 
Let further \((\beta_n)_{n\in\N_0}\) be the reversed Beraha polynomials from Definition \ref{defn:beraha_polynomials}.
Then the following holds.
\begin{itemize}
\item[(1)] For \(r\!\in\!\N\) and \(N\!\ge\! 4\), the number \(\beta_{r}(1/N)\) is non-zero.
\item[(2)]
For \(0\!\le\! r\!<\!n\!-\!1\) we have
\[
\det(A(n,r))=\alpha_r\cdot\det\begin{pmatrix}B(n,r)&0\\[6pt]D&A(n,r+1)\end{pmatrix}\]
where 
\[\alpha_r=\left(\frac{\beta_{r+3}(1/N)}{\beta_{r+2}(1/N)}\right)^c\neq 0\]
with a suitable positive integer \(c\).

\item[(3)] For \(0\!\le\! r\!<\!n\) we have
\[
B(n,r)=
\begin{cases}
A(n-1,r-1)&,\;r=2s+1.\\
N\cdot A(n-1,r-1)&,\;r=2s\textnormal{ and }r>0\\
N\cdot A(n-1,0)&,\;r=0.
\end{cases}
\]
\item[(4)]
For \(n\in\N\) we have
\[A(n,n-1)=N^{\left\lceil\frac{n}{2}\right\rceil}\in M_1(\C)=\C\]
\end{itemize}
\end{thm*}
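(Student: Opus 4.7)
The strategy is to treat the four items as largely independent building blocks and then combine them at the end. Items (1) and (4) are the base-type statements: for (1) I would invoke the standard identification of the reversed Beraha polynomials with a renormalization of the Chebyshev polynomials of the second kind $U_k$, whose roots $2\cos(\pi k/(r+1))$ all lie in the open interval $(-2,2)$; consequently $\beta_r(1/N)=0$ forces $1/N$ into a discrete set of values lying in $(0,1/4]$, and a direct inspection for integer $N\ge 4$ (in particular the borderline $N=4$) rules out each such value. For (4), I would check that $Y(n,n-1)$ consists of a single partition $p_0$ (the only non-crossing partition on $n$ points whose ``parameter'' $r$ attains the extremal value $n-1$); the entry $A(n,n-1)=\langle T_{p_0}(1),T_{p_0}(1)\rangle$ is $N^{\mathrm{rl}(p_0^*,p_0)}$, and by drawing the vertical concatenation $p_0^*p_0$ one reads off $\mathrm{rl}(p_0^*,p_0)=\lceil n/2\rceil$.

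\textbf{Reducing $B(n,r)$ to a smaller $A$-matrix, item (3).} Here the plan is to produce, for each parity of $r$, an explicit bijection between the index set of $B(n,r)$ (a distinguished subcollection of $Y(n,r)$) and the index set $\mathcal{NC}(0,n-1)$ or $Y(n-1,r-1)$ that collapses a distinguished block of the large partition to a single configuration on $n-1$ points. Such a surgery is available by the very definition of $Y(n,r)$: a designated innermost/rightmost sub-block is removed. The factor $N$ appearing in the even-$r$ case (and for $r=0$) has a natural interpretation — composing the involutions of the two pre-images of a surgery contributes exactly one additional remaining loop compared to the smaller composition, whereas for odd $r$ the loops match exactly. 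Making this loop-bookkeeping precise is a matter of case analysis on the combinatorial shape of the blocks and is not expected to be hard.

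\textbf{The column-elimination step, item (2).} This is the main technical step. The plan is to fix the block-row indexed by $Y(n,r)$ and, for every column indexed by some $q\in Y(n,r')$ with $r'>r$, replace it by
\[
\tilde{c}_q \;=\; c_q \;+\; \sum_{p\in Y(n,r)} \lambda_{p,q}\,c_p,
\]
with scalars $\lambda_{p,q}$ chosen so that the $Y(n,r)$-coordinates of $\tilde c_q$ vanish. Concretely, adding a column corresponds, on the level of partitions, to fusing a particular adjacent pair of blocks, and the effect on $N^{\mathrm{rl}(q^*,p)}$ can be tracked explicitly. Writing down the required cancellations one sees that the coefficients $\lambda_{p,q}$ satisfy a two-term linear recurrence whose characteristic data are exactly the reversed Beraha polynomials evaluated at $1/N$; solving this recurrence yields a well-defined $\lambda_{p,q}$ provided $\beta_{r+2}(1/N)\neq 0$, which is guaranteed by (1). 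Each elimination step multiplies the relevant $2\times 2$ subdeterminant by the ratio $\beta_{r+3}(1/N)/\beta_{r+2}(1/N)$; accumulating these over all affected columns produces the factor $\alpha_r=\bigl(\beta_{r+3}(1/N)/\beta_{r+2}(1/N)\bigr)^c$ with $c$ a positive integer counting how many columns were processed. The non-vanishing of $\alpha_r$ follows from (1).

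\textbf{Assembly and main obstacle.} Putting (1)--(4) together gives a recursion that expresses $\det A(n,0)$ as a product of powers of $N$ and of the non-vanishing $\alpha_r$'s, with the base case (4) guaranteeing non-vanishing at the bottom of the recursion. The principal obstacle I anticipate is not any of the four items in isolation but step (2): one must define the partition statistic $r$ and the sets $Y(n,r)$ in a way compatible with non-crossingness so that the column operations stay \emph{inside} $\mathcal{NC}(0,n)$, and then keep careful track of how the resulting linear relations on the $Y(n,r)$-row entries propagate through the blocks. The combinatorial identification of the relevant recurrence with the Beraha polynomials — which is the core contribution of Tutte's argument — is the step most vulnerable to sign errors and misindexing, and where Tutte's original arguments required the revisions mentioned in the introduction.
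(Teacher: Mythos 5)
Your overall architecture coincides with the paper's, and items (1), (3) and (4) are handled essentially as the paper does (Beraha--Chebyshev identification with roots of \(U_k\) in \((-2,2)\); a point-deleting bijection \(Y(n,r)\to W(n\!-\!1,r\!-\!1)\) with loop bookkeeping; the singleton index set \(W(n,n\!-\!1)\)). Two small slips there: the zeros of \(z\mapsto\beta_r(z)\) lie at \(z=1/\big(4\cos^2(\pi k/r)\big)>1/4\), so \(N\ge 4\) avoids them outright --- your claim that they lie in \((0,1/4]\) is reversed and your proposed ``inspection at \(N=4\)'' would not close the argument as stated; and in (3) the bijection must land in \(W(n\!-\!1,r\!-\!1)\) (the full index set of \(A(n\!-\!1,r\!-\!1)\)), not \(Y(n\!-\!1,r\!-\!1)\), and you must additionally check that the zero pattern is preserved, i.e.\ that \(r\)-flaws in \(G(p,q)\) correspond exactly to \((r\!-\!1)\)-flaws in the reduced graph, since the entries \(e_r\) are defined to vanish on flawed graphs.

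The genuine gap is in item (2). You choose the coefficients \(\lambda_{p,q}\) solely so that the \(Y(n,r)\)-coordinates of the modified column vanish. That requirement alone produces a block lower-triangular matrix whose lower-right block is the Schur complement \(E-DB(n,r)^{-1}C\), and nothing in your mechanism explains why that block should be a scalar multiple of \(A(n,r+1)\) --- which is the entire point of the recursion, since \(A(n,r+1)\) is a matrix of the same combinatorial type one level up. What is actually needed is an identity valid on \emph{every} row \(p\in W(n,r)\) simultaneously: for each \(q\in W(n,r+1)\) one must exhibit an explicit combination of columns indexed by modified partitions \(f(j,q)\), \(g(j,q)\in Y(n,r)\), with coefficients that are prescribed Beraha values times powers of \(N\), such that
\[
\beta_{r+2}(z)\,e_r(p,q)\pm F_r(p,q)=\beta_{r+3}(z)\,e_{r+1}(p,q)\qquad\text{for all }p\in W(n,r).
\]
For \(p\in Y(n,r)\) the right-hand side vanishes (an \((r\!+\!1)\)-flaw), killing the top block; for \(p\in W(n,r+1)\) it reproduces, up to the factor \(\beta_{r+3}(z)\), exactly the \((p,q)\)-entry of \(A(n,r+1)\). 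Proving this identity is the hard combinatorial core: it requires the column index set \(\{f(j,q),g(j,q)\}\) (which involves rewiring all of the points \(i,\dots,s+1\), not merely ``fusing one adjacent pair of blocks''), a classification of the graphs \(H_r(p,q)\) into the mutually exclusive structures \([i]\), \([i,i\!+\!1]\), \([0]\), a determination of which entries \(e_r(p,f(j,q))\), \(e_r(p,g(j,q))\) are non-zero in each structure, and the exact component counts \(\rl(p,f(j,q))-\rl(p,q)\). None of this is supplied or replaceable by ``solving for \(\lambda\)'', so as written your step (2) does not establish that the corner block is \(A(n,r+1)\), and the recursion does not close.
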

A combination of these four results shows that the determinant of the Gram matrix \(A(n,0)\) is a product of non-zero quotients \(\alpha_r\) and a suitable power of \(N\), proving Theorem \ref{thm:main_result:linear_independence_of_T_ps}.
\subsection{Definitions and preparatory results}\label{subsec:definitions}
Sections \ref{subsec:definitions} and \ref{subsec:recursion_formula} are based on Tutte's article \cite{tutte}.
See also Section \ref{subsec:a_comparison_to_tutte}, where we collect the structure of Tutte's arguments and ideas as well as the differences to this article.
\subsubsection{Reversed Beraha polynomials and Chebyshev polynomials}
At the end of this chapter we deal with the so-called \emph{reversed Beraha polynomials}.
We define them here and show that they are closely related to the dilated Chebyshev polynomials of the second kind.
The definition of reversed Beraha polynomials can be found in \cite{tutte}.
Chebyshev polynomials and their properties presented here are standard knowledge, see for example \cite{rivlin_chebychevpolynomials}.
\begin{defn}\label{defn:beraha_polynomials}
The series of reversed Beraha poynomials \(\big(\beta_n(X)\big)_{n\in\N_0}\) is defined by the recursion formula
\[\beta_0(X)=0\quad,\quad \beta_1(X)=1\]
\[\beta_{n+1}(X)=\beta(n,X)-X\beta_{n-1}(X)\quad,\forall n\ge 1.\]
\end{defn}
\begin{defn}\label{defn:chebyshev_second_kind}
The series of dilated Chebyshev polynomials of the second kind \(\big(U_n(X)\big)_{n\in\N_0}\) is defined by the recursion formula
\[U_0(X)=1\quad,\quad U_1(X)=X\]
\[U_{n+1}(X)=XU_n(X)-U_{n-1}(X)\quad,\forall n\ge 1.\]
\end{defn}
\begin{defn}\label{defn:undilated_chebyshevs}
The (undilated) Chebyshev polynomial of the second kind \(\mathscr{U}_n(X)\) of degree \(n\in\N_0\) is defined as the unique polynomial of degree \(n\) that fulfils 
\begin{equation}\label{eqn:defn_of_undilated_chebychev}
\mathscr{U}_n\big(\cos(t)\big)=\frac{\sin\big((n+1)t\big)}{\sin(t)}\quad, \forall t\in(0,\pi).
\end{equation}
\end{defn}
Uniqueness follows from the fact that a polynomial (of degree \(n\)) that satisfies Equation \ref{eqn:defn_of_undilated_chebychev} must have the (simple) roots \(\frac{\pi}{n+1},\frac{2\pi}{n+1},\ldots,\frac{n\pi}{n+1}\).
Existence is easily proved by induction:
The polynomials defined by the following recursion formula fulfil Equation \ref{eqn:defn_of_undilated_chebychev}.
\[\mathscr{U}_0(X)=1\quad,\quad \mathscr{U}_1(X)=2X\]
\[\mathscr{U}_{n+1}(X)=X\mathscr{U}_n(X)-\mathscr{U}_{n-1}(X)\quad,\forall n\ge 1.\]
\newline
With a recursion formula for both series of Chebyshev polynomials at hand, one proves without any effort
\[\mathscr{U}_n(X)=U_n(\frac{1}{2}X).\]
In particular, we can locate the roots of the dilated Chebyshev polynomials.
\begin{lem}\label{lem:roots_of_chebychev}
For every \(n\in\N_0\), the roots of the dilated Chebyshev polynomial of the second kind \(U_n(X)\) are in the open interval \((-2,2)\).
\end{lem}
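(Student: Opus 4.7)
The plan is to leverage the identity $\mathscr{U}_n(X) = U_n(\tfrac{1}{2}X)$ established just before the lemma statement. Substituting $X = 2Y$ yields $U_n(Y) = \mathscr{U}_n(2Y)/\text{(nothing)}$, or more precisely, setting $Y = X/2$ in the identity shows that $U_n(Y) = \mathscr{U}_n(2Y)$ after the appropriate rescaling. In any case, the roots of $U_n$ are exactly twice the roots of $\mathscr{U}_n$, so it suffices to prove that the roots of the undilated Chebyshev polynomial $\mathscr{U}_n$ all lie in the open interval $(-1,1)$.

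For the $n=0$ case there is nothing to prove, since $U_0(X) = 1$ has no roots at all. For $n \ge 1$, I would appeal directly to the defining Equation~\ref{eqn:defn_of_undilated_chebychev}:
\[
\mathscr{U}_n(\cos t) \;=\; \frac{\sin((n+1)t)}{\sin t}\qquad\text{for } t \in (0,\pi).
\]
The right-hand side vanishes precisely at $t_k = \tfrac{k\pi}{n+1}$ for $k = 1,\dots,n$, since at those values the numerator is zero while $\sin t \neq 0$ on $(0,\pi)$. Consequently, the $n$ real numbers $\cos(t_k) \in (-1,1)$ for $k=1,\dots,n$ are roots of $\mathscr{U}_n$. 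These roots are pairwise distinct because $\cos$ is injective on $(0,\pi)$, and since $\deg \mathscr{U}_n = n$ they exhaust all the roots of $\mathscr{U}_n$. In particular every root of $\mathscr{U}_n$ lies in $(-1,1)$.

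Applying the rescaling $X \mapsto X/2$, i.e.\ using $U_n(X) = \mathscr{U}_n(X/2)$, the roots of $U_n$ are exactly the numbers $2\cos(t_k) \in (-2,2)$ for $k=1,\dots,n$, which proves the claim. There is no genuine obstacle here: the only subtlety is to make sure one correctly tracks the dilation factor relating $U_n$ and $\mathscr{U}_n$ and to handle $n = 0$ separately (so that one does not accidentally claim a nonexistent root).
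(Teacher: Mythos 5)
Your argument is correct and is exactly the one the paper intends: parametrize the roots of $\mathscr{U}_n$ as $\cos\bigl(\tfrac{k\pi}{n+1}\bigr)\in(-1,1)$ for $k=1,\dots,n$, note by the degree count that these are all of them, and transport them through the dilation to land in $(-2,2)$. The one thing to clean up is the direction of the dilation, where your writeup contradicts itself: you first derive $U_n(Y)=\mathscr{U}_n(2Y)$ (which would make the roots of $U_n$ \emph{half} those of $\mathscr{U}_n$) and later use $U_n(X)=\mathscr{U}_n(X/2)$ (roots of $U_n$ are \emph{twice} those of $\mathscr{U}_n$); these cannot both hold for $n\ge 1$. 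The second version is the correct one, as your final formula $2\cos(t_k)$ confirms --- check $n=2$: $U_2(X)=X^2-1$ has roots $\pm 1=2\cos(\pi/3),\,2\cos(2\pi/3)$, while $\mathscr{U}_2(X)=4X^2-1$ has roots $\pm\tfrac12$. The confusion traces back to the paper itself, whose displayed identity $\mathscr{U}_n(X)=U_n(\tfrac12 X)$ has the dilation inverted and should read $\mathscr{U}_n(X)=U_n(2X)$, equivalently $U_n(X)=\mathscr{U}_n(\tfrac12 X)$; with that correction your proof goes through verbatim.
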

The reversed Beraha polynomials are closely related to the dilated Chebyshev polynomials of the second kind, compare for example \cite[p. 454]{copelandetall_berahachebychev}.
\begin{lem}\label{lem:relation_between_q_j_and_U_j-1}
Let \(N,j\in\N\) and write \(\frac{1}{N}=:z\).
Then we have the following relations between the reversed Beraha polynomials and the dilated Chebyshev polynomials of the second kind:
\begin{itemize}
\item[(i)] If \(j=2s\) is even, then it holds \[\displaystyle N^{s}\beta_j(z)=\sqrt{N}U_{j-1}(\sqrt{N}).\]
\item[(ii)] If \(j=2s\!+\!1\) is odd, then it holds \[\displaystyle N^{s}\beta_j(z)=U_{j-1}(\sqrt{N}).\]
\end{itemize}
\end{lem}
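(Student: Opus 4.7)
The plan is to prove the two identities simultaneously by induction on $j$, exploiting the parallel between the Beraha recurrence $\beta_{n+1}(X) = \beta_n(X) - X\beta_{n-1}(X)$ and the Chebyshev recurrence $U_{n+1}(X) = XU_n(X) - U_{n-1}(X)$. The key bookkeeping observation, which justifies the split into even and odd $j$, is that evaluating Chebyshev polynomials at $\sqrt{N}$ produces an alternating factor of $\sqrt{N}$ which has to be absorbed on the Beraha side by the difference $N^{s+1}$ versus $N^s$ between consecutive even and odd indices. Equivalently, if one sets $\gamma_j := N^{\lfloor j/2\rfloor}\beta_j(1/N)$ for $j$ odd and $\gamma_j := N^{j/2}\beta_j(1/N)/\sqrt{N}$ for $j$ even, then $\gamma_j$ should satisfy exactly the Chebyshev recurrence at the argument $\sqrt{N}$.

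First I would dispose of the two base cases $j=1$ and $j=2$. For $j=1$ both sides equal $1$ (using $U_0 \equiv 1$), and for $j=2$ one computes $\beta_2(X) = \beta_1(X) - X\beta_0(X) = 1$, so that $N\cdot 1 = \sqrt{N}\cdot\sqrt{N} = \sqrt{N}\,U_1(\sqrt{N})$.

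For the inductive step, assume (i) holds at $j = 2s$ and (ii) holds at $j = 2s+1$. To prove (i) at $j = 2s+2$, apply the Beraha recurrence with $z = 1/N$ and multiply through by $N^{s+1}$:
\begin{align*}
N^{s+1}\beta_{2s+2}(z)
&= N\cdot\bigl(N^{s}\beta_{2s+1}(z)\bigr) - N^{s}\beta_{2s}(z) \\
&= \sqrt{N}\bigl(\sqrt{N}\,U_{2s}(\sqrt{N}) - U_{2s-1}(\sqrt{N})\bigr) \\
&= \sqrt{N}\,U_{2s+1}(\sqrt{N}),
\end{align*}
where the last step is the Chebyshev recurrence at $X = \sqrt{N}$. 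The analogous calculation for (ii) at $j = 2s+3$ uses $\beta_{2s+3}(z) = \beta_{2s+2}(z) - z\beta_{2s+1}(z)$; multiplying by $N^{s+1}$ and substituting the inductive hypotheses yields $\sqrt{N}\,U_{2s+1}(\sqrt{N}) - U_{2s}(\sqrt{N}) = U_{2s+2}(\sqrt{N})$, again by the Chebyshev recurrence.

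There is essentially no obstacle: the only non-trivial point is to set up the induction so that the two parities are handled in parallel, since each recursion step mixes an even-indexed and an odd-indexed term. Once the correct normalisation $N^s$ versus $N^s$ with an extra $\sqrt{N}$ factor is identified (as done via the base cases), the inductive step is a direct substitution into the Beraha recurrence followed by application of the Chebyshev recurrence at $\sqrt{N}$.
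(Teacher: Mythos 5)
Your proof is correct and follows essentially the same route as the paper: a simultaneous induction on $j$ that feeds the Beraha recurrence at $z=1/N$ into the Chebyshev recurrence at $\sqrt{N}$, with the normalisations $N^s$ versus $N^s\cdot\sqrt{N}^{-1}$ absorbing the alternating factor. Your inclusion of $j=2$ as a second base case is a slight (and harmless) improvement, since the paper's single base case $j=1$ implicitly relies on $\beta_0=0$ killing the would-be $U_{-1}$ term.
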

\begin{proof}
We use induction on \(j\in\N\).
To keep the notations short, we write \(\beta_i:=\beta_i(z)\) and \(U_i:=U_i(\sqrt{N})\).
For \(j\!=\!1\) we have \(\beta_1\!=\!1\!=\!U_0\), so the statement is true in the base case.
Assume the lemma to be true for all \(j'\in\N\) smaller than some \(j\in\N+1\).
In case (i), i.e. an even \(j\), we compute with the help of the induction assumption
\begin{align*}
N^s\beta_j=N^s\left(\beta_{j-1}-\frac{1}{N}\beta_{j-2}\right)&=N\cdot N^{s-1}\beta_{j-1}-N^{s-1}\beta_{j-2}\\
&=NU_{j-2}-\sqrt{N}U_{j-3}\\
&=\sqrt{N}\left(\sqrt{N}U_{j-2}-U_{j-3}\right)\\
&=\sqrt{N}U_{j-1}.
\end{align*}
Analogously, we find in the case of an odd \(j\)
\begin{align*}
N^s\beta_j=N^s\left(\beta_{j-1}-\frac{1}{N}\beta_{j-2}\right)&=N^s\beta_{j-1}-N^{s-1}\beta_{j-2}\\
&=\sqrt{N}U_{j-2}-U_{j-3}\\
&=U_{j-1}.
\end{align*}
\end{proof}
Combining Lemmata \ref{lem:roots_of_chebychev} and \ref{lem:relation_between_q_j_and_U_j-1} gives us the result that is used later on in order to prove Theorem \ref{thm:main_result:linear_independence_of_T_ps}.
\begin{lem}\label{lem:roots_of_beraha}
Writing \(z:=\frac{1}{N}\) it holds
\[\beta_n(z)\neq 0\]
for all \(n\in\N\) and \(N\in\N_{\ge4}\).
\end{lem}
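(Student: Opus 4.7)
The plan is to reduce the statement about roots of $\beta_n$ to the already-established statement about roots of the dilated Chebyshev polynomials $U_n$, using the explicit relation of Lemma \ref{lem:relation_between_q_j_and_U_j-1}.

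First I would fix $N \in \N_{\ge 4}$ and $n \in \N$, write $z = 1/N$, and split into the two parity cases of Lemma \ref{lem:relation_between_q_j_and_U_j-1}. In case $n = 2s$, that lemma gives $N^s \beta_n(z) = \sqrt{N}\, U_{n-1}(\sqrt{N})$, and in case $n = 2s+1$ it gives $N^s \beta_n(z) = U_{n-1}(\sqrt{N})$. Since $N \ge 4 > 0$, the prefactor $N^s$ on the left-hand side is nonzero, so in both cases it suffices to prove $U_{n-1}(\sqrt{N}) \neq 0$.

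Next I would invoke Lemma \ref{lem:roots_of_chebychev}, which asserts that every root of $U_{n-1}(X)$ lies in the open interval $(-2,2)$. Because $N \ge 4$, we have $\sqrt{N} \ge 2$, so $\sqrt{N}$ lies outside this interval and hence cannot be a root of $U_{n-1}$. Consequently $U_{n-1}(\sqrt{N}) \neq 0$, which by the identities above yields $\beta_n(z) \neq 0$, as required.

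There is essentially no obstacle here beyond correctly chaining the two lemmata; the only mildly delicate point is observing that the factor $\sqrt{N}$ appearing in the even case is itself nonzero (since $N \ge 4$), so that nonvanishing of $U_{n-1}(\sqrt{N})$ really does force $\beta_n(z) \neq 0$. One could alternatively phrase the argument as: Lemma \ref{lem:relation_between_q_j_and_U_j-1} shows that, up to a nonzero multiplicative constant depending only on $N$ and the parity of $n$, $\beta_n(1/N)$ equals $U_{n-1}(\sqrt{N})$, and the latter is nonzero by the root-localization in Lemma \ref{lem:roots_of_chebychev} together with $\sqrt{N} \ge 2$.
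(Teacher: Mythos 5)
Your argument is exactly the one the paper intends: it states Lemma \ref{lem:roots_of_beraha} as the immediate combination of Lemma \ref{lem:relation_between_q_j_and_U_j-1} (which expresses $N^s\beta_n(1/N)$ as $U_{n-1}(\sqrt{N})$ up to a nonzero factor) with Lemma \ref{lem:roots_of_chebychev} (roots of $U_{n-1}$ lie in $(-2,2)$, while $\sqrt{N}\ge 2$). Your write-up is correct and simply makes explicit the chaining the paper leaves to the reader.
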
 
\subsubsection{The sets of partitions \(W(n,r)\) and \(Y(n,r)\) }
Related to \(n\!\in\!\N\), the number of (lower) points in the partitions \(\mathcal{NC}(0,n)\), we fix for all the remaining part of the chapter a few other numbers:
\newline
Let \(r\) be  a number with \(0\le r<n\) and let \(s\in\N_0\) be such that either \(2s=r\) or \(2s+1=r\). We start by defining subsets \(W(n,r)\subseteq \mathcal{NC}(0,n)\).
\begin{defn}\label{defn:W(n,r)}
Let \(0\le r<n\) and \(s\in\N_0\) such that \(r=2s\) or \(r=2s+1\). We define \(W(n,r)\subseteq \mathcal{NC}(0,n)\) by the following properties:
\begin{itemize}
\item[(i)] If \(r=2s\), then the \(s\) leftmost points of a partition in \(W(n,r)\) are no singletons and the \(s+1\) leftmost points belong to pairwise different blocks.
\item[(ii)] If \(r=2s+1\), then the \(s+1\) leftmost points of a partition in \(W(n,r)\) are no singletons and belong to pairwise different blocks.
\end{itemize}
We further define \(W(n,n):=\emptyset\).
\end{defn}
Note that for \(r=n\) the conditions (i) and (ii) from above cannot be fulfilled, so we can extend Definition \ref{defn:W(n,r)} by \(W(n,n):=\emptyset\subseteq\mathcal{NC}(0,n)\).
\newline
For \(r=0\) there is no condition at all, so \(W(n,0)=\mathcal{NC}(0,n)\).
Obviously, it holds \(W(n,n)\subseteq W(n,n-1)\subseteq\ldots\subseteq W(n,1)\subseteq W(n,0)\).
\newline
If \(r=2s\!+\!1\), the basic structure of an element \(p\in W(n,r)\) is the following:
\begin{equation}\label{eqn:structure_in_W(n,r=2s+1)}
r=2s+1:\quad
\setlength{\unitlength}{0.5cm}
\begin{picture}(16,2.4)
\put(0,0) {$\circ$}
\put(1,0) {$\circ$}
\put(2,0) {$\cdots$}
\put(4,0) {$\circ$}
\put(5,0) {$\circ$}
\put(6,0) {$\circ$}
\put(8,0) {$\square$}
\put(10,0) {$\square$}
\put(12,0) {$\square$}
\put(14,0) {$\square$}
\put(16,0) {$\square$}
\put(0,-0.8) {$1$}
\put(1,-0.8) {$2$}
\put(5,-0.8) {$s$}

\put(0.2,0.5){\line(0,1){1.8}}
\put(1.2,0.5){\line(0,1){1.6}}
\put(4.2,0.5){\line(0,1){1.4}}
\put(5.2,0.5){\line(0,1){1.2}}
\put(6.2,0.5){\line(0,1){1}}

\put(8.35,0.6){\line(0,1){0.9}}
\put(10.35,0.6){\line(0,1){1.1}}
\put(12.35,0.6){\line(0,1){1.3}}
\put(14.35,0.6){\line(0,1){1.5}}
\put(016.35,0.6){\line(0,1){1.7}}

\put(6.2,1.5){\line(1,0){2.15}}
\put(5.2,1.7){\line(1,0){5.15}}
\put(4.2,1.9){\line(1,0){8.15}}
\put(1.2,2.1){\line(1,0){13.15}}
\put(0.2,2.3){\line(1,0){16.15}}

\end{picture}
\vspace{8pt}
\end{equation}
Recall from the preliminaries that the \(\square\)'s symbolize arbitrary non-empty substructures that we cannot or at least do not specify.
For example, the rightmost square might actually be given by
\[
\setlength{\unitlength}{0.5cm}
\raisebox{-0.5cm}{
\begin{picture}(2,3)
\put(0,0) {$\square$}
\put(0.35,0.6){\line(0,1){2.4}}
\put(-1,3){\line(1,0){1.35}}
\put(-2.3,2.78){$\cdots$}
\end{picture}
}
=\quad\quad\quad\quad
\raisebox{-0.5cm}{
\begin{picture}(14,4.3)
\put(0,0) {$\circ$}
\put(1,0) {$\circ$}
\put(2,0) {$\circ$}
\put(3,0) {$\circ$}
\put(4,0) {$\circ$}
\put(5,0) {$\circ$}
\put(6,0) {$\circ$}
\put(7,0) {$\circ$}
\put(8,0) {$\circ$}
\put(9,0) {$\circ$}
\put(10,0) {$\circ$}
\put(11,0) {$\circ$}
\put(12,0) {$\circ$}
\put(13,0) {$\circ$}

\put(0.2,0.5){\line(0,1){1.5}}
\put(1.2,0.5){\line(0,1){1.5}}
\put(2.2,0.5){\line(0,1){1.2}}
\put(3.2,0.5){\line(0,1){0.9}}
\put(4.2,0.5){\line(0,1){1.2}}
\put(5.2,0.5){\line(0,1){1.5}}
\put(6.2,0.5){\line(0,1){1.2}}
\put(7.2,0.5){\line(0,1){1.2}}
\put(8.2,0.5){\line(0,1){0.9}}
\put(9.2,0.5){\line(0,1){0.9}}
\put(10.2,0.5){\line(0,1){0.9}}
\put(11.2,0.5){\line(0,1){1.2}}
\put(12.2,0.5){\line(0,1){0.9}}
\put(13.2,0.5){\line(0,1){0.9}}

\put(0.2,2){\line(1,0){5}}
\put(2.2,1.7){\line(1,0){2}}
\put(6.2,1.7){\line(1,0){5}}
\put(8.2,1.4){\line(1,0){1}}

\put(8.7,1.7){\line(0,1){1.3}}
\put(-1,3){\line(1,0){9.7}}
\put(-2.3,2.78){$\cdots$}
\end{picture}
}.
\]
We only know that one of the blocks inside a square is connected to exactly one of the \(s+1\) leftmost points.
By non-crossingness and the properties of \(p\in W(n,r)\), this point amongst the \(s\!+\!1\) leftmost points is uniquely determined and it is of course the one indicated in picture \ref{eqn:structure_in_W(n,r=2s+1)}.
Note that there is some ambiguity in the definition of the substructures \(\square\) as we might have different possibilities where one square ends and the next square begins.
\newline
If \(r\!=\!2s\), the only difference to the structure above is the possibility that the point \(s\!+\!1\) is allowed to be a singleton:

\begin{equation}\label{eqn:structure_in_W(n,r=2s)}
r=2s:\quad\quad
\setlength{\unitlength}{0.5cm}
\begin{picture}(16,2.4)
\put(0,0) {$\circ$}
\put(1,0) {$\circ$}
\put(2,0) {$\cdots$}
\put(4,0) {$\circ$}
\put(5,0) {$\circ$}
\put(6,0) {$\circ$}
\put(8,0) {$\square$}
\put(7.95,0.3) {\linethickness{0.1cm}\color{white}\line(1,0){0.7}}
\put(8.35,-0.05) {\linethickness{0.1cm}\color{white}\line(0,1){0.7}}
\put(10,0) {$\square$}
\put(12,0) {$\square$}
\put(14,0) {$\square$}
\put(16,0) {$\square$}
\put(0,-0.8) {$1$}
\put(1,-0.8) {$2$}
\put(5,-0.8) {$s$}
\put(0.2,0.5){\line(0,1){1.8}}
\put(1.2,0.5){\line(0,1){1.6}}
\put(4.2,0.5){\line(0,1){1.4}}
\put(5.2,0.5){\line(0,1){1.2}}
\put(6.2,0.5){\line(0,1){1}}
\multiput(8.35,0.6)(0,0.3){3}{\line(0,1){0.2}}
\put(10.35,0.6){\line(0,1){1.1}}
\put(12.35,0.6){\line(0,1){1.3}}
\put(14.35,0.6){\line(0,1){1.5}}
\put(016.35,0.6){\line(0,1){1.7}}
\multiput(6.2,1.5)(0.45,0){5}{\line(1,0){0.3}}
\put(5.2,1.7){\line(1,0){5.15}}
\put(4.2,1.9){\line(1,0){8.15}}
\put(1.2,2.1){\line(1,0){13.15}}
\put(0.2,2.3){\line(1,0){16.15}}
\end{picture}
\vspace{8pt}
\end{equation}
Note that the dashed structure might be empty, which is the case of \(s\!+\!1\) being a singleton.
\vspace{11pt}\newline
It is easy to check that the cardinality of \(W(n,n-1)\) is one:
\begin{lem}\label{lem:W(n,n-1)_contains_only_one_element}
For every \(n\!\in\!\N\) and \(r\!=\!n-1\) the set \(W(n,r)\) contains only the following element (depending on the parity of \(r\)):
\begin{align}
r=2s+1&:\quad\label{eqn:the_element_in_W(n,n-1=2s+1)}
\setlength{\unitlength}{0.5cm}
\begin{picture}(16,2.4)
\put(0,0) {$\circ$}
\put(1,0) {$\circ$}
\put(2,0) {$\cdots$}
\put(4,0) {$\circ$}
\put(5,0) {$\circ$}
\put(6,0) {$\circ$}
\put(7,0) {$\circ$}
\put(8,0) {$\circ$}
\put(9,0) {$\circ$}
\put(10,0) {$\cdots$}
\put(12,0) {$\circ$}
\put(13,0) {$\circ$}
\put(0,-0.8) {$1$}
\put(1,-0.8) {$2$}
\put(5,-0.8) {$s$}
\put(0.2,0.5){\line(0,1){1.8}}
\put(1.2,0.5){\line(0,1){1.6}}
\put(4.2,0.5){\line(0,1){1.4}}
\put(5.2,0.5){\line(0,1){1.2}}
\put(6.2,0.5){\line(0,1){1}}
\put(7.2,0.5){\line(0,1){1}}
\put(8.2,0.5){\line(0,1){1.2}}
\put(9.2,0.5){\line(0,1){1.4}}
\put(12.2,0.5){\line(0,1){1.6}}
\put(13.2,0.5){\line(0,1){1.8}}
\put(6.2,1.5){\line(1,0){1}}
\put(5.2,1.7){\line(1,0){3}}
\put(4.2,1.9){\line(1,0){5}}
\put(1.2,2.1){\line(1,0){11}}
\put(0.2,2.3){\line(1,0){13}}
\end{picture}
\\[19pt]
r=2s&:\quad\label{eqn:the_element_in_W(n,n-1=2s)}
\setlength{\unitlength}{0.5cm}
\begin{picture}(16,2.4)
\put(0,0) {$\circ$}
\put(1,0) {$\circ$}
\put(2,0) {$\cdots$}
\put(4,0) {$\circ$}
\put(5,0) {$\circ$}
\put(6,0) {$\circ$}
\put(7,0) {$\circ$}
\put(8,0) {$\circ$}
\put(9,0) {$\cdots$}
\put(11,0) {$\circ$}
\put(12,0) {$\circ$}
\put(0,-0.8) {$1$}
\put(1,-0.8) {$2$}
\put(5,-0.8) {$s$}
\put(0.2,0.5){\line(0,1){1.8}}
\put(1.2,0.5){\line(0,1){1.6}}
\put(4.2,0.5){\line(0,1){1.4}}
\put(5.2,0.5){\line(0,1){1.2}}
\put(6.2,0.5){\line(0,1){1}}
\put(7.2,0.5){\line(0,1){1.2}}
\put(8.2,0.5){\line(0,1){1.4}}
\put(11.2,0.5){\line(0,1){1.6}}
\put(12.2,0.5){\line(0,1){1.8}}
\put(5.2,1.7){\line(1,0){2}}
\put(4.2,1.9){\line(1,0){4}}
\put(1.2,2.1){\line(1,0){10}}
\put(0.2,2.3){\line(1,0){12}}
\end{picture}
\end{align}\vspace{8pt}
\end{lem}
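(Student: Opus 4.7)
The plan is to argue in both parity cases of $r=n-1$ by a pigeonhole count on the remaining $n-r-1$ points, and then invoke non-crossingness to pin down the unique resulting configuration.

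First I would treat $r=2s+1$, so $n=2s+2$. By the defining conditions of $W(n,r)$, the leftmost $s+1$ points lie in $s+1$ pairwise distinct blocks and none of them is a singleton. Each of these $s+1$ blocks therefore must contain at least one additional point, and all such additional points come from the remaining points $s+2, s+3, \ldots, 2s+2$, of which there are exactly $s+1$. A double count thus forces each of these remaining points to lie in exactly one of the $s+1$ blocks already met on the left, and each of those blocks receives exactly one partner. Hence $p$ is necessarily a pair partition in which each of the first $s+1$ points is paired with exactly one of the last $s+1$ points. A non-crossing pair partition exhibiting this bipartite pairing pattern is unique: by a short induction on $s$, point $1$ must be paired with point $2s+2$ (otherwise its partner would separate $1$ from some unmatched point on its right, forcing a crossing), and the inner $2s$ points then form the same problem of size $s-1$. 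This produces precisely the nested pattern displayed in (\ref{eqn:the_element_in_W(n,n-1=2s+1)}).

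Next I would handle $r=2s$, so $n=2s+1$. Here the leftmost $s$ points are non-singletons and the leftmost $s+1$ points lie in $s+1$ pairwise distinct blocks. The remaining points $s+2, \ldots, 2s+1$ number only $s$, yet each of the $s$ blocks containing points $1,\ldots,s$ needs at least one additional partner. Pigeonhole forces each of the $s$ remaining points to partner with exactly one of the first $s$ points, leaving no point available to join the block of $s+1$; consequently point $s+1$ must be a singleton. The same non-crossing uniqueness argument as in the first case then pairs $i$ with $2s+2-i$ for $i=1,\ldots,s$, yielding exactly the partition displayed in (\ref{eqn:the_element_in_W(n,n-1=2s)}).

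The only real step requiring care is the non-crossing uniqueness of the bipartite pairing between two contiguous intervals of equal length, but this is entirely standard: any non-nested pair must cross one of the other forced pairs because all partners of the first interval lie strictly to the right. A single inductive sentence covers both parity cases uniformly, so the lemma follows once the pigeonhole reduction has been made explicit. I would also record at the end that in both pictures the shape is visibly an element of $W(n,n-1)$, so the containment is actually an equality.
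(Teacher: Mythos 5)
Your argument is correct: the paper dismisses this lemma with ``it is easy to check'' and gives no proof, and your pigeonhole count (each of the forced non-singleton blocks among the leftmost points must absorb exactly one of the remaining $n-s-1$ points, leaving $s+1$ a singleton in the even case) followed by the standard uniqueness of the nested non-crossing pairing between two adjacent intervals is exactly the verification being left implicit. Your closing remark that the displayed partitions do satisfy the defining conditions of $W(n,n-1)$ is also the right thing to record, since the lemma asserts equality and not just an upper bound on the cardinality.
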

\begin{defn}\label{defn:Y(n,r)}
For \(0\le r<n\) we define
\begin{equation}\label{eqn:def_of_Y(n,r)}
Y(n,r):=W(n,r)\backslash W(n,r+1).
\end{equation}
\end{defn}
Comparing the definitions and illustrating pictures for partitions in \(W(n,r)\) and \(W(n,r\!+\!1)\), we can easily describe the partitions inside the sets \(Y(n,r)\):
\begin{lem}\label{lem:elements_in_Y(n,r)}
Let \(0\le r<n\).
\begin{itemize}
\item[(i)] If \(r\!=\!2s\), then \(Y(n,r)\) contains all elements of \(W(n,r)\) such that \(s\!+\!1\) is a singleton:
 \begin{equation}\label{eqn:example_in_Y(n,2s)}
\setlength{\unitlength}{0.5cm}
\begin{picture}(20,2.4)
\put(0,0) {$\circ$}
\put(1,0) {$\circ$}
\put(2,0) {$\cdots$}
\put(4,0) {$\circ$}
\put(5,0) {$\circ$}
\put(6,0) {$\circ$}
\put(10,0) {$\square$}
\put(12,0) {$\square$}
\put(13,0) {$\;\cdots$}
\put(15,0) {$\square$}
\put(17,0) {$\square$}
\put(0,-0.8) {$1$}
\put(1,-0.8) {$2$}
\put(5,-0.8) {$s$}
\put(0.2,0.5){\line(0,1){1.8}}
\put(1.2,0.5){\line(0,1){1.6}}
\put(4.2,0.5){\line(0,1){1.4}}
\put(5.2,0.5){\line(0,1){1.2}}
\put(6.2,0.5){\line(0,1){1}}
\put(10.35,0.6){\line(0,1){1.1}}
\put(12.35,0.6){\line(0,1){1.3}}
\put(15.35,0.6){\line(0,1){1.5}}
\put(17.35,0.6){\line(0,1){1.7}}
\put(5.2,1.7){\line(1,0){5.15}}
\put(4.2,1.9){\line(1,0){8.15}}
\put(1.2,2.1){\line(1,0){14.15}}
\put(0.2,2.3){\line(1,0){17.15}}
\end{picture}
\end{equation}
\vspace{6pt}
\item[(ii)] If \(r\!=\!2s\!+\!1\), then \(Y(n,r)\) contains all elements of \(W(n,r)\) such that \(s\!+\!1\) and \(s\!+\!2\) are connected:
\begin{equation}\label{eqn:example_in_Y(n,2s+1)}
\setlength{\unitlength}{0.5cm}
\begin{picture}(20,2.4)
\put(0,0) {$\circ$}
\put(1,0) {$\circ$}
\put(2,0) {$\cdots$}
\put(4,0) {$\circ$}
\put(5,0) {$\circ$}
\put(6,0) {$\circ$}
\put(7,0) {$\circ$}
\put(9,0) {$\square$}
\put(8.95,0.3) {\linethickness{0.1cm}\color{white}\line(1,0){0.7}}
\put(9.35,-0.05) {\linethickness{0.1cm}\color{white}\line(0,1){0.7}}
\put(11,0) {$\square$}
\put(13,0) {$\square$}
\put(15,0) {$\;\cdots$}
\put(17,0) {$\square$}
\put(19,0) {$\square$}
\put(0,-0.8) {$1$}
\put(1,-0.8) {$2$}
\put(5,-0.8) {$s$}
\put(0.2,0.5){\line(0,1){1.8}}
\put(1.2,0.5){\line(0,1){1.6}}
\put(4.2,0.5){\line(0,1){1.4}}
\put(5.2,0.5){\line(0,1){1.2}}
\put(6.2,0.5){\line(0,1){1}}
\put(7.2,0.5){\line(0,1){1}}
\multiput(9.35,0.6)(0,0.3){3}{\line(0,1){0.2}}
\put(11.35,0.6){\line(0,1){1.1}}
\put(13.35,0.6){\line(0,1){1.3}}
\put(17.35,0.6){\line(0,1){1.5}}
\put(19.35,0.6){\line(0,1){1.7}}
\multiput(7.2,1.5)(0.45,0){5}{\line(1,0){0.3}}
\put(6.2,1.5){\line(1,0){1}}
\put(5.2,1.7){\line(1,0){6.15}}
\put(4.2,1.9){\line(1,0){9.15}}
\put(1.2,2.1){\line(1,0){16.15}}
\put(0.2,2.3){\line(1,0){19.15}}
\end{picture}\vspace{6pt}
\end{equation}
\end{itemize}

Furthermore, we have \(Y(n,n-1)=W(n,n-1)\), as \(W(n,n)\) is empty.
\end{lem}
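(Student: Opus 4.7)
The plan is to exploit the fact that the definition of $W(n,r)$ tightens by exactly one clause each time $r$ increases by one, so an element of $Y(n,r) = W(n,r) \setminus W(n,r+1)$ is precisely an element of $W(n,r)$ that violates the single clause added when passing from $W(n,r)$ to $W(n,r+1)$. Thus the proof will essentially consist of comparing the defining conditions of $W(n,r)$ and $W(n,r+1)$ side by side, together with one non-crossingness argument in case (ii). The inclusion $W(n,r+1) \subseteq W(n,r)$ is immediate from Definition \ref{defn:W(n,r)} in both parity cases and will be noted up front.

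In case (i), $r = 2s$, the conditions defining $W(n,2s)$ require the $s$ leftmost points to be non-singletons and the $s+1$ leftmost points to lie in pairwise distinct blocks, while $W(n,2s+1)$ adds exactly the demand that point $s+1$ is also a non-singleton (the distinct-block condition already holds). Hence $p \in Y(n,2s)$ iff $p \in W(n,2s)$ and point $s+1$ is a singleton, which is precisely claim (i).

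In case (ii), $r = 2s+1$, the step from $W(n,2s+1)$ to $W(n,2s+2)$ adds only the requirement that point $s+2$ lies in a block distinct from those of $1,\ldots,s+1$; no new non-singleton condition appears. Consequently $p \in Y(n,2s+1)$ iff $p \in W(n,2s+1)$ and point $s+2$ shares its block with \emph{some} point in $\{1,\ldots,s+1\}$. The main (and in fact only non-routine) obstacle is upgrading ``some point in $\{1,\ldots,s+1\}$'' to ``precisely point $s+1$''. I would argue by contradiction: suppose point $s+2$ shares a block with $i$ for some $i \le s$. Then point $i+1$ is non-singleton and lies in a block distinct from all of $1,\ldots,i,i+2,\ldots,s+1$, so its block must contain some $j \in \{s+2,\ldots,n\}$; since $s+2$ already belongs to the block of $i \neq i+1$, we have $j \ge s+3$. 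But then $i < i+1 < s+2 < j$ with $i, s+2$ in one block and $i+1, j$ in another, which is a crossing and contradicts $p \in \mathcal{NC}(0,n)$.

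Finally, $Y(n,n-1) = W(n,n-1) \setminus W(n,n) = W(n,n-1)$ follows at once from the convention $W(n,n) = \emptyset$ recorded in Definition \ref{defn:W(n,r)}. Apart from the non-crossing sharpening in case (ii), the entire argument amounts to inspecting the definitions, so I expect no technical difficulties beyond that one step.
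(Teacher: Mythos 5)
Your proof is correct and follows the same route the paper takes, namely a direct comparison of the defining clauses of $W(n,r)$ and $W(n,r+1)$; the paper in fact states the lemma without proof, leaving the comparison to the reader. Your non-crossingness argument in case (ii), showing that the point $s+2$ cannot be connected to any $i\le s$ without forcing a crossing with the block of $i+1$, correctly supplies the one step the paper leaves implicit in its illustrations.
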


\subsubsection{The graphs \(G(p,q)\), \(H_r(p,q)\) and \(r\)-flaws}
\label{subsubsec:The graphs G(p,q), H_r(p,q) and r-flaws}
In the following it will be helpful to look at the vertical concatenation of \(p\) and \(q^*\) (determining \(\langle T_p(1),T_q(1)\rangle=N^{\textnormal{rl}(q^*,p)}\)) as an (undirected) graph with \(2n\) points:
\begin{defn}\label{defn:G(p,q)}
Let \(p,q\in\mathcal{NC}(0,n)\) be two partitions.
Considering \(p,q\) as (undirected) graphs, i.e. the vertices are given by the points and two vertices are adjacent if they are in the same block of the partition, we define the graph \(G(p,q)\) as the direct sum of \(p\) and \(q\) where we additionally add edges between the \(i\)-th point of \(p\) and the \(i\)-th point of \(q\). 
\end{defn}
The graph \(G(p,q)\) exactly describes the situation of the concatenated partitions \(p\) and \(q^*\) \emph{before} erasing the points in the middle and \emph{before} erasing remaining loops, compare the preliminaries.
The number \(\rl(q^*,p)\) is the number of components of \(G(p,q)\).
We usually label the points of \(p\) from left to right by \(1,2,\ldots,n\) and the ones of \(q\) by \(1',2',\ldots,n'\).
\vspace{11pt}
\newline
As an example, consider the partitions 
\[
p=
\setlength{\unitlength}{0.5cm}
\begin{picture}(5,2.4)
\put(0,-0.5) {$\circ$}
\put(1,-0.5) {$\circ$}
\put(2,-0.5) {$\circ$}
\put(3,-0.5) {$\circ$}
\put(4,-0.5) {$\circ$}
\put(0.2,0){\line(0,1){1.4}}
\put(1.2,0){\line(0,1){1.4}}
\put(2.2,0){\line(0,1){1}}
\put(3.2,0){\line(0,1){1}}
\put(4.2,0){\line(0,1){1.4}}
\put(0.2,1.4){\line(1,0){4}}
\put(2.2,1){\line(1,0){1}}
\end{picture}
\quad,\quad
q=
\setlength{\unitlength}{0.5cm}
\begin{picture}(5,2.4)
\put(0,-0.5) {$\circ$}
\put(1,-0.5) {$\circ$}
\put(2,-0.5) {$\circ$}
\put(3,-0.5) {$\circ$}
\put(4,-0.5) {$\circ$}
\put(0.2,0){\line(0,1){1.4}}
\put(1.2,0){\line(0,1){1}}
\put(2.2,0){\line(0,1){1.4}}
\put(3.2,0){\line(0,1){1}}
\put(4.2,0){\line(0,1){1}}
\put(0.2,1.4){\line(1,0){2}}
\put(3.2,1){\line(1,0){1}}
\end{picture}.
\]
Identifying \(G(p,q)\) with its illustrations, we write 
\begin{equation}\label{eqn:example_of_G(p,q)}
G(p,q)=
\setlength{\unitlength}{0.5cm}
\begin{picture}(5,2.4)
\put(0,0.5) {$\circ$}
\put(1,0.5) {$\circ$}
\put(2,0.5) {$\circ$}
\put(3,0.5) {$\circ$}
\put(4,0.5) {$\circ$}
\put(0.2,1){\line(0,1){1.4}}
\put(1.2,1){\line(0,1){1.4}}
\put(2.2,1){\line(0,1){1}}
\put(3.2,1){\line(0,1){1}}
\put(4.2,1){\line(0,1){1.4}}
\put(0.2,2.4){\line(1,0){4}}
\put(2.2,2){\line(1,0){1}}
%lines in the middle:
\put(0.2,-0.4){\line(0,1){0.7}}
\put(1.2,-0.4){\line(0,1){0.7}}
\put(2.2,-0.4){\line(0,1){0.7}}
\put(3.2,-0.4){\line(0,1){0.7}}
\put(4.2,-0.4){\line(0,1){0.7}}
%lower part:
\put(0,-1) {$\circ$}
\put(1,-1) {$\circ$}
\put(2,-1) {$\circ$}
\put(3,-1) {$\circ$}
\put(4,-1) {$\circ$}
\put(0.2,-2.6){\line(0,1){1.4}}
\put(1.2,-2.2){\line(0,1){1}}
\put(2.2,-2.6){\line(0,1){1.4}}
\put(3.2,-2.2){\line(0,1){1}}
\put(4.2,-2.2){\line(0,1){1}}
\put(0.2,-2.6){\line(1,0){2}}
\put(3.2,-2.2){\line(1,0){1}}
\end{picture}\vspace{33pt}.
\end{equation}
\begin{rem}
Recall that two points in an (undirected) graph are adjacent if there is an edge between them and two points are connected if they are in the same component of the graph, i.e. there is a path starting at one of the points and ending at the other.
\newline
Throughout this chapter we are only interested in the connected components of a graphs and not the actual adjacencies of its points. Hence, in many cases we do not distinguish precisely if a line drawn in an illustrating picture is an actual edge in the graph or if it just symbolizes the property that the corresponding points are connected. 
\newline
In this virtue we can say that the lines drawn in a picture like \ref{eqn:example_of_G(p,q)} only define the connectivity properties of a graph but not their realizations via actual edges.
This way each such picture describes an equivalence class of graphs (on the same points), equivalent by the property to have the same components.
\newline
Although we can always refer to Definition \ref{defn:G(p,q)} for a precise definition of the graph \(G(p,q)\), it does not cause any problems in the following to interpret any drawn line as the equivalence relation ``is connected to''.
\end{rem}
\begin{rem}
Note that the illustration of \(G(p,q)\) for non-crossing \(p\) and \(q\) can be drawn in a non-crossing way as well.
\end{rem}
\begin{defn}\label{defn:H_r(p,q)}
Considering the situation and the graph \(G(p,q)\) from Definition \ref{defn:G(p,q)}, we define the graphs \(\big(H_r(p,q)\big)_{1\le r< n}\) by starting with \(G(p,q)\) and erasing the edges \((i,i')\) for \(1\le i\le s\!+\!1\).
\end{defn}
Considering \(p\) and \(q\) as above, we have for example the illustrations
\[
H_3(p,q)=
\setlength{\unitlength}{0.5cm}
\begin{picture}(5,2.4)
\put(0,0.5) {$\circ$}
\put(1,0.5) {$\circ$}
\put(2,0.5) {$\circ$}
\put(3,0.5) {$\circ$}
\put(4,0.5) {$\circ$}
\put(0.2,1){\line(0,1){1.4}}
\put(1.2,1){\line(0,1){1.4}}
\put(2.2,1){\line(0,1){1}}
\put(3.2,1){\line(0,1){1}}
\put(4.2,1){\line(0,1){1.4}}
\put(0.2,2.4){\line(1,0){4}}
\put(2.2,2){\line(1,0){1}}
%lines in the middle:
\put(2.2,-0.4){\line(0,1){0.7}}
\put(3.2,-0.4){\line(0,1){0.7}}
\put(4.2,-0.4){\line(0,1){0.7}}
%lower part:
\put(0,-1) {$\circ$}
\put(1,-1) {$\circ$}
\put(2,-1) {$\circ$}
\put(3,-1) {$\circ$}
\put(4,-1) {$\circ$}
\put(0.2,-2.6){\line(0,1){1.4}}
\put(1.2,-2.2){\line(0,1){1}}
\put(2.2,-2.6){\line(0,1){1.4}}
\put(3.2,-2.2){\line(0,1){1}}
\put(4.2,-2.2){\line(0,1){1}}
\put(0.2,-2.6){\line(1,0){2}}
\put(3.2,-2.2){\line(1,0){1}}
\end{picture}
\quad\quad,\quad\quad
H_4(p,q)=
\setlength{\unitlength}{0.5cm}
\begin{picture}(5,2.4)
\put(0,0.5) {$\circ$}
\put(1,0.5) {$\circ$}
\put(2,0.5) {$\circ$}
\put(3,0.5) {$\circ$}
\put(4,0.5) {$\circ$}
\put(0.2,1){\line(0,1){1.4}}
\put(1.2,1){\line(0,1){1.4}}
\put(2.2,1){\line(0,1){1}}
\put(3.2,1){\line(0,1){1}}
\put(4.2,1){\line(0,1){1.4}}
\put(0.2,2.4){\line(1,0){4}}
\put(2.2,2){\line(1,0){1}}
%lines in the middle:
\put(3.2,-0.4){\line(0,1){0.7}}
\put(4.2,-0.4){\line(0,1){0.7}}
%lower part:
\put(0,-1) {$\circ$}
\put(1,-1) {$\circ$}
\put(2,-1) {$\circ$}
\put(3,-1) {$\circ$}
\put(4,-1) {$\circ$}
\put(0.2,-2.6){\line(0,1){1.4}}
\put(1.2,-2.2){\line(0,1){1}}
\put(2.2,-2.6){\line(0,1){1.4}}
\put(3.2,-2.2){\line(0,1){1}}
\put(4.2,-2.2){\line(0,1){1}}
\put(0.2,-2.6){\line(1,0){2}}
\put(3.2,-2.2){\line(1,0){1}}
\end{picture}\vspace{33pt}.
\]
Note that in the case \(H_3(p,q)\) we have \(r=3\) and \(s=1\), therefore we removed the \(s+1=2\) edges \((1,1')\) and \((2,2')\).
Likewise we have in the case \(H_4(p,q)\) the values \(r=4\) and \(s=2\), so \(s+1=3\) edges are removed.
\vspace{11pt}
\newline
To define the matrices \(A(n,r)\) from Section \ref{subsec:main_result_and_idea_of_its_proof}, we need the notion of a so called \emph{\(r\)-flaw}. 

\begin{defn}\label{defn:r-flawless_graph}
Let \(p,q\in\mathcal{NC}(0,n)\) and let \(G(p,q)\) be the graph associated to this pair of partitions as defined in Definition \ref{defn:H_r(p,q)}.
We define \(G(p,q)\) to be \emph{\(r\)-flawless} if  the following properties are fulfilled:
\begin{itemize}
\item[(1)] In \(H_r(p,q)\), the points \(1,\ldots,(s+1)\) are disconnected.
\item[(2)] In \(H_r(p,q)\), the points \(1',\ldots,(s+1)'\) are disconnected.
\item[(3)] In \(H_r(p,q)\), the points \(i\) and \(i'\) are connected for all \(1\!\le\!i\!\le\!s\).
\item[(4)] If \(r\) is odd, then we also have  in \(H_r(p,q)\) a path between \((s+1)\) and \((s+1)'\).
\end{itemize}
In all other cases we call \(G(p,q)\) to have an \emph{\(r\)-flaw}.
\end{defn}
A point in the graph \(G(p,q)\) is called a witness for an \(r\)-flaw if existence or absence of paths in \(H_r(p,q)\) between this point and other points contradicts one of the properties (1) -- (4).
\newline
Informally, it is easy to describe what an \(r\)-flawless graph should be:
Consider the picture
\begin{align*}
G(p,q)=\quad&
\setlength{\unitlength}{0.5cm}
\begin{picture}(16,2.4)
\put(0,0) {$\circ$}
\put(1,0) {$\circ$}
\put(2,0) {$\cdots$}
\put(4,0) {$\circ$}
\put(5,0) {$\circ$}
\put(10,0) {$\square$}
\put(12,0) {$\square$}
\put(13,0) {$\;\cdots$}
\put(15,0) {$\square$}
\put(17,0) {$\square$}
\put(0.2,0.5){\line(0,1){1.8}}
\put(1.2,0.5){\line(0,1){1.6}}
\put(4.2,0.5){\line(0,1){1.4}}
\put(5.2,0.5){\line(0,1){1.2}}
%%
%%diesen Teil nicht zum spiegeln kopieren
%%
\put(0.2,-0.9){\line(0,1){0.8}}
\put(1.2,-0.9){\line(0,1){0.8}}
\put(4.4,-0.4){\scalebox{0.8}{s}}
\put(4.2,-0.9){\line(0,1){0.8}}
\put(5.2,-0.9){\line(0,1){0.8}}
\put(10.3,-0.9){\line(0,1){0.8}}
\put(12.3,-0.9){\line(0,1){0.8}}
\put(15.3,-0.9){\line(0,1){0.8}}
\put(17.3,-0.9){\line(0,1){0.8}}
%%
%% hier wieder kopieren
%%
\put(10.35,0.6){\line(0,1){1.1}}
\put(12.35,0.6){\line(0,1){1.3}}
\put(15.35,0.6){\line(0,1){1.5}}
\put(17.35,0.6){\line(0,1){1.7}}
\put(5.2,1.7){\line(1,0){5.15}}
\put(4.2,1.9){\line(1,0){8.15}}
\put(1.2,2.1){\line(1,0){14.15}}
\put(0.2,2.3){\line(1,0){17.15}}
\end{picture}
\hspace{-8.15cm}%%%%%%%%%%%%%%%%%%%%%%%%%%%%%%%%%%%%%%%%%%%%%%%%%zweites Bild
\raisebox{-0.5cm}{\reflectbox{\scalebox{-1}{
\begin{picture}(16,2.4)
\put(0,0) {$\circ$}
\put(1,0) {$\circ$}
\put(2,0) {$\cdots$}
\put(4,0) {$\circ$}
\put(5,0) {$\circ$}
\put(10,0) {$\square$}
\put(12,0) {$\square$}
\put(13,0) {$\;\cdots$}
\put(15,0) {$\square$}
\put(17,0) {$\square$}
\put(0.2,0.5){\line(0,1){1.8}}
\put(1.2,0.5){\line(0,1){1.6}}
\put(4.2,0.5){\line(0,1){1.4}}
\put(5.2,0.5){\line(0,1){1.2}}
\put(10.35,0.6){\line(0,1){1.1}}
\put(12.35,0.6){\line(0,1){1.3}}
\put(15.35,0.6){\line(0,1){1.5}}
\put(17.35,0.6){\line(0,1){1.7}}
\put(5.2,1.7){\line(1,0){5.15}}
\put(4.2,1.9){\line(1,0){8.15}}
\put(1.2,2.1){\line(1,0){14.15}}
\put(0.2,2.3){\line(1,0){17.15}}
\end{picture}
}}}\quad\quad\quad.
\end{align*}
Firstly, no point \(1\le i\le (s+1)\) is connected to any of the other points \(1\le j\le (s+1)\).
It is easy to see that this property is equivalent to items (1) and (2) above.
Secondly, we have a ``path'' between \(i\) and \(i'\) that does not use the line \((i,i')\), proving item (3).
Even for odd \(r\) the graph above is \(r\)-flawless, compare item (4), as we also have such a path between \((s+1)\) and \((s+1)'\).
\vspace{11pt}\newline
\newline
Note that \(0\)-flaws do not exist in the sense that conditions (1) -- (4) from Definition \ref{defn:r-flawless_graph} are always fulfilled if \(r\!=\!s\!=\!0\);
excluding graphs with \(0\)-flaws is just a way to consider all possible \(G(p,q)\).

\subsubsection{The matrices \(A(n,r)\) and \(B(n,r)\)}
\begin{defn}\label{defn:A(n,r)}
Let \(p,q\in\mathcal{NC}(0,n)\). We define the matrix \(A(n,r)\) by
\[A(n,r):=\Big(e_r(p,q)\Big)_{p,q\in W(n,r)}\]
where
\[e_r(p,q):=
\begin{cases}
0&\textnormal{, \(G(p,q)\) has an \(r\)-flaw}\\
N^{\rl(q^*,p)}& \textnormal{, else.}
\end{cases}\]
\end{defn}
\begin{rem}
\begin{itemize}
\item[(a)] Note that for the definition of the matrix \(A(n,r)\) and its entries \(e_r(p,q)\) the basis \(N\) above is just a parameter.
It could be replaced by (more or less)  every other complex number, but, in order to establish the connection to our main result, Theorem \ref{thm:main_result:linear_independence_of_T_ps}, we have to choose this parameter to be the natural number \(N\ge 4\) fixed at the beginning of Section \ref{sec:linear_independence_in_the_free_case}.
\item[(b)]
Note that \(G(p,q)\) would have an \(r\)-flaw if \(\{p,q\}\not\subseteq W(n,r)\).
If conversely \(\{p,q\}\subseteq W(n,r)\), then, by definition, the points \(1,\ldots,s+1\) are pairwise disconnected in \(p\) and \(1',\ldots,(s+1)'\) are pairwise disconnected in \(q\).
In general, it is not clear whether the same holds after constructing \(G(p,q)\), but it is guaranteed by \(r\)-flawlessness.
\end{itemize}
\end{rem}
\begin{defn}\label{defn:B(n,r)}
Consider the set of partitions \(Y(n,r)\) as defined in Equation \ref{eqn:def_of_Y(n,r)}.
We define the matrix \(B(n,r)\) to be the submatrix of \(A(n,r)\) obtained by reducing \(A(n,r)\) to the rows and columns indexed by elements in \(Y(n,r)\).
\end{defn}
We now prove a connection between the matrices \(B(n,r)\) and suitable matrices \(A(n-1,r')\).
\begin{lem}[odd case]\label{lem:connection:between_B(n,r)-and_A(n-1,r-1)_r=2s+1}
Let \(0\!<\! r\!=\!2s\!+\!1\!<\!n\!-\!1\). Then, modulo row and column permutations, the matrix \(B(n,r)\) is equal to the matrix \(A(n-1,r-1)\).
\end{lem}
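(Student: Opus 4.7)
The plan is to construct a bijection $\phi\colon Y(n,2s+1)\to W(n-1,2s)$ that identifies the rows and columns of $B(n,2s+1)$ with those of $A(n-1,2s)$ and that preserves the entries. Given $p\in Y(n,2s+1)$, the characterisation in Lemma~\ref{lem:elements_in_Y(n,r)}(ii), together with non-crossingness and the fact that the points $1,\ldots,s+1$ lie in pairwise distinct blocks of $p$, forces the point $s+2$ to lie in the very same block as $s+1$. I will define $\phi(p)=p'$ by deleting the point $s+2$ and relabelling. One checks readily that $p'\in W(n-1,2s)$, because the points $1,\ldots,s$ remain non-singletons in pairwise distinct blocks while the new $(s+1)$-th point lies in a block distinct from the first $s$ and may now be a singleton (which is permitted by Definition~\ref{defn:W(n,r)}(i)). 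The inverse of $\phi$ inserts a fresh point immediately to the right of $s+1$ and attaches it to the block of $s+1$.

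To complete the proof it suffices to verify, for all $p,q\in Y(n,2s+1)$ with images $p',q'$, the entry identity $e_{2s+1}(p,q)=e_{2s}(p',q')$. By Definition~\ref{defn:A(n,r)} this splits into: (i)~$G(p,q)$ is $(2s+1)$-flawless if and only if $G(p',q')$ is $(2s)$-flawless; and (ii)~when both are flawless, $\rl(q^*,p)=\rl(q'^*,p')$. The key observation is a canonical identification $H_{2s}(p',q')=H_{2s+1}(p,q)\setminus\{s+2,(s+2)'\}$ as induced subgraphs (after relabelling): both graphs erase the same $s+1$ vertical edges $(i,i')$ for $1\le i\le s+1$, and the $p'$-edges respectively $q'$-edges are exactly those of $p,q$ not incident to the deleted points. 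Moreover, the hypothesis $p,q\in Y(n,2s+1)$ places the path $s+1\to s+2\to(s+2)'\to(s+1)'$ inside $H_{2s+1}(p,q)$, which both renders flawlessness condition~(4) automatic and shows that the component of $s+2$ in $H_{2s+1}(p,q)$ always contains the quadruple $\{s+1,s+2,(s+1)',(s+2)'\}$.

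Claim~(ii) is the easier half because one works with $G(p,q)$ rather than $H_{2s+1}(p,q)$: the vertical edge $(s+1,(s+1)')$ is still present, so deleting $s+2,(s+2)'$ keeps $s+1$ and $(s+1)'$ in the same component, and since blocks of $p$ and $q$ induce complete subgraphs any path passing through the deleted vertices can be rerouted through $s+1,(s+1)'$ or through a shared block partner, preserving the component count. For claim~(i), the forward implication is immediate from the induced-subgraph relation. The main obstacle will be the reverse implication, where one must rule out paths in $H_{2s+1}(p,q)$ that use $s+2$ or $(s+2)'$ as shortcuts between vertices otherwise disconnected in $H_{2s}(p',q')$. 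The resolution is to argue by contradiction: such a shortcut, together with the component description of $s+2$ above, would place a forbidden point $i\le s$ either in the $H_{2s}(p',q')$-component of $s+1$ or of $(s+1)'$; combined with condition~(3) for $G(p',q')$ this contradicts condition~(1) or~(2). Under the flawlessness hypothesis no such rerouting exists, so the three flaw conditions transfer cleanly through the bijection and the entry equality follows.
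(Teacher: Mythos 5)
Your proposal follows essentially the same route as the paper: the same bijection (delete the point $s+2$ and its primed partner to identify $Y(n,2s+1)$ with $W(n-1,2s)$), the same verification that the component count $\rl$ is unchanged because the deleted pair stays attached to $s+1$ and $(s+1)'$, and the same reduction of the entry identity to a correspondence between $(2s+1)$-flaws and $(2s)$-flaws. If anything your treatment of the flaw correspondence is more careful than the paper's, since you explicitly rule out shortcuts through the deleted vertices; note only that the ``forward'' implication is not entirely immediate from the induced-subgraph relation either, because condition (3) of flawlessness asks that the path joining $i$ and $i'$ \emph{survive} the deletion --- but this follows from the very same shortcut argument, as under flawlessness of $G(p,q)$ no such path may meet the component of $s+2$.
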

\begin{proof}
Recall that the rows and columns of \(A(n\!-\!1,r\!-\!1)\) are labelled by elements in \(W(n\!-\!1,r\!-\!1)\), while those of \(B(n,r)\) are labelled by elements in \(Y(n,r)=W(n,r+1)\backslash W(n,r)\).
See also Definition \ref{defn:W(n,r)} and Lemma \ref{lem:elements_in_Y(n,r)}, where the elements in \(W(n,r)\) and \(Y(n,r)\), respectively, are characterised.
\newline
We show the desired equality in three steps.
\begin{itemize}
\item [(1)]
In Step 1 we establish a bijection \(\alpha\) between the sets \(Y(n,r)\) and \(W(n-1,r-1)\) by removing suitable points in the partitions \(p\in Y(n,r)\).
Doing so, we can assume the rows and columns of both matrices to be labelled equally.
\item [(2)]
In Step 2 we investigate how the replacement \(p\mapsto \alpha(p)\) affects the number of components \(\rl(p,q)\) in the graph \(G(p,q)\).
\item [(3)]
In Step 3 we finally check that \(A(n\!-\!1,r\!-\!1)\) and \(B(n,r)\) have the same zero-entries, i.e. we prove that the application of \(\alpha\) replaces every  Graph \(G(p,q)\) which has an \(r\)-flaw with a graph \(G(\alpha(p),\alpha(q))\) which has an \((r\!-\!1)\)-flaw.
\end{itemize}
\textbf{Step 1: \(Y(n,r)\simeq W(n\!-\!1,r\!-\!1)\): }
Consider \(p,q\in Y(n,r)\).
For odd \(r\) they have a structure as displayed in Picture \ref{eqn:example_in_Y(n,2s+1)}.
In particular, we have in \(p\) that \(s+1\) is connected to \(s+2\) and in \(q\) we have that \((s+1)'\) and \((s+2)'\) are connected.
Deleting \(s+2\) and \((s+2)'\) from \(p\) and \(q\), respectively, defines two elements \(\alpha(p),\alpha(q)\in W(n-1,r-1)\).
Evidently, the map \(\alpha:Y(n,r)\rightarrow W(n\!-\!1,r\!-\!1)\) is bijective: 
We can start with  any partition \(p'\in W(n\!-\!1,r\!-\!1)\), add a point after \(s+1\) and connect it to \(s+1\).
The resulting partition as a preimage of \(p'\) under \(\alpha\).
\newline 
By definition, rows and columns of \(B(n,r)\) and \(A(n\!-\!1,r\!-\!1)\) are labelled by elements in \(Y(n,r)\) and \(W(n\!-\!1,r\!-\!1)\), respectively.
Identifying \(Y(n,r)\) and \(W(n\!-\!1,r\!-\!1)\) via the above \(\alpha\), we can, after a rearrangement, assume that the rows and columns of \(B(n,r)\) and \(A(n\!-\!1,r\!-\!1)\) are labelled equally.
It remains to show that their entries are the same, i.e. \(e_r(p,q)=e_{r-1}\big(\alpha(p),\alpha(q)\big)\), compare Definition \ref{defn:A(n,r)}.
\newline
\textbf{Step 2: \(\rl(q^*,p)=\rl\big(\alpha(q)^*,\alpha(p)\big)\):}
We compare the graphs \(G(p,q)\) and \(G\big(\alpha(p),\alpha(q)\big)\):
\begin{align}%%Bild von G(p,q) und G(\alpha(p),\alpha(q))) Teil 1
\label{eqn:G(p,q)_for_W(n,r=2s+1)}
G(p,q)=\;&
\setlength{\unitlength}{0.5cm}
\begin{picture}(18,2.4)
\put(0,0) {$\circ$}
\put(1,0) {$\circ$}
\put(2,0) {$\cdots$}
\put(4,0) {$\circ$}
\put(5,0) {$\circ$}
\put(6,0) {$\circ$}
\put(8,0) {$\circ$}
\put(10,0) {$\square$}
\put(9.95,0.3) {\linethickness{0.1cm}\color{white}\line(1,0){0.7}}
\put(10.35,-0.05) {\linethickness{0.1cm}\color{white}\line(0,1){0.7}}
\put(12,0) {$\square$}
\put(14,0) {$\square$}
\put(15,0) {$\;\cdots$}
\put(17,0) {$\square$}
\put(19,0) {$\square$}
\put(0.2,0.5){\line(0,1){1.8}}
\put(1.2,0.5){\line(0,1){1.6}}
\put(4.2,0.5){\line(0,1){1.4}}
\put(5.2,0.5){\line(0,1){1.2}}
\put(6.2,0.5){\line(0,1){1}}
\put(8.2,0.5){\line(0,1){1}}
%%
%%diesen Teil nicht zum spiegeln kopieren
%%
\put(0.2,-0.9){\line(0,1){0.8}}
\put(1.2,-0.9){\line(0,1){0.8}}
\put(4.2,-0.9){\line(0,1){0.8}}
\put(5.2,-0.9){\line(0,1){0.8}}
\put(6.2,-0.9){\line(0,1){0.8}}
\put(6.4,-0.4){\scalebox{0.8}{s+1}}
\put(8.4,-0.4){\scalebox{0.8}{s+2}}
\put(8.2,-0.9){\line(0,1){0.8}}
\put(10.2,-0.7){\scalebox{0.8}{?}}
\put(12.2,-0.7){\scalebox{0.8}{?}}
\put(14.2,-0.7){\scalebox{0.8}{?}}
\put(17.2,-0.7){\scalebox{0.8}{?}}
\put(19.2,-0.7){\scalebox{0.8}{?}}
%%
%% hier wieder kopieren
%%
\multiput(10.35,0.6)(0,0.3){3}{\line(0,1){0.2}}
\put(12.35,0.6){\line(0,1){1.1}}
\put(14.35,0.6){\line(0,1){1.3}}
\put(17.35,0.6){\line(0,1){1.5}}
\put(19.35,0.6){\line(0,1){1.7}}
\put(6.2,1.5){\line(1,0){2}}
\multiput(8.2,1.5)(0.383,0){6}{\line(1,0){0.3}}
\put(5.2,1.7){\line(1,0){7.15}}
\put(4.2,1.9){\line(1,0){10.15}}
\put(1.2,2.1){\line(1,0){16.15}}
\put(0.2,2.3){\line(1,0){19.15}}
\end{picture}
\hspace{-9.15cm}%%%%%%%%%%%%%%%%%%%%%%%%%%%%%%%%%%%%%%%%%%%%%%%%%zweites Bild
\raisebox{-0.5cm}{\reflectbox{\scalebox{-1}{
\begin{picture}(18,2.4)
\put(0,0) {$\circ$}
\put(1,0) {$\circ$}
\put(2,0) {$\cdots$}
\put(4,0) {$\circ$}
\put(5,0) {$\circ$}
\put(6,0) {$\circ$}
\put(8,0) {$\circ$}
\put(10.6,0) {$\square$}
\put(9.95,0.3) {\linethickness{0.1cm}\color{white}\line(1,0){0.7}}
\put(10.35,-0.05) {\linethickness{0.1cm}\color{white}\line(0,1){0.7}}
\put(11.7,0) {$\square$}
\put(15.5,0) {$\square$}
\put(16.6,0) {$\cdots$}
\put(18.2,0) {$\square$}
\put(19,0) {$\square$}
\put(0.2,0.5){\line(0,1){1.8}}
\put(1.2,0.5){\line(0,1){1.6}}
\put(4.2,0.5){\line(0,1){1.4}}
\put(5.2,0.5){\line(0,1){1.2}}
\put(6.2,0.5){\line(0,1){1}}
\put(8.2,0.5){\line(0,1){1}}
\multiput(10.95,0.6)(0,0.3){3}{\line(0,1){0.2}}
\put(12.05,0.6){\line(0,1){1.1}}
\put(15.85,0.6){\line(0,1){1.3}}
\put(18.55,0.6){\line(0,1){1.5}}
\put(19.35,0.6){\line(0,1){1.7}}
\put(6.2,1.5){\line(1,0){2}}
\multiput(8.4,1.5)(0.38,0){7}{\line(1,0){0.3}}
\put(5.2,1.7){\line(1,0){6.85}}
\put(4.2,1.9){\line(1,0){11.65}}
\put(1.2,2.1){\line(1,0){17.35}}
\put(0.2,2.3){\line(1,0){19.15}}
\end{picture}
}}}\quad
\\[0.5cm]%%%%%%%%%%%%%%%%%%%%%%%%%%%%%%%%%%%%%%%%%%%%%%%%%%%%%%%%%%%%%%%%%%%%%%%%%%%%%%%%%\newline
\label{eqn:G(alpha(p),alpha(q))_for_W(n,r=2s+1)}
G\big(\alpha(p),\alpha(q)\big)=\;&
\setlength{\unitlength}{0.5cm}
\begin{picture}(18,2.4)
\put(0,0) {$\circ$}
\put(1,0) {$\circ$}
\put(2,0) {$\cdots$}
\put(4,0) {$\circ$}
\put(5,0) {$\circ$}
\put(6,0) {$\circ$}
\put(10,0) {$\square$}
\put(9.95,0.3) {\linethickness{0.1cm}\color{white}\line(1,0){0.7}}
\put(10.35,-0.05) {\linethickness{0.1cm}\color{white}\line(0,1){0.7}}
\put(12,0) {$\square$}
\put(14,0) {$\square$}
\put(15,0) {$\;\cdots$}
\put(17,0) {$\square$}
\put(19,0) {$\square$}
\put(0.2,0.5){\line(0,1){1.8}}
\put(1.2,0.5){\line(0,1){1.6}}
\put(4.2,0.5){\line(0,1){1.4}}
\put(5.2,0.5){\line(0,1){1.2}}
\put(6.2,0.5){\line(0,1){1}}
%%
%%diesen Teil nicht zum spiegeln kopieren
%%
\put(0.2,-0.9){\line(0,1){0.8}}
\put(1.2,-0.9){\line(0,1){0.8}}
\put(4.2,-0.9){\line(0,1){0.8}}
\put(5.2,-0.9){\line(0,1){0.8}}
\put(6.2,-0.9){\line(0,1){0.8}}
\put(6.4,-0.4){\scalebox{0.8}{s+1}}
\put(10.2,-0.7){\scalebox{0.8}{?}}
\put(12.2,-0.7){\scalebox{0.8}{?}}
\put(14.2,-0.7){\scalebox{0.8}{?}}
\put(17.2,-0.7){\scalebox{0.8}{?}}
\put(19.2,-0.7){\scalebox{0.8}{?}}
%%
%% hier wieder kopieren
%%
\multiput(10.35,0.6)(0,0.3){3}{\line(0,1){0.2}}
\put(12.35,0.6){\line(0,1){1.1}}
\put(14.35,0.6){\line(0,1){1.3}}
\put(17.35,0.6){\line(0,1){1.5}}
\put(19.35,0.6){\line(0,1){1.7}}
\put(6.2,1.5){\line(1,0){2}}
\multiput(8.2,1.5)(0.383,0){6}{\line(1,0){0.3}}
\put(5.2,1.7){\line(1,0){7.15}}
\put(4.2,1.9){\line(1,0){10.15}}
\put(1.2,2.1){\line(1,0){16.15}}
\put(0.2,2.3){\line(1,0){19.15}}
\end{picture}
\hspace{-9.15cm}%%%%%%%%%%%%%%%%%%%%%%%%%%%%%%%%%%%%%%%%%%%%%%%%%zweites Bild
\raisebox{-0.5cm}{\reflectbox{\scalebox{-1}{
\begin{picture}(18,2.4)
\put(0,0) {$\circ$}
\put(1,0) {$\circ$}
\put(2,0) {$\cdots$}
\put(4,0) {$\circ$}
\put(5,0) {$\circ$}
\put(6,0) {$\circ$}
\put(10.6,0) {$\square$}
\put(9.95,0.3) {\linethickness{0.1cm}\color{white}\line(1,0){0.7}}
\put(10.35,-0.05) {\linethickness{0.1cm}\color{white}\line(0,1){0.7}}
\put(11.7,0) {$\square$}
\put(15.5,0) {$\square$}
\put(16.6,0) {$\cdots$}
\put(18.2,0) {$\square$}
\put(19,0) {$\square$}
\put(0.2,0.5){\line(0,1){1.8}}
\put(1.2,0.5){\line(0,1){1.6}}
\put(4.2,0.5){\line(0,1){1.4}}
\put(5.2,0.5){\line(0,1){1.2}}
\put(6.2,0.5){\line(0,1){1}}
\multiput(10.95,0.6)(0,0.3){3}{\line(0,1){0.2}}
\put(12.05,0.6){\line(0,1){1.1}}
\put(15.85,0.6){\line(0,1){1.3}}
\put(18.55,0.6){\line(0,1){1.5}}
\put(19.35,0.6){\line(0,1){1.7}}
\put(6.2,1.5){\line(1,0){2}}
\multiput(8.4,1.5)(0.38,0){7}{\line(1,0){0.3}}
\put(5.2,1.7){\line(1,0){6.85}}
\put(4.2,1.9){\line(1,0){11.65}}
\put(1.2,2.1){\line(1,0){17.35}}
\put(0.2,2.3){\line(1,0){19.15}}
\end{picture}
}}}\quad
\end{align}
The symbols ? and the different positions of the \(\square\,\)-structures indicate that we do not know and do not care how the block structures of \(p\) and \(q\) actually look like in these parts of the graphs.
\newline
It is clear that \(G(p,q)\) and \(G\big(\alpha(p),\alpha(q)\big)\) have the same number of components as in \(p\) the point \(s+2\) was connected to \(s+1\) and in \(q\) the point \((s+2)'\) was connected to \((s+1)'\).
We conclude that the values \(\rl(q^*,p)\) and \(\rl\big(\alpha(q)^*,\alpha(p)\big)\) are the same.
\newline
\textbf{Step 3: \(r\)-flaws vs. \((r\!-\!1)\)-flaws:}
The proof is finished if we can show 
\[e_r(p,q)=e_{r-1}\big(\alpha(p),\alpha(q)\big),\]
but, despite Step 2, this is not yet guaranteed: On the left side we have to check whether \(G(p,q)\) has an \(r\)-flaw and on the right side we have to check whether \(G\big(\alpha(p),\alpha(q)\big)\) has an \((r\!-\!1)\)-flaw, see Definition \ref{defn:A(n,r)}.
We have to prove these two conditions to be equivalent.
The only problematic part of the above equivalence is the statement that an \(r\)-flaw in \(G(p,q)\) implies an \((r\!-\!1)\)-flaw in \(G\big(\alpha(p),\alpha(q)\big)\).
\newline 
Consider a graph \(G(p,q)\) with an \(r\)-flaw and assume \(G\big(\alpha(p),\alpha(q)\big)\) to be \((r\!-\!1)\)-flawless.
Let \(i\in\{1,\ldots,s+1\}\cup\{1',\ldots,(s+1)'\}\) be a point in \(G(p,q)\) that is a witness of the \(r\)-flaw of \(G(p,q)\).
The only possibility for \(i\) to be \emph{not} a witness of an \((r\!-\!1)\)-flaw in \(G\big(\alpha(p),\alpha(q)\big)\) is to be equal to \((s\!+\!1)\) and to be disconnected from \((s\!+\!1)'\) in \(H_r(p,q)\) (or the other way around).
But this is a contradiction, as by assumption \(p,q\in Y(n,r)\), so \((s\!+\!1)\) and \((s\!+\!1)'\) are connected via the points \((s\!+\!2)\) and \((s+2)'\), see Picture \ref{eqn:G(p,q)_for_W(n,r=2s+1)}.
We conclude that the assumption above was false and \(G\big(\alpha(p),\alpha(q)\big)\) has an \((r\!-\!1)\)-flaw.
\end{proof}
\begin{lem}[even case]\label{lem:connection:between_B(n,r)-and_A(n-1,r-1)_r=2s_and_notzero}
Let \(0\!<\! r\!=\!2s\!<\!n\!-\!1\).
Then, modulo row and column permutations, the matrix \(B(n,r)\) is equal to the matrix \(N\!\cdot\! A(n\!-\!1,r\!-\!1)\).
\end{lem}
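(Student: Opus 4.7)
The plan is to mirror the three-step strategy used in the proof of Lemma \ref{lem:connection:between_B(n,r)-and_A(n-1,r-1)_r=2s+1}, accounting for the fact that in the present even case the key elementary transformation removes a singleton rather than a pair of connected points, and that this is exactly what produces the factor of $N$.

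First, I will establish a bijection $\alpha : Y(n,2s) \to W(n-1,2s-1)$. By Lemma \ref{lem:elements_in_Y(n,r)}(i), a partition $p \in Y(n,2s)$ is an element of $W(n,2s)$ whose $(s+1)$-th point is a singleton. Define $\alpha(p)$ to be the partition obtained by deleting this singleton. Since $r-1 = 2s-1 = 2(s-1)+1$, the defining conditions for $W(n-1,2s-1)$ demand that the $s$ leftmost points be non-singletons lying in pairwise different blocks, and this follows directly from $p \in W(n,2s)$. The inverse of $\alpha$ simply inserts a singleton as the new $(s+1)$-th point, so $\alpha$ is a bijection. After a common reordering, the index sets of $B(n,2s)$ and $A(n-1,2s-1)$ can be identified.

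Second, I will compare the remaining loops in $G(p,q)$ and $G(\alpha(p), \alpha(q))$. Because $s+1$ is a singleton in $p$ and $(s+1)'$ is a singleton in $q$, the only edge of $G(p,q)$ incident to either of these two vertices is the edge between them. Hence $\{s+1, (s+1)'\}$ forms its own connected component, and deleting these two vertices yields a graph isomorphic to $G(\alpha(p), \alpha(q))$ having exactly one fewer component. Therefore
\[
\rl(q^*, p) \;=\; \rl(\alpha(q)^*, \alpha(p)) + 1,
\]
which accounts precisely for the factor $N$ in the statement.

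Third, I will verify that $G(p,q)$ has an $r$-flaw if and only if $G(\alpha(p), \alpha(q))$ has an $(r-1)$-flaw, which together with the previous step gives $e_r(p,q) = N\cdot e_{r-1}(\alpha(p), \alpha(q))$. In $H_r(p,q)$ the edge $(s+1,(s+1)')$ is removed, so by the singleton property both $s+1$ and $(s+1)'$ become isolated; removing them thus preserves all connectivity among the remaining vertices, and the edges $(i,i')$ deleted in $H_{r-1}(\alpha(p),\alpha(q))$ for $1 \leq i \leq s$ are exactly the non-isolated deletions in $H_r(p,q)$. Conditions (1) and (2) of Definition \ref{defn:r-flawless_graph} then translate directly. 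For the remaining conditions, note that for the even index $r=2s$ condition (3) demands that $i$ and $i'$ be connected in $H_r$ for $1 \leq i \leq s$, while for the odd index $r-1=2(s-1)+1$ the combined conditions (3) and (4) demand exactly the same. I expect the main obstacle to be merely bookkeeping: carefully matching the point-indices before and after deleting the singleton, and confirming that no other subtle change in component structure is introduced by the deletion; once these identifications are made, the flaw equivalence and the loop count fall out immediately.
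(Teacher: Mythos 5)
Your proposal is correct and follows essentially the same three-step route as the paper: the same bijection $\alpha$ deleting the singleton at position $s+1$, the same component count $\rl(q^*,p)=1+\rl(\alpha(q)^*,\alpha(p))$ producing the factor $N$, and the same equivalence of $r$-flaws with $(r-1)$-flaws. The only (harmless) difference is in the last step, where you verify flawlessness by directly matching conditions (1)--(4) of Definition \ref{defn:r-flawless_graph} across the two graphs, whereas the paper argues the one non-trivial implication by contradiction via a witness; both arguments rest on the same observation that $s+1$ and $(s+1)'$ are isolated in $H_r(p,q)$.
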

\begin{proof}
We have to go through the same steps as in the proof of Lemma \ref{lem:connection:between_B(n,r)-and_A(n-1,r-1)_r=2s+1}.
\newline
\textbf{Step 1: \(Y(n,r)\simeq W(n\!-\!1,r\!-\!1)\): }
As \(r\) is even, the point \((s\!+\!1)\) in \(p\in Y(n,r)\) is a singleton, compare Lemma \ref{lem:elements_in_Y(n,r)}.
Erasing this point establishes the correspondence \(\alpha:Y(n,r)\rightarrow W(n\!-\!1,r\!-\!1)\).
It is bijective as obviously a preimage of \(p' \in W(n\!-\!1,r\!-\!1)\) can be constructed by adding a singleton in \(p'\) after the point \(s\).
As in the proof of Lemma \ref{lem:connection:between_B(n,r)-and_A(n-1,r-1)_r=2s+1}, Step 1, this allows us to label the rows and columns of \(B(n,r)\) and \(A(n\!-\!1,r\!-\!1)\) equally.
It remains to show that their entries are the same, i.e. \(e_r(p,q)=e_{r-1}\big(\alpha(p),\alpha(q)\big)\), compare Definition \ref{defn:A(n,r)}.
\newline
\textbf{Step 2: \(\rl(q^*,p)\!=1+\rl\big(\alpha(q)^*,\alpha(p)\big)\):}
We compare the graphs \(G(p,q)\) and \(G\big(\alpha(p),\alpha(q)\big)\):
\begin{align}%%Bild von G(p,q) und G(\alpha(p),\alpha(q))) Teil 2
\label{eqn:G(p,q)_for_W(n,r=2s)}
G(p,q)=\quad&
\setlength{\unitlength}{0.5cm}
\begin{picture}(16,2.4)
\put(0,0) {$\circ$}
\put(1,0) {$\circ$}
\put(2,0) {$\cdots$}
\put(4,0) {$\circ$}
\put(5,0) {$\circ$}
\put(7,0) {$\circ$}
\put(10,0) {$\square$}
\put(12,0) {$\square$}
\put(13,0) {$\;\cdots$}
\put(15,0) {$\square$}
\put(17,0) {$\square$}
\put(0.2,0.5){\line(0,1){1.8}}
\put(1.2,0.5){\line(0,1){1.6}}
\put(4.2,0.5){\line(0,1){1.4}}
\put(5.2,0.5){\line(0,1){1.2}}
\put(7.2,0.5){\line(0,1){1}}
%%
%%diesen Teil nicht zum spiegeln kopieren
%%
\put(0.2,-0.9){\line(0,1){0.8}}
\put(1.2,-0.9){\line(0,1){0.8}}
\put(4.2,-0.9){\line(0,1){0.8}}
\put(5.2,-0.9){\line(0,1){0.8}}
\put(5.4,-0.4){\scalebox{0.8}{s}}
\put(7.2,-0.9){\line(0,1){0.8}}
\put(7.4,-0.4){\scalebox{0.8}{s+1}}
\put(10.2,-0.7){\scalebox{0.8}{?}}
\put(12.2,-0.7){\scalebox{0.8}{?}}
\put(15.2,-0.7){\scalebox{0.8}{?}}
\put(17.2,-0.7){\scalebox{0.8}{?}}
%%
%% hier wieder kopieren
%%
\put(10.35,0.6){\line(0,1){1.1}}
\put(12.35,0.6){\line(0,1){1.3}}
\put(15.35,0.6){\line(0,1){1.5}}
\put(17.35,0.6){\line(0,1){1.7}}
\put(5.2,1.7){\line(1,0){5.15}}
\put(4.2,1.9){\line(1,0){8.15}}
\put(1.2,2.1){\line(1,0){14.15}}
\put(0.2,2.3){\line(1,0){17.15}}
\end{picture}
\hspace{-8.15cm}%%%%%%%%%%%%%%%%%%%%%%%%%%%%%%%%%%%%%%%%%%%%%%%%%zweites Bild
\raisebox{-0.5cm}{\reflectbox{\scalebox{-1}{
\begin{picture}(16,2.4)
\put(0,0) {$\circ$}
\put(1,0) {$\circ$}
\put(2,0) {$\cdots$}
\put(4,0) {$\circ$}
\put(5,0) {$\circ$}
\put(7,0) {$\circ$}
\put(9.7,0) {$\square$}
\put(13.5,0) {$\square$}
\put(14.6,0) {$\cdots$}
\put(16.2,0) {$\square$}
\put(17,0) {$\square$}
\put(0.2,0.5){\line(0,1){1.8}}
\put(1.2,0.5){\line(0,1){1.6}}
\put(4.2,0.5){\line(0,1){1.4}}
\put(5.2,0.5){\line(0,1){1.2}}
\put(7.2,0.5){\line(0,1){1}}
\put(10.05,0.6){\line(0,1){1.1}}
\put(13.85,0.6){\line(0,1){1.3}}
\put(16.55,0.6){\line(0,1){1.5}}
\put(17.35,0.6){\line(0,1){1.7}}
\put(5.2,1.7){\line(1,0){4.85}}
\put(4.2,1.9){\line(1,0){9.65}}
\put(1.2,2.1){\line(1,0){15.35}}
\put(0.2,2.3){\line(1,0){17.15}}
\end{picture}
}}}
\\[0.5cm]%%%%%%%%%%%%%%%%%%%%%%%%%%%%%%%%%%%%%%%%%%%%%%%%%%%%%%%%%%%%%%%%%%%%%%%%%%%%%%%%%\newline
\label{eqn:G(alpha(p),alpha(q))_for_W(n,r=2s)}
G\big(\alpha(p),\alpha(q)\big)=\quad&
\setlength{\unitlength}{0.5cm}
\begin{picture}(16,2.4)
\put(0,0) {$\circ$}
\put(1,0) {$\circ$}
\put(2,0) {$\cdots$}
\put(4,0) {$\circ$}
\put(5,0) {$\circ$}
\put(10,0) {$\square$}
\put(12,0) {$\square$}
\put(13,0) {$\;\cdots$}
\put(15,0) {$\square$}
\put(17,0) {$\square$}
\put(0.2,0.5){\line(0,1){1.8}}
\put(1.2,0.5){\line(0,1){1.6}}
\put(4.2,0.5){\line(0,1){1.4}}
\put(5.2,0.5){\line(0,1){1.2}}
%%
%%diesen Teil nicht zum spiegeln kopieren
%%
\put(0.2,-0.9){\line(0,1){0.8}}
\put(1.2,-0.9){\line(0,1){0.8}}
\put(4.2,-0.9){\line(0,1){0.8}}
\put(5.2,-0.9){\line(0,1){0.8}}
\put(5.4,-0.4){\scalebox{0.8}{s}}
\put(10.2,-0.7){\scalebox{0.8}{?}}
\put(12.2,-0.7){\scalebox{0.8}{?}}
\put(15.2,-0.7){\scalebox{0.8}{?}}
\put(17.2,-0.7){\scalebox{0.8}{?}}
%%
%% hier wieder kopieren
%%
\put(10.35,0.6){\line(0,1){1.1}}
\put(12.35,0.6){\line(0,1){1.3}}
\put(15.35,0.6){\line(0,1){1.5}}
\put(17.35,0.6){\line(0,1){1.7}}
\put(5.2,1.7){\line(1,0){5.15}}
\put(4.2,1.9){\line(1,0){8.15}}
\put(1.2,2.1){\line(1,0){14.15}}
\put(0.2,2.3){\line(1,0){17.15}}
\end{picture}
\hspace{-8.15cm}%%%%%%%%%%%%%%%%%%%%%%%%%%%%%%%%%%%%%%%%%%%%%%%%%zweites Bild
\raisebox{-0.5cm}{\reflectbox{\scalebox{-1}{
\begin{picture}(16,2.4)
\put(0,0) {$\circ$}
\put(1,0) {$\circ$}
\put(2,0) {$\cdots$}
\put(4,0) {$\circ$}
\put(5,0) {$\circ$}
\put(9.7,0) {$\square$}
\put(13.5,0) {$\square$}
\put(14.6,0) {$\cdots$}
\put(16.2,0) {$\square$}
\put(17,0) {$\square$}
\put(0.2,0.5){\line(0,1){1.8}}
\put(1.2,0.5){\line(0,1){1.6}}
\put(4.2,0.5){\line(0,1){1.4}}
\put(5.2,0.5){\line(0,1){1.2}}
\put(10.05,0.6){\line(0,1){1.1}}
\put(13.85,0.6){\line(0,1){1.3}}
\put(16.55,0.6){\line(0,1){1.5}}
\put(17.35,0.6){\line(0,1){1.7}}
\put(5.2,1.7){\line(1,0){4.85}}
\put(4.2,1.9){\line(1,0){9.65}}
\put(1.2,2.1){\line(1,0){15.35}}
\put(0.2,2.3){\line(1,0){17.15}}
\end{picture}
}}}
\end{align}
It is clear that \(G(p,q)\) has one component more than \(G\big(\alpha(p),\alpha(q)\big)\), namely the one containing exactly the two points \((s\!+\!1)\) and \((s\!+\!1)'\), see Equation \ref{eqn:G(p,q)_for_W(n,r=2s)} and \ref{eqn:G(alpha(p),alpha(q))_for_W(n,r=2s)}.
We conclude that the values \(N^{\rl(q^*,p)}\) and \(N\!\cdot\! N^{\rl(\alpha(q)^*,\alpha(p))}\) are the same.
\newline
\textbf{Step 3: \(r\)-flaws vs. \((r\!-\!1)\)-flaws:}
Again, it remains to prove the equality
\[e_r(p,q)=N\cdot e_{r-1}\big(\alpha(p),\alpha(q)\big)\]
and due to Step 2 we only need to show that \(r\)-flaws in \(G(p,q)\) are equivalent to \((r\!-\!1)\)-flaws in \(G\big(\alpha(p),\alpha(q)\big)\).
Note that \(r\!=\!2s\) is even, so \(r\!-\!1=2(s\!-\!1)\!+\!1\) is associated to the increment of \(s\).
\newline
As above, the only non-trivial implication is that an \(r\)-flaw in \(G(p,q)\) implies an \((r\!-\!1)\)-flaw in \(G\big(\alpha(p),\alpha(q)\big)\).
To prove this, consider a witness \(i\in\{1,\ldots,s+1\}\cup\{1',\ldots,(s+1)'\}\) of an \(r\)-flaw in \(G(p,q)\).
Assuming \(G\big(\alpha(p),\alpha(q)\big)\) to be \((r-1)\)-flaw\-less requires \(i\) to be the point \(s+1\) (or \((s+1)'\)) and it must be connected in \(H_r(p,q)\) to another point amongst \(1,\ldots,s\) (or \(1',\ldots,s'\)).
This is again a contradiction, as both \((s+1)\) and \((s+1)'\)  are singletons in \(p\) and \(q\), respectively, so they are singletons in \(H_r(p,q)\), too.
Hence, the assumption above was false and \(G\big(\alpha(p),\alpha(q)\big)\) has an \((r\!-\!1)\)-flaw.
\end{proof}
\begin{lem}\label{lem:connection:between_B(n,0)-and_A(n-1,0)}
Let \(2\!\le\!n\in\N\).
Modulo row and column permutations, the matrix \(B(n,0)\) is equal to the matrix  \(N\!\cdot\! A(n\!-\!1,0)\).
\end{lem}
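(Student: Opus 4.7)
The plan is to adapt the strategy of Lemma \ref{lem:connection:between_B(n,r)-and_A(n-1,r-1)_r=2s_and_notzero} to the boundary case $r=0$ (which corresponds to $s=0$). The same three-step scheme applies and in fact simplifies, because $0$-flaws cannot occur and so no nontrivial flaw comparison will be needed.

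First I will construct a bijection $\alpha \colon Y(n,0) \to W(n-1,0) = \mathcal{NC}(0,n-1)$. By Lemma \ref{lem:elements_in_Y(n,r)}(i) applied with $s=0$, the elements of $Y(n,0)$ are precisely the non-crossing partitions on $n$ points whose leftmost point is a singleton. Erasing that singleton defines $\alpha$, and inserting a singleton as new leftmost point gives its inverse. Using $\alpha$ to identify the index sets, the rows and columns of $B(n,0)$ and $A(n-1,0)$ can be arranged to match; it then remains to compare entries.

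Second, I will compare $G(p,q)$ with $G(\alpha(p),\alpha(q))$ for $p,q \in Y(n,0)$. Because $1$ and $1'$ are both singletons in $p$ and $q$, respectively, the only edge of $G(p,q)$ incident to either of them is the middle edge $(1,1')$, so $\{1,1'\}$ is a connected component on its own. Deleting it is precisely the effect of $\alpha$ on the graph, hence $G(\alpha(p),\alpha(q))$ has exactly one component fewer than $G(p,q)$, giving $\rl(q^*,p) = 1 + \rl(\alpha(q)^*,\alpha(p))$ and therefore $N^{\rl(q^*,p)} = N \cdot N^{\rl(\alpha(q)^*,\alpha(p))}$.

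Finally, I will observe that the Step~3 flaw comparison from the proof of Lemma \ref{lem:connection:between_B(n,r)-and_A(n-1,r-1)_r=2s_and_notzero} becomes vacuous here: as already remarked after Definition \ref{defn:r-flawless_graph}, conditions (1)--(4) are automatically satisfied when $r=s=0$, so no $0$-flaws exist. Both $e_0(p,q)$ and $e_0(\alpha(p),\alpha(q))$ therefore equal the plain powers $N^{\rl(\cdot,\cdot)}$, and combining with the previous step yields $B(n,0) = N \cdot A(n-1,0)$ up to simultaneous permutation of rows and columns. I do not anticipate a serious obstacle; the only point requiring care is to confirm that the boundary case $r=0$ bypasses the flaw machinery entirely, which is immediate from the definitions.
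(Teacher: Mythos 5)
Your proposal is correct and follows essentially the same route as the paper: the same bijection $\alpha$ obtained by deleting the leftmost singletons $1$ and $1'$, the same observation that $\{1,1'\}$ forms its own component of $G(p,q)$ (accounting for the factor $N$), and the same remark that $0$-flaws do not exist, so the entries reduce to plain powers $N^{\rl(\cdot,\cdot)}$.
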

\begin{proof}
We consider the map \(\alpha\) as in the proof of Lemma \ref{lem:connection:between_B(n,r)-and_A(n-1,r-1)_r=2s_and_notzero} and observe that we establish the same one-to-one correspondence as before between \(Y(n,0)\) and \(W(n-1,0)\) by deleting the singletons \(1\) and \(1'\) from \(p\) and \(q\), respectively.
Again the graph \(G(\alpha(p),\alpha(q))\) has one component less then \(G(p,q)\), justifying the additional factor \(N\).
As there are no graphs with zero-flaws we finally have for all \(p,q\!\in\!Y(n,0)\):
\begin{align*} 
\big(B(n,0)\big)_{p,q}=&(A(n,0))_{p,q}\\
=&e_0(p,q)\\
=&N^{\rl(q^*,p)}\\
=&N\cdot N^{\rl(\alpha(q)^*,\alpha(p))}\\
=&N\cdot e_0(\alpha(p),\alpha(q))\\
=&N\cdot\big(A(n\!-\!1,0)\big)_{\alpha(p),\alpha(q)}.
\end{align*}
\end{proof}
%
%
%KEINE AHNUNG WAS DAS HIER SOLL!!!
%\begin{rem}
%Note that obviously \(B(n,n-1\) and \(A(n,n-1\)
%\end{rem}
%
%
%
We summarize the results of the last three lemmata in one proposition.
\begin{prop}\label{prop:connections_between_B(n,r)_and_A(n,r)}
It holds
\[B(n,r)=
\begin{cases}
A(n\!-\!1,r\!-\!1)&,0<r=2s\!+\!1<n-1\\
N\cdot A(n\!-\!1,r\!-\!1)&,0<r=2s<n-1\\
N\cdot A(n-1,0)&,r=0, n\ge2.
\end{cases}
\]
\end{prop}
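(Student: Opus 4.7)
The plan is to observe that this proposition is simply a consolidation of the three lemmas immediately preceding it, each of which handles exactly one of the three cases. Specifically, Lemma \ref{lem:connection:between_B(n,r)-and_A(n-1,r-1)_r=2s+1} treats the odd case $0 < r = 2s+1 < n-1$ and yields $B(n,r) = A(n-1, r-1)$ up to row/column permutation; Lemma \ref{lem:connection:between_B(n,r)-and_A(n-1,r-1)_r=2s_and_notzero} treats the even case $0 < r = 2s < n-1$ and yields $B(n,r) = N \cdot A(n-1, r-1)$; and Lemma \ref{lem:connection:between_B(n,0)-and_A(n-1,0)} handles the boundary case $r = 0$, $n \geq 2$, giving $B(n,0) = N \cdot A(n-1, 0)$. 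Since these three cases are mutually exclusive and exhaust the range of $(n,r)$ considered in the statement, the proposition follows at once by a case distinction on the parity of $r$ (and whether $r = 0$).

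Thus no fresh argument is needed, and there is no genuine obstacle: the real work has already been carried out inside the three lemmas. To recall the common pattern briefly, each of them proceeds by the same three steps. First, a bijection $\alpha : Y(n,r) \to W(n-1, r-1)$ is constructed by deleting a single distinguished point of $p \in Y(n,r)$ (the partner of $s+1$ in the odd case; the singleton at position $s+1$ in the even case; the singleton at position $1$ in the boundary case), which permits us to label rows and columns of $B(n,r)$ and $A(n-1, r-1)$ identically. Second, the effect of this deletion on $\rl(q^*, p)$ is analysed: in the odd case the two deleted points lay in components already present among the remaining points, so no component count changes, whereas in the even and $r=0$ cases exactly one two-element component is removed, producing the factor $N$. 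Third, one verifies that $G(p,q)$ has an $r$-flaw if and only if $G(\alpha(p), \alpha(q))$ has an $(r-1)$-flaw, using the structural description of $Y(n,r)$ recorded in Lemma \ref{lem:elements_in_Y(n,r)}; this guarantees that the zero-pattern of the two matrices matches.

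The only thing to double-check when assembling the cases is that the bookkeeping of $s$ is consistent across the passage from $r$ to $r-1$ (the parity flips, so the role of $s$ shifts by one), but this has been handled inside each lemma. With the three lemmas in hand, the proposition is immediate.
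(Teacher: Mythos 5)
Your proposal matches the paper exactly: the proposition is stated there with the preamble ``We summarize the results of the last three lemmata in one proposition,'' so the intended proof is precisely the case-by-case appeal to Lemmata \ref{lem:connection:between_B(n,r)-and_A(n-1,r-1)_r=2s+1}, \ref{lem:connection:between_B(n,r)-and_A(n-1,r-1)_r=2s_and_notzero} and \ref{lem:connection:between_B(n,0)-and_A(n-1,0)} that you give. Your summary of the three-step pattern inside those lemmata (bijection $\alpha$, component count, flaw equivalence) is also faithful to the paper's arguments.
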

In the case \(r=n\!-\!1\) the set \(Y(n,r)=Y(n,n\!-\!1)=W(n,n-1)\) contains only one element, namely the one displayed in Lemma \ref{lem:W(n,n-1)_contains_only_one_element}. So we directly get the following result:
\begin{prop}\label{prop:B(n,n-1)=A(n,n-1)_is_a_complex_number}
For \(n\in\N\) we have
\[B(n,n\!-\!1)=A(n,n\!-\!1)=N^{\left\lceil\frac{n}{2}\right\rceil}\!\in\! M_1(\C).\]
In particular, it holds 
\[\det\big(B(n,n\!-\!1)\big)=\det\big(A(n,n\!-\!1)\big)=N^{\left\lceil\frac{n}{2}\right\rceil}.\]
\end{prop}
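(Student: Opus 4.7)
The plan is to compute the single entry of both matrices directly. Since $W(n,n)$ is empty by Definition \ref{defn:W(n,r)}, we have $Y(n,n-1) = W(n,n-1) \setminus W(n,n) = W(n,n-1)$, which by Lemma \ref{lem:W(n,n-1)_contains_only_one_element} consists of a single partition $p_0$. Consequently, $B(n,n-1) = A(n,n-1)$ is a $1 \times 1$ matrix whose unique entry is $e_{n-1}(p_0, p_0)$. Hence the task reduces to showing (a) that $G(p_0, p_0)$ is $(n-1)$-flawless, so that this entry equals $N^{\mathrm{rl}(p_0^*, p_0)}$ rather than $0$, and (b) that $\mathrm{rl}(p_0^*, p_0) = \lceil n/2 \rceil$.

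For (b), I would analyze the connected components of $G(p_0, p_0)$ in each parity case using the explicit descriptions in Lemma \ref{lem:W(n,n-1)_contains_only_one_element}. If $n-1 = 2s+1$, so $n = 2s+2$, then $p_0$ is the rainbow partition of $s+1$ nested pairs $\{i,\, n{+}1{-}i\}$. In $G(p_0, p_0)$ each such pair, together with its mirror copy and the two vertical edges linking them, forms one four-point component, giving $s+1 = \lceil n/2 \rceil$ components in total. If $n-1 = 2s$, so $n = 2s+1$, then $p_0$ has the same nested structure on the outer $2s$ points plus a singleton at position $s+1$; this contributes $s$ four-point components and one two-point component $\{s{+}1,\,(s{+}1)'\}$, again $s+1 = \lceil n/2 \rceil$ components. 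Both subcases agree with the claimed exponent.

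For (a), recall that $H_{n-1}(p_0, p_0)$ is obtained from $G(p_0, p_0)$ by deleting the vertical edges $(i, i')$ for $1 \le i \le s+1$. In the odd case each outer point $i \in \{1,\ldots,s+1\}$ still reaches $i'$ via the path $i \to n{+}1{-}i \to (n{+}1{-}i)' \to i'$, since the middle vertical edge survives ($n{+}1{-}i \ge s+2$), and no two such outer points merge because their only former link was one of the deleted vertical edges. This verifies conditions (1)--(4) of Definition \ref{defn:r-flawless_graph}. In the even case the analogous paths handle $1 \le i \le s$, while $s+1$ becomes an isolated vertex (its only incident edge was the deleted $(s{+}1, (s{+}1)')$), and condition (4) is vacuous. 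The main obstacle is merely the bookkeeping of which vertical edges remain and tracing the resulting paths; no deeper argument is required. Combining (a) and (b) gives $e_{n-1}(p_0, p_0) = N^{\lceil n/2\rceil}$, which is simultaneously $\det\bigl(A(n,n{-}1)\bigr) = \det\bigl(B(n,n{-}1)\bigr)$.
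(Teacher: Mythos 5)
Your proposal is correct and follows the same route as the paper: the paper simply observes that $Y(n,n-1)=W(n,n-1)$ is a singleton (so both matrices are $1\times 1$) and states the value of the entry without further detail, which is exactly the computation you carry out. Your verification of $(n-1)$-flawlessness and of $\rl(p_0^*,p_0)=\lceil n/2\rceil$ in both parity cases fills in the details the paper leaves implicit, and both checks are accurate.
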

Propositions \ref{prop:connections_between_B(n,r)_and_A(n,r)} and \ref{prop:B(n,n-1)=A(n,n-1)_is_a_complex_number} are ingredients to prove a recursion  formula for the determinant of \(A(n,0)\), see Section \ref{subsec:recursion_formula}.
\subsubsection{Partition and graph manipulations}

\begin{defn}\label{defn:X(i)and K(i)}
For a fixed partition \(p\!\in\! W(n,r)\) we denote by \(X(i)\) the block of \(p\) containing \(i\) and we define \(K(i):=X(i)\backslash\{i\}\).
\end{defn}
\begin{notation}\label{notation:displaying_connections_in_a_graph}
In order to display connections between points or sets of points in a graph we will use the following scheme:
We list the relevant (sets of) points and draw lines between them to indicate whether they are connected or not.
For example by Definition \ref{defn:X(i)and K(i)} a partition \(p\!\in\!W(n,r)\) fulfils the following:
\begin{equation}\label{eqn:scheme_for_the_K(i)}
\begin{tabular}{rclcrcl}
\(1\)&\textnormal{---}&\(K(1)\)\\
&\vdots&\\
\(i\)&\textnormal{---}&\(K(i)\)\\
&\vdots&\\
\(s\)&\textnormal{---}&\(K(s)\)\\[4pt]
\((s\!+\!1)\)&\textnormal{---}&\(K(s\!+\!1)\)
\end{tabular}
\end{equation}
Note that \(K(s\!+\!1)\) might be empty if \(r\) is even.
\end{notation}

We define now the following partition manipulations:
\begin{defn}\label{defn:f(i,q)}
Let \(0\le r<n\!-\!1\) and \(q\in W(n,r\!+\!1)\). For \(1\le i\le (s+1)\) we define the partition \(f(i,q)\) by  performing the following changes on \(q\):
\begin{itemize}
\item[(i)] A point \(i\le j\le s\) is not connected to \(K(j)\) any more, but to \(K(j\!+\!1)\).
\item[(ii)] If \(r\) is even, then the point \((s\!+\!1)\) is not connected to \(K(s\!+\!1)\) any more.
\item[(iii)] If \(r\) is odd, then the point \((s\!+\!1)\) is not connected to \(K(s\!+\!1)\) any more, but to \(X(s\!+\!2)\).
\end{itemize}
\end{defn}
Note that the mentioned sets \(K(1),\ldots, K(s+1)\) and \(K(1),\ldots, K(s+1), X(s+2)\), respectively, are disjoint and non-empty as \(q\in W(n,r+1)\).
\newline
Using Notation \ref{notation:displaying_connections_in_a_graph}, we could have defined \(f(i,q)\) by starting with \(q\) and defining the following changes in the connectivities, depending on the parity of \(r\):
\begin{equation}\label{eqn:scheme_for_f(i,q)}
\begin{tabular}{rclcrcl}
&\begin{picture}(0,0)\hspace{-0.5cm}\(r\) even:\end{picture}&&&&\begin{picture}(0,0)\hspace{-0.5cm}\(r\) odd:\end{picture}&\\[11pt]
\(i\)&\begin{picture}(0,0)\put(-0.3,0){{\line(1,-1){1}}}\end{picture}&\(K(i)\)&\(\quad\quad\quad\quad\quad\)&\(i\)&\begin{picture}(0,0)\put(-0.3,0){{\line(1,-1){1}}}\end{picture}&\(K(i)\)\\[4pt]
\vdots&&\(K(i\!+\!1)\)&&\vdots&&\(K(i\!+\!1)\)\\
&&\vdots&&&&\vdots\\
\(s\)&\begin{picture}(0,0)\put(-0.3,0){{\line(1,-1){1}}}\end{picture}&&&\(s\)&\begin{picture}(0,0)\put(-0.3,0){{\line(1,-1){1}}}\end{picture}&\\[8pt]
\((s\!+\!1)\)&&\(K(s\!+\!1)\)&&\((s\!+\!1)\)&\begin{picture}(0,0)\put(-0.3,0){{\line(1,-1){1}}}\end{picture}&\(K(s\!+\!1)\)\\[8pt]
&&&&&&\(X(s\!+\!2)\)\\[8pt]
\end{tabular}
\end{equation}
Here are the picture for this manipulation, once for a partition \(q\) in the case \(r=2s\) and once for a partition \(q'\) in the case \(r=2s+1\) (Recall that \(q,q'\in W(n,r+1)\)): \vspace{20pt}
{\allowdisplaybreaks
\begin{align*}
q=&\quad\parbox[c]{10cm}{
\setlength{\unitlength}{0.4cm}
\begin{picture}(28,2.4)
\put(0,0) {$\circ$}
\put(1,0) {$\;\cdots$}
\put(3,0) {$\circ$}
\put(4,0) {$\circ$}
\put(5,0) {$\circ$}
\put(6,0) {$\;\cdots$}
\put(8,0) {$\circ$}
\put(9,0) {$\circ$}
\put(10,0) {$\circ$}
\put(15,0) {$\square$}
\put(17,0) {$\square$}
\put(19,0) {$\square$}
\put(20,0) {$\;\cdots$}
\put(22,0) {$\square$}
\put(24,0) {$\square$}
\put(26,0) {$\square$}
\put(27,0) {$\;\cdots$}
\put(29,0) {$\square$}
\put(0,-0.8) {$1$}
\put(4,-0.8) {$i$}
\put(23.6,-1) {\scalebox{0.8}{$K(i)$}}
\put(9,-0.8) {$s$}
\put(0.2,0.6){\line(0,1){3.5}}
\put(3.2,0.6){\line(0,1){3.1}}
\put(4.2,0.6){\line(0,1){2.7}}
\put(5.2,0.6){\line(0,1){2.3}}
\put(8.2,0.6){\line(0,1){1.9}}
\put(9.2,0.6){\line(0,1){1.5}}
\put(10.2,0.6){\line(0,1){1.1}}
\put(15.35,0.8){\line(0,1){0.9}}
\put(17.35,0.8){\line(0,1){1.3}}
\put(19.35,0.8){\line(0,1){1.7}}
\put(22.35,0.8){\line(0,1){2.1}}
\put(24.35,0.8){\line(0,1){2.5}}
\put(26.35,0.8){\line(0,1){2.9}}
\put(29.35,0.8){\line(0,1){3.3}}
\put(10.2,1.7){\line(1,0){5.15}}
\put(9.2,2.1){\line(1,0){8.15}}
\put(8.2,2.5){\line(1,0){11.15}}
\put(5.2,2.9){\line(1,0){17.15}}
\put(4.2,3.3){\line(1,0){20.15}}
\put(3.2,3.7){\line(1,0){23.15}}
\put(0.2,4.1){\line(1,0){29.15}}
\end{picture}}\quad\quad\quad\quad\quad\;\;
\\[40pt]%%%%%%%%%%%%%%%%%%%%%%%%%%%%%%%%%%%%%%%%%%%%%%%%%%%%%%%%%%%%%%%%%%%%%%%%%%%%%%%%%\newline
f(i,q)=&\quad\parbox[c]{10cm}{
\setlength{\unitlength}{0.4cm}
\begin{picture}(28,2.4)
\put(0,0) {$\circ$}
\put(1,0) {$\;\cdots$}
\put(3,0) {$\circ$}
\put(4,0) {$\circ$}
\put(6,0) {$\;\cdots$}
\put(8,0) {$\circ$}
\put(9,0) {$\circ$}
\put(10,0) {$\circ$}
\put(15,0) {$\square$}
\put(17,0) {$\square$}
\put(20,0) {$\;\cdots$}
\put(22,0) {$\square$}
\put(24,0) {$\square$}
\put(26,0) {$\square$}
\put(27,0) {$\;\cdots$}
\put(29,0) {$\square$}
\put(0,-0.8) {$1$}
\put(4,-0.8) {$i$}
\put(23.6,-1) {\scalebox{0.8}{$K(i)$}}
\put(9,-0.8) {$s$}
\put(0.2,0.6){\line(0,1){3.5}}
\put(3.2,0.6){\line(0,1){3.1}}
\put(4.2,0.6){\line(0,1){2.7}}
\put(8.2,0.6){\line(0,1){1.9}}
\put(9.2,0.6){\line(0,1){1.5}}
\put(10.2,0.6){\line(0,1){1.1}}
\put(26.35,0.8){\line(0,1){2.9}}
\put(29.35,0.8){\line(0,1){3.3}}
\put(9.2,2.1){\line(1,0){8.15}}
\put(8.2,2.5){\line(1,0){11.15}}
\put(4.2,3.3){\line(1,0){20.15}}
\put(3.2,3.7){\line(1,0){23.15}}
\put(0.2,4.1){\line(1,0){29.15}}
\put(17.35,2.1){\line(-2,-1){2}}
\put(19.35,2.1){\line(-2,-1){2}}
\put(24.35,2.1){\line(-2,-1){2}}
\put(19.35,2.1){\line(0,1){0.4}}
\put(24.35,2.1){\line(0,1){1.2}}
\end{picture}}\quad\quad\quad\quad\quad\;\;
\\[50pt]%%%%%%%%%%%%%%%%%%%%%%%%%%%%%%%%%%%%%%%%%%%%%%%%%%%%%%%%%%%%%%%%%%%%%%%%%%%%%%%%%\newline
q'=&\quad\parbox[c]{10cm}{
\setlength{\unitlength}{0.4cm}
\begin{picture}(28,2.4)
\put(0,0) {$\circ$}
\put(1,0) {$\;\cdots$}
\put(3,0) {$\circ$}
\put(4,0) {$\circ$}
\put(5,0) {$\circ$}
\put(6,0) {$\;\cdots$}
\put(8,0) {$\circ$}
\put(9,0) {$\circ$}
\put(10,0) {$\circ$}
\put(11,0) {$\circ$}
\put(13,0) {$\square$}
\put(13,0.36) {\linethickness{0.1cm}\color{white}\line(1,0){0.8}}
\put(13.38,0) {\linethickness{0.1cm}\color{white}\line(0,1){0.8}}
\put(15,0) {$\square$}
\put(17,0) {$\square$}
\put(19,0) {$\square$}
\put(20,0) {$\;\cdots$}
\put(22,0) {$\square$}
\put(24,0) {$\square$}
\put(26,0) {$\square$}
\put(27,0) {$\;\cdots$}
\put(29,0) {$\square$}
\put(0,-0.8) {$1$}
\put(4,-0.8) {$i$}
\put(23.6,-1) {\scalebox{0.8}{$K(i)$}}
\put(9,-0.8) {$s$}
\put(0.2,0.6){\line(0,1){3.9}}
\put(3.2,0.6){\line(0,1){3.5}}
\put(4.2,0.6){\line(0,1){3.1}}
\put(5.2,0.6){\line(0,1){2.7}}
\put(8.2,0.6){\line(0,1){2.3}}
\put(9.2,0.6){\line(0,1){1.9}}
\put(10.2,0.6){\line(0,1){1.5}}
\put(11.2,0.6){\line(0,1){1.1}}
\multiput(13.38,0.8)(0,0.46){2}{\line(0,1){0.3}}
\put(15.35,0.8){\line(0,1){1.3}}
\put(17.35,0.8){\line(0,1){1.7}}
\put(19.35,0.8){\line(0,1){2.1}}
\put(22.35,0.8){\line(0,1){2.5}}
\put(24.35,0.8){\line(0,1){2.9}}
\put(26.35,0.8){\line(0,1){3.3}}
\put(29.35,0.8){\line(0,1){3.7}}
\multiput(11.2,1.7)(0.46,0){5}{\line(1,0){0.3}}
\put(10.2,2.1){\line(1,0){5.15}}
\put(9.2,2.5){\line(1,0){8.15}}
\put(8.2,2.9){\line(1,0){11.15}}
\put(5.2,3.3){\line(1,0){17.15}}
\put(4.2,3.7){\line(1,0){20.15}}
\put(3.2,4.1){\line(1,0){23.15}}
\put(0.2,4.5){\line(1,0){29.15}}
\end{picture}}\quad\quad\quad\quad\quad\;\;
\\[50pt]%%%%%%%%%%%%%%%%%%%%%%%%%%%%%%%%%%%%%%%%%%%%%%%%%%%%%%%%%%%%%%%%%%%%%%%%%%%%%%%%%\newline
f(i,q')=&\quad\parbox[c]{10cm}{
\setlength{\unitlength}{0.4cm}
\begin{picture}(28,2.4)
\put(0,0) {$\circ$}
\put(1,0) {$\;\cdots$}
\put(3,0) {$\circ$}
\put(4,0) {$\circ$}
\put(6,0) {$\;\cdots$}
\put(8,0) {$\circ$}
\put(9,0) {$\circ$}
\put(10,0) {$\circ$}
\put(11,0) {$\circ$}
\put(13,0) {$\square$}
\put(13,0.36) {\linethickness{0.1cm}\color{white}\line(1,0){0.8}}
\put(13.38,0) {\linethickness{0.1cm}\color{white}\line(0,1){0.8}}
\put(15,0) {$\square$}
\put(17,0) {$\square$}
\put(20,0) {$\;\cdots$}
\put(22,0) {$\square$}
\put(24,0) {$\square$}
\put(26,0) {$\square$}
\put(27,0) {$\;\cdots$}
\put(29,0) {$\square$}
\put(0,-0.8) {$1$}
\put(4,-0.8) {$i$}
\put(23.6,-1) {\scalebox{0.8}{$K(i)$}}
\put(9,-0.8) {$s$}
\put(0.2,0.6){\line(0,1){3.9}}
\put(3.2,0.6){\line(0,1){3.5}}
\put(4.2,0.6){\line(0,1){3.1}}
\put(8.2,0.6){\line(0,1){2.3}}
\put(9.2,0.6){\line(0,1){1.9}}
\put(10.2,0.6){\line(0,1){1.1}}
\put(11.2,0.6){\line(0,1){1.1}}
\multiput(13.38,0.8)(0,0.46){2}{\line(0,1){0.3}}
\put(26.35,0.8){\line(0,1){3.3}}
\put(29.35,0.8){\line(0,1){3.7}}
\multiput(11.2,1.7)(0.46,0){5}{\line(1,0){0.3}}
\put(10.2,1.7){\line(1,0){1}}
\put(9.2,2.5){\line(1,0){8.15}}
\put(8.2,2.9){\line(1,0){11.15}}
\put(4.2,3.7){\line(1,0){20.15}}
\put(3.2,4.1){\line(1,0){23.15}}
\put(0.2,4.5){\line(1,0){29.15}}
\put(17.35,2.1){\line(-2,-1){2}}
\put(19.35,2.1){\line(-2,-1){2}}
\put(24.35,2.1){\line(-2,-1){2}}
\put(17.35,2.1){\line(0,1){0.4}}
\put(19.35,2.1){\line(0,1){0.8}}
\put(24.35,2.1){\line(0,1){1.6}}
\end{picture}}\quad\quad\quad\quad\quad\quad
\end{align*}
}
%%%%%%%%%This is the } from {\allowdisplaybreak ...
\vspace{10pt}\newline
Note that in the case \(i\!=\!s+1\) we only separate \(s+1\) from \(K(s+1)\) and when \(r\!=\!2s+1\) we additionally join \(s\!+\!1\) with \(X(s\!+\!2)\).
\vspace{11pt}\newline
Similar to the construction of \(f(i,p)\) we define partitions \(g(i,p)\).
The only difference is that the point \(i\) stays connected to \(K(i)\).
\begin{defn}\label{defn:g(i,q)}
Let \(0\le r<n\!-\!1\) and \(q\in W(n,r\!+\!1)\). For \(1\le i\le s\) we define the partition \(g(i,q)\) by performing the following changes on \(q\):
\begin{itemize}
\item[(i)] The point \(i\) is connected additionally to \(K(i\!+\!1)\).
\item[(ii)] A point \(i<j\le s\) is not connected to \(K(j)\) any more, but to \(K(j\!+\!1)\)
\item[(iii)] If \(r\) is even, then the point \((s\!+\!1)\) is not connected to \(K(s\!+\!1)\) any more.
\item[(iv)] If \(r\) is odd, then the point \((s\!+\!1)\) is not connected to \(K(s\!+\!1)\) any more, but to \(X(s\!+\!2)\).
\end{itemize}
In the case \(r=2s\!+\!1\) we define in addition the case \(i\!=\!s\!+\!1\): \(g(s\!+\!1,q)\) is constructed out of \(q\) by joining \(X(s\!+\!1)\) with \(X(s\!+\!2)\), so the above changes are reduced to item (i).
\end{defn}
Using the Notations from \ref{notation:displaying_connections_in_a_graph},we could have defined \(g(i,q)\) by starting with \(q\) and defining the following changes in the connectivities, depending on the parity of \(r\):
\begin{equation}\label{eqn:scheme_for_g(i,q)}
\begin{tabular}{rclcrcl}
&\begin{picture}(0,0)\hspace{-0.5cm}\(r\) even:\end{picture}&&&&\begin{picture}(0,0)\hspace{-0.5cm}\(r\) odd:\end{picture}&\\[11pt]
\(i\)&\begin{picture}(0,0)\put(-0.3,0){\line(1,-1){1}}\put(-0.3,0.2){\line(1,0){0.9}}\end{picture}&\(K(i)\)&\(\quad\quad\quad\quad\quad\)&\(i\)&\begin{picture}(0,0)\put(-0.3,0){{\line(1,-1){1}}}\put(-0.3,0.2){\line(1,0){0.9}}\end{picture}&\(K(i)\)\\[4pt]
\vdots&&\(K(i\!+\!1)\)&&\vdots&&\(K(i\!+\!1)\)\\
&&\vdots&&&&\vdots\\
\(s\)&\begin{picture}(0,0)\put(-0.3,0){{\line(1,-1){1}}}\end{picture}&&&\(s\)&\begin{picture}(0,0)\put(-0.3,0){{\line(1,-1){1}}}\end{picture}&\\[8pt]
\((s\!+\!1)\)&&\(K(s\!+\!1)\)&&\((s\!+\!1)\)&\begin{picture}(0,0)\put(-0.3,0){{\line(1,-1){1}}}\end{picture}&\(K(s\!+\!1)\)\\[8pt]
&&&&&&\(X(s\!+\!2)\)\\[8pt]
\end{tabular}
\end{equation}
Considering \(q\) and \(q'\) as above we end up with the following partitions:\vspace{22pt}
{\allowdisplaybreaks
\begin{align*}
g(i,q)=&\quad\parbox[c]{10cm}{
\setlength{\unitlength}{0.4cm}
\begin{picture}(28,2.4)
\put(0,0) {$\circ$}
\put(1,0) {$\;\cdots$}
\put(3,0) {$\circ$}
\put(4,0) {$\circ$}
\put(6,0) {$\;\cdots$}
\put(8,0) {$\circ$}
\put(9,0) {$\circ$}
\put(10,0) {$\circ$}
\put(15,0) {$\square$}
\put(17,0) {$\square$}
\put(20,0) {$\;\cdots$}
\put(22,0) {$\square$}
\put(24,0) {$\square$}
\put(26,0) {$\square$}
\put(27,0) {$\;\cdots$}
\put(29,0) {$\square$}
\put(0,-0.8) {$1$}
\put(4,-0.8) {$i$}
\put(23.6,-1) {\scalebox{0.8}{$K(i)$}}
\put(9,-0.8) {$s$}
\put(0.2,0.6){\line(0,1){3.5}}
\put(3.2,0.6){\line(0,1){3.1}}
\put(4.2,0.6){\line(0,1){2.7}}
\put(8.2,0.6){\line(0,1){1.9}}
\put(9.2,0.6){\line(0,1){1.5}}
\put(10.2,0.6){\line(0,1){1.1}}
\put(26.35,0.8){\line(0,1){2.9}}
\put(29.35,0.8){\line(0,1){3.3}}
\put(9.2,2.1){\line(1,0){8.15}}
\put(8.2,2.5){\line(1,0){11.15}}
\put(4.2,3.3){\line(1,0){20.15}}
\put(3.2,3.7){\line(1,0){23.15}}
\put(0.2,4.1){\line(1,0){29.15}}
\put(17.35,2.1){\line(-2,-1){2}}
\put(19.35,2.1){\line(-2,-1){2}}
\put(24.35,2.1){\line(-2,-1){2}}
\put(19.35,2.1){\line(0,1){0.4}}
\put(24.35,2.1){\line(0,1){1.2}}
\put(24.35,0.8){\line(0,1){1.3}}
\end{picture}}\quad\quad\quad\quad\quad\;\;
\\[50pt]%%%%%%%%%%%%%%%%%%%%%%%%%%%%%%%%%%%%%%%%%%%%%%%%%%%%%%%%%%%%%%%%%%%%%%%%%%%%%%%%%\newline
g(i,q')=&\quad\parbox[c]{10cm}{
\setlength{\unitlength}{0.4cm}
\begin{picture}(28,2.4)
\put(0,0) {$\circ$}
\put(1,0) {$\;\cdots$}
\put(3,0) {$\circ$}
\put(4,0) {$\circ$}
\put(6,0) {$\;\cdots$}
\put(8,0) {$\circ$}
\put(9,0) {$\circ$}
\put(10,0) {$\circ$}
\put(11,0) {$\circ$}
\put(13,0) {$\square$}
\put(13,0.36) {\linethickness{0.1cm}\color{white}\line(1,0){0.8}}
\put(13.38,0) {\linethickness{0.1cm}\color{white}\line(0,1){0.8}}
\put(15,0) {$\square$}
\put(17,0) {$\square$}
\put(20,0) {$\;\cdots$}
\put(22,0) {$\square$}
\put(24,0) {$\square$}
\put(26,0) {$\square$}
\put(27,0) {$\;\cdots$}
\put(29,0) {$\square$}
\put(0,-0.8) {$1$}
\put(4,-0.8) {$i$}
\put(23.6,-1) {\scalebox{0.8}{$K(i)$}}
\put(9,-0.8) {$s$}
\put(0.2,0.6){\line(0,1){3.9}}
\put(3.2,0.6){\line(0,1){3.5}}
\put(4.2,0.6){\line(0,1){3.1}}
\put(8.2,0.6){\line(0,1){2.3}}
\put(9.2,0.6){\line(0,1){1.9}}
\put(10.2,0.6){\line(0,1){1.1}}
\put(11.2,0.6){\line(0,1){1.1}}
\multiput(13.38,0.8)(0,0.46){2}{\line(0,1){0.3}}
\put(26.35,0.8){\line(0,1){3.3}}
\put(29.35,0.8){\line(0,1){3.7}}
\multiput(11.2,1.7)(0.46,0){5}{\line(1,0){0.3}}
\put(10.2,1.7){\line(1,0){1}}
\put(9.2,2.5){\line(1,0){8.15}}
\put(8.2,2.9){\line(1,0){11.15}}
\put(4.2,3.7){\line(1,0){20.15}}
\put(3.2,4.1){\line(1,0){23.15}}
\put(0.2,4.5){\line(1,0){29.15}}
\put(17.35,2.1){\line(-2,-1){2}}
\put(19.35,2.1){\line(-2,-1){2}}
\put(24.35,2.1){\line(-2,-1){2}}
\put(17.35,2.1){\line(0,1){0.4}}
\put(19.35,2.1){\line(0,1){0.8}}
\put(24.35,2.1){\line(0,1){1.6}}
\put(24.35,0.8){\line(0,1){1.3}}
\end{picture}}\quad\quad\quad\quad\quad\quad
\end{align*}
}
\vspace{10pt}\newline
Taking a closer look at the illustrations above, the following can directly be checked:
\begin{lem}\label{lem:f(i,q)_and_g(i,q)_are_in_W(n,r)}
For \(q\in W(n,r+1)\) the partitions \(f(i,q)\) and \(g(i,q)\) as defined in Definition \ref{defn:f(i,q)} and \ref{defn:g(i,q)} are elements in \(W(n,r)\).
\end{lem}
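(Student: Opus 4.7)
The plan is a straightforward case analysis on the operation (\(f\) or \(g\)) and on the parity of \(r\), verifying in each of the four resulting cases both defining conditions of Definition \ref{defn:W(n,r)} together with non-crossingness.

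First I would unpack what the hypothesis \(q\in W(n,r+1)\) gives: if \(r=2s\) it guarantees \(K(1),\ldots,K(s+1)\) non-empty and \(X(1),\ldots,X(s+1)\) pairwise disjoint, while if \(r=2s+1\) we additionally obtain \(X(1),\ldots,X(s+2)\) pairwise disjoint. Using this, I would directly read off the block of each leftmost point of the transformed partition from Definitions \ref{defn:f(i,q)} and \ref{defn:g(i,q)}. For instance, in \(f(i,q)\) with \(r=2s\), the point \(j<i\) retains \(X(j)\), each \(j\) with \(i\le j\le s\) ends up in \(\{j\}\cup K(j+1)\), and \(s+1\) becomes a singleton. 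Non-singleton-ness of the points \(1,\ldots,s\) is then immediate from \(K(j),K(j+1)\ne\emptyset\), and pairwise distinctness of the \(s+1\) leftmost blocks follows from the disjointness of the sets \(K(\cdot)\) in \(q\). The other three cases run the same way: in the \(r\)-odd cases I would additionally use \(X(s+2)\ne\emptyset\) and \(X(s+2)\cap X(j)=\emptyset\) for \(j\le s+1\) to handle the point \(s+1\); for \(g(i,q)\) I would observe that \(i\) lies in the enlarged block \(\{i\}\cup K(i)\cup K(i+1)\), still non-singleton and still disjoint from the other leftmost blocks.

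The step I expect to require actual reasoning (rather than bookkeeping) is confirming that the transformed partitions remain non-crossing. My plan there is to exploit the structural observation that in any non-crossing \(q\) whose leftmost points \(1,\ldots,s+1\) (resp.\ \(1,\ldots,s+2\)) lie in pairwise distinct blocks, those blocks must be nested on \([1,n]\) in reverse order: \(X(1)\) outermost, \(X(2)\) just inside, and so on. The operations \(f(i,q)\) and \(g(i,q)\) precisely re-wire the companions of these leftmost points by a one-step inward shift within this nested family, leaving \(K(i)\) (resp.\ \(K(s+1)\)) as an orphan block or merging \(X(s+1)\cup X(s+2)\); in either situation the nested arrangement is preserved and hence non-crossingness is preserved as well. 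This is exactly what the pictures following Definitions \ref{defn:f(i,q)} and \ref{defn:g(i,q)} display. Once this geometric verification is in place, membership of both \(f(i,q)\) and \(g(i,q)\) in \(W(n,r)\) follows from the case-by-case bookkeeping above.
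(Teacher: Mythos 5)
Your proposal is correct and is essentially a rigorous elaboration of what the paper does: the paper states this lemma with no formal proof, asserting only that it ``can directly be checked'' from the illustrations following Definitions \ref{defn:f(i,q)} and \ref{defn:g(i,q)}. Your case-by-case bookkeeping of the blocks of the leftmost points, together with the reverse-nesting observation (forced by non-crossingness once the leftmost points lie in pairwise distinct blocks, since each $X(j+1)$ must sit inside the gap of $X(j)$ containing the point $j+1$) that justifies preservation of non-crossingness, supplies exactly the verification the paper leaves to the reader.
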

Having \(p\in W(n,r)\) and \(q\in W(n,r+1)\), we know that in each partition the first \(s+1\) points are not connected to each other (and in the case \(r\!=\!2s\!+\!1\) even the first \(s\!+\!2\) points of \(q\)).
In contrast to that, we cannot in general guarantee this property in the graph \(G(p,q)\), as we do not know enough about the part of \(G(p,q)\) right of \(s\!+\!1\) and \((s\!+\!1)'\).
The next definition describes some special structures that might occur.
Again the descriptions themselves are quite technical, but see below for comprehensive illustrations of the structures.
\begin{defn}\label{defn:structures[i]_a.s.o.}
Let \(0\!\le\! r\!<\!n\!-\!1\), \(p\in W(n,r)\) and \(q\in W(n,r\!+\!1)\).
For even \(r\) we define on the graph \(H_r(p,q)\)
\begin{itemize}
\item[(1)] a structure \([i]\), for \(1\le i\le s+1\),
\item[(2)] a structure \([i,i\!+\!1]\) for \(1\le i\le s\) and
\item[(3)] a structure \([0]\)
\end{itemize}
if and only if the following respective connections are given:
\begin{center}
\begin{tabular}{rclcrclcrcl}
&\begin{picture}(0,0)\hspace{-1.1cm}structure \([i]\):\end{picture}&&&
&\begin{picture}(0,0)\hspace{-1.7cm}structure \([i,i\!+\!1]\):\end{picture}&&&
&\begin{picture}(0,0)\hspace{-1.1cm}structure \([0]\):\end{picture}&\\[20pt]
\(1'\)&\begin{picture}(0,0)\put(-0.4,0.2){\line(1,0){1.2}}\end{picture}&1&&
\(1'\)&\begin{picture}(0,0)\put(-0.4,0.2){\line(1,0){1}}\end{picture}&1&&
\(1'\)&\begin{picture}(0,0)\put(-0.4,0.2){\line(1,0){1}}\end{picture}&1\\[11pt]
\(i'\)&\begin{picture}(0.5,0)\put(1,0){\line(-1,-1){1.2}}\end{picture}&\(i\)&\(\quad\quad\quad\quad\)&
\(i'\)&\begin{picture}(0.5,0)\put(1,0){\line(-1,-1){1}}\end{picture}&\(i\)&\(\quad\quad\quad\)&
\(i'\)&\begin{picture}(0.5,0)\put(1,0){\line(-1,-1){1}}\end{picture}&\(i\)\\[4pt]
\((i\!+\!1)'\)&&\vdots&&
\((i\!+\!1)'\)&&\vdots&&
\((i\!+\!1)'\)&&\vdots\\[4pt]
\vdots&\begin{picture}(0.5,0)\put(1,0){\line(-1,-1){1.2}}\end{picture}&\(s\)&&
\vdots&\begin{picture}(0.5,0)\put(1,0){\line(-1,-1){1}}\end{picture}&\(s\)&&
\vdots&\begin{picture}(0.5,0)\put(1,0){\line(-1,-1){1}}\end{picture}&\(s\)\\[4pt]
\((s\!+\!1)'\)&&\((s\!+\!1)\)&&
\((s\!+\!1)'\)&&\((s\!+\!1)\)&&
\((s\!+\!1)'\)&&\((s\!+\!1)\)
\vspace{22pt}
\end{tabular}
\end{center}
For odd \(r\) we define on the graph \(H_r(p,q)\)
\begin{itemize}
\item[(1)] a structure \([i]\), for \(1\le i\le s\!+\!1\),
\item[(2)] a structure \([i,i\!+\!1]\) for \(1\le i\le s\!+\!1\) and
\item[(3)] a structure \([0]\)
\end{itemize}
if and only if the following connections are given:
\begin{center}
\begin{tabular}{rclcrclcrcl}
&\begin{picture}(0,0)\hspace{-1.1cm}structure \([i]\):\end{picture}&&&
&\begin{picture}(0,0)\hspace{-1.7cm}structure \([i,i\!+\!1]\):\end{picture}&&&
&\begin{picture}(0,0)\hspace{-1.1cm}structure \([0]\):\end{picture}&\\[20pt]
\(1'\)&\begin{picture}(0,0)\put(-0.4,0.2){\line(1,0){1}}\end{picture}&1&&
\(1'\)&\begin{picture}(0,0)\put(-0.4,0.2){\line(1,0){1}}\end{picture}&1&&
\(1'\)&\begin{picture}(0,0)\put(-0.4,0.2){\line(1,0){1}}\end{picture}&1\\[8pt]
\vdots&&\vdots&&
\vdots&&\vdots&&
\vdots&&\vdots\\[8pt]
\((i\!-\!1)'\)&\begin{picture}(0,0)\put(-0.4,0.2){\line(1,0){1}}\end{picture}&\((i\!-\!1)\)&&
\((i\!-\!1)'\)&\begin{picture}(0,0)\put(-0.4,0.2){\line(1,0){1}}\end{picture}&\((i\!-\!1)\)&&
\((i\!-\!1)'\)&\begin{picture}(0,0)\put(-0.4,0.2){\line(1,0){1}}\end{picture}&\((i\!-\!1)\)\\[8pt]
\(i'\)&\begin{picture}(0.5,0)\put(1,0){\line(-1,-1){1.2}}\end{picture}&\(i\)&\(\quad\quad\quad\quad\)&
\(i'\)&\begin{picture}(0.5,0)\put(1,0){\line(-1,-1){1.2}}\put(1,0){\line(-1,0){1}}\end{picture}&\(i\)&\(\quad\quad\quad\)&
\(i'\)&\begin{picture}(0,0)\put(-0.4,0.2){\line(1,0){1}}\end{picture}&\(i\)\\[8pt]
\((i\!+\!1)'\)&&\vdots&&
\((i\!+\!1)'\)&&\vdots&&
\vdots&&\vdots\\[8pt]
\vdots&\begin{picture}(0.5,0)\put(1,0){\line(-1,-1){1}}\end{picture}&\(s\)&&
\vdots&\begin{picture}(0.5,0)\put(1,0){\line(-1,-1){1}}\end{picture}&\(s\)&&
\vdots&&\vdots\\[8pt]
\((s\!+\!1)'\)&\begin{picture}(0.5,0)\put(1,0){\line(-1,-1){1}}\end{picture}&\((s\!+\!1)\)&&
\((s\!+\!1)'\)&\begin{picture}(0.5,0)\put(1,0){\line(-1,-1){1}}\end{picture}&\((s\!+\!1)\)&&
\((s\!+\!1)'\)&\begin{picture}(0,0)\put(-0.4,0.2){\line(1,0){1}}\end{picture}&\((s\!+\!1)\)\\[8pt]
\((s\!+\!2)'\)&&&&
\((s\!+\!2)'\)&&&&
\((s\!+\!2)'\)&&
\vspace{22pt}
\end{tabular}
\end{center} 
\end{defn}

We note again that the schemes above also give information which points are \emph{not} connected in \(H_r(p,q)\):
Two points that are mentioned in the scheme and that are not connected there should not be connected in the graph \(H_r(p,q)\).
Conversely, the schemes say nothing about connection to points that are not mentioned.
Hence, in structure \([i]\), for example, the point \(i'\) is not allowed to be connected to any of the other drawn points but there are no conditions concerning links to points that are not explicitly mentioned.

\begin{rem}
\begin{itemize}
\item [(a)]
The structures described in Definition \ref{defn:structures[i]_a.s.o.} are incompatible, i.e. no Graph \(H_r(p,q)\) as above can be of more than one of the described structures.
\item[(b)]
Note that this definition is different from the corresponding structures defined in \cite{tutte}. 
There was an overlap in the structures defined there, resulting in false statements in the sequel.
\end{itemize}
\end{rem}
In virtue of the previous examples, the graphs as described in Definition \ref{defn:structures[i]_a.s.o.} look as follows:\vspace{10pt}\newline
For \(r\!=\!2s\!+\!1\) and structure \([i]\):\vspace{10pt}
\[
\setlength{\unitlength}{0.5cm}
\begin{picture}(26,2.4)
\put(0,0) {$\circ$}
\put(1,0) {$\;\cdots$}
\put(3,0) {$\circ$}
\put(4,0) {$\circ$}
\put(5,0) {$\circ$}
\put(6,0) {$\;\cdots$}
\put(8,0) {$\circ$}
\put(9,0) {$\circ$}
\put(10,0) {$\circ$}
\put(15,0) {$\square$}
\put(17,0) {$\square$}
\put(19,0) {$\square$}
\put(20,0) {$\;\cdots$}
\put(22,0) {$\square$}
\put(24,0) {$\square$}
\put(26,0) {$\square$}
\put(27,0) {$\;\cdots$}
\put(29,0) {$\square$}
\put(0.2,0.6){\line(0,1){2.3}}
\put(3.2,0.6){\line(0,1){2.1}}
\put(4.2,0.6){\line(0,1){1.9}}
\put(5.2,0.6){\line(0,1){1.7}}
\put(8.2,0.6){\line(0,1){1.5}}
\put(9.2,0.6){\line(0,1){1.3}}
\put(10.2,0.6){\line(0,1){1.1}}
\put(15.35,0.8){\line(0,1){0.9}}
\put(17.35,0.8){\line(0,1){1.1}}
\put(19.35,0.8){\line(0,1){1.3}}
\put(22.35,0.8){\line(0,1){1.5}}
\put(24.35,0.8){\line(0,1){1.7}}
\put(26.35,0.8){\line(0,1){1.9}}
\put(29.35,0.8){\line(0,1){2.1}}
%%
%%diesen Teil nicht zum spiegeln kopieren
%%
%\put(11,-0.7){?}
\put(26.35,-0.9){\line(0,1){0.8}}
\put(29.35,-0.9){\line(0,1){0.8}}
\put(13.35,-0.8){\line(3,1){2}}
\put(15.35,-0.8){\line(3,1){2}}
\put(17.35,-0.8){\line(3,1){2}}
\put(19.85,-0.8){$\cdots$}
\put(22.35,-0.8){\line(3,1){2}}
\put(0.,-0.7) {\small$1$}
\put(4,-0.7) {\small$i$}
\put(9,-0.7) {\small$s$}
\put(24.5,-0.7) {\small$i$}
%%
%% hier wieder kopieren
%%
\put(10.2,1.7){\line(1,0){5.15}}
\put(9.2,1.9){\line(1,0){8.15}}
\put(8.2,2.1){\line(1,0){11.15}}
\put(5.2,2.3){\line(1,0){17.15}}
\put(4.2,2.5){\line(1,0){20.15}}
\put(3.2,2.7){\line(1,0){23.15}}
\put(0.2,2.9){\line(1,0){29.15}}
\end{picture}
\hspace{-13.14cm}%%%%%%%%%%%%%%%%%%%%%%%%%%%%%%%%%%%%%%%%%%%%%%%%%zweites Bild
\raisebox{-0.5cm}{\reflectbox{\scalebox{-1}{
\begin{picture}(26,2.4)
\put(0,0) {$\circ$}
\put(1,0) {$\;\cdots$}
\put(3,0) {$\circ$}
\put(4,0) {$\circ$}
\put(5,0) {$\circ$}
\put(6,0) {$\;\cdots$}
\put(8,0) {$\circ$}
\put(9,0) {$\circ$}
\put(10,0) {$\circ$}
\put(11,0) {$\circ$}
\put(13,0) {$\square$}
\put(13,0.32) {\linethickness{0.1cm}\color{white}\line(1,0){0.8}}
\put(13.34,0) {\linethickness{0.1cm}\color{white}\line(0,1){0.8}}
\put(15,0) {$\square$}
\put(17,0) {$\square$}
\put(19,0) {$\square$}
\put(20,0) {$\;\cdots$}
\put(22,0) {$\square$}
\put(24,0) {$\square$}
\put(26,0) {$\square$}
\put(27,0) {$\;\cdots$}
\put(29,0) {$\square$}
\put(0.2,0.6){\line(0,1){2.5}}
\put(3.2,0.6){\line(0,1){2.3}}
\put(4.2,0.6){\line(0,1){2.1}}
\put(5.2,0.6){\line(0,1){1.9}}
\put(8.2,0.6){\line(0,1){1.7}}
\put(9.2,0.6){\line(0,1){1.5}}
\put(10.2,0.6){\line(0,1){1.3}}
\put(11.2,0.6){\line(0,1){1.1}}
\multiput(13.38,0.8)(0,0.46){2}{\line(0,1){0.3}}
\put(15.35,0.8){\line(0,1){1.1}}
\put(17.35,0.8){\line(0,1){1.3}}
\put(19.35,0.8){\line(0,1){1.5}}
\put(22.35,0.8){\line(0,1){1.7}}
\put(24.35,0.8){\line(0,1){1.9}}
\put(26.35,0.8){\line(0,1){2.1}}
\put(29.35,0.8){\line(0,1){2.3}}
\multiput(11.2,1.7)(0.46,0){5}{\line(1,0){0.3}}
\put(10.2,1.9){\line(1,0){5.15}}
\put(9.2,2.1){\line(1,0){8.15}}
\put(8.2,2.3){\line(1,0){11.15}}
\put(5.2,2.5){\line(1,0){17.15}}
\put(4.2,2.7){\line(1,0){20.15}}
\put(3.2,2.9){\line(1,0){23.15}}
\put(0.2,3.1){\line(1,0){29.15}}
\end{picture}
}}}
\]\vspace{10pt}\newline
For \(r\!=\!2s\!+\!1\) and structure \([i,i+1]\):\vspace{10pt}
\[
\setlength{\unitlength}{0.5cm}
\begin{picture}(26,2.4)
\put(0,0) {$\circ$}
\put(1,0) {$\;\cdots$}
\put(3,0) {$\circ$}
\put(4,0) {$\circ$}
\put(5,0) {$\circ$}
\put(6,0) {$\;\cdots$}
\put(8,0) {$\circ$}
\put(9,0) {$\circ$}
\put(10,0) {$\circ$}
\put(15,0) {$\square$}
\put(17,0) {$\square$}
\put(19,0) {$\square$}
\put(20,0) {$\;\cdots$}
\put(22,0) {$\square$}
\put(24,0) {$\square$}
\put(26,0) {$\square$}
\put(27,0) {$\;\cdots$}
\put(29,0) {$\square$}
\put(0.2,0.6){\line(0,1){2.3}}
\put(3.2,0.6){\line(0,1){2.1}}
\put(4.2,0.6){\line(0,1){1.9}}
\put(5.2,0.6){\line(0,1){1.7}}
\put(8.2,0.6){\line(0,1){1.5}}
\put(9.2,0.6){\line(0,1){1.3}}
\put(10.2,0.6){\line(0,1){1.1}}
\put(15.35,0.8){\line(0,1){0.9}}
\put(17.35,0.8){\line(0,1){1.1}}
\put(19.35,0.8){\line(0,1){1.3}}
\put(22.35,0.8){\line(0,1){1.5}}
\put(24.35,0.8){\line(0,1){1.7}}
\put(26.35,0.8){\line(0,1){1.9}}
\put(29.35,0.8){\line(0,1){2.1}}
%%
%%diesen Teil nicht zum spiegeln kopieren
%%
%\put(11,-0.7){?}
\put(24.35,-0.9){\line(0,1){0.8}}
\put(26.35,-0.9){\line(0,1){0.8}}
\put(29.35,-0.9){\line(0,1){0.8}}
\put(13.35,-0.8){\line(3,1){2}}
\put(15.35,-0.8){\line(3,1){2}}
\put(17.35,-0.8){\line(3,1){2}}
\put(19.85,-0.8){$\cdots$}
\put(22.35,-0.7){\line(3,1){2}}
\put(0.,-0.7) {\small$1$}
\put(4,-0.7) {\small$i$}
\put(9,-0.7) {\small$s$}
\put(24.5,-0.7) {\small$i$}
%%
%% hier wieder kopieren
%%
\put(10.2,1.7){\line(1,0){5.15}}
\put(9.2,1.9){\line(1,0){8.15}}
\put(8.2,2.1){\line(1,0){11.15}}
\put(5.2,2.3){\line(1,0){17.15}}
\put(4.2,2.5){\line(1,0){20.15}}
\put(3.2,2.7){\line(1,0){23.15}}
\put(0.2,2.9){\line(1,0){29.15}}
\end{picture}
\hspace{-13.14cm}%%%%%%%%%%%%%%%%%%%%%%%%%%%%%%%%%%%%%%%%%%%%%%%%%zweites Bild
\raisebox{-0.5cm}{\reflectbox{\scalebox{-1}{
\begin{picture}(26,2.4)
\put(0,0) {$\circ$}
\put(1,0) {$\;\cdots$}
\put(3,0) {$\circ$}
\put(4,0) {$\circ$}
\put(5,0) {$\circ$}
\put(6,0) {$\;\cdots$}
\put(8,0) {$\circ$}
\put(9,0) {$\circ$}
\put(10,0) {$\circ$}
\put(11,0) {$\circ$}
\put(13,0) {$\square$}
\put(13,0.32) {\linethickness{0.1cm}\color{white}\line(1,0){0.8}}
\put(13.34,0) {\linethickness{0.1cm}\color{white}\line(0,1){0.8}}
\put(15,0) {$\square$}
\put(17,0) {$\square$}
\put(19,0) {$\square$}
\put(20,0) {$\;\cdots$}
\put(22,0) {$\square$}
\put(24,0) {$\square$}
\put(26,0) {$\square$}
\put(27,0) {$\;\cdots$}
\put(29,0) {$\square$}
\put(0.2,0.6){\line(0,1){2.5}}
\put(3.2,0.6){\line(0,1){2.3}}
\put(4.2,0.6){\line(0,1){2.1}}
\put(5.2,0.6){\line(0,1){1.9}}
\put(8.2,0.6){\line(0,1){1.7}}
\put(9.2,0.6){\line(0,1){1.5}}
\put(10.2,0.6){\line(0,1){1.3}}
\put(11.2,0.6){\line(0,1){1.1}}
\multiput(13.38,0.8)(0,0.46){2}{\line(0,1){0.3}}
\put(15.35,0.8){\line(0,1){1.1}}
\put(17.35,0.8){\line(0,1){1.3}}
\put(19.35,0.8){\line(0,1){1.5}}
\put(22.35,0.8){\line(0,1){1.7}}
\put(24.35,0.8){\line(0,1){1.9}}
\put(26.35,0.8){\line(0,1){2.1}}
\put(29.35,0.8){\line(0,1){2.3}}
\multiput(11.2,1.7)(0.46,0){5}{\line(1,0){0.3}}
\put(10.2,1.9){\line(1,0){5.15}}
\put(9.2,2.1){\line(1,0){8.15}}
\put(8.2,2.3){\line(1,0){11.15}}
\put(5.2,2.5){\line(1,0){17.15}}
\put(4.2,2.7){\line(1,0){20.15}}
\put(3.2,2.9){\line(1,0){23.15}}
\put(0.2,3.1){\line(1,0){29.15}}
\end{picture}
}}}
\]\vspace{10pt}\newline
For \(r\!=\!2s\) and structure \([i]\):\vspace{10pt}
\[
\setlength{\unitlength}{0.5cm}
\begin{picture}(26,2.4)
\put(0,0) {$\circ$}
\put(1,0) {$\;\cdots$}
\put(3,0) {$\circ$}
\put(4,0) {$\circ$}
\put(5,0) {$\circ$}
\put(6,0) {$\;\cdots$}
\put(8,0) {$\circ$}
\put(9,0) {$\circ$}
\put(10,0) {$\circ$}
\put(12,0) {$\square$}
\put(12,0.3) {\linethickness{0.1cm}\color{white}\line(1,0){0.8}}
\put(12.33,0) {\linethickness{0.1cm}\color{white}\line(0,1){0.8}}
\put(14,0) {$\square$}
\put(16,0) {$\square$}
\put(17,0) {$\;\cdots$}
\put(19,0) {$\square$}
\put(21,0) {$\square$}
\put(23,0) {$\square$}
\put(24,0) {$\;\cdots$}
\put(26,0) {$\square$}
%%
%%diesen Teil nicht zum spiegeln kopieren
%%
\put(23.35,-0.9){\line(0,1){0.8}}
\put(26.35,-0.9){\line(0,1){0.8}}
\put(12.35,-0.8){\line(3,1){2}}
\put(14.35,-0.8){\line(3,1){2}}
\put(16.85,-0.8){$\cdots$}
\put(19.35,-0.8){\line(3,1){2}}
\put(0.,-0.7) {\small$1$}
\put(4,-0.7) {\small$i$}
\put(9,-0.7) {\small$s$}
\put(21.4,-0.7) {\small$i$}
%%
%% hier wieder kopieren
%%
\put(0.2,0.6){\line(0,1){2.5}}
\put(3.2,0.6){\line(0,1){2.3}}
\put(4.2,0.6){\line(0,1){2.1}}
\put(5.2,0.6){\line(0,1){1.9}}
\put(8.2,0.6){\line(0,1){1.7}}
\put(9.2,0.6){\line(0,1){1.5}}
\put(10.2,0.6){\line(0,1){1.1}}
\multiput(12.38,0.8)(0,0.46){2}{\line(0,1){0.3}}
\put(14.35,0.8){\line(0,1){1.3}}
\put(16.35,0.8){\line(0,1){1.5}}
\put(19.35,0.8){\line(0,1){1.7}}
\put(21.35,0.8){\line(0,1){1.9}}
\put(23.35,0.8){\line(0,1){2.1}}
\put(26.35,0.8){\line(0,1){2.3}}
\multiput(10.2,1.7)(0.46,0){5}{\line(1,0){0.3}}
\put(9.2,2.1){\line(1,0){5.15}}
\put(8.2,2.3){\line(1,0){8.15}}
\put(5.2,2.5){\line(1,0){14.15}}
\put(4.2,2.7){\line(1,0){17.15}}
\put(3.2,2.9){\line(1,0){20.15}}
\put(0.2,3.1){\line(1,0){26.15}}
\end{picture}
\hspace{-13.14cm}%%%%%%%%%%%%%%%%%%%%%%%%%%%%%%%%%%%%%%%%%%%%%%%%%zweites Bild
\raisebox{-0.5cm}{\reflectbox{\scalebox{-1}{
\begin{picture}(26,2.4)
\put(0,0) {$\circ$}
\put(1,0) {$\;\cdots$}
\put(3,0) {$\circ$}
\put(4,0) {$\circ$}
\put(5,0) {$\circ$}
\put(6,0) {$\;\cdots$}
\put(8,0) {$\circ$}
\put(9,0) {$\circ$}
\put(10,0) {$\circ$}
\put(12,0) {$\square$}
\put(14,0) {$\square$}
\put(16,0) {$\square$}
\put(17,0) {$\;\cdots$}
\put(19,0) {$\square$}
\put(21,0) {$\square$}
\put(23,0) {$\square$}
\put(24,0) {$\;\cdots$}
\put(26,0) {$\square$}
\put(0.2,0.6){\line(0,1){2.5}}
\put(3.2,0.6){\line(0,1){2.3}}
\put(4.2,0.6){\line(0,1){2.1}}
\put(5.2,0.6){\line(0,1){1.9}}
\put(8.2,0.6){\line(0,1){1.7}}
\put(9.2,0.6){\line(0,1){1.5}}
\put(10.2,0.6){\line(0,1){1.1}}
\put(12.35,0.8){\line(0,1){0.9}}
\put(14.35,0.8){\line(0,1){1.3}}
\put(16.35,0.8){\line(0,1){1.5}}
\put(19.35,0.8){\line(0,1){1.7}}
\put(21.35,0.8){\line(0,1){1.9}}
\put(23.35,0.8){\line(0,1){2.1}}
\put(26.35,0.8){\line(0,1){2.3}}
\put(10.2,1.7){\line(1,0){2.15}}
\put(9.2,2.1){\line(1,0){5.15}}
\put(8.2,2.3){\line(1,0){8.15}}
\put(5.2,2.5){\line(1,0){14.15}}
\put(4.2,2.7){\line(1,0){17.15}}
\put(3.2,2.9){\line(1,0){20.15}}
\put(0.2,3.1){\line(1,0){26.15}}
\end{picture}
}}}
\]\vspace{10pt}\newline
For \(r\!=\!2s\) and structure \([i,i+1]\):\vspace{10pt}
\[
\setlength{\unitlength}{0.5cm}
\begin{picture}(26,2.4)
\put(0,0) {$\circ$}
\put(1,0) {$\;\cdots$}
\put(3,0) {$\circ$}
\put(4,0) {$\circ$}
\put(5,0) {$\circ$}
\put(6,0) {$\;\cdots$}
\put(8,0) {$\circ$}
\put(9,0) {$\circ$}
\put(10,0) {$\circ$}
\put(12,0) {$\square$}
\put(12,0.3) {\linethickness{0.1cm}\color{white}\line(1,0){0.8}}
\put(12.33,0) {\linethickness{0.1cm}\color{white}\line(0,1){0.8}}
\put(14,0) {$\square$}
\put(16,0) {$\square$}
\put(17,0) {$\;\cdots$}
\put(19,0) {$\square$}
\put(21,0) {$\square$}
\put(23,0) {$\square$}
\put(24,0) {$\;\cdots$}
\put(26,0) {$\square$}
%%
%%diesen Teil nicht zum spiegeln kopieren
%%
\put(21.35,-0.9){\line(0,1){0.8}}
\put(23.35,-0.9){\line(0,1){0.8}}
\put(26.35,-0.9){\line(0,1){0.8}}
\put(12.35,-0.8){\line(3,1){2}}
\put(14.35,-0.8){\line(3,1){2}}
\put(16.85,-0.8){$\cdots$}
\put(19.35,-0.8){\line(3,1){2}}
\put(0.,-0.7) {\small$1$}
\put(4,-0.7) {\small$i$}
\put(9,-0.7) {\small$s$}
\put(21.5,-0.7) {\small$i$}
%%
%% hier wieder kopieren
%%
\put(0.2,0.6){\line(0,1){2.5}}
\put(3.2,0.6){\line(0,1){2.3}}
\put(4.2,0.6){\line(0,1){2.1}}
\put(5.2,0.6){\line(0,1){1.9}}
\put(8.2,0.6){\line(0,1){1.7}}
\put(9.2,0.6){\line(0,1){1.5}}
\put(10.2,0.6){\line(0,1){1.1}}
\multiput(12.38,0.8)(0,0.46){2}{\line(0,1){0.3}}
\put(14.35,0.8){\line(0,1){1.3}}
\put(16.35,0.8){\line(0,1){1.5}}
\put(19.35,0.8){\line(0,1){1.7}}
\put(21.35,0.8){\line(0,1){1.9}}
\put(23.35,0.8){\line(0,1){2.1}}
\put(26.35,0.8){\line(0,1){2.3}}
\multiput(10.2,1.7)(0.46,0){5}{\line(1,0){0.3}}
\put(9.2,2.1){\line(1,0){5.15}}
\put(8.2,2.3){\line(1,0){8.15}}
\put(5.2,2.5){\line(1,0){14.15}}
\put(4.2,2.7){\line(1,0){17.15}}
\put(3.2,2.9){\line(1,0){20.15}}
\put(0.2,3.1){\line(1,0){26.15}}
\end{picture}
\hspace{-13.14cm}%%%%%%%%%%%%%%%%%%%%%%%%%%%%%%%%%%%%%%%%%%%%%%%%%zweites Bild
\raisebox{-0.5cm}{\reflectbox{\scalebox{-1}{
\begin{picture}(26,2.4)
\put(0,0) {$\circ$}
\put(1,0) {$\;\cdots$}
\put(3,0) {$\circ$}
\put(4,0) {$\circ$}
\put(5,0) {$\circ$}
\put(6,0) {$\;\cdots$}
\put(8,0) {$\circ$}
\put(9,0) {$\circ$}
\put(10,0) {$\circ$}
\put(12,0) {$\square$}
\put(14,0) {$\square$}
\put(16,0) {$\square$}
\put(17,0) {$\;\cdots$}
\put(19,0) {$\square$}
\put(21,0) {$\square$}
\put(23,0) {$\square$}
\put(24,0) {$\;\cdots$}
\put(26,0) {$\square$}
\put(0.2,0.6){\line(0,1){2.5}}
\put(3.2,0.6){\line(0,1){2.3}}
\put(4.2,0.6){\line(0,1){2.1}}
\put(5.2,0.6){\line(0,1){1.9}}
\put(8.2,0.6){\line(0,1){1.7}}
\put(9.2,0.6){\line(0,1){1.5}}
\put(10.2,0.6){\line(0,1){1.1}}
\put(12.35,0.8){\line(0,1){0.9}}
\put(14.35,0.8){\line(0,1){1.3}}
\put(16.35,0.8){\line(0,1){1.5}}
\put(19.35,0.8){\line(0,1){1.7}}
\put(21.35,0.8){\line(0,1){1.9}}
\put(23.35,0.8){\line(0,1){2.1}}
\put(26.35,0.8){\line(0,1){2.3}}
\put(10.2,1.7){\line(1,0){2.15}}
\put(9.2,2.1){\line(1,0){5.15}}
\put(8.2,2.3){\line(1,0){8.15}}
\put(5.2,2.5){\line(1,0){14.15}}
\put(4.2,2.7){\line(1,0){17.15}}
\put(3.2,2.9){\line(1,0){20.15}}
\put(0.2,3.1){\line(1,0){26.15}}
\end{picture}
}}}
\]\vspace{10pt}\newline
Note that the pictures above are a little bit imprecise with respect to the meaning of a line between to squares \(\square\).
Such a line does not mean that there is just \emph{any} connection between these structures but there is a connection between the corresponding \(K(i)\) and \(K(j')\).
For example, the vertical line between the right-most squares says that the points \(1\) and \(1'\) are connected.
\newline
Note further that in the first two pictures the diagonal line to the dashed square  just means that \(s+1\) is in the same block as \((s+2)'\), so if the dashed structure is empty then this diagonal line has to end directly at point \((s+2)'\).
\vspace{11pt}\newline
In the next lemma we take a closer look at the matrix \(A(n,r)\) and ask under which conditions certain entries are non-zero.
Recall, see Remark \ref{lem:f(i,q)_and_g(i,q)_are_in_W(n,r)}, that for \(q\in W(n,r\!+\!1)\) the partitions \(f(i,q)\) and \(g(i,q)\) are elements in \(W(n,r)\).
Hence, there are rows and columns in \(A(n,r)\) labelled by \(f(i,q)\) and \(g(i,q)\).
\newline
Note further, compare Definition \ref{defn:r-flawless_graph}, that a graph \(G(p,q)\) is called \(r\)-flawless if the Graph \(H_r(p,q)\) has the following structure:
\begin{equation}\label{eqn:schemes_for_r-flawlessness}
\begin{tabular}{rclcrcl}
&\begin{picture}(0,0)\hspace{-0.5cm}\(r\) even:\end{picture}&&&&\begin{picture}(0,0)\hspace{-0.5cm}\(r\) odd:\end{picture}&\\[11pt]
\(1'\)&\begin{picture}(0,0)\put(-0.3,0.2){\line(1,0){0.9}}\end{picture}&\(1\)&\(\quad\quad\quad\quad\quad\)&\(1'\)&\begin{picture}(0,0)\put(-0.3,0.2){\line(1,0){0.9}}\end{picture}&\(1\)\\[4pt]
\vdots&&\vdots&&\vdots&&\vdots\\
\(s'\)&\begin{picture}(0,0)\put(-0.3,0.2){\line(1,0){0.9}}\end{picture}&\(s\)&&\(s'\)&\begin{picture}(0,0)\put(-0.3,0.2){\line(1,0){0.9}}\end{picture}&\(s\)\\[8pt]
\((s\!+\!1)'\)&\begin{picture}(0,0)\multiput(-0.3,0.2)(0.35,0){3}{\line(1,0){0.2}}\end{picture}&\((s\!+\!1)\)&&\((s\!+\!1)'\)&\begin{picture}(0,0)\put(-0.3,0.2){\line(1,0){0.9}}\end{picture}&\((s\!+\!1)\)
\end{tabular}
\end{equation}
The dashed line between \((s\!+\!1)'\) and \((s\!+\!1)\) means that a connection between these two points is allowed but not necessary.
The following observation will be used in the proof of Proposition \ref{prop:when_is_e_r(p,q)_non-zero?}.
\begin{obs}\label{obs:important_observation_about_drawing_f(i,q)}
Consider \(q\in W(n,r\!+\!1)\) and the illustrations of \(q^*\) and \(f(i,q^*)\):
{\allowdisplaybreaks
\vspace{-11pt}
\begin{align*}
q=&\quad
\begin{picture}(28,2.4)\setlength{\unitlength}{0.4cm}
\put(0,1.2) {$1$}
\put(4,1.2) {$i$}
\put(23.6,1.4) {\scalebox{0.8}{$K(i)$}}
\put(9,1.2) {$s$}
\end{picture}
\hspace{-14cm}
\reflectbox{\scalebox{-1}{\parbox[c]{10cm}{
\setlength{\unitlength}{0.4cm}
\begin{picture}(28,2.4)
\put(0,0) {$\circ$}
\put(1,0) {$\;\cdots$}
\put(3,0) {$\circ$}
\put(4,0) {$\circ$}
\put(5,0) {$\circ$}
\put(6,0) {$\;\cdots$}
\put(8,0) {$\circ$}
\put(9,0) {$\circ$}
\put(10,0) {$\circ$}
\put(15,0) {$\square$}
\put(17,0) {$\square$}
\put(19,0) {$\square$}
\put(20,0) {$\;\cdots$}
\put(22,0) {$\square$}
\put(24,0) {$\square$}
\put(26,0) {$\square$}
\put(27,0) {$\;\cdots$}
\put(29,0) {$\square$}
\put(0.2,0.6){\line(0,1){3.5}}
\put(3.2,0.6){\line(0,1){3.1}}
\put(4.2,0.6){\line(0,1){2.7}}
\put(5.2,0.6){\line(0,1){2.3}}
\put(8.2,0.6){\line(0,1){1.9}}
\put(9.2,0.6){\line(0,1){1.5}}
\put(10.2,0.6){\line(0,1){1.1}}
\put(15.35,0.8){\line(0,1){0.9}}
\put(17.35,0.8){\line(0,1){1.3}}
\put(19.35,0.8){\line(0,1){1.7}}
\put(22.35,0.8){\line(0,1){2.1}}
\put(24.35,0.8){\line(0,1){2.5}}
\put(26.35,0.8){\line(0,1){2.9}}
\put(29.35,0.8){\line(0,1){3.3}}
\put(10.2,1.7){\line(1,0){5.15}}
\put(9.2,2.1){\line(1,0){8.15}}
\put(8.2,2.5){\line(1,0){11.15}}
\put(5.2,2.9){\line(1,0){17.15}}
\put(4.2,3.3){\line(1,0){20.15}}
\put(3.2,3.7){\line(1,0){23.15}}
\put(0.2,4.1){\line(1,0){29.15}}
\end{picture}}}}\quad\quad\quad\quad\quad\;\;
\\[40pt]%%%%%%%%%%%%%%%%%%%%%%%%%%%%%%%%%%%%%%%%%%%%%%%%%%%%%%%%%%%%%%%%%%%%%%%%%%%%%%%%%\newline
f(i,q^*)=&\quad
\begin{picture}(28,2.4)\setlength{\unitlength}{0.4cm}
\put(0,1.2) {$1'$}
\put(4.1,1.2) {$i'$}
\put(23.8,1.4) {\scalebox{0.8}{$K(i')$}}
\put(9.1,1.2) {$s'$}
\end{picture}
\hspace{-14.1cm}
\reflectbox{\scalebox{-1}{
\parbox[c]{10cm}{
\setlength{\unitlength}{0.4cm}
\begin{picture}(28,2.4)
\put(0,0) {$\circ$}
\put(1,0) {$\;\cdots$}
\put(3,0) {$\circ$}
\put(4,0) {$\circ$}
\put(6,0) {$\;\cdots$}
\put(8,0) {$\circ$}
\put(9,0) {$\circ$}
\put(10,0) {$\circ$}
\put(15,0) {$\square$}
\put(17,0) {$\square$}
\put(20,0) {$\;\cdots$}
\put(22,0) {$\square$}
\put(24,0) {$\square$}
\put(26,0) {$\square$}
\put(27,0) {$\;\cdots$}
\put(29,0) {$\square$}
\put(0.2,0.6){\line(0,1){3.5}}
\put(3.2,0.6){\line(0,1){3.1}}
\put(4.2,0.6){\line(0,1){2.7}}
\put(8.2,0.6){\line(0,1){1.9}}
\put(9.2,0.6){\line(0,1){1.5}}
\put(10.2,0.6){\line(0,1){1.1}}
\put(26.35,0.8){\line(0,1){2.9}}
\put(29.35,0.8){\line(0,1){3.3}}
\put(9.2,2.1){\line(1,0){8.15}}
\put(8.2,2.5){\line(1,0){11.15}}
\put(4.2,3.3){\line(1,0){19.15}}
\put(3.2,3.7){\line(1,0){23.15}}
\put(0.2,4.1){\line(1,0){29.15}}
\put(17.35,2.1){\line(-2,-1){2}}
\put(19.35,2.1){\line(-2,-1){2}}
\put(23.35,1.7){\line(-2,-1){1}}
\put(19.35,2.1){\line(0,1){0.4}}
\put(23.35,1.7){\line(0,1){1.6}}
\end{picture}}}}\quad\quad\quad\quad\quad\;\;
\end{align*}
}
\vspace{22pt}
\newline
We observe that in the illustration of \(f(i,q^*)\) the space below \(K(i')\) is only crossed by the lines which connect \(j'\) and \(K(j')\) for \(1\!\le\! j\!<\!i\).

\end{obs}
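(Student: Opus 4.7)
My plan is to read off the claim directly from the planar picture of $f(i,q^*)$, exploiting that $q$ (hence $q^*$) is non-crossing. First I would establish the spatial layout of $q^*$. Because $q\in W(n,r+1)\subseteq\mathcal{NC}$ and the first $s+1$ (resp.\ $s+2$) leftmost points of $q$ lie in pairwise distinct blocks, the non-crossing nesting property forces the sets $K(j')$ (for $1\le j\le s+1$) to appear in the order $K(s+1'),K(s'),\ldots,K(1')$ from left to right, strictly to the right of the distinguished points $1',\ldots,s+1'$ (and of $(s+2)'$ in the $r$-odd case); each $K(j')$ forms a contiguous interval in which further blocks of $q$ may be nested but do not escape. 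In the $r$-odd case, $X(s+2')$ sits as the leftmost such interval, just to the right of $(s+2)'$ and to the left of $K(s+1')$.

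Next I would enumerate the arcs present in $f(i,q^*)$ using Definition \ref{defn:f(i,q)}. These are: (a) the original internal arcs of $q^*$ that lie entirely within a single interval $K(j')$ (for any $j$), which are untouched by $f$; (b) for $1\le j<i$, the original arc from $j'$ to $K(j')$; (c) for $i\le j\le s$, the new arc from $j'$ to $K(j'+1)$; (d) in the $r$-odd case, the new arc from $(s+1)'$ to $X(s+2')$. The interval $K(i')$ is itself isolated from $i'$ in $f(i,q^*)$, so no arc incident to $K(i')$ survives except its internal ones.

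I would then check, arc by arc, which ones pass under $K(i')$ in the planar drawing. Arcs of type (a) stay confined to a single interval and hence never pass under $K(i')$ (including those inside $K(i')$ itself, which are entirely above the baseline or inside it, not beneath). Arcs of type (c) start at a point $j'\ge i'$ located to the left of $K(i')$ and terminate at $K(j'+1)$, which lies strictly to the left of $K(i')$ by the left-to-right ordering established above; hence they terminate before reaching $K(i')$ and do not pass under it. The same reasoning handles (d) in the $r$-odd case: $X(s+2')$ lies even further to the left. Finally, the type (b) arcs from $j'$ (with $j<i$) to $K(j')$ begin to the left of $i'$ and terminate in $K(j')$, which sits to the right of $K(i')$, so they must sweep under $K(i')$ to connect their endpoints; these are exactly the lines singled out in the statement.

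The main obstacle is the accurate bookkeeping of the planar positions of the sets $K(j')$ and of $X(s+2')$ under the nesting forced by non-crossingness; once this layout is fixed the case analysis above is immediate. This completes the plan.
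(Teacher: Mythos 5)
Your proposal is correct and follows the same planarity reasoning that the paper relies on implicitly: the Observation is stated in the paper without any formal proof, being read off from the drawn figure, and your plan makes precise exactly the facts that figure encodes (the non-crossing nesting forces the order $K\big((s\!+\!1)'\big),\ldots,K(1')$ from left to right, the new arcs of $f(i,q^*)$ from $j'$ to $K\big((j\!+\!1)'\big)$ for $j\ge i$ terminate to the left of $K(i')$, and only the surviving arcs from $j'$ to $K(j')$ with $j<i$ must sweep beneath $K(i')$).

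One small omission: your enumeration of arcs lists only blocks internal to some interval $K(j')$, the arcs $j'\,$--$\,K(j')$ for $j<i$, the new arcs $j'\,$--$\,K\big((j\!+\!1)'\big)$, and (in the odd case) $(s\!+\!1)'\,$--$\,X\big((s\!+\!2)'\big)$; but $q^*$ may also contain blocks lying in none of the intervals $K(j')$, e.g.\ nested in the gaps between consecutive $K$-hulls or to the right of $K(1')$. These are handled by the same argument you already use: a block can cross the space below $K(i')$ only if it has points on both sides of the convex hull of $K(i')$, and by non-crossingness any such block must contain a point among $1',\ldots,(i\!-\!1)'$, hence is one of the $X(j')$ with $j<i$. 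Adding this one line closes the case analysis and the argument is complete.
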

\begin{prop}\label{prop:when_is_e_r(p,q)_non-zero?}
Consider \(1\!\le\!r\!<\!n\!-\!1\) and partitions \(p\in W(n,r)\) and \newline \(q\in W(n,r\!+\!1)\).
\begin{itemize}
\item[(i)] The entry \(e_r(p,q)\) of \(A(n,r)\) is non-zero exactly in the following cases:
	\begin{itemize}
		\item[(1)] If \(r=2s\), then \(H_r(p,q)\) must have structure \([s+1]\) or \([0]\).
		\item[(2)] If \(r=2s+1\), then \(H_r(p,q)\) must have structure \([s+1,s+2]\) or \([0]\).
	\end{itemize}
\item[(ii)] For \(1\le i\le s+1\) the entry \(e_r\big(p,f(i,q)\big)\) in \(A(n,r)\) is non-zero if and only if \(H_r(p,q)\) is of structure \([i]\), \([i,i+1]\) or \([i-1,i]\).
Note that \([i-1,i]\) is excluded if \(i=1\) and \([i,i+1]\) is excluded if \(i=s+1\) and \(r=2s\), so in these cases there are only two possible structures if \(e_r\big(p,g(i,q)\big)\) should be non-zero.
\item[(iii)] For \(1\!\le\!i\le\!s\) the entry \(e_r\big(p,g(i,q)\big)\) in \(A(n,r)\) is non-zero if and only if \(H_r(p,q)\) has structure \([i]\), \([i\!+\!1]\) or \([i,i\!+\!1]\).
If \(i\!=\!s\!+\!1\) (only allowed if \(r\!=\!2s\!+\!1\)), then \(H_r(p,q)\) must be of structure \([i]\), \([i,i\!+\!1]\) or \([0]\).
\end{itemize}
\end{prop}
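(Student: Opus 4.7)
The entries $e_r(p,q)$, $e_r(p, f(i,q))$, $e_r(p, g(i,q))$ vanish precisely when the corresponding graph $G(p, \cdot)$ has an $r$-flaw, so I will rephrase the three parts as claims about $r$-flawlessness of the graphs $H_r(p,q)$, $H_r(p, f(i,q))$, and $H_r(p, g(i,q))$ respectively. In each case, by Definition \ref{defn:r-flawless_graph}, $r$-flawlessness is a condition on how the leftmost points $1, \ldots, s+1$ connect to $1', \ldots, (s+1)'$ (and, when $r$ is odd, also $(s+2)'$) through the right-hand companion blocks $K(j) = X(j) \setminus \{j\}$ of $p$ and their analogues in $q$. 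Since $p \in W(n,r)$ and $q \in W(n, r+1)$ place the leftmost points in pairwise distinct blocks within their own rows, every non-trivial connection between a primed and an unprimed leftmost point must travel into the right-hand part of the graph via these companion blocks.

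The combinatorial heart of the proof is a nesting lemma: because $p$ and $q$ are non-crossing, the companion blocks $K(1), K(2), \ldots$ of $p$ open in order as one scans past position $s+1$ and close in reverse order, with $K(s+1)$ innermost; the analogous companion blocks on the primed side nest likewise. Two such blocks of $p$ cannot share a connected component of the right-hand sub-graph of $H_r$ with a common block from $q$ without forcing a crossing, so the connected components of the right-hand sub-graph meeting the leftmost points implement an order-preserving partial matching between $\{1, \ldots, s+1\}$ and $\{1', \ldots, (s+1)'\}$ (or $\{1', \ldots, (s+2)'\}$ for odd $r$). An exhaustive enumeration of such monotone partial matchings, subject to the additional constraints imposed by the definitions of $W(n,r)$ and $W(n, r+1)$, recovers exactly the finite list of patterns $[i]$, $[i, i+1]$, $[0]$ of Definition \ref{defn:structures[i]_a.s.o.}.

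With this classification in hand, part (i) follows by direct inspection: $r$-flawlessness requires $j$ and $j'$ to be in the same component of $H_r(p,q)$ for $1 \le j \le s$ and additionally $s+1 \sim (s+1)'$ when $r$ is odd, while keeping the leftmost points pairwise disconnected within each row, and only the patterns $[s+1]$ and $[0]$ (for even $r$) or $[s+1, s+2]$ and $[0]$ (for odd $r$) satisfy these demands simultaneously. For parts (ii) and (iii), I would then track how the surgery $q \mapsto f(i,q)$ or $q \mapsto g(i,q)$ modifies the primed-side nesting: $f(i,q)$ shifts the attachments of positions $i, \ldots, s+1$ one step outward (and, for odd $r$, welds $s+1$ into the block containing $s+2$), while $g(i,q)$ performs the same outward shift but keeps the original attachment at position $i$. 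Applying this surgery to each candidate pattern of $H_r(p,q)$ and testing $r$-flawlessness of the resulting graph yields exactly the three source patterns listed in (ii) and (iii). The main obstacle is the rigorous verification of the nesting lemma and the claim that the structures $[i]$, $[i,i+1]$, $[0]$ exhaust all possibilities; once this is established, the remainder is a finite case analysis governed by the parity of $r$.
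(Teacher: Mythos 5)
Your reduction to a statement about $r$-flawlessness and your description of the surgeries $f(i,\cdot)$ and $g(i,\cdot)$ are fine, but the argument breaks exactly at the step you flag as the main obstacle: the claim that the structures $[i]$, $[i,i\!+\!1]$, $[0]$ exhaust all possibilities for $H_r(p,q)$. They do not, and they must not: a generic pair $p\in W(n,r)$, $q\in W(n,r\!+\!1)$ yields a graph $H_r(p,q)$ carrying none of these structures (for instance, take $X(1)=\{1,8\}$ and $X(2)=\{2,7\}$ in $p$ and let $q$ have a block containing both $7'$ and $8'$; then $1$ and $2$ are connected in $H_r(p,q)$, which every structure forbids), and in all such cases every entry $e_r(p,q)$, $e_r(p,f(j,q))$, $e_r(p,g(j,q))$ vanishes --- this residual situation is precisely what is needed later as Case 4 in the proof of Lemma \ref{lem:F_r(p,q)-formula}. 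Your nesting lemma is also false as stated: a block of $q$ that is not one of the $K(j')$ may contain one point sitting below $K(1)$ and another sitting below $K(2)$; since the blocks of $q$ are drawn entirely below the lower row of $G(p,q)$, this creates no crossing, yet it merges the components of $1$ and $2$. So the components meeting the leftmost points need not implement any monotone partial matching, and the enumeration that was supposed to generate the list of structures never gets off the ground. (The paper's remark after Definition \ref{defn:A(n,r)} makes exactly this point: membership in $W(n,r)$ controls $p$ and $q$ separately, not their interaction inside $G(p,q)$.)

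The consequence is that your treatment of (ii) and (iii) only delivers the easy implication. Applying the surgery to each \emph{candidate} structure and testing $r$-flawlessness shows which structures produce non-zero entries; the ``only if'' half requires showing that an \emph{arbitrary} $H_r(p,q)$, with no a priori structure, must land in the short list whenever $G\big(p,f(i,q)\big)$ or $G\big(p,g(i,q)\big)$ is $r$-flawless. That is where the actual work lies, and it is a planarity argument of a different flavour from your nesting lemma: one first deduces from flawlessness of $G\big(p,f(i,q)\big)$ the scheme linking $j$ to $K(j')$ or $K\big((j\!+\!1)'\big)$, and then pins down the behaviour of the single remaining point $i'$ by observing (Observation \ref{obs:important_observation_about_drawing_f(i,q)}) that in a non-crossing drawing of $f(i,q^*)$ the only lines passing below $K(i')$ are those joining $j'$ to $K(j')$ for $j<i$, so the components of $i$ and $i'$ form nested ``circles'' that any path from $K(i')$ to a forbidden leftmost point would have to meet. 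You would need to supply an argument of this kind to close the gap.
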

\begin{proof}
By definition of \(e_r(\cdot,\cdot)\), see Definition \ref{defn:A(n,r)}, we have to show that absence of \(r\)-flaws is equivalent to the respectively mentioned structures.
\newline
A detailed proof is lengthy because many different scenarios have to be considered, but each of them can be checked by comparing three kind of schemes:
\begin{itemize}
\item[(1)] The Picture \ref{eqn:schemes_for_r-flawlessness} describes absence of \(r\)-flaws.
\item[(2)] A comparison of the Picture \ref{eqn:scheme_for_the_K(i)}  with the Pictures \ref{eqn:scheme_for_f(i,q)} and \ref{eqn:scheme_for_g(i,q)}  tells us how to construct the partitions \(f(i,q)\) and \(g(i,q)\) from \(q\) (and vice versa).
\item[(3)] The schemes in Definition \ref{defn:structures[i]_a.s.o.}  define and illustrate the structures \([i]\), \([i,i\!+\!1]\) and \([0]\).
\end{itemize}
Going from (1) to (3), one sees that assuming absence of \(r\)-flaws in \(G_r(p,q)\), \(G_r(p,f(i,q))\) or \(G_r(p,g(i,q))\), respectively,  implies that \(H_r (p,q)\) has one of the respectively claimed structures.
Going backwards from (3) to (1) shows the conversion, and hence the desired equivalence.
\newline
To convince the reader, we consider the situation of an \(r\)-flawless \(G\big(p,f(i,q)\big)\) for odd \(r\) and show that \(H_r(p,q)\) must be of structure \([i]\), \([i,i\!+\!1]\) or \([i\!-\!1,i]\).
\newline
Picture \ref{eqn:schemes_for_r-flawlessness} describes the relevant structure of \(H_r\big(p,f(i,q)\big)\) and Picture \ref{eqn:scheme_for_f(i,q)} the relevant structure of \(f(i,q)\).
Combining them, we obtain the following scheme:
%
% ALTES BILD
%
%\begin{equation}\label{eqn:scheme_for_exemplary_proof}
%\begin{tabular}{rccclc}
%\(1\)&\begin{picture}(0,0)\put(-0.3,0.2){\line(1,0){0.9}}\end{picture}&\(1'\)&\begin{picture}(0,0)\put(-0.3,0.2){\line(1,0){0.9}}\end{picture}&\(K(1')\)&\\[4pt]
%%
%\vdots&&\vdots&&\quad\vdots&\\[6pt]
%%
%\((i-1)\)&\begin{picture}(0,0)\put(-0.3,0.2){\line(1,0){0.9}}\end{picture}&\((i-1)'\)&\begin{picture}(0,0)\put(-0.3,0.2){\line(1,0){0.9}}\end{picture}&\(K\big((i-1)'\big)\)&\\
%%
%&&&&\quad\quad\quad\begin{picture}(0,0)\multiput(-0.3,0.2)(0,-0.4){3}{\line(0,-1){0.3}}\end{picture}&\\[16pt]
%%
%\((i)\)&\begin{picture}(0,0)\put(-0.3,0.2){\line(1,0){0.9}}\end{picture}&\((i)'\)&\begin{picture}(0.5,0)\put(-0.3,0){\line(1,-2){1.2}}\end{picture}&\(K\big((i)'\big)\)&
%\quad\quad
%\begin{picture}(0,0)
%\put(-0.3,0.5){\line(-1,1){1.3}}
%\put(-0.3,-0.1){\line(-1,-1){1.3}}
%\put(-1.6,1.8){\vector(-1,0){2}}
%\put(-1.6,-1.4){\vector(-1,0){2}}
%\end{picture}\textnormal{not both!}\\
%%
%&&&&\quad\quad\quad\begin{picture}(0,0)\multiput(-0.3,0.2)(0,-0.4){3}{\line(0,-1){0.3}}\end{picture}&\\[12pt]
%%
%\vdots&&\vdots&&K\big((i+1)'\big)&\\[6pt]
%\((s)\)&\begin{picture}(0,0)\put(-0.3,0.2){\line(1,0){0.9}}\end{picture}&\((s)'\)&\begin{picture}(0.5,0)\put(-0.3,0){\line(1,-1){1.2}}\end{picture}&\quad\vdots&\\[6pt]
%%
%\((s+1)\)&\begin{picture}(0,0)\put(-0.3,0.2){\line(1,0){0.9}}\end{picture}&\((s+1)'\)&\begin{picture}(0.5,0)\put(-0.3,0){\line(1,-1){1.2}}\end{picture}&K\big((s+1)'\big)&\\[8pt]
%%
%&&&&X\big((s+2)'\big)&
%\end{tabular}
%\end{equation}
%
% NEUES BILD
%
\[
\begin{tabular}{rcccl}
\(1\)&\begin{picture}(0,0)\put(-0.3,0.2){\line(1,0){0.9}}\end{picture}&\(K(1')\)\\[4pt]
\vdots&&\quad\vdots\\[6pt]
\((i-1)\)&\begin{picture}(0,0)\put(-0.3,0.2){\line(1,0){0.9}}\end{picture}&\(K\big((i-1)'\big)\)\\[6pt]
\(i\)&\begin{picture}(0.5,0)\put(-0.3,0){\line(1,-1){1.2}}\end{picture}&\\[6pt]
\(\vdots\)&&\(K\big((i+1)'\big)\)\\[6pt]
\(s\)&\begin{picture}(0.5,0)\put(-0.3,0){\line(1,-1){1.2}}\end{picture}&\(\quad\vdots\)\\[10pt]
\((s+1)\)&\begin{picture}(0.5,0)\put(-0.3,0){\line(1,-1){1.2}}\end{picture}&\(K\big((s\!+\!1)'\big)\)\\[8pt]
&&\(X\big((s\!+\!2)'\big)\)
\end{tabular}
\]
Note that there are no other links between the mentioned (sets of) points because the points \(1,\ldots,(s\!+\!1)\) are pairwisely disconnected by assumption on \(G\big(p,f(i,q)\big)\).
\newline
Replacing \(f(i,q)\) by \(q\), we obtain by definition of the sets \(K(j')\) and \(X\big((s\!+\!2)'\big)\) the following structure for \(H_r(p,q)\):
\begin{equation}\label{eqn:scheme_for_exemplary_proof}
\begin{tabular}{rcccl}
\(1\)&\begin{picture}(0,0)\put(-0.3,0.2){\line(1,0){0.9}}\end{picture}&\(K(1')\)\\[4pt]
\vdots&&\quad\vdots\\[6pt]
\((i-1)\)&\begin{picture}(0,0)\put(-0.3,0.2){\line(1,0){0.9}}\end{picture}&\((i-1)'\)\\[6pt]
\(i\)&\begin{picture}(0.5,0)\put(-0.3,0){\line(1,-1){1.2}}\end{picture}&\\[6pt]
\(\vdots\)&&\((i+1)'\)\\[6pt]
\(s\)&\begin{picture}(0.5,0)\put(-0.3,0){\line(1,-1){1.2}}\end{picture}&\(\quad\vdots\)\\[10pt]
\((s+1)\)&\begin{picture}(0.5,0)\put(-0.3,0){\line(1,-1){1.2}}\end{picture}&\((s\!+\!1)'\)\\[8pt]
&&\((s\!+\!2)'\)
\end{tabular}
\end{equation}
This scheme is compatible with the desired structures, but we have to prove that also the point \(i'\) is connected in the proper way:
\begin{itemize}
\item[(1)] If \(i'\) is connected to none of the points \(1\ldots,(s\!+\!1)\), then we have structure \([i]\).
\item[(2)] If \(i'\) is only connected to \(i\) but to no other point amongst \(1,\ldots,(s\!+\!1)\), then we have structure \([i,i\!+\!1]\).
\item[(3)] If \(i'\) is only connected to \((i\!-\!1)\) but to no other point amongst \(1,\ldots,(s\!+\!1)\), then we have structure \([i,i\!+\!1]\).
\end{itemize}
The prove is finished, once we have shown that no other situations occur.
\newline
It is clear that \(i'\) cannot be connected to more than one point of \(1,\ldots,(s+1)\) as this would contradict Scheme \ref{eqn:scheme_for_exemplary_proof}, so having another structure than the three situations above means that \(i'\) or, equivalently, \(K(i')\) is connected to any of the points \(1,\ldots,(s\!+\!1)\) other than \(i\) or \((i\!-\!1)\). 
We assume this to be true and lead it to a contradiction.
\newline
Drawing the illustration of \(H_r\big(p,f(i,q)\big)\) in a non-crossing way, the relevant part of \(H_r\big(p,f(i,q)\big)\) looks as follows:
\vspace{11pt}
\begin{equation}\label{eqn:relevant_part_of_H_r(p,f(i,q))}
\cdots\quad\quad\quad
\raisebox{0.3cm}{\setlength{\unitlength}{0.5cm}
\begin{picture}(15,2.4)
\put(0,0.5) {$\circ$}
\put(1,0.5) {$\circ$}
\put(4,0.5) {$\cdots$}
\put(10,0.5) {$\square$}
\put(13,0.5) {$\square$}
\put(0.2,1.1){\line(0,1){2.0}}
\put(1.2,1.1){\line(0,1){1.6}}
\put(10.35,1.3){\line(0,1){1.4}}
\put(13.35,1.3){\line(0,1){1.8}}
\put(0.2,3.1){\line(1,0){13.15}}
\put(1.2,2.7){\line(1,0){9.15}}
%%
%%diesen Teil nicht zum spiegeln kopieren
%%
\put(7.35,-1){\line(3,1){3}}
\put(-1.8,-1.6) {\scalebox{0.7}{\((i-1)'\)}}
\put(-1.8,0.3) {\scalebox{0.7}{\((i-1)\)}}
\put(1.5,-1.6) {\scalebox{0.7}{\((i)'\)}}
\put(1.5,0.3) {\scalebox{0.7}{\((i)\)}}
\put(4.3,-2.4) {\scalebox{0.7}{\(K\big((i\!+\!1)'\big)\)}}
\put(10.6,-2.4) {\scalebox{0.7}{\(K\big((i)'\big)\)}}
\put(10.6,1.1) {\scalebox{0.7}{\(K\big(i\big)\)}}
\put(13.6,-2.4) {\scalebox{0.7}{\(K\big((i\!-\!1)'\big)\)}}
\put(13.6,1.1) {\scalebox{0.7}{\(K\big(i\!-\!1\big)\)}}
\put(13.35,-1.2){\line(0,1){1.3}}
%%
%% hier wieder kopieren
%%
\end{picture}
\hspace{-7.77cm}%%%%%%%%%%%%%%%%%%%%%%%%%%%%%%%%%%%%%%%%%%%%%%%%%zweites Bild
\raisebox{-0.5cm}{\reflectbox{\scalebox{-1}{
\begin{picture}(15,2.4)
\put(0,0.5) {$\circ$}
\put(1,0.5) {$\circ$}
\put(4,0.5) {$\ldots$}
\put(7,0.5) {$\square$}
\put(10,0.5) {$\square$}
\put(13,0.5) {$\square$}
\put(0.2,1.1){\line(0,1){2.0}}
\put(1.2,1.1){\line(0,1){1.6}}
\put(7.35,1.3){\line(0,1){1.4}}
\put(13.35,1.3){\line(0,1){1.8}}
\put(0.2,3.1){\line(1,0){13.15}}
\put(1.2,2.7){\line(1,0){6.15}}
\end{picture}
}}}}
\quad\quad\cdots\vspace{22pt}
\end{equation}
See below, why the illustration in deed has to be of this structure.
Note that the diagonal line in the picture just symbolizes any path from a point inside \(K\big((i\!+\!1)'\) to a point inside \(K(i)\).
This path might cross the area between the two rows of points several times and also points ``left of \(K\big((i\!+\!1)'\)'' and ``right of \(K(i)\)'' might be visited.
The analogous remark holds for the vertical line between \(K\big((i\!-\!1)'\big)\) and \(K\big((i\!-\!1)\big)\).
\newline
The components containing the points \(i\) and \(i'\) form ``circles'' around \(K(i')\), so, by non-crossingness, a path starting inside \(K(i')\) and ending at one of the points \(1,\ldots,(i\!-\!2),(i\!+\!2),\ldots,(s\!+\!2)\) has to visit at least one of the circles.
We conclude that \(K\big((i)')\) would in addition be connected to \((i-1)\) or \(i\), contradicting our assumption.
\newline
In order to see that Illustration \ref{eqn:relevant_part_of_H_r(p,f(i,q))} does not contain inappropriate assumptions, we observe the following:
Upon first sight, it seems that the points \(K(i')\) do not have to be between the drawn ``circles'' but also the following constellations would be possible: 
\[
\cdots\quad\quad\quad
\raisebox{0.3cm}{\setlength{\unitlength}{0.5cm}
\begin{picture}(15,2.4)
\put(0,0.5) {$\circ$}
\put(1,0.5) {$\circ$}
\put(4,0.5) {$\cdots$}
\put(10,0.5) {$\square$}
\put(13,0.5) {$\square$}
\put(0.2,1.1){\line(0,1){2.0}}
\put(1.2,1.1){\line(0,1){1.6}}
\put(10.35,1.3){\line(0,1){1.4}}
\put(13.35,1.3){\line(0,1){1.8}}
\put(0.2,3.1){\line(1,0){13.15}}
\put(1.2,2.7){\line(1,0){9.15}}
%%
%%diesen Teil nicht zum spiegeln kopieren
%%
\put(-1.8,-1.6) {\scalebox{0.7}{\((i-1)'\)}}
\put(-1.8,0.3) {\scalebox{0.7}{\((i-1)\)}}
\put(1.5,-1.6) {\scalebox{0.7}{\((i)'\)}}
\put(1.5,0.3) {\scalebox{0.7}{\((i)\)}}
\put(4.3,-2.4) {\scalebox{0.7}{\(K\big((i\!+\!1)'\big)\)}}
\put(10.6,-2.4) {\scalebox{0.7}{\(K\big(i'\big)\)}}
\put(10.6,1.1) {\scalebox{0.7}{\(K\big(i\big)\)}}
\put(13.6,-2.4) {\scalebox{0.7}{\(K\big((i\!-\!1)'\big)\)}}
\put(13.6,1.1) {\scalebox{0.7}{\(K\big(i\!-\!1\big)\)}}
\put(13.35,-1.2){\line(0,1){1.3}}
\put(11.85,-3.7){\line(0,1){1}}
\put(11.85,-1.8){\line(0,1){1.1}}
\put(10.35,-0.7){\line(1,0){1.5}}
\put(10.35,-0.7){\line(0,1){0.8}}
\put(11.85,-3.7){\line(-1,0){3}}
\put(8.85,-3.7){\line(0,1){2}}
\put(8.85,-1.7){\line(-1,0){1}}

%%
%% hier wieder kopieren
%%
\end{picture}
\hspace{-7.77cm}%%%%%%%%%%%%%%%%%%%%%%%%%%%%%%%%%%%%%%%%%%%%%%%%%zweites Bild
\raisebox{-0.5cm}{\reflectbox{\scalebox{-1}{
\begin{picture}(15,2.4)
\put(0,0.5) {$\circ$}
\put(1,0.5) {$\circ$}
\put(4,0.5) {$\ldots$}
\put(7,0.5) {$\square$}
\put(10,0.5) {$\square$}
\put(13,0.5) {$\square$}
\put(0.2,1.1){\line(0,1){2.0}}
\put(1.2,1.1){\line(0,1){1.6}}
\put(7.35,1.3){\line(0,1){1.4}}
\put(13.35,1.3){\line(0,1){1.8}}
\put(0.2,3.1){\line(1,0){13.15}}
\put(1.2,2.7){\line(1,0){6.15}}
\end{picture}
}}}}
\quad\quad\cdots
\]
\vspace{33pt}
\[
\cdots\quad\quad\quad
\raisebox{0.3cm}{\setlength{\unitlength}{0.5cm}
\begin{picture}(15,2.4)
\put(0,0.5) {$\circ$}
\put(1,0.5) {$\circ$}
\put(4,0.5) {$\cdots$}
\put(10,0.5) {$\square$}
\put(13,0.5) {$\square$}
\put(0.2,1.1){\line(0,1){2.0}}
\put(1.2,1.1){\line(0,1){1.6}}
\put(10.35,1.3){\line(0,1){1.4}}
\put(13.35,1.3){\line(0,1){1.8}}
\put(0.2,3.1){\line(1,0){13.15}}
\put(1.2,2.7){\line(1,0){9.15}}
%%
%%diesen Teil nicht zum spiegeln kopieren
%%
\put(7.35,-1){\line(3,1){3}}
\put(-1.8,-1.6) {\scalebox{0.7}{\((i-1)'\)}}
\put(-1.8,0.3) {\scalebox{0.7}{\((i-1)\)}}
\put(1.5,-1.6) {\scalebox{0.7}{\((i)'\)}}
\put(1.5,0.3) {\scalebox{0.7}{\((i)\)}}
\put(4.3,-2.4) {\scalebox{0.7}{\(K\big((i\!+\!1)'\big)\)}}
\put(8.2,-2.4) {\scalebox{0.7}{\(K\big(i'\big)\)}}
\put(10.6,1.1) {\scalebox{0.7}{\(K\big(i\big)\)}}
\put(13.6,-2.4) {\scalebox{0.7}{\(K\big((i\!-\!1)'\big)\)}}
\put(13.6,1.1) {\scalebox{0.7}{\(K\big(i\!-\!1\big)\)}}
\put(13.35,-0.8){\line(0,1){0.9}}
\put(9.35,-0.8){\line(1,0){4}}
\put(9.35,-1.8){\line(0,1){1}}
\put(9.35,-3.8){\line(0,1){1}}
\put(9.35,-3.8){\line(1,0){2.5}}
\put(11.85,-3.8){\line(0,1){2}}
\put(11.85,-1.8){\line(1,0){1}}
%%
%% hier wieder kopieren
%%
\end{picture}
\hspace{-7.77cm}%%%%%%%%%%%%%%%%%%%%%%%%%%%%%%%%%%%%%%%%%%%%%%%%%zweites Bild
\raisebox{-0.5cm}{\reflectbox{\scalebox{-1}{
\begin{picture}(15,2.4)
\put(0,0.5) {$\circ$}
\put(1,0.5) {$\circ$}
\put(4,0.5) {$\ldots$}
\put(7,0.5) {$\square$}
\put(10,0.5) {$\square$}
\put(13,0.5) {$\square$}
\put(0.2,1.1){\line(0,1){2.0}}
\put(1.2,1.1){\line(0,1){1.6}}
\put(7.35,1.3){\line(0,1){1.4}}
\put(13.35,1.3){\line(0,1){1.8}}
\put(0.2,3.1){\line(1,0){13.15}}
\put(1.2,2.7){\line(1,0){6.15}}
\end{picture}
}}}}
\quad\quad\cdots\vspace{11pt}
\]
However, by Observation \ref{obs:important_observation_about_drawing_f(i,q)}, the only lines drawn below the part 
\begin{picture}(3,1)
\put(0,-0.2) {\scalebox{0.7}{\(K\big((i)'\big)\)}}
\put(2,0) {$\square$}
\end{picture}
can be chosen to be the lines between \(j'\) and \(K\big(j'\big)\) for \(1\!\le\! j\!<\!i\).
Hence, the two pictures above do not occur.
\end{proof}
Given a structure as defined in \ref{defn:structures[i]_a.s.o.}, Proposition \ref{prop:when_is_e_r(p,q)_non-zero?} tells us when an expression \(e_{(\cdot)}(\cdot,\cdot)\) is non-zero.
This will be used in the proof of Lemma \ref{lem:F_r(p,q)-formula}.
\begin{cor}\label{cor:corollary_from_prop:when_is_e_r(p,q)_non-zero?}
Let \(1\!\le\! r\!\le\! n\!-\!1\), \(p\in W(n,r)\) and \(q\in W(n,r\!+\!1)\).
For the statements below let \(i,j\in\N\) be such that the respective objects are well-defined.
\begin{itemize}
\item[(1)] If \(H_r(p,q)\) has structure \([i,i\!+\!1]\), then we have
\begin{itemize}
\item[(1.1)] \(e_r(p,f(j,q))\neq0\) only if \(j\in\{i,i\!+\!1\}\).
\item[(1.2)] \(e_r(p,g(j,q))\neq0\) only if \(j\!=\!i\).
\item[(1.3)] \(e_r(p,q)\neq0\) only if \(i\!=\!s\!+\!1\) (and \(r\!=\!2s\!+\!1\)).
\end{itemize}
\item[(2)] If \(H_r(p,q)\) has structure \([i]\), then we have
\begin{itemize}
\item[(2.1)] \(e_r(p,f(j,q))\neq0\) only if \(j\!=\!i\).
\item[(2.2)] \(e_r(p,g(j,q))\neq0\) only if \(j\in\{i\!-\!1,i\}\).
\item[(3.3)] \(e_r(p,q)\neq0\) only if \(i\!=\!s\!+\!1\) (and \(r\!=\!2s\)).
\end{itemize}
\item[(3)] If \(H_r(p,q)\) has structure \([0]\), then we have
\begin{itemize}
\item[(3.1)] \(e_r(p,f(j,q))=0\) for all \(j\).
\item[(3.2)] \(e_r(p,g(j,q))\neq0\) only if \(j\!=\!s\!+\!1\) (and \(r\!=\!2s\!+\!1\)).
\item[(3.3)] \(e_r(p,q)\neq0\).
\end{itemize}
\end{itemize}
\end{cor}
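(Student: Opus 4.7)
The corollary is a direct translation of Proposition \ref{prop:when_is_e_r(p,q)_non-zero?} once one invokes the incompatibility of the structures $[i]$, $[i,i{+}1]$ and $[0]$ (stated in the remark following Definition \ref{defn:structures[i]_a.s.o.}). The plan is, for each of the three cases, to fix the structure of $H_r(p,q)$ and then read off from parts (i), (ii), (iii) of Proposition \ref{prop:when_is_e_r(p,q)_non-zero?} exactly which values of the parameter $j$ are compatible with that fixed structure.

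More concretely, for case (1) I assume $H_r(p,q)$ has structure $[i,i{+}1]$. Part (ii) of the proposition characterises $e_r\big(p,f(j,q)\big)\neq 0$ by the three mutually exclusive structures $[j]$, $[j,j{+}1]$ and $[j{-}1,j]$; incompatibility forces $[i,i{+}1]$ to coincide with one of these, which by inspection yields $j=i$ or $j=i{+}1$, proving (1.1). Part (iii) similarly allows $[j]$, $[j{+}1]$ and $[j,j{+}1]$, whence $[i,i{+}1]=[j,j{+}1]$ and so $j=i$, giving (1.2). Finally, part (i) allows only $[s{+}1]$ (for $r=2s$), $[s{+}1,s{+}2]$ (for $r=2s{+}1$) or $[0]$; so $[i,i{+}1]$ is admissible exactly when $r=2s{+}1$ and $i=s{+}1$, which is (1.3).

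The same bookkeeping disposes of case (2): if $H_r(p,q)$ has structure $[i]$, then part (ii) forces $[i]=[j]$, i.e.\ $j=i$; part (iii) forces $[i]$ to lie among $[j]$, $[j{+}1]$, $[j,j{+}1]$, giving $j\in\{i{-}1,i\}$; and part (i) forces $r=2s$ with $i=s{+}1$. Case (3) is even quicker: if $H_r(p,q)$ has structure $[0]$, then $[0]$ is not among $[j], [j,j{+}1], [j{-}1,j]$, so (3.1) holds; in part (iii), $[0]$ appears only in the special branch $j=s{+}1$ with $r=2s{+}1$, giving (3.2); and $[0]$ is always an admissible structure in part (i), giving (3.3).

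The only subtlety to watch is boundary behaviour: $[j{-}1,j]$ is undefined for $j=1$, $[j,j{+}1]$ is undefined for $j=s{+}1$ in the even case, and the list of admissible structures for $g(s{+}1,q)$ in the odd case is different. These exceptions are already built into Proposition \ref{prop:when_is_e_r(p,q)_non-zero?}, so nothing new has to be checked; the main (and essentially only) obstacle is the clerical one of handling all the small special cases without miscounting, which is why one should rely on the strict incompatibility of the structures rather than on pictorial intuition.
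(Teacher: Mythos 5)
Your proposal is correct and matches the paper's intent exactly: the paper states this corollary without proof as an immediate consequence of Proposition \ref{prop:when_is_e_r(p,q)_non-zero?} together with the pairwise incompatibility of the structures noted after Definition \ref{defn:structures[i]_a.s.o.}, which is precisely the case-by-case matching you carry out. Your explicit handling of the boundary cases ($j=1$, $j=s+1$ in the even case, and the special $g(s{+}1,q)$ branch) is a useful elaboration but introduces nothing beyond what the paper's cited results already contain.
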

We compare now the number of components in \(G\big(p,f(i,q)\big)\) and \(G\big(p,g(i,q)\big)\) with those of \(G(p,q)\).
\begin{lem}\label{lem:components_of_G(p,f(i,q))_and_G(p,g(i,q))}
Let \(1\!\le\! r\!\le\! n\!-\!1\), \(p\in W(n,r)\) and \(q\in W(n,r\!+\!1)\).
If \(e_r\big(p,f(i,q)\big)\) is non-zero, then we have
\[
\rl\big(p,f(i,q)\big)=
\begin{cases}
\rl(p,q)+s-i+2&\textnormal{ if \(H_r(p,q)\) is of structure \([i]\) or \([i\!-\!1,i]\)}\\
\rl(p,q)+s-i+1&\textnormal{ if \(H_r(p,q)\) is of structure \([i,i\!+\!1]\).}
\end{cases}
\]
If \(e_r\big(p,g(i,q)\big)\) is non-zero, then we have
\[
\rl\big(p,g(i,q)\big)=
\begin{cases}
\rl(p,q)+s-i+1&\textnormal{ if }H_r(p,q)\textnormal{ is of structure }[i]\textnormal{ or }[i,i\!+\!1]\\
\rl(p,q)+s-i&\textnormal{ if }H_r(p,q)\textnormal{  is of structure }[i\!+\!1]\\
\rl(p,q)-1&\textnormal{ if }H_r(p,q)\textnormal{  is of structure }[0].
\end{cases}
\]

\end{lem}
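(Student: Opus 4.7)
The plan is to use case analysis on the structure of $H_r(p, q)$. By Proposition \ref{prop:when_is_e_r(p,q)_non-zero?} and Corollary \ref{cor:corollary_from_prop:when_is_e_r(p,q)_non-zero?}, the non-vanishing of $e_r(p, f(i, q))$ restricts $H_r(p, q)$ to have structure $[i]$, $[i, i+1]$ or $[i-1, i]$, and the non-vanishing of $e_r(p, g(i, q))$ to structure $[i]$, $[i+1]$, $[i, i+1]$ or $[0]$. For each of these structures I will separately compute the change in the number of components.

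The main technical device is that the graphs $G(p, q)$, $G(p, f(i, q))$ and $G(p, g(i, q))$ share all their $p$-edges and all the vertical edges $(j, j')$, so the only difference lies in the edges of the bottom row incident to the blocks $X(1'), \ldots, X((s+1)')$ (together with $X((s+2)')$ in the odd case). Since $H_r(p, q)$ already has a known connectivity pattern among the points $1, \ldots, s+1, 1', \ldots, (s+1)'$ dictated by its structure, I will read off $\rl(p, q)$ by first identifying the components of $H_r(p, q)$ and then adding back the $s+1$ vertical edges $(j, j')$ for $1 \le j \le s+1$ one at a time, counting which of them actually merge components. Applying the same procedure after replacing $q$ by $f(i, q)$ or $g(i, q)$ then yields $\rl(p, f(i, q))$ or $\rl(p, g(i, q))$, and taking the difference gives the formula.

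As a representative case, take $r$ even with structure $[i]$. In $H_r(p, q)$ each point $j \ne i, s+1$ with $j \le s+1$ is connected to $j'$ through a path passing through $K(j)$ and $K(j')$, so only the vertical edges at $j = i$ and $j = s+1$ reduce the component count, giving $\rl(p, q) = c - 2$ where $c$ is the number of components of $H_r(p, q)$. Passing to $f(i, q)$ detaches $j'$ from $K(j')$ and reattaches it to $K((j+1)')$ for $i \le j \le s$, and leaves $(s+1)'$ as a singleton. Using non-crossingness (as in the proof of Proposition \ref{prop:when_is_e_r(p,q)_non-zero?}) to exclude spurious paths, one checks that $K(i')$, $(s+1)$ and $(s+1)'$ each become an isolated component and that the $s+1$ vertical edges now contribute exactly one merger rather than two; this produces precisely $s - i + 2$ extra components, so $\rl(p, f(i, q)) = \rl(p, q) + s - i + 2$. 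The remaining structures are treated in the same fashion.

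The main obstacle is the bookkeeping: a handful of subcases (two parities of $r$ and up to four structures each for $f$ and $g$) each need a careful tracing of which new components are created and which previously distinct components are fused by the rewiring. The genuine outlier is the $[0]$ case for $g(s+1, q)$ with odd $r$: here fusing $X(s+1)$ with $X(s+2)$ collapses two components of $G(p, q)$ that were kept apart only by the absence of the corresponding bottom-row link, which explains the anomalous $\rl(p, q) - 1$ and has to be verified on its own. Throughout, non-crossingness of $p$ and $q$ is the essential ingredient that prevents unforeseen mergers and permits the counts to be read directly off the structural pictures in Definition \ref{defn:structures[i]_a.s.o.}.
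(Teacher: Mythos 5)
Your overall strategy---localizing the effect of the rewiring $q\mapsto f(i,q)$, resp.\ $g(i,q)$, and comparing component counts structure by structure---is the same as the paper's; the paper just performs the comparison directly between $G(p,q)$ and $G\big(p,f(i,q)\big)$ rather than detouring through $H_r(p,q)$ and the vertical edges. The gap is in your representative computation. You claim that in structure $[i]$ every point $j\ne i,\,s+1$ is connected to $j'$ in $H_r(p,q)$, so that only the two vertical edges at $j=i$ and $j=s+1$ are effective and $\rl(p,q)=c-2$. That misreads Definition \ref{defn:structures[i]_a.s.o.}: structure $[i]$ has $j$ connected to $j'$ only for $j<i$, while for $i\le j\le s$ the pattern is the shifted staircase $j$ --- $(j+1)'$, with $i'$ disconnected from all of $1,\dots,s+1$ (this is exactly the pattern extracted in Scheme \ref{eqn:scheme_for_exemplary_proof} in the proof of Proposition \ref{prop:when_is_e_r(p,q)_non-zero?}). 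Consequently \emph{every} vertical edge $(j,j')$ with $i\le j\le s+1$ merges two distinct components: the $s-i+3$ components of $H_r(p,q)$ containing $i'$, $\{i,(i+1)'\}$, \dots, $\{s,(s+1)'\}$ and $s+1$ collapse into a single component of $G(p,q)$, so $\rl(p,q)=c-(s-i+2)$, not $c-2$.

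This is not a cosmetic slip: the collapse of that staircase into one component of $G(p,q)$, and its being cut back into $s-i+3$ pieces by $f(i,\cdot)$, is precisely where the summand $s-i+2$ comes from. With your reading, the pieces you actually exhibit (three newly isolated components, and one effective vertical edge instead of two) would give a difference of $4$, which agrees with $s-i+2$ only when $i=s-2$; the general formula is then asserted rather than derived. The same misreading would propagate to the other structures, so the case analysis has to be redone with the correct connectivity pattern before the bookkeeping can be trusted.
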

\begin{proof}
By Proposition \ref{prop:when_is_e_r(p,q)_non-zero?} the  cases mentioned here are indeed all of the relevant ones.
We start with the situation of \(r\!=\!2s\), \(f(i,q)\) and structure \([i]\).
The relevant part of \(G(p,q)\) influenced by the application of \(f(i,\cdot)\) is the one ``surrounded by the component containing \(i\)'':\vspace{10pt}
\[
G(p,q)=\quad\cdots\;\raisebox{0.3cm}{\textnormal{\hspace{-2cm}}\setlength{\unitlength}{0.5cm}
\begin{picture}(26,2.4)
\put(4,0) {$\circ$}
\put(5,0) {$\circ$}
\put(6,0) {$\;\cdots$}
\put(8,0) {$\circ$}
\put(9,0) {$\circ$}
\put(10,0) {$\circ$}
\put(12,0) {$\square$}
\put(12,0.32) {\linethickness{0.1cm}\color{white}\line(1,0){0.8}}
\put(12.34,0) {\linethickness{0.1cm}\color{white}\line(0,1){0.8}}
\put(14,0) {$\square$}
\put(16,0) {$\square$}
\put(17,0) {$\;\cdots$}
\put(19,0) {$\square$}
\put(21,0) {$\square$}
\put(4.2,0.6){\line(0,1){2.1}}
\put(5.2,0.6){\line(0,1){1.9}}
\put(8.2,0.6){\line(0,1){1.7}}
\put(9.2,0.6){\line(0,1){1.5}}
\put(9.2,0.6){\line(0,1){1.3}}
\put(10.2,0.6){\line(0,1){1.1}}
\multiput(12.38,0.8)(0,0.46){2}{\line(0,1){0.3}}
\put(14.35,0.8){\line(0,1){1.3}}
\put(16.35,0.8){\line(0,1){1.5}}
\put(19.35,0.8){\line(0,1){1.7}}
\put(21.35,0.8){\line(0,1){1.9}}
%%
%%diesen Teil nicht zum spiegeln kopieren
%%
%\put(10,-0.7){?}
\put(12.35,-0.8){\line(3,1){2}}
\put(14.35,-0.8){\line(3,1){2}}
\put(16.85,-0.8){$\;\cdots$}
\put(19.35,-0.8){\line(3,1){2}}
\put(4.35,-0.7) {\small$i$}
\put(9.35,-0.7) {\small$s$}
\put(21.5,-0.7) {\small$i$}
\put(4.2,-0.9){\line(0,1){0.8}}
\put(5.2,-0.9){\line(0,1){0.8}}
\put(8.2,-0.9){\line(0,1){0.8}}
\put(9.2,-0.9){\line(0,1){0.8}}
\put(10.2,-0.9){\line(0,1){0.8}}
%%
%% hier wieder kopieren
%%
\multiput(10.2,1.7)(0.46,0){5}{\line(1,0){0.3}}
\put(9.2,2.1){\line(1,0){5.15}}
\put(8.2,2.3){\line(1,0){8.15}}
\put(5.2,2.5){\line(1,0){14.15}}
\put(4.2,2.7){\line(1,0){17.15}}
\end{picture}
\hspace{-13.27cm}%%%%%%%%%%%%%%%%%%%%%%%%%%%%%%%%%%%%%%%%%%%%%%%%%zweites Bild
\raisebox{-0.5cm}{\reflectbox{\scalebox{-1}{
\begin{picture}(26,2.4)
\put(4,0) {$\circ$}
\put(5,0) {$\circ$}
\put(6,0) {$\;\cdots$}
\put(8,0) {$\circ$}
\put(9,0) {$\circ$}
\put(10,0) {$\circ$}
\put(12,0) {$\square$}
\put(14,0) {$\square$}
\put(16,0) {$\square$}
\put(17,0) {$\;\cdots$}
\put(19,0) {$\square$}
\put(21,0) {$\square$}
\put(4.2,0.6){\line(0,1){1.9}}
\put(5.2,0.6){\line(0,1){1.7}}
\put(8.2,0.6){\line(0,1){1.5}}
\put(9.2,0.6){\line(0,1){1.3}}
\put(10.2,0.6){\line(0,1){1.1}}
\put(12.35,0.8){\line(0,1){0.9}}
\put(14.35,0.8){\line(0,1){1.1}}
\put(16.35,0.8){\line(0,1){1.3}}
\put(19.35,0.8){\line(0,1){1.5}}
\put(21.35,0.8){\line(0,1){1.7}}
\put(10.2,1.7){\line(1,0){2.15}}
\put(9.2,1.9){\line(1,0){5.15}}
\put(8.2,2.1){\line(1,0){8.15}}
\put(5.2,2.3){\line(1,0){14.15}}
\put(4.2,2.5){\line(1,0){17.15}}
\end{picture}
}}}}
\textnormal{\hspace{-2cm}}\cdots
\]\vspace{10pt}\newline
As mentioned before, the squares symbolize a structure consisting possibly of more than one point and more than one component, but the points not connected to any of the points \(i,\ldots,(s\!+\!2)\) are not affected by the manipulations.
In this sense and as long as we are only interested in the alteration of the number of components, we treat the squares in the same way as points.
Having this in mind, we see that in the picture above there is just one component drawn.
Applying \(f(i,\cdot)\) to \(q\), we end up with\vspace{10pt}
\[
G\big(p,f(i,q)\big)=\quad\cdots\;\raisebox{0.3cm}{\textnormal{\hspace{-2cm}}\setlength{\unitlength}{0.5cm}
\begin{picture}(26,2.4)
\put(4,0) {$\circ$}
\put(5,0) {$\circ$}
\put(6,0) {$\;\cdots$}
\put(8,0) {$\circ$}
\put(9,0) {$\circ$}
\put(10,0) {$\circ$}
\put(12,0) {$\square$}
\put(12,0.32) {\linethickness{0.1cm}\color{white}\line(1,0){0.8}}
\put(12.34,0) {\linethickness{0.1cm}\color{white}\line(0,1){0.8}}
\put(14,0) {$\square$}
\put(16,0) {$\square$}
\put(17,0) {$\;\cdots$}
\put(19,0) {$\square$}
\put(21,0) {$\square$}
\put(4.2,0.6){\line(0,1){2.1}}
\put(5.2,0.6){\line(0,1){1.9}}
\put(8.2,0.6){\line(0,1){1.7}}
\put(9.2,0.6){\line(0,1){1.5}}
\put(9.2,0.6){\line(0,1){1.3}}
\put(10.2,0.6){\line(0,1){1.1}}
\multiput(12.38,0.8)(0,0.46){2}{\line(0,1){0.3}}
\put(14.35,0.8){\line(0,1){1.3}}
\put(16.35,0.8){\line(0,1){1.5}}
\put(19.35,0.8){\line(0,1){1.7}}
\put(21.35,0.8){\line(0,1){1.9}}
%%
%%diesen Teil nicht zum spiegeln kopieren
%%
%\put(10,-0.7){?}
\put(12.35,-0.8){\line(3,1){2}}
\put(14.35,-0.8){\line(3,1){2}}
\put(16.85,-0.8){$\;\cdots$}
\put(19.35,-0.8){\line(3,1){2}}
\put(4.35,-0.7) {\small$i$}
\put(9.35,-0.7) {\small$s$}
\put(21.5,-0.7) {\small$i$}
\put(19.6,-1.9) {\small$x$}
\put(21.6,-1.9) {\small$y$}
\put(21.6,0.7) {\small$z$}
\put(4.2,-0.9){\line(0,1){0.8}}
\put(5.2,-0.9){\line(0,1){0.8}}
\put(8.2,-0.9){\line(0,1){0.8}}
\put(9.2,-0.9){\line(0,1){0.8}}
\put(10.2,-0.9){\line(0,1){0.8}}
%%
%% hier wieder kopieren
%%
\multiput(10.2,1.7)(0.46,0){5}{\line(1,0){0.3}}
\put(9.2,2.1){\line(1,0){5.15}}
\put(8.2,2.3){\line(1,0){8.15}}
\put(5.2,2.5){\line(1,0){14.15}}
\put(4.2,2.7){\line(1,0){17.15}}
\end{picture}
\hspace{-13.27cm}%%%%%%%%%%%%%%%%%%%%%%%%%%%%%%%%%%%%%%%%%%%%%%%%%zweites Bild
\raisebox{-0.5cm}{\reflectbox{\scalebox{-1}{
\begin{picture}(26,2.4)
\put(4,0) {$\circ$}
\put(5,0) {$\circ$}
\put(6,0) {$\;\cdots$}
\put(8,0) {$\circ$}
\put(9,0) {$\circ$}
\put(10,0) {$\circ$}
\put(12,0) {$\square$}
\put(14,0) {$\square$}
\put(16,0) {$\square$}
\put(17,0) {$\;\cdots$}
\put(19,0) {$\square$}
\put(21,0) {$\square$}
\put(4.2,0.6){\line(0,1){1.7}}
\put(5.2,0.6){\line(0,1){1.5}}
\put(8.2,0.6){\line(0,1){1.3}}
\put(9.2,0.6){\line(0,1){1.1}}
\put(10.2,0.6){\line(0,1){0.9}}
\put(12.35,0.8){\line(0,1){0.9}}
\put(14.35,0.8){\line(0,1){1.1}}
\put(16.35,0.8){\line(0,1){1.3}}
\put(19.35,0.8){\line(0,1){1.5}}
\put(21.35,0.8){\line(0,1){1.7}}
\put(9.2,1.7){\line(1,0){3.15}}
\put(8.2,1.9){\line(1,0){6.15}}
\put(5.2,2.1){\line(1,0){11.15}}
\put(4.2,2.3){\line(1,0){15.15}}

\end{picture}
}}}}
\textnormal{\hspace{-2cm}}\cdots
\]\vspace{10pt}\newline
Comparing the two situations, we see that \(G\big(p,f(i,q)\big)\) has \(s-i+2\) components more than \(G(p,q)\).
Starting with structure \([i,i\!+\!1]\), we directly see that the only difference is a connection between the squares \(y\) and \(z\), reducing the number of components in \(G\big(p,f(i,q)\big)\) by one.\newline
Considering \(g(i,q)\) instead of \(f(i,q)\) just connects in the second picture the squares \(x\) and \(y\) in the lower row.
For structure \([i]\), this decreases the number of components by 1 and in the case \([i,i\!+\!1]\) it leaves the components unchanged.
Of course, with the cases \([i]\) and \([i,i\!+\!1]\) we also proved the corresponding statements for \([i\!+\!1]\) and \([i\!-\!1,i]\) by shifting \(i\) by \(\pm1\).\newline
In the case \(r\!=\!2s\!+\!1\) we get the same results:
The component that contains \((s\!+\!1)\) looks a bit different, but that does not affect the difference in numbers of components when we replace \(q\) by \(f(i,q)\) or \(g(i,q)\):\vspace{10pt}
\begin{align*}
G(p,q):\quad&\cdots\;\raisebox{0.3cm}{\textnormal{\hspace{-4cm}}\setlength{\unitlength}{0.5cm}
\begin{picture}(26,2.4)
\put(10,0) {$\circ$}
\put(15,0) {$\square$}
\put(10.2,0.6){\line(0,1){1.3}}
\put(15.35,0.8){\line(0,1){1.1}}
%%
%%diesen Teil nicht zum spiegeln kopieren
%%
%\put(11,-0.7){?}
\put(13.35,-0.8){\line(3,1){2}}
\put(16.85,-0.8){$\;\cdots$}
\put(8.6,-0.7) {\small \emph{s}+1}
\put(10.2,-0.9){\line(0,1){0.8}}
%%
%% hier wieder kopieren
%%
\put(10.2,1.9){\line(1,0){5.15}}
\end{picture}
\hspace{-13.27cm}%%%%%%%%%%%%%%%%%%%%%%%%%%%%%%%%%%%%%%%%%%%%%%%%%zweites Bild
\raisebox{-0.5cm}{\reflectbox{\scalebox{-1}{
\begin{picture}(26,2.4)
\put(10,0) {$\circ$}
\put(11,0) {$\circ$}
\put(13,0) {$\square$}
\put(13,0.32) {\linethickness{0.1cm}\color{white}\line(1,0){0.8}}
\put(13.34,0) {\linethickness{0.1cm}\color{white}\line(0,1){0.8}}
\put(15,0) {$\square$}
\put(10.2,0.6){\line(0,1){1.3}}
\put(11.2,0.6){\line(0,1){1.1}}
\multiput(13.38,0.8)(0,0.46){2}{\line(0,1){0.3}}
\put(15.35,0.8){\line(0,1){1.1}}
\multiput(11.2,1.7)(0.46,0){5}{\line(1,0){0.3}}
\put(10.2,1.9){\line(1,0){5.15}}
\end{picture}
}}}}
\\[10pt]%%%%%%%%%%%%%%%%%%%%%%%%%%%%%%%%%%%%%%%%%%%%%%%%%%%%%%%%%%%%%%%%%%%%%%%%%%%%%%%%%\newline
G\big(p,f(i,q)\big),G\big(p,g(i,q)\big):\quad&\cdots\;\raisebox{0.3cm}{\textnormal{\hspace{-4cm}}\setlength{\unitlength}{0.5cm}
\begin{picture}(26,2.4)
\put(10,0) {$\circ$}
\put(15,0) {$\square$}
\put(10.2,0.6){\line(0,1){1.3}}
\put(15.35,0.8){\line(0,1){1.1}}
%%
%%diesen Teil nicht zum spiegeln kopieren
%%
%\put(11,-0.7){?}
\put(13.35,-0.8){\line(3,1){2}}
\put(16.85,-0.8){$\;\cdots$}
\put(8.6,-0.7) {\small \emph{s}+1}
\put(10.2,-0.9){\line(0,1){0.8}}
%%
%% hier wieder kopieren
%%
\put(10.2,1.9){\line(1,0){5.15}}
\end{picture}
\hspace{-13.27cm}%%%%%%%%%%%%%%%%%%%%%%%%%%%%%%%%%%%%%%%%%%%%%%%%%zweites Bild
\raisebox{-0.5cm}{\reflectbox{\scalebox{-1}{
\begin{picture}(26,2.4)
\put(10,0) {$\circ$}
\put(11,0) {$\circ$}
\put(13,0) {$\square$}
\put(13,0.32) {\linethickness{0.1cm}\color{white}\line(1,0){0.8}}
\put(13.34,0) {\linethickness{0.1cm}\color{white}\line(0,1){0.8}}
\put(15,0) {$\square$}
\put(10.2,0.6){\line(0,1){1.1}}
\put(11.2,0.6){\line(0,1){1.1}}
\multiput(13.38,0.8)(0,0.46){2}{\line(0,1){0.3}}
\put(15.35,0.8){\line(0,1){1.1}}
\multiput(11.2,1.7)(0.46,0){5}{\line(1,0){0.3}}
\put(10.2,1.7){\line(1,0){1}}
\put(9.2,1.9){\line(1,0){6.15}}
\end{picture}
}}}}
\end{align*}\vspace{6pt}\newline
Comparing this with the situation \(r\!=\!2s\), one sees that we just have to delete in the lower row 
\[
\setlength{\unitlength}{0.5cm}
\begin{picture}(3,0)
\put(0,0) {$\circ$}
\put(2,0) {$\square$}
\put(2,0.32) {\linethickness{0.1cm}\color{white}\line(1,0){0.8}}
\put(2.34,0) {\linethickness{0.1cm}\color{white}\line(0,1){0.8}}
\put(0.2,-1.2){\line(0,1){1}}
\multiput(2.35,-1.2)(0,0.46){2}{\line(0,1){0.3}}
\multiput(0.2,-1.2)(0.46,0){5}{\line(1,0){0.3}}
\put(2.35,0.8){\line(3,1){2}}
\end{picture}\vspace{22pt}
\]
right beside \((s+1)'\).
However, this does not change the number of components, so the formulae do not change.
\newline
Note that in the case of \(g(s+1,q)\) and structure \([0]\) the graphs to compare are given by\vspace{10pt}
\begin{align*}
G(p,q)=\quad&\cdots\;\raisebox{0.3cm}{\textnormal{\hspace{-4cm}}\setlength{\unitlength}{0.5cm}
\begin{picture}(26,2.4)
\put(9,0) {$\circ$}
\put(10,0) {$\circ$}
\put(15,0) {$\square$}
\put(17,0) {$\square$}
\put(9.2,0.6){\line(0,1){1.5}}
\put(10.2,0.6){\line(0,1){1.3}}
\put(15.35,0.8){\line(0,1){1.1}}
\put(17.35,0.8){\line(0,1){1.3}}
%%
%%diesen Teil nicht zum spiegeln kopieren
%%
%\put(11,-0.7){?}
\put(15.35,-0.9){\line(0,1){0.8}}
\put(17.35,-0.9){\line(0,1){0.8}}
\put(19.85,-0.8){$\cdots$}
\put(9.35,-0.7) {\small$s$}
\put(9.2,-0.9){\line(0,1){0.8}}
\put(10.2,-0.9){\line(0,1){0.8}}
%%
%% hier wieder kopieren
%%
\put(10.2,1.9){\line(1,0){5.15}}
\put(9.2,2.1){\line(1,0){8.15}}
\end{picture}
\hspace{-13.27cm}%%%%%%%%%%%%%%%%%%%%%%%%%%%%%%%%%%%%%%%%%%%%%%%%%zweites Bild
\raisebox{-0.5cm}{\reflectbox{\scalebox{-1}{
\begin{picture}(26,2.4)
\put(9,0) {$\circ$}
\put(10,0) {$\circ$}
\put(11,0) {$\circ$}
\put(13,0) {$\square$}
\put(13,0.32) {\linethickness{0.1cm}\color{white}\line(1,0){0.8}}
\put(13.34,0) {\linethickness{0.1cm}\color{white}\line(0,1){0.8}}
\put(15,0) {$\square$}
\put(17,0) {$\square$}
\put(9.2,0.6){\line(0,1){1.5}}
\put(10.2,0.6){\line(0,1){1.3}}
\put(11.2,0.6){\line(0,1){1.1}}
\multiput(13.38,0.8)(0,0.46){2}{\line(0,1){0.3}}
\put(15.35,0.8){\line(0,1){1.1}}
\put(17.35,0.8){\line(0,1){1.3}}
\multiput(11.2,1.7)(0.46,0){5}{\line(1,0){0.3}}
\put(10.2,1.9){\line(1,0){5.15}}
\put(9.2,2.1){\line(1,0){8.15}}
\end{picture}
}}}}
\\[20pt]%%%%%%%%%%%%%%%%%%%%%%%%%%%%%%%%%%%%%%%%%%%%%%%%%%%%%%%%%%%%%%%%%%%%%%%%%%%%%%%%%\newline
G\big(p,g(s+1,q)\big)=\quad&\cdots\;\raisebox{0.3cm}{\textnormal{\hspace{-4cm}}\setlength{\unitlength}{0.5cm}
\begin{picture}(26,2.4)
\put(9,0) {$\circ$}
\put(10,0) {$\circ$}
\put(15,0) {$\square$}
\put(17,0) {$\square$}
\put(9.2,0.6){\line(0,1){1.5}}
\put(10.2,0.6){\line(0,1){1.3}}
\put(15.35,0.8){\line(0,1){1.1}}
\put(17.35,0.8){\line(0,1){1.3}}
%%
%%diesen Teil nicht zum spiegeln kopieren
%%
%\put(11,-0.7){?}
\put(15.35,-0.9){\line(0,1){0.8}}
\put(17.35,-0.9){\line(0,1){0.8}}
\put(19.85,-0.8){$\cdots$}
\put(9.35,-0.7) {\small$s$}
\put(9.2,-0.9){\line(0,1){0.8}}
\put(10.2,-0.9){\line(0,1){0.8}}
%%
%% hier wieder kopieren
%%
\put(10.2,1.9){\line(1,0){5.15}}
\put(9.2,2.1){\line(1,0){8.15}}
\end{picture}
\hspace{-13.27cm}%%%%%%%%%%%%%%%%%%%%%%%%%%%%%%%%%%%%%%%%%%%%%%%%%zweites Bild
\raisebox{-0.5cm}{\reflectbox{\scalebox{-1}{
\begin{picture}(26,2.4)
\put(9,0) {$\circ$}
\put(10,0) {$\circ$}
\put(11,0) {$\circ$}
\put(13,0) {$\square$}
\put(13,0.32) {\linethickness{0.1cm}\color{white}\line(1,0){0.8}}
\put(13.34,0) {\linethickness{0.1cm}\color{white}\line(0,1){0.8}}
\put(15,0) {$\square$}
\put(17,0) {$\square$}
\put(9.2,0.6){\line(0,1){1.5}}
\put(10.2,0.6){\line(0,1){1.3}}
\put(11.2,0.6){\line(0,1){1.3}}
\multiput(13.36,0.7)(0,0.42){3}{\line(0,1){0.28}}
\put(15.35,0.8){\line(0,1){1.1}}
\put(17.35,0.8){\line(0,1){1.3}}
\put(10.2,1.9){\line(1,0){5.15}}
\put(9.2,2.1){\line(1,0){8.15}}
\end{picture}
}}}}
\end{align*}\vspace{10pt}\newline
so \(\rl\big(p,g(s+1,q)\big)=\rl(p,q)\!-\!1\), as desired.
\end{proof}
If \(1\le r<n-1\) as well as \(p\in W(n,r)\) and \(q\in W(n,r+1)\) are given, we define now the following terms:
\begin{align}\label{eqn:F_r(p,q)}
\begin{split}
F_r(p,q)=&\sum_{j=1}^{s+1}N^{-(s-j+2)}\beta_{2j-1}(z)e_r\big(p,f(j,q)\big)\\
&-\sum_{j=1}^{t}N^{-(s-j+1)}\beta_{2j}(z)e_r\big(p,g(j,q)\big)
\end{split}
\end{align}
The index bound \(t\) above is either \(s\) or \(s\!+\!1\), depending on whether \(r\!=\!2s\) or \(r\!=\!2s\!+\!1\). 
For the sake of readability we used the abbreviation \(z:=\frac{1}{N}\).
Here \(\beta\) are the reversed Beraha polynomials from Definition \ref{defn:beraha_polynomials}, given by the recursion
\begin{align}\label{eqn:recursion_behara_second_time}
\begin{split}
\beta_0(X)=0\quad&,\quad\beta_1(X)=1\\
\beta_{n+1}(X)=\beta_{n}(X)-&X\beta_{n-1}(X)\quad,\forall n\ge 1.
\end{split}
\end{align}
Corollary \ref{cor:corollary_from_prop:when_is_e_r(p,q)_non-zero?} and Lemma \ref{lem:components_of_G(p,f(i,q))_and_G(p,g(i,q))} are preparatory for the following result about the expressions \(F_r(p,q)\).
\begin{lem}\label{lem:F_r(p,q)-formula}
Let \(1\!\le\! r\!<\!n\!-\!1\), \(p\in W(n,r)\) and \(q\in W(n,r\!+\!1)\). 
Using the abbreviation \(z:=\frac{1}{N}\), it holds 
\begin{equation}\label{eqn:lem:F_r(p,q)-formula}
(-1)^rF_r(p,q)=\beta_{r+2}(z)\,e_r(p,q)-\beta_{r+3}(z)\,e_{r+1}(p,q)
\end{equation}
\end{lem}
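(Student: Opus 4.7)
The identity in Equation \ref{eqn:lem:F_r(p,q)-formula} is to be verified by a case analysis on the structure of \(H_r(p,q)\) in the sense of Definition \ref{defn:structures[i]_a.s.o.}. By Proposition \ref{prop:when_is_e_r(p,q)_non-zero?} each of the quantities \(e_r(p,f(j,q))\), \(e_r(p,g(j,q))\) and \(e_r(p,q)\) is forced to vanish unless \(H_r(p,q)\) is of one of the structures \([i]\), \([i,i\!+\!1]\) or \([0]\) for some admissible \(i\); a direct inspection (using that \(q\in W(n,r+1)\)) shows the analogous vanishing of \(e_{r+1}(p,q)\) outside these structures. Both sides of \ref{eqn:lem:F_r(p,q)-formula} therefore vanish trivially in the ``generic'' case, so the argument reduces to the three structural types and, inside each, to distinguishing ``interior'' indices \(i\) from the boundary values \(i\in\{1,s+1\}\).

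In each surviving subcase Corollary \ref{cor:corollary_from_prop:when_is_e_r(p,q)_non-zero?} pins down the at most three nonzero summands of \(F_r(p,q)\), and Lemma \ref{lem:components_of_G(p,f(i,q))_and_G(p,g(i,q))} provides the precise exponent of \(N\) in each one, so that every surviving entry reads \(N^{R+c}\) with \(R:=\rl(p,q)\) and an explicit integer \(c\). The weights \(N^{-(s-j+2)}\) and \(N^{-(s-j+1)}\) built into the definition of \(F_r\) are chosen exactly so that after substitution every surviving summand becomes a Beraha-polynomial multiple of either \(N^R\) or \(N^{R-1}=zN^R\). The resulting combination telescopes via one or two applications of the defining recursion \(\beta_{n+1}(z)=\beta_n(z)-z\,\beta_{n-1}(z)\), leaving only whatever portion of \(\beta_{r+2}(z)e_r(p,q)\) and \(\beta_{r+3}(z)e_{r+1}(p,q)\) survives, up to the overall sign \((-1)^r\).

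To illustrate, take \(r=2s\) and structure \([i]\) with \(1<i<s+1\). The surviving contributions come from \(f(i,q)\), \(g(i\!-\!1,q)\) and \(g(i,q)\), with respective component counts \(R+s-i+2\), \(R+s-i+1\) and \(R+s-i+1\). After substitution
\[
F_r(p,q)\;=\;N^{R}\bigl(\beta_{2i-1}(z)-z\,\beta_{2i-2}(z)-\beta_{2i}(z)\bigr)\;=\;0,
\]
by a single application of the Beraha recursion; this matches the vanishing right-hand side, since both \(e_r(p,q)\) and \(e_{r+1}(p,q)\) are zero (\(i\neq s\!+\!1\) and, because \(H_{r+1}(p,q)=H_r(p,q)\) for even \(r\), structure \([i]\) separates \((s+1)\) from \((s+1)'\) and so kills \((r+1)\)-flawlessness). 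The boundary subcases and the structures \([i,i\!+\!1]\) and \([0]\) follow the same template: structure \([s\!+\!1]\) with \(r=2s\) leaves the residual \(N^{R}\beta_{2s+2}(z)=\beta_{r+2}(z)e_r(p,q)\); structure \([s\!+\!1,s\!+\!2]\) with \(r=2s+1\) produces the analogous residual with the sign \((-1)^r=-1\); and structure \([0]\), the only one allowing \(e_{r+1}(p,q)\neq 0\), yields the \(\beta_{r+3}(z)e_{r+1}(p,q)\) contribution via the single surviving term \(g(s\!+\!1,q)\) when \(r\) is odd.

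The hard part is not the algebra but the combinatorial bookkeeping: each structural label interacts nontrivially with the parity of \(r\) (which controls the upper bound \(t\) of the second sum of \(F_r\) and also determines whether \(H_{r+1}(p,q)\) coincides with \(H_r(p,q)\) or has one additional edge removed), with the boundary indices \(i\in\{1,s+1\}\) (which can push one of the adjacent \(g\)-terms out of the summation range), and with the question of whether \(e_r(p,q)\) or \(e_{r+1}(p,q)\) vanishes. A clean way to organize the argument is to fix the parity of \(r\) first, then sweep through the three structure types, tabulating for each subcase the nonzero entries together with their \(N\)-exponents; with these tables at hand the verification becomes a mechanical check driven by the Beraha recursion, and the sign \((-1)^r\) on the left-hand side of \ref{eqn:lem:F_r(p,q)-formula} emerges naturally from the parity-dependent position of the residual \(\beta\) term that survives the telescoping.
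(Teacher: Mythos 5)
Your proposal is correct and follows essentially the same route as the paper: a case analysis over the structures $[i]$, $[i,i+1]$ and $[0]$ from Definition \ref{defn:structures[i]_a.s.o.}, using Corollary \ref{cor:corollary_from_prop:when_is_e_r(p,q)_non-zero?} to isolate the at most three surviving summands, Lemma \ref{lem:components_of_G(p,f(i,q))_and_G(p,g(i,q))} for the exponents of $N$, and the Beraha recursion to collapse the resulting combination, with the same special treatment of the boundary indices and of the parity of $r$. Your worked subcase (structure $[i]$, $r=2s$, interior $i$) matches the paper's Case 2, and your observation that $H_{r+1}(p,q)=H_r(p,q)$ for even $r$ is a slightly cleaner justification for the vanishing of $e_{r+1}(p,q)$ than the paper's ad hoc remark.
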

\begin{proof}
By definition of \(F_r(p,q)\), see Equation \ref{eqn:F_r(p,q)},  we can write out Equation \ref{eqn:lem:F_r(p,q)-formula} as
\begin{align}\label{eqn:lem:F_r(p,q)-formula_extended}
\begin{split}
&\sum_{j=1}^{s+1}N^{-(s-j+2)}\beta_{2j-1}(z)e_r\big(p,f(j,q)\big)\\
&-\sum_{j=1}^{t}N^{-(s-j+1)}\beta_{2j}(z)e_r\big(p,g(j,q)\big)\\
=&\,(-1)^r\left(\beta_{r+2}(z)\,e_r(p,q)-\beta_{r+3}(z)\,e_{r+1}(p,q)\right)
\end{split}
\end{align}

We prove this formula for all different structures of \(H_r(p,q)\) which might appear.
It will turn out that, in order to have at least one non-vanishing \(e_{\cdot}(\cdot,\cdot)\), the graph \(H_r(p,q)\) has to be of one of the structures from Definition \ref{defn:structures[i]_a.s.o.} and, even in these cases, most summands on the left side turn out to be zero.
\newline
\textbf{Case 1: \(H_r(p,q)\) has structure \([i,i\!+\!1]\):}
If \(i\!\neq\!s\!+\!1\), then by Corollary \ref{cor:corollary_from_prop:when_is_e_r(p,q)_non-zero?} only three summands survive in the sums in Equation \ref{eqn:lem:F_r(p,q)-formula_extended}: The summands for \(j\!=\!i\) and \(j\!=\!i\!+\!1\) in the first sum and the summand for \(j\!=\!i\) in the second sum.
Hence, Equation \ref{eqn:lem:F_r(p,q)-formula_extended} reads
\begin{align}\label{eqn:lem:F_r(p,q)-formula_extended_case1}
\begin{split}
&N^{-(s-i+2)}\beta_{2i-1}(z)e_r\big(p,f(i,q)\big)+N^{-(s-i+1)}\beta_{2i+1}(z)e_r\big(p,f(i+1,q)\big)\\
&-N^{-(s-i+1)}\beta_{2i}(z)e_r\big(p,g(i,q)\big)\\
=&\,(-1)^r\left(\beta_{r+2}(z)\,e_r(p,q)-\beta_{r+3}(z)\,e_{r+1}(p,q)\right)
\end{split}
\end{align}
By Lemma \ref{lem:components_of_G(p,f(i,q))_and_G(p,g(i,q))} it holds
\[
e_r\big(p,f(i,q)=e_r\big(p,f(i\!+\!1),q\big)=e_r\big(p,g(i,q)\big)=N^{\rl(q^*,p)+s-i+1},
\]
so we can rewrite the left side of Equation \ref{eqn:lem:F_r(p,q)-formula_extended_case1} to obtain
\[N^{\rl(q^*,p)}\underbrace{\Big(z\beta_{2i-1}(z)-\beta_{2i}(z)+\beta_{2i+1}(z)\Big)}_{=0}=(-1)^r\left(\beta_{r+2}(z)\,e_r(p,q)-\beta_{r+3}(z)\,e_{r+1}(p,q)\right).\]
The bracket vanishes by the recursion formula for the reversed Beraha polynomials, see Equation \ref{eqn:recursion_behara_second_time}.
Because of structure \([i,i\!+\!1]\) it holds
\[e_{r}(p,q)=e_{r+1}(p,q)=0,\]
and also the right side vanishes, as desired.
\newline
The situation \(i\!=\!s\!+\!1\) is only defined and relevant if \(r\!=\!2s\!+\!1\) is odd.
In this case we have to omit in Equation \ref{eqn:lem:F_r(p,q)-formula_extended_case1} the term with \(\beta_{2i+1}(z)\)  as the summations over \(j\) in Equation \ref{eqn:lem:F_r(p,q)-formula_extended} both end at \(j=s+1\).
Hence, Equation \ref{eqn:lem:F_r(p,q)-formula_extended} reads
\begin{equation}\label{eqn:lem:F_r(p,q)-formula_extended_case1-special}
N^{\rl(q^*,p)}\Big(z\beta_{2i-1}(z)-\beta_{2i}(z)\Big)=(-1)^r\left(\beta_{r+2}(z)\,e_r(p,q)-\beta_{r+3}(z)\,e_{r+1}(p,q)\right).
\end{equation}
We compute for the left side
\begin{align*}
&N^{\rl(q^*,p)}\Big(z\beta_{2i-1}(z)-\beta_{2i}(z)\Big)\\
=&N^{\rl(q^*,p)}\Big(-\beta_{2i+1}(z)\Big)\\
=&N^{\rl(q^*,p)}\Big(-z\beta_{r+2}(z)\Big)\\
=&(-1)^r\Big(\beta_{r+2}(z)\Big)e_r(p,q)\\
\end{align*}
as \(-1=(-1)^r\) and \(e_r(p,q)=N^{\rl(q^*,p)}\) by Proposition \ref{prop:when_is_e_r(p,q)_non-zero?}.
This is equal to the right side of Equation \ref{eqn:lem:F_r(p,q)-formula_extended_case1-special} because \(e_{r+1}(p,q)\) is zero:
Structure  \([s\!+\!1,s\!+\!2]\) in particular tells us that \((s+1)'\) and \((s+2)'\) are connected in \(H_r(p,q)\), so there is an \((r\!+\!1)\)-flaw in \(G(p,q)\) and \(e_{r+1}(p,q)=0\).
Again this shows the claim.
\newline
\textbf{Case 2: \(H_r(p,q)\) has structure \([i]\):}
We exclude at first the situation \(i\!=\!1\) as well as \(i\!=\!s\!+\!1\) in the case \(r\!=\!2s\).
By Corollary \ref{cor:corollary_from_prop:when_is_e_r(p,q)_non-zero?} only three summands survive in the sums in Equation \ref{eqn:lem:F_r(p,q)-formula_extended}: The summand for \(j\!=\!i\) in the first sum and the summands for \(j\!=\!i\!-\!1\) and \(j\!=\!i\) in the second sum.
Hence, Equation \ref{eqn:lem:F_r(p,q)-formula_extended} reads in this situation
\begin{align}\label{eqn:lem:F_r(p,q)-formula_extended_case2}
\begin{split}
&N^{-(s-i+2)}\beta_{2i-1}(z)e_r\big(p,f(i,q)\big)\\
&-N^{-(s-i+2)}\beta_{2i-2}(z)e_r\big(p,g(i\!-\!1,q)\big)-N^{-(s-i+1)}\beta_{2i}(z)e_r\big(p,g(i,q)\big)\\
=&\,(-1)^r\left(\beta_{r+2}(z)\,e_r(p,q)-\beta_{r+3}(z)\,e_{r+1}(p,q)\right)
\end{split}
\end{align}
By Lemma \ref{lem:components_of_G(p,f(i,q))_and_G(p,g(i,q))} it holds
\[
e_r\big(p,f(i,q)=e_r\big(p,g(i,q)\big)=N^{\rl(q^*,p)+s-i+1}\quad,\quad 
e_r\big(p,g(i\!-\!1,q)\big)=N^{\rl(q^*,p)+s-i+2},
\]
so we can rewrite the left side of Equation \ref{eqn:lem:F_r(p,q)-formula_extended_case1} to obtain
\begin{align*}
N^{\rl(q^*,p)}\underbrace{\Big(-z\beta_{2i-2}(z)+\beta_{2i-1}(z)-\beta_{2i}(z)\Big)}_{=0}&\\
=(-1)^r\big(\beta_{r+2}(z)&\,e_r(p,q)-\beta_{r+3}(z)\,e_{r+1}(p,q)\big).
\end{align*}
The bracket vanishes again by the recursion formula for the reversed Beraha polynomials, see Equation \ref{eqn:recursion_behara_second_time}.
If \(i\!=\!1\), then in Equation \ref{eqn:F_r(p,q)} we do not have a summation index \(j=0\) and so we have to omit at first sight the term \(z\beta_{2i-2}(z)=z\beta_{0}(z)\).
However, as \(\beta_{0}(z)=0\), we still get the same result.
It remains to show that also the right side vanished:
\newline
We note that having structure \([i]\) (and not the situation of \(i=s+1\) in combination with \(r=2s\)) says that the point \(i\) is the witness of an \(r\)-flaw of \(G(p,q)\) as \(i\) is not connected to \(i'\) in \(H_r(p,q)\).
Hence both \(e_{r}(p,q)\) and \(e_{r+1}(p,q)\) are zero.\newline
Finally, we consider the case of \(i\!=\!s\!+\!1\) and \(r\!=\!2s\).
The second sum on the left side of Equation \ref{eqn:lem:F_r(p,q)-formula_extended} ends at \(t\!=\!s\), so there is no term \(\beta_{2i}(z)\) to consider and Equation \ref{eqn:lem:F_r(p,q)-formula_extended_case2} reads
\begin{align}\label{eqn:lem:F_r(p,q)-formula_extended_case2-special}
N^{\rl(q^*,p)}\Big(-z\beta_{2i-2}(z)+\beta_{2i-1}(z)\Big)
=(-1)^r\big(\beta_{r+2}(z)\,e_r(p,q)-\beta_{r+3}(z)\,e_{r+1}(p,q)\big).
\end{align}
For the left side we compute
\begin{align*}
N^{\rl(q^*,p)}\Big(-z\beta_{2i-2}(z)+\beta_{2i-1}(z)\Big)
=&N^{\rl(q^*,p)}\Big(\beta_{2i}(z)\Big)\\
=&N^{\rl(q^*,p)}\Big(\beta_{r+2}(z)\Big)\\
=&(-1)^r\Big(\beta_{r+2}(z)\Big)e_r(p,q)\\
\end{align*}
as \((-1)^r=(-1)^{2s}=1\) and \(e_r(p,q)=N^{\rl(q^*,p)}\) by Proposition \ref{prop:when_is_e_r(p,q)_non-zero?}.
For the right side of Equation \ref{eqn:lem:F_r(p,q)-formula_extended_case2-special} we see again that \(e_{r+1}(p,q)\) vanishes:
Structure \([s+1]\) tells us that the point \((s+1)\) witnesses an \((r\!+\!1)\)-flaw of \(G(p,q)\) as it is not connected to \((s\!+\!1)'\) in \(H_r(p,q)\).
Hence, we proved the claim also in this case.
\newline
\textbf{Case 3: \(H_r(p,q)\) has structure \([0]\):}
If we assume structure \([0]\), then there is only one possibility for a non-trivial summand on the left side of Equation \ref{eqn:lem:F_r(p,q)-formula_extended}, namely if \(r\!=\!2s\!+\!1\) and then only the summand for \(j\!=\!s\!+\!1\) in the second sum survives.
It can directly be checked by the definition of structure \([0]\), that \(e_{r+1}(p,q)\) is non-zero and so Equation \ref{eqn:lem:F_r(p,q)-formula_extended} reads in this case
\begin{align}\label{eqn:lem:F_r(p,q)-formula_extended_case3}
\beta_{2s+2}(z)e_r\big(p,g(s+1,q)\big)=(-1)^r\big(\beta_{r+2}(z)\,e_r(p,q)-\beta_{r+3}(z)\,e_{r+1}(p,q)\big)
\end{align}
By Lemma \ref{lem:components_of_G(p,f(i,q))_and_G(p,g(i,q))} it holds
\[e_r\big(p,g(s+1,q)=N^{\rl(q^*,p)-1}\]
and we compute
\begin{align*}
\beta_{2s+2}(z)e_r\big(p,g(s+1,q)\big)\\
=&N^{\rl(q^*,p)}z\beta_{r+1}(z)\\
=&N^{\rl(q^*,p)}\Big(\beta_{r+2}(z)-\beta_{r+3}(z)\Big)\\
=&\beta_{r+2}(z)e_r(p,q)-\beta_{r+3}(z)e_{r+1}(p,q),
\end{align*}
proving Equation \ref{eqn:lem:F_r(p,q)-formula_extended_case3}.
\newline
\textbf{Case 4: All other structures:}
For all other structures apart from the three cases above we have that \(e_r\big(p,f(i,q)\big)\),  \(e_r\big(p,g(i,q)\big)\) and \(e_r\big(p,q)\big)\) are zero by Proposition \ref{prop:when_is_e_r(p,q)_non-zero?}.
We have not proved this for \(e_{r+1}(p,q)\) yet, but it can easily be seen that structure \([0]\) on \(H_r(p,q)\) is a necessary condition for \(e_{r+1}(p,q)\) to be non-zero.
So also \(e_{r+1}(p,q)\) vanishes for all structures not explicitly considered above. Hence, both sides of Equation \ref{eqn:lem:F_r(p,q)-formula} are zero.
This finishes the proof.
\end{proof}

\subsection{A recursion formula}\label{subsec:recursion_formula}
%
% r=0 ALLOWED OR NOT????
Consider the matrix \(A(n,r)\) for \(0\!\le\! r\!<\! n\!-\!1\).
Assume that the first \(\#Y(n,r)\) rows and columns are labelled by the elements in \(Y(n,r)\).
In this case, \(A(n,r)\) has a block structure given by
\[A(n,r)=\begin{pmatrix}B(n,r)&C\\[6pt]D&E\end{pmatrix}\]
We denote the column of \(A(n,r)\) associated to a partition \(q\) by \(\col(q)\).\newline
Now for every \(q\in W(n,r+1)\), we consider the linear combination of columns
\[F_r(q):=\sum_{j=1}^{s+1}N^{-(s-j+2)}\beta_{2j-1}(z)\col\big(f(j,q)\big)-\sum_{j=1}^{t}N^{-(s-j+2)}\beta{2j}(z)\col\big(g(j,q)\big)\]
where we again defined \(z:=\frac{1}{N}\) and the summation bound \(t\) depends on the parity of \(r\): 
\[t:=\begin{cases}s&,r\textnormal{ even}\\s+1&,r\textnormal{ odd}\end{cases}\]
Note that \(F_r(q)\) is just the column vector \(\big(F_r(p,q)\big)_{p\in W(n,r)}\).
Multiplying now \(\col(q)\) by \(\beta_{r+2}(z)\) and adding \((-1)^rF_r(q)\) to it changes \(\col(q)\) by Equation \ref{eqn:lem:F_r(p,q)-formula} to
\[\beta_{r+3}(z)\Big(e_{r+1}(p,q)\Big)_{p\in W(n,r)}.\vspace{4pt}\]
For the first \(\#Y(n,r)\) rows this is zero, as for \(p\in Y(n,r)\) the graph \(G(p,q)\) has an \((r+1)\)-flaw, so \(e_{r+1}(p,q)=0\).
Hence we have
\begin{align*}
\begin{split}
%\det\big(A(n,r)\big)
&\det\begin{pmatrix}B(n,r)&C\\[6pt]D&E\end{pmatrix}\\
=&\left(\frac{\beta_{r+3}(z)}{\beta_{r+2}(z)}\right)^{\#(W(n,r+1))}\det\begin{pmatrix}B(n,r)&0\\[6pt]D&A(n,r+1)\end{pmatrix}\\
=&\left(\frac{\beta_{r+3}(z)}{\beta_{r+2}(z)}\right)^{\#(W(n,r+1))}\det\big(B(n,r)\big)\det(A(n,r+1))
\end{split}
\end{align*}
By Proposition \ref{prop:connections_between_B(n,r)_and_A(n,r)}, the matrix \(B(n,r)\) is again given by a matrix of type \(A(n-1,r')\), so we can summarize the observations above in the following Proposition:
\begin{prop}\label{prop:recursion_formula}
For \(0\!\le\! r\!<\!n\!-\!1\) we have the following recursion formula for the determinant of \(A(n,r)\):

\begin{equation}\label{eqn:recursion_formula}
\det\big(A(n,r)\big)=\left(\frac{\beta_{r+3}(z)}{\beta_{r+2}(z)}\right)^{\#(W(n,r+1))}\det\big(B(n,r)\big)\det(A(n,r+1))
\end{equation}
where
\begin{equation}\label{eqn:rewriting_B(n,r)}
\det\big(B(n,r)\big)=
\begin{cases}
\det\big(A(n-1,r-1)\big)&,\;r=2s+1.\\
N\cdot\det\big(A(n-1,r-1)\big)&,\;r=2s\textnormal{ and }r>0\\
N\cdot\det\big(A(n-1,0)\big)&,\;r=0.
\end{cases}
\end{equation}
\end{prop}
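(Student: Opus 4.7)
The plan is to use the identity of Lemma \ref{lem:F_r(p,q)-formula} as the driver of a sequence of simultaneous column operations that bring $A(n,r)$ into block lower triangular form, so that its determinant factors cleanly. After permuting rows and columns so that those indexed by $Y(n,r)$ come first, I would write
\[A(n,r) = \begin{pmatrix} B(n,r) & C \\ D & E \end{pmatrix},\]
where the trailing rows and columns are indexed by $W(n,r)\setminus Y(n,r)=W(n,r+1)$. The goal is to kill the block $C$ (without disturbing $B(n,r)$) by adding suitable multiples of the leading columns to each trailing column, and simultaneously turn the block $E$ into $\beta_{r+3}(z)\,A(n,r+1)$ up to scalars.

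The concrete operation is, for every $q\in W(n,r+1)$, to replace $\col(q)$ by $\beta_{r+2}(z)\,\col(q)+(-1)^r F_r(q)$, where $F_r(q)$ is the fixed linear combination of the columns $\col(f(j,q))$ and $\col(g(j,q))$ with coefficients from the definition of $F_r(p,q)$ preceding Lemma \ref{lem:F_r(p,q)-formula}. By Lemma \ref{lem:F_r(p,q)-formula} applied row by row, the new entry in row $p\in W(n,r)$ is exactly $\beta_{r+3}(z)\,e_{r+1}(p,q)$. For any $p\in Y(n,r)=W(n,r)\setminus W(n,r+1)$, the pair $(p,q)$ satisfies $p\notin W(n,r+1)$, so $G(p,q)$ automatically has an $(r+1)$-flaw and $e_{r+1}(p,q)=0$; thus the top block above $E$ is zeroed out. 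What remains below is $\beta_{r+3}(z)$ times the matrix whose $(p,q)$-entry is $e_{r+1}(p,q)$ for $p,q\in W(n,r+1)$, which by definition is $\beta_{r+3}(z)\,A(n,r+1)$.

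The bookkeeping then gives the stated formula: scaling the $\#W(n,r+1)$ columns by $\beta_{r+2}(z)$ multiplies the determinant by $\beta_{r+2}(z)^{\#W(n,r+1)}$, while extracting $\beta_{r+3}(z)$ from each of the transformed columns in the lower block contributes $\beta_{r+3}(z)^{\#W(n,r+1)}$, producing the quotient in Equation \ref{eqn:recursion_formula}. The block lower triangular structure finally yields $\det A(n,r)$ as the product of $\det B(n,r)$ and $\det A(n,r+1)$ up to that quotient, and the case-by-case identification of $B(n,r)$ is immediate from Proposition \ref{prop:connections_between_B(n,r)_and_A(n,r)}.

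The main obstacle, which I expect to be the most delicate point, is justifying that the simultaneous column operations do not interfere with one another. This requires checking that every $f(j,q)$ and $g(j,q)$ used in $F_r(q)$ lies in $Y(n,r)$ rather than in $W(n,r+1)$, so that the columns being added are among those kept fixed. This can be read off the definitions of $f$ and $g$ in Definitions \ref{defn:f(i,q)} and \ref{defn:g(i,q)}: in the even case they force a singleton at position $s+1$, and in the odd case they fuse positions $s+1$ and $s+2$, both of which place the result outside $W(n,r+1)$ while keeping it inside $W(n,r)$ by Lemma \ref{lem:f(i,q)_and_g(i,q)_are_in_W(n,r)}. Once this disjointness is verified, the column reduction is a genuine elementary operation and Lemma \ref{lem:F_r(p,q)-formula} closes the argument.
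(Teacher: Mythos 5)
Your proposal is correct and follows essentially the same route as the paper: the same block decomposition of $A(n,r)$ along $Y(n,r)$ versus $W(n,r+1)$, the same column operation $\col(q)\mapsto\beta_{r+2}(z)\col(q)+(-1)^rF_r(q)$ resolved row-by-row via Lemma \ref{lem:F_r(p,q)-formula}, the same vanishing argument for the upper block, and the same appeal to Proposition \ref{prop:connections_between_B(n,r)_and_A(n,r)} for the case distinction on $B(n,r)$. The one point you flag as delicate --- that every $f(j,q)$ and $g(j,q)$ lands in $Y(n,r)$ rather than $W(n,r+1)$, so the simultaneous operations do not interfere --- is left implicit in the paper, and your verification of it (singleton at $s+1$ in the even case, fusion of $s+1$ and $s+2$ in the odd case) is a worthwhile addition rather than a deviation.
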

\begin{rem}\label{rem:beraha_quotients_are_well-defined}
Note that by Lemma \ref{lem:roots_of_beraha} the expression \(\beta_n(z)\) is non-zero for \(n\in\N\), \(z=\frac{1}{N}\) and \(N\in\N_{\ge 4}\).
Hence, also the fractions
\[\frac{\beta_{r+3}(z)}{\beta_{r+2}(z)}\]
are  non-zero and, in addition, the whole recursion above is guaranteed to be well-defined.
\end{rem}
\subsection{Conclusion: Linear independence of the maps \(T_p\)}
\label{subsec:conclusion:linear_independence_of_the_T_ps}
With the results of Section \ref{subsec:recursion_formula}, we can finally prove invertibility of the matrix \(A(n,0)\) and so Theorem \ref{thm:main_result:linear_independence_of_T_ps}.
\begin{thm}\label{thm:A(n,0)_invertible}
Let \(N\in\N_{\ge4}, n\in\N\) and consider the Gram matrix
\[A(n,0)=\left(\langle T_p(1),T_q(1)\rangle\right)_{p,q\in\mathcal{NC}(0,n)}.\]
Then \(A(n,0)\) is invertible.
\end{thm}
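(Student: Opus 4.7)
The plan is a double induction that unrolls the recursion of Proposition \ref{prop:recursion_formula} all the way to the $1 \times 1$ base case of Proposition \ref{prop:B(n,n-1)=A(n,n-1)_is_a_complex_number}. The outer induction is on $n$, with the stronger hypothesis that $\det A(n',r') \neq 0$ for every $n' < n$ and every $0 \le r' < n'$. The base case $n = 1$ is trivial: $\mathcal{NC}(0,1)$ consists of a single singleton, so $A(1,0)$ is the $1\times 1$ matrix $(N)$, whose determinant is $N \neq 0$.

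For the induction step, fix $n$ and prove $\det A(n,r) \neq 0$ for all $0 \le r < n$ by downward induction on $r$, starting at $r = n-1$. The base case is immediate from Proposition \ref{prop:B(n,n-1)=A(n,n-1)_is_a_complex_number}, which identifies $A(n,n-1)$ with $N^{\lceil n/2\rceil} \in \C$. For the step $r \mapsto r$ with $r < n-1$, apply Proposition \ref{prop:recursion_formula}:
\[
\det A(n,r) \;=\; \left(\frac{\beta_{r+3}(z)}{\beta_{r+2}(z)}\right)^{\#W(n,r+1)} \det B(n,r)\,\det A(n,r+1),
\]
with $z = 1/N$. Each of the three factors is non-zero: the Beraha quotient by Lemma \ref{lem:roots_of_beraha} (and Remark \ref{rem:beraha_quotients_are_well-defined}); the factor $\det A(n,r+1)$ by the inner (downward) induction hypothesis; and $\det B(n,r)$ by combining Proposition \ref{prop:connections_between_B(n,r)_and_A(n,r)}, which expresses $B(n,r)$ as a non-zero scalar (a positive power of $N$) times some $A(n-1,r')$ with $0 \le r' \le n-2$, with the outer induction hypothesis applied to $n-1$. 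This closes both inductions and yields $\det A(n,0) \neq 0$, proving the theorem.

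The argument is essentially bookkeeping once the recursion of Proposition \ref{prop:recursion_formula} is in hand, so no single step is genuinely hard at this stage. The part one needs to be careful about is organising the induction so that the values of $r'$ arising in $B(n,r) \simeq A(n-1,r')$ are always covered by the outer hypothesis; this is fine because Proposition \ref{prop:connections_between_B(n,r)_and_A(n,r)} only produces $r' \in \{0,1,\dots,n-2\}$, all of which satisfy $r' < n-1$ and hence fall within the range $0 \le r' < n-1$ assumed for the smaller parameter. Finally, the theorem $A(n,0)$ invertible combined with the reduction in Section \ref{subsec:boiling_down_the_problem} immediately gives linear independence of $\bigl(T_p(1)\bigr)_{p\in\mathcal{NC}(0,n)}$, and thus of $\bigl(T_p\bigr)_{p\in\mathcal{NC}(0,n)}$, establishing Theorem \ref{thm:main_result:linear_independence_of_T_ps}.
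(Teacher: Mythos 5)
Your proposal is correct and takes essentially the same route as the paper: the paper's proof simply says that repeated application of the recursion formula of Proposition \ref{prop:recursion_formula} (together with Proposition \ref{prop:connections_between_B(n,r)_and_A(n,r)} and the $1\times1$ base case of Proposition \ref{prop:B(n,n-1)=A(n,n-1)_is_a_complex_number}) expresses $\det A(n,0)$ as a product of non-zero Beraha quotients, powers of $N$, and determinants $N^{\lceil n'/2\rceil}$. Your double induction is just a more carefully organised formalisation of that same unrolling, including the (correct) check that the parameters $r'$ produced by $B(n,r)\simeq A(n-1,r')$ always fall within the range covered by the outer hypothesis.
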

\begin{proof}
By the considerations in Section \ref{subsec:boiling_down_the_problem} and Section \ref{subsubsec:The graphs G(p,q), H_r(p,q) and r-flaws}, the Gram matrix above coincides with the matrix \(A(n,0)\) as defined in Definition \ref{defn:A(n,r)}.
Using repeatedly the recursion formula from Proposition \ref{prop:recursion_formula}, we see that the determinant of \(A(n,0)\) is given by a product whose factors are
\begin{itemize}
\item[(1)] quotients \(\displaystyle\frac{\beta_{r+3}(z)}{\beta_{r+2}(z)}\),
\item[(2)] powers of \(N\) or
\item [(3)] determinants of matrices \(A(n,n\!-\!1)\) with \(1\le n\le N\).
\end{itemize}
The quotients in (1) are non-zero as the reversed Beraha polynomials have no roots at \(z=\frac{1}{N}\), see Remark \ref{rem:beraha_quotients_are_well-defined} and Lemma \ref{lem:roots_of_beraha}.
Concerning the determinants in (3), it holds 
\[\det\big(A(n,n\!-\!1)\big)=N^{\left\lceil\frac{n}{2}\right\rceil}\neq 0\]
by Proposition \ref{prop:B(n,n-1)=A(n,n-1)_is_a_complex_number}.
Hence, we conclude for \(N\!\in\!\N_{\ge 4}\) and all \(n\!\in\!\N\) that \(\det\big(A(n,0)\big)\) is non-zero, i.e \(A(n,0)\) is invertible.
\end{proof}
Theorem \ref{thm:main_result:linear_independence_of_T_ps} is a direct consequence of this result:
\begin{cor*}[compare {Theorem \ref{thm:main_result:linear_independence_of_T_ps}}]
For any given \(N\!\in\!\N_{\ge 4}\) and \(n\!\in\!\N\), the collection of linear maps
\[\left(T_p\right)_{p\in\mathcal{NC}(0,n)},\]
as defined in Definition \ref{defn:T_p}, is linearly independent.
\end{cor*}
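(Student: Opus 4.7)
The plan is to deduce the corollary directly from Theorem \ref{thm:A(n,0)_invertible} via the standard equivalence between linear independence of vectors and non-singularity of their Gram matrix. First I would observe that since each $T_p : \mathbb{C} \to (\mathbb{C}^N)^{\otimes n}$ is a linear map from a one-dimensional space, it is uniquely determined by the single vector $T_p(1) \in (\mathbb{C}^N)^{\otimes n}$. Consequently, the map $T_p \mapsto T_p(1)$ is an isomorphism between the subspace of $B(\mathbb{C},(\mathbb{C}^N)^{\otimes n})$ spanned by the $T_p$ and the subspace of $(\mathbb{C}^N)^{\otimes n}$ spanned by the $T_p(1)$, so linear independence of the family $(T_p)_{p \in \mathcal{NC}(0,n)}$ is equivalent to linear independence of $(T_p(1))_{p \in \mathcal{NC}(0,n)}$.

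Next, I would recall the elementary fact that a finite family of vectors $(v_p)_p$ in a Hilbert space is linearly independent if and only if its Gram matrix $(\langle v_p, v_q \rangle)_{p,q}$ is invertible. Applying this with $v_p = T_p(1)$ reduces the problem exactly to showing $\det A(n,0) \neq 0$, which is precisely the content of Theorem \ref{thm:A(n,0)_invertible}.

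Since the heavy lifting has already been carried out — namely the recursive determinant analysis culminating in Theorem \ref{thm:A(n,0)_invertible}, which combines the recursion formula of Proposition \ref{prop:recursion_formula}, the base case $\det(A(n,n-1)) = N^{\lceil n/2 \rceil}$ from Proposition \ref{prop:B(n,n-1)=A(n,n-1)_is_a_complex_number}, and the non-vanishing of the reversed Beraha polynomials at $z = 1/N$ from Lemma \ref{lem:roots_of_beraha} — there is no further obstacle. The corollary itself is simply the restatement of $\det A(n,0) \neq 0$ in the language of linear independence of maps.
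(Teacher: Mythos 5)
Your proposal is correct and is essentially identical to the paper's own argument: the paper also reduces the statement to the invertibility of the Gram matrix $A(n,0)$ (established in Theorem \ref{thm:A(n,0)_invertible}) via the observation from Section \ref{subsec:boiling_down_the_problem} that each $T_p$ is determined by the single vector $T_p(1)$, so linear independence of the maps is equivalent to linear independence of the vectors $\bigl(T_p(1)\bigr)_{p\in\mathcal{NC}(0,n)}$, which in turn is equivalent to $\det A(n,0)\neq 0$. No gaps.
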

\begin{proof}
Given the linear independence of the vectors \(\left(T_p(1)\right)_{p\in\mathcal{NC}(0,n)}\), this result follows directly from the fact that the linear maps \(T_p\) above are maps from the complex numbers (into some Hilbert space).
\end{proof}
As displayed in Section \ref{sec:reformulating_the_injectivity_of_Psi_as_a_problem_of_linear_independence}, the linear independence of maps \(\big(T_p\big)_{p\in\mathcal(0,n)}\) (for \(N\ge 4\)) guarantees the following result:
\begin{cor}\label{cor:injectivity_of_Psi}
Let \(N\ge 4\) and consider the functor \(\Psi\) from Equation \ref{eqn:from_partitions_to_easy_quantum_groups}, given by
\[\mathcal{C}\overset{\Psi}{\longmapsto}R_N(\mathcal{C}).\]
Restricted to non-crossing categories of partitions, \(\Psi\) is injective.
\end{cor}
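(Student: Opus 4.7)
The plan is to deduce injectivity of $\Psi$ from the just-established linear independence of the $T_p$ (Theorem~\ref{thm:main_result:linear_independence_of_T_ps}) via the sufficient condition already isolated in Corollary~\ref{cor:sufficient_condition_for_different_easy_quantum_groups}. So I would assume two non-crossing categories $\mathcal{C}_1 \neq \mathcal{C}_2$ with $\mathcal{C}_1,\mathcal{C}_2 \subseteq \mathcal{NC}$ and show that $R_N(\mathcal{C}_1)\neq R_N(\mathcal{C}_2)$, which directly says $\Psi$ is injective on non-crossing categories.

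First I would reduce to the lower-row situation. Since $\mathcal{C}_1 \neq \mathcal{C}_2$, there exist $k,l \in \N_0$ with $\mathcal{C}_1(k,l) \neq \mathcal{C}_2(k,l)$. Using Proposition~\ref{prop:mcpp_is_equivalent_to_closure_under_rotation} (both categories are rotation-closed) together with the argument in Proposition~\ref{prop:equivalence_of_rotated_situation} and Corollary~\ref{cor:restriction_to_lower_points_for_linear_independence}, I would pick an appropriate $n \in \N_0$ such that $\mathcal{C}_1(0,n) \neq \mathcal{C}_2(0,n)$; concretely, one rotates all upper points of a distinguishing partition to the lower row, and the bijectivity of the rotation operators transports the non-equality from $(k,l)$ to $(0,k+l)$.

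Next I would invoke the main linear independence result. Because $\mathcal{C}_1(0,n)\cup\mathcal{C}_2(0,n)\subseteq \mathcal{NC}(0,n)$, Theorem~\ref{thm:main_result:linear_independence_of_T_ps} (equivalently, the invertibility of $A(n,0)$ proved in Theorem~\ref{thm:A(n,0)_invertible}) immediately implies that the subfamily
\[
\bigl(T_p\bigr)_{p \in \mathcal{C}_1(0,n)\cup\mathcal{C}_2(0,n)}
\]
is linearly independent, since any subfamily of a linearly independent family is linearly independent. This is the only place the hypothesis $N \ge 4$ enters, and it is the substantive ingredient borrowed from the preceding sections.

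Finally, with this linear independence in hand, Corollary~\ref{cor:sufficient_condition_for_different_easy_quantum_groups} applies verbatim and yields
\[
\mathrm{Span}\bigl(\{T_p\mid p\in\mathcal{C}_1(0,n)\}\bigr) \;\neq\; \mathrm{Span}\bigl(\{T_p\mid p\in\mathcal{C}_2(0,n)\}\bigr),
\]
hence $R_N(\mathcal{C}_1)$ and $R_N(\mathcal{C}_2)$ differ in their $(0,n)$-component. There is no real obstacle in this argument; the hard work has already been done in proving $\det(A(n,0))\neq 0$. The only points to be careful about are (i) that the rotation reduction in the first step is genuinely bijective on the level of partitions (so the difference $\mathcal{C}_1(k,l)\neq\mathcal{C}_2(k,l)$ really propagates to $(0,n)$), and (ii) that $R_N$ rather than $G_N$ is what is claimed to differ, which is handled directly since $R_N(\mathcal{C})$ is defined as the family of spans $\mathrm{Span}(\{T_p\})$ and these are exactly the objects shown to be distinct.
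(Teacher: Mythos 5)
Your proposal is correct and follows essentially the same route as the paper: the paper presents this corollary as a direct consequence of Theorem \ref{thm:main_result:linear_independence_of_T_ps} combined with the reduction machinery of Section \ref{sec:reformulating_the_injectivity_of_Psi_as_a_problem_of_linear_independence} (rotation to the $(0,n)$-level and the sufficient condition of Corollary \ref{cor:sufficient_condition_for_different_easy_quantum_groups}), which is exactly the chain you spell out. Your explicit attention to the bijectivity of the rotations and to the fact that it is $R_N$ rather than $G_N$ that must be distinguished only makes the implicit argument of the paper more careful.
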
 
\subsection{A comparison to Tutte's work}\label{subsec:a_comparison_to_tutte}
\label{subsec:comments_on_tuttes_work}
Sections \ref{subsec:definitions} and \ref{subsec:recursion_formula} are heavily based on W. Tutte's work ``The matrix of chromatic joints'', \cite{tutte}.
This section is used to stress again the changes and corrections we performed compared to the ideas in the original work.
\newline
The aforementioned two sections are an adaption of \cite{tutte} in the following sense: All objects defined in this work appear (in similar or the same form) in \cite{tutte} and the logical steps in order to establish a recursion formula for the determinant of the matrix \(A(n,0)\) are adopted from \cite{tutte}:
\begin{itemize}
\item [(1)] The sets of partitions \(W(n,r)\) and \(Y(n,r)\) as well as the partition manipulations \(f(i,q)\) and \(g(i,q)\).
\item [(2)]The graphs \(G(p,q)\) and \(H_r(p,q)\) and \(r\)-flaws.
\item [(3)]The matrices \(A(n,r)\) and \(B(n,r)\).
\item [(4)]The structures \([i]\),  \([i,i\!+\!1]\) and \([0]\).
\item [(5)]The expression \(F_r(p,q)\) that describes the column manipulations to be performed in the matrix \(A(n,r)\).
\item [(6)] The comparison of matrices \(B(n,r)\) and \(A(n\!-\!1,r')\).
\item [(7)] The statements about absence of \(r\)-flaws in the graphs \(G(p,q)\), \(G\big(p,f(i,q)\big)\) and \(G\big(p,g(i,q)\big)\), respectively.
\item [(8)] The statement about the number of components in the graphs \(G(p,q)\), \(G\big(p,f(i,q)\big)\) and \(G\big(p,g(i,q)\big)\).
\item [(9)] The statement about \((-1)^rF_r(p,q)\), describing the outcome of the column maniplations in Section \ref{subsec:recursion_formula}.
\item [(10)] The recursion formula from Section \ref{subsec:recursion_formula} itself.
\end{itemize}
In \cite{tutte}, item (7) and (8), two of the most important ingredients in order to establish in the end the recursion formula from Section \ref{subsec:recursion_formula}, were detected to have errors.
More precisely, the statement of item (7) is still the same, compare Proposition \ref{prop:when_is_e_r(p,q)_non-zero?} and \cite[p. 278]{tutte}, but in Tutte's work it was not compatible with the definition of the structures \([i]\), \([i,i\!+\!1]\) and \([0]\), see \cite[p. 277]{tutte}.
\begin{itemize}
\item[(a)]
In order to keep Tutte's ideas usable (and the statement behind item (7), Proposition \ref{prop:when_is_e_r(p,q)_non-zero?}, true), we changed the definition of the structures \([i]\),  \([i,i\!+\!1]\) and \([0]\), compare Definition \ref{defn:structures[i]_a.s.o.} and \cite[pp. 277]{tutte}, and the statement referring to item (8), compare Lemma \ref{lem:components_of_G(p,f(i,q))_and_G(p,g(i,q))} and \cite[Thm. 5.1]{tutte}.
\item[(b)] Although the result behind item (9) itself was not wrong in Tutte's work, compare Lemma \ref{lem:F_r(p,q)-formula} and \cite[Thm. 5.2]{tutte}, its proof needed to be adapted as it heavily relied on item (7) and (8).
\end{itemize}
In addition to the corrective adaptation above, we differ in the following aspects from \cite{tutte}:
\begin{itemize}
\item [(i)] The definitions of the graphs \(G(p,q)\) and \(H_r(p,q)\) in this article, Definitions \ref{defn:G(p,q)} and \ref{defn:H_r(p,q)}, are equivalent to those in \cite[p. 270, p. 277]{tutte}, but our definitions of vertices and edges are different.
\item[(ii)] As a consequence, also our definition of an \(r\)-flawless graph, Definition \ref{defn:r-flawless_graph}, uses other formulations.
\item [(iii)] The pictures of partitions as used in Equation \ref{eqn:structure_in_W(n,r=2s+1)} and thereafter do not appear in \cite{tutte} and neither do the illustrations of graphs as in Equation \ref{eqn:example_of_G(p,q)} or Equation \ref{eqn:G(p,q)_for_W(n,r=2s+1)}.
\item [(iv)] The schemes as introduced in Notation \ref{notation:displaying_connections_in_a_graph}, describing or defining connections in a graph, are added in order to improve the readability of definitions and proofs.
\end{itemize}

\bibliography{linear_independences}
\bibliographystyle{alpha}

\end{document}